\documentclass[a4,11pt]{amsart}

\usepackage[T1]{fontenc}
\usepackage[utf8]{inputenc}
\usepackage{lmodern}
\usepackage[english]{babel}
\usepackage[autostyle]{csquotes}

\usepackage[pdfusetitle,linktocpage,colorlinks=false]{hyperref}
\usepackage{doi}
\usepackage{tikz-cd}

\usepackage{enumitem}

\usepackage{amsmath,amssymb,amsthm,amscd}
\newtheorem{thm}{Theorem}[section]
\newtheorem{prop}[thm]{Proposition}
\newtheorem{lem}[thm]{Lemma}
\newtheorem{cor}[thm]{Corollary}

\newtheorem{ex}[thm]{Example}
\newtheorem{defn}[thm]{Definition}

\theoremstyle{remark}
\newtheorem{rem}[thm]{Remark}

\newcommand{\supp}{\mathrm{supp}}
\newcommand{\insupp}{\mathrm{insupp}}
\newcommand{\outsupp}{\mathrm{outsupp}}
\newcommand{\diag}{\mathrm{diag}}
\newcommand{\sign}{\mathrm{sign}}

\newcommand{\E}{\mathsf{E}}
\newcommand{\F}{\mathsf{F}}

\DeclareSymbolFont{symbolsC}{U}{pxsyc}{m}{n}
\DeclareMathSymbol{\coloneqq}{\mathrel}{symbolsC}{"42}

\begin{document}
\title{Ideal structure of $\ell^p$ uniform Roe algebras}
\author{Yeong Chyuan Chung}
\address{School of Mathematics, Jilin University, Changchun 130012, P.R. China}
\email{chungyc@jlu.edu.cn}
\author{Xinhui Du}
\address{School of Mathematics, Jilin University, Changchun 130012, P.R. China}
\email{xhdu24@mails.jlu.edu.cn}

\date{\today}
\keywords{Uniform Roe algebras, geometric ideals, ghostly ideals, coarse spaces, limit operators, property A}

\begin{abstract}
For a uniformly locally finite coarse space $(X,\mathcal{E})$, we prove that for every $p\in\{0\}\cup[1,\infty]$, the lattice of geometric ideals in the $\ell^p$ uniform Roe algebra $B^p_u(X,\mathcal{E})$ is isomorphic to the lattice of ideals of $\mathcal{E}$ (equivalently, to the lattice of ideals in the associated family of controlled partial coverings of $X$).
In particular, the lattices of geometric ideals for different values of $p$ coincide.
Using limit operators, we establish a canonical isometric isomorphism between $B^p_u(X,\mathcal{E})$ and the reduced $L^p$ operator algebra of the coarse groupoid for $p\in[1,\infty]$, and show that it induces an isomorphism between lattices of ideals that preserves inner support. In particular, geometric (resp. ghostly) ideals correspond precisely to dynamical (resp. restrictive) ideals under this isomorphism. Using equivalent formulations of property A for coarse spaces, we prove that for $p\in(1,\infty)$, property A implies that $B^p_u(X,\mathcal{E})$ admits a multiplier approximate identity with controlled propagation, that all ideals are geometric, and that all ghosts are trivial. For the extreme cases $p\in\{0,1,\infty\}$, these properties hold for every uniformly locally finite coarse space without assuming Property A.
Finally, for $p\in[1,\infty)$, a Morita equivalence between the $\ell^p$ uniform Roe algebra and the $\ell^p$ uniform algebra is shown to preserve the lattice of geometric ideals.
\end{abstract}

\maketitle

\tableofcontents

\section{Introduction}

One of the fundamental themes in coarse geometry is the interplay between the large-scale structure of a space and the algebraic properties of associated operator algebras. Roe algebras are $C^*$-algebras that encode the coarse geometry of a space. Originally introduced by Roe to study index theory on non-compact manifolds, they have become central objects of study in higher index theory \cite{Roe88, Roe96,Yu}, and thus have also been extensively studied for their own structural properties \cite{BBFKVW, BFV, CW01, CW04, CW05, MV25, Sako20, WZ25}.

The uniform Roe algebra of a uniformly locally finite coarse space $X$ consists of norm limits of finite propagation operators on $\ell^2(X)$, and its structure reflects the geometry of the underlying space in many ways. In particular, a series of works by Chen and Wang \cite{CW01, CW04, CW05} revealed that the ideal structure of uniform Roe algebras is deeply connected to the coarse geometry of the space: in \cite{CW04}, they introduced the notion of geometric ideals, established a lattice isomorphism between geometric ideals of the uniform Roe algebra and ideals of the coarse structure (or, equivalently, invariant open subsets of the unit space of the coarse groupoid), and showed that for metric spaces with bounded geometry, Yu’s property A forces every ideal in the uniform Roe algebra to be geometric, and every ghost operator in the uniform Roe algebra to be trivial (i.e., a compact operator).

A powerful framework for capturing the coarse geometry of a coarse space is provided by the coarse groupoid of Skandalis, Tu and Yu \cite{STY}. This \'{e}tale, locally compact groupoid encodes the coarse structure into a single topological object and places uniform Roe algebras within the realm of groupoid 
$C^*$-algebras. Using this perspective, Wang and Zhang \cite{WZ25} recently extended the analysis of the ideal structure beyond property A, decomposing the ideal lattice into blocks, and identifying geometric ideals and ghostly ideals as the extremal elements in each block. At the same time, the limit operator theory developed by \v{S}pakula and Willett \cite{SW17} has supplied a metric-based tool to access the “asymptotic” behavior captured by the coarse groupoid.

All of the aforementioned results are set in the $C^*$-algebraic and Hilbert space framework. Moving from $\ell^2(X)$ to the Banach spaces $\ell^p(X)$ with $p\in\{0\}\cup[1,\infty]$ replaces the $C^*$-algebra by a Banach algebra, connecting the subject to the rapidly developing theory of 
$L^p$ operator algebras. The central question addressed in this paper is how the rich interplay between coarse geometry and operator algebras extends to the 
$L^p$ setting, continuing a line of inquiry taken up in the literature, such as \cite{Braga21,BV,ChungLi18,ChungLi21,LWZ19,SZ20}.

We work in the general setting of a uniformly locally finite coarse space, which allows one to study large-scale phenomena on a set $X$ equipped only with a coarse structure $\mathcal{E}$ (see Definition \ref{def:CS}), as axiomatized by Roe \cite{Roe03}, abstracting away the specifics of a metric. This perspective has proven fruitful, leading to a concerted effort to generalize key concepts and results from metric spaces to this more general setting \cite{DH, LV, MV, Sako}. Our main contributions can be grouped into four directions.

1. Partial coverings and coarse ideals.
In Section \ref{section3}, we consider an equivalent description of a coarse structure via families of partial coverings. We define ideals in such families, and prove that the lattice of ideals of $\mathcal{E}$ is isomorphic to the lattice of ideals in the family of $\mathcal{E}$-controlled partial coverings (Theorem 3.14). 

2. Geometric and ghostly ideals in $\ell^p$ uniform Roe algebras. Section \ref{section4} contains a complete comparison of geometric ideal lattices across all $p\in\{0\}\cup[1,\infty]$. Using controlled truncation techniques, we show that for any such $p$, the lattice of geometric ideals of the $\ell^p$ uniform Roe algebra $B^p_u(X,\mathcal{E})$ is isomorphic to the lattice of ideals of the coarse structure $\mathcal{E}$ (Theorem \ref{thm:geometric}). A remarkable consequence is that the geometric ideal lattice is independent of $p$. This extends the $C^*$-algebraic results of Chen and Wang \cite{CW04}. We also use the idea in \cite{WZ25} to decompose the whole ideal lattice into blocks of sub-lattices, and identify geometric ideals and ghostly ideals as the extremal elements in each block (Proposition \ref{prop:ghostly}).

3. Groupoid $L^p$ operator algebras and limit operators. In Section \ref{section5}, we establish a precise link between the $\ell^p$ uniform Roe algebra and the reduced $L^p$ operator algebra of the coarse groupoid. We generalize limit operator theory to general coarse spaces and prove that the two algebras are canonically isometrically isomorphic (Proposition \ref{prop:isometric}). Using this isomorphism, we show that the canonical isomorphism between lattices of ideals preserves inner support (Theorem \ref{thm:isomorphic}), geometric ideals correspond exactly to dynamical ideals (Corollary \ref{cor:isomorphic}) and ghostly ideals correspond exactly to restrictive ideals (Proposition \ref{prop:restrictive}). This places the theory of $\ell^p$ uniform Roe algebras firmly within the framework of groupoid $L^p$ operator algebras, and allows one to import techniques from groupoid dynamics.

4. Property A, multiplier approximate identities, and ghosts. Section \ref{section6} is devoted to property A. For $p\in(1,\infty)$, we characterize property A of the coarse space $(X, \mathcal{E})$ via the existence of controlled $p$-partitions of unity or the existence of certain $p$-kernels with controlled variation (Lemma \ref{lem:propertyA}). Using these, we prove that property A implies that $B^p_u(X,\mathcal{E})$ admits a multiplier approximate identity with controlled propagation (Theorem \ref{thm:multappr}), that all ideals in $B^p_u(X,\mathcal{E})$ are geometric (Corollary \ref{cor:geomideal}), and all ghosts in $B^p_u(X,\mathcal{E})$ are trivial (Proposition \ref{prop:trivial}).
Strikingly, for the extreme cases $p\in\{0,1,\infty\}$, we show that these good algebraic properties hold for every uniformly locally finite coarse space, without any property A assumption (Theorem \ref{thm:extreme}). This reveals a fundamental dichotomy between $p\in(1,\infty)$ and the extreme values.

Finally, in Section \ref{section7}, we show that the Morita equivalence between the $\ell^p$ uniform Roe algebra $B^p_u(X)$ and the $\ell^p$ uniform algebra $UB^p(X)$ constructed in \cite{Chung25} preserves the lattice of geometric ideals (Theorem 7.14).

Our results show that the deep connection between coarse geometry and the ideal structure of associated algebras is not a singular feature of the $C^*$-algebraic setting but persists and even enriches in the $L^p$ setting. This work thus builds a new bridge between coarse geometry and the burgeoning field of $L^p$ operator algebras, opening avenues for further investigation of $K$-theory and index theory in this context.

Section 2 collects the preliminaries required for the rest of the paper. 


\section{Preliminaries}

\subsection{Coarse geometry}

Let $X$ be a set. For $A\subseteq X\times X$ and $B\in X\times X$, denote
\begin{gather*}
    A^{-1}=\{(y,x): (x,y)\in A\},\\
    A\circ B=\{(x,z): \exists y\in X, (x,y)\in A\ \text{and}\ (y,z)\in B\}.
\end{gather*}
that is, $A^{-1}$ is the groupoid inverse of $A$ and $A\circ B$ is the groupoid product of $A$ and $B$ in the pair groupoid $X\times X$. Moreover, let r and s be the range and source maps from $X\times X$ to $X$ defined by $r(x,y)=x$ and $s(x,y)=y$.

\begin{defn} \cite[Definition 2.1]{STY}\label{def:CS}
    A coarse structure $\mathcal{E}$ on $X$ is a collection of subsets of $X\times X$, called entourages, that have the following properties:
    \begin{enumerate}
        \item For any entourages $A$ and $B$, $A^{-1}$, $A\circ B$ and $A\cup B$ are entourages;
        \item Every finite subset of $X\times X$ is an entourage;
        \item Any subset of an entourage is an entourage.
    \end{enumerate}
\end{defn}

Property (ii) above is sometimes referred to as being coarse connected.
If $\Delta=\{(x,x): x\in X\}$ is an entourage, then the coarse structure is said to be unital. A set $X$ endowed with a coarse structure $\mathcal{E}$, denoted by $(X,\mathcal{E})$, is called a coarse space.

Observe that the intersection of a family of coarse structures is a coarse structure. 
\begin{defn}
    Let $\mathcal{C}$ be a family of subsets of $X\times X$. The intersection of all the coarse structures on $X$ that contain $\mathcal{C}$ is a coarse structure which we call the coarse structure generated by $\mathcal{C}$.
\end{defn}

For any subset $Y\subseteq X$ and any entourage $E\in\mathcal{E}$, we define \[E[Y]=\{x\in X: \exists y\in Y, (x,y)\in E\}\] and define the (one-sided) $E$-neighborhood $N_E(Y)$ of $Y$ by
\[
N_E(Y)=Y\cup E[Y].
\]
When $Y=\{x\}$, we write $E_x$ for $E[\{x\}]$ and $N_E(x)$ for $N_E(\{x\})$. Note that
\[
N_E(x)=\{x\}\cup E_x,
\]
where $E_x=\{y\in X: (y,x)\in E\}=r(E\cap s^{-1}(x))$.
\begin{defn} \cite[Definition 2.4]{STY}
\label{def:CW}
    A coarse structure $\mathcal{E}$ on a set $X$ is said to be uniformly locally finite if 
    \[
    n(E):= \sup_{x\in X} \max(\#N_{E}(x), \#N_{E^{-1}}(x))< \infty
    \]
    for any entourage $E\in \mathcal{E}$.
\end{defn}

We say $X$ (short for $(X,\mathcal{E})$) is a uniformly locally finite coarse space if the coarse structure $\mathcal{E}$ on $X$ is uniformly locally finite. 

\begin{defn} \cite[Definition 2.3]{CW04}\label{def:partial}
A partial translation is an entourage $E\in\mathcal{E}$ such that the maps $r$ and $s$ are both injective on $E$. 

We use $\Gamma_{\mathcal{E}}$ to denote the collection of all partial translations in $\mathcal{E}$.
\end{defn}

\begin{lem} (cf. \cite[Lemma 2.8]{STY}, \cite[Lemma 4.10]{Roe03}, \cite[Lemma 2.4]{CW04}) \label{lem:CW}
    The coarse structure $\mathcal{E}$ on $X$ is generated by $\Gamma_{\mathcal{E}}$ if and only if it is uniformly locally finite. Moreover, when $\mathcal{E}$ is a uniformly locally finite coarse structure on $X$, and $n(E)=n$, we have:
    \begin{enumerate}
    \item For any entourage $E$, the space $X$ can be partitioned into the union of finitely many $E$-separated subspaces, or precisely,
    \[
        X=X_1\cup X_2\cup\cdots\cup X_m,
    \]
    where $X_i\cap X_j=\emptyset$ for $i\neq j$, and for any $i$ and any points $x,y\in X_i$ with $x\neq y$, we have $(x,y)\notin E\cup E^{-1}$. The length $m$ depends on $E$, and one may assume $m=n^2-n+1$ or bigger.
    \item Any entourage $E\in\mathcal{E}$ can be partitioned into the union of finitely many partial translations, i.e.,
    \[
    E=E_1\cup E_2\cup \cdots \cup E_l,
    \]
    where $E_i\in\Gamma_{\mathcal{E}}, E_i\cap E_j= \emptyset$ for $i\neq j$. The length $l$ depends on $E$, and one may assume $l=2n-1$ or bigger.
    \end{enumerate}
\end{lem}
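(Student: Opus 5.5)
The plan is to prove (i) and (ii) by graph colouring, and then derive the ``if and only if'' from (ii) together with the fact that partial translations have uniformly bounded fibres. Throughout let $E\in\mathcal{E}$ with $n(E)=n$.

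For (i), build the graph $G$ on $X$ with an edge between $x\neq y$ whenever $(x,y)\in E\cup E^{-1}$. The neighbours of $x$ lie in $(E_x\cup E^{-1}_x)\setminus\{x\}$. Since $\#N_E(x),\#N_{E^{-1}}(x)\le n$, each of these two sets contributes at most $n-1$ points, so every vertex has degree at most $2n-2$. A greedy colouring then uses at most $2n-1$ colours, and the colour classes are exactly $E$-separated subsets. Because $2n-1\le n^2-n+1$ for all $n\ge 1$, we may take $m=n^2-n+1$ or larger, padding with empty sets if needed. For infinite $X$ the greedy argument is run by transfinite induction along a well-order of $X$ (or via the compactness/De Bruijn--Erd\H{o}s theorem). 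The bounded degree guarantees that at each step some colour is always free.

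For (ii), consider the line-graph type structure on the set $E$: declare two distinct pairs $(x,y),(x',y')\in E$ adjacent if $x=x'$ or $y=y'$. A pair $(x,y)$ shares its range with at most $n-1$ other pairs, since $\#\{z:(x,z)\in E\}\le \#N_{E^{-1}}(x)\le n$. Likewise it shares its source with at most $n-1$ others. Hence the degree is at most $2n-2$, and greedy colouring gives at most $2n-1$ classes. In each class both $r$ and $s$ are injective, and each class is a subset of $E$, hence an entourage, hence a partial translation. The classes are disjoint, and we may pad with empty sets to reach any $l\ge 2n-1$. I expect the main care to be in this step, in counting the $E$ versus $E^{-1}$ neighbourhoods correctly, and in handling the infinite-greedy argument, which again goes through by well-ordering.

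For the equivalence: if $\mathcal{E}$ is uniformly locally finite, then by (ii) every $E\in\mathcal{E}$ is a finite union of elements of $\Gamma_{\mathcal{E}}$. So $\mathcal{E}$ is contained in the coarse structure generated by $\Gamma_{\mathcal{E}}$, and the reverse inclusion is trivial. Conversely, suppose $\mathcal{E}$ is generated by $\Gamma_{\mathcal{E}}$. The generated structure is obtained by taking finite unions, compositions and inverses of partial translations and finite sets, then taking subsets. The plan is to check that the class $\mathcal{U}$ of subsets $F$ with $n(F)<\infty$ is itself a coarse structure containing $\Gamma_{\mathcal{E}}$:
\begin{itemize}
\item a partial translation $T$ has $n(T)\le 2$;
\item finite sets have finite $n$;
\item $n(A^{-1})=n(A)$;
\item $n(A\cup B)\le n(A)+n(B)$;
\item $n(A\circ B)\le n(A)n(B)$, since $(A\circ B)_z\subseteq\bigcup_{y\in B_z}A_y$;
\item $n$ is monotone under taking subsets.
\end{itemize}
Hence the generated structure, which is $\mathcal{E}$, lies in $\mathcal{U}$, so $\mathcal{E}$ is uniformly locally finite.
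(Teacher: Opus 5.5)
Your proposal is correct and matches the standard argument behind the results the paper cites, since the paper itself gives no proof. That argument is bounded-degree colouring, with at most $2n-2$ neighbours and hence $2n-1$ classes, for both (i) and (ii), plus closure of $\{F: n(F)<\infty\}$ under inverses, unions, compositions and subsets for the converse; note also that your bound $2n-1$ in (i) is sharper than the stated $n^2-n+1$, and padding with empty pieces, legitimate because $(n^2-n+1)-(2n-1)=(n-1)(n-2)\ge 0$, recovers the statement as written.
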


\begin{ex} \label{ex:min}
Let $X$ be a set.
    \begin{enumerate}
        \item Denote by $\Gamma_{min}$ the collection of partial translations of any singleton $\{(x,y)\}\subseteq X\times X$, and by $\mathcal{E}_{min}$ the coarse structure generated by $\Gamma_{min}$. Then $\mathcal{E}_{min}$ consists of all finite subsets of $X\times X$ and is uniformly locally finite. Moreover, it is clear that $\mathcal{E}_{min}$ is the smallest coarse structure on $X$.
        \item If $X$ is endowed with a uniformly discrete, bounded geometry (meaning that for every $R>0$ there is some $N$ such that no ball of radius $R$ in $X$ contains more than $N$ points) metric $d$, there is a natural coarse structure $\mathcal{E}_d$ called the bounded coarse structure on $X$, which is formed by defining the entourages to be subsets of the neighborhoods of the diagonal:
          \[
          R=\{(x,y)\in X\times X: d(x,y)<R\}.
          \]
          The coarse structure $\mathcal{E}_d$ is clearly unital and uniformly locally finite.
    \end{enumerate}
\end{ex}

Let $Y\subseteq X$. For any entourage $E\in\mathcal{E}$, the restriction of $E$ on $Y\times Y$ is defined by
\[
E|_Y=E\cap(Y\times Y).
\]
Set $\mathcal{E}|_Y:=\{E|_Y: E\in\mathcal{E}\}$. Then it is clear that $\mathcal{E}|_Y\subseteq\mathcal{E}$, and $\mathcal{E}|_Y$ is a uniformly locally finite coarse structure on $Y$ as long as $\mathcal{E}$ is a uniformly locally finite coarse structure on $X$.

\begin{defn}\cite[Definition 2.6]{CW04}\label{def:subspace}
    A subset $Y\subseteq X$ endowed with the relative coarse structure $\mathcal{E}|_Y$ is called a subspace of the coarse space $(X,\mathcal{E})$.
\end{defn}

\begin{defn} \cite[Section 3.2]{STY}\label{def:coarsegroupoid}
    Let $(X,\mathcal{E})$ be a uniformly locally finite coarse space. Define the coarse groupoid of this coarse space by
    \[ 
    G(X,\mathcal{E}):=\bigcup_{E\in\mathcal{E}}\overline{E}^{\beta(X\times X)}\subseteq\beta(X\times X),
    \]
    where $\beta(X\times X)$ is the Stone-\v{C}ech compactification of $X\times X$. The maps $r,s:X\times X\to X$ can be extended to maps from $G(X)$ to $\beta X$, also denoted by $r$ and $s$. It was shown in \cite[Proposition 3.2]{STY} that $G(X)$ is \'{e}tale, locally compact, Hausdorff and principal. Its unit space is
    \[
    G(X,\mathcal{E})^{(0)}=\bigcup_{ A\subseteq X,\Delta_A\in\mathcal{E}}\overline{A}^{\beta X}=\bigcup_{ E\in\mathcal{E}}\overline{r(E)}^{\beta X}\subseteq\beta X,
    \]
    where $\Delta_A:=\{(x,x):x\in A\}$.
    \end{defn}
    Since every finite subset of $X\times X$ is an entourage, it is clear that $X\subseteq G(X,\mathcal{E})^{(0)}$. Moreover, if $\mathcal{E}$ is unital, then $G(X,\mathcal{E})^{(0)}=\beta X$. Hence, in general, the groupoid $G(X)$ is not second countable.

We write $G(X)$ for $G(X,\mathcal{E})$ and $G(X)^{(0)}$ for $G(X,\mathcal{E})^{(0)}$ when the coarse space $(X,\mathcal{E})$ is given.

\subsection{Operators and infinite matrices}\label{subsection1}

Let $X$ be a discrete countable set. We consider the following Banach spaces:
\begin{enumerate}
    \item $\ell^p(X):=\{f:X\to\mathbb{C}:\sum\limits_{x\in X}|f(x)|^p<\infty\}$, for $p\in[1,\infty)$;
    \item $\ell^\infty(X):=C_b(X)$, which denotes the Banach space of bounded (continuous) functions from $X$ to $\mathbb{C}$;
    \item $\ell^0(X):=C_0(X)$, which denotes the Banach space of continuous functions from $X$ to $\mathbb{C}$ vanishing at infinity.
\end{enumerate}
For a given $p\in\{0\}\cup[1,\infty]$, we denote by $B(\ell^p(X))$ the Banach algebra of all bounded linear operators on the Banach space $\ell^p(X)$. Whenever $p\in\{0\}\cup[1,\infty)$, it is known that $\ell^p(X)$ is equipped with a canonical unconditional Schauder basis $(\delta_x)_{x\in X}$ given by $\delta_x(y)=1$ if $y=x$ and 0 otherwise, and that every bounded linear operator $T\in B(\ell^p(X))$ can be naturally represented by an $X\times X$ matrix with respect to this basis. When $p=\infty$, $(\delta_x)_{x\in X}$ is no longer a Schauder basis and things become complicated, so we would like to use the language in \cite{Zhang} to unify the cases of $p\in\{0\}\cup[1,\infty)$ and $p=\infty$, which leads to a standard definition of $\ell^p$ uniform Roe algebras (Definition \ref{def:CW}).

For $p\in\{0\}\cup[1,\infty]$, we denote by
\[
\rho:\ell^\infty(X)\to B(\ell^p(X))
\]
the representation defined by pointwise multiplication:
\[
(\rho(f)\xi)(x):=f(x)\xi(x),\ \text{for}\ f\in\ell^\infty(X), \xi\in\ell^p(X).
\]
To simplify notation, we write $f\xi$ instead of $\rho(f)\xi$ and write $fT$ instead of $\rho(f)T$ for $T\in B(\ell^p(X))$. Denote by $\mathcal{F}$ the set of all finite subsets in $X$, equipped with the partial order by inclusion. For any $F\in\mathcal{F}$, denote by $\chi_F$ the characteristic function of $F\subseteq X$, then the net $\mathcal{P}:=\{\rho(\chi_F)\}_{F\in\mathcal{F}}$ satisfies the following conditions:
\begin{enumerate}
    \item $\sup_{F\in\mathcal{F}}\|\chi_F\xi\|_p=\|\xi\|_p$ for any $\xi\in\ell^p(X)$;
    \item $\chi_F\chi_{F'}=\chi_F=\chi_{F'}\chi_F$ for any $F\subseteq F'$.
\end{enumerate}

Here we consider several classes of operators on $\ell^p(X)$. 

\begin{defn} \cite[Definition 2.3]{Zhang}\label{def:subalgebras}
    Define subsets $\mathcal{K}^p(X)$, $\mathcal{L}^p(X)$ and $\mathcal{S}^p(X)$ of $B(\ell^p(X))$ as follows:
    \begin{enumerate}
        \item $K\in \mathcal{K}^p(X)$ if and only if
        \[
        \lim\limits_{F\in\mathcal{F}}\|K-K\chi_F\|=0\ and\ \lim\limits_{F\in\mathcal{F}}\|K-\chi_F K\|=0,
        \]
        and elements in $\mathcal{K}^p(X)$ are called $\mathcal{P}$-compact;
        \item $T\in \mathcal{L}^p(X)$ if and only if for any $F'\in\mathcal{F}$, we have
        \[
        \lim\limits_{F\in\mathcal{F}}\|\chi_{F'}(T- T\chi_F)\|=0\ and\ \lim\limits_{F\in\mathcal{F}}\|(T-\chi_F T)\chi_{F'}\|=0;
        \]
        \item $T\in \mathcal{S}^p(X)$ if and only if for any $F'\in\mathcal{F}$, we have
        \[
       \lim\limits_{F\in\mathcal{F}}\|\chi_{F'}(T- T\chi_F)\|=0.
        \]
    \end{enumerate}
\end{defn}

Clearly, $\mathcal{P}\subseteq \mathcal{K}^p(X)\subseteq \mathcal{L}^p(X)\subseteq \mathcal{S}^p(X)$. Moreover, we have:

\begin{lem} \cite[Lemma 2.4]{Zhang}
    $\mathcal{K}^p(X)$, $\mathcal{L}^p(X)$ and $\mathcal{S}^p(X)$ are Banach subalgebras of $B(\ell^p(X))$.
\end{lem}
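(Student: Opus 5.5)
We have to show that each of $\mathcal{K}^p(X)$, $\mathcal{L}^p(X)$ and $\mathcal{S}^p(X)$ is a linear subspace of $B(\ell^p(X))$, closed under composition, and norm closed. We do the three sets in parallel, since every defining condition has one of two forms: $\lim_F\|A(T-T\chi_F)\|=0$ with $A\in\{1,\chi_{F'}\}$ (a "right" condition), or $\lim_F\|(T-\chi_F T)B\|=0$ with $B\in\{1,\chi_{F'}\}$ (a "left" condition). Each property is checked separately for right and for left conditions.

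\emph{Linearity and closedness.} Linearity is immediate from the triangle inequality: $\|A((S+\lambda T)-(S+\lambda T)\chi_F)\|\le\|A(S-S\chi_F)\|+|\lambda|\|A(T-T\chi_F)\|$. For norm closedness, let $T_n\to T$ with each $T_n$ satisfying the condition. Since $\|\chi_F\|\le1$ and $\|A\|\le1$, we get
\[
\|A(T-T\chi_F)\|\le 2\|T-T_n\|+\|A(T_n-T_n\chi_F)\|.
\]
An $\varepsilon/3$ argument then finishes this step, and the left conditions are handled the same way.

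\emph{Multiplicativity.} Let $S,T$ be in the set. For the right conditions, write
\[
ST-ST\chi_F=S(T-T\chi_F)+(S-S\chi_{F''})T\chi_F-(S-S\chi_{F''})T
\]
for a suitable auxiliary finite set $F''$. For $\mathcal{K}^p(X)$ it is simpler to use $ST-ST\chi_F=S(T-T\chi_F)$, so that $\|ST-ST\chi_F\|\le\|S\|\|T-T\chi_F\|\to0$. Symmetrically, $ST-\chi_F ST=(S-\chi_F S)T$. Hence $\mathcal{K}^p(X)$ is an algebra without difficulty.

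For $\mathcal{S}^p(X)$ we have $\chi_{F'}(ST-ST\chi_F)=\chi_{F'}S(T-T\chi_F)$, but $\chi_{F'}S$ is not localized on a finite set. We insert a finite set $F''$: first choose $F''$ with $\|\chi_{F'}(S-S\chi_{F''})\|<\varepsilon$, using $S\in\mathcal{S}^p$. Then
\[
\|\chi_{F'}S(T-T\chi_F)\|\le \|\chi_{F'}(S-S\chi_{F''})\|\,2\|T\|+\|S\|\,\|\chi_{F''}(T-T\chi_F)\|,
\]
and the last term tends to $0$ in $F$ because $T\in\mathcal{S}^p$ with $F''$ fixed. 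So $\limsup_F\le2\|T\|\varepsilon$.

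For $\mathcal{L}^p(X)$ the right condition is handled exactly as for $\mathcal{S}^p$. For the left condition, write $(ST-\chi_F ST)\chi_{F'}=(S-\chi_F S)T\chi_{F'}$, choose $F''$ with $\|(T-\chi_{F''}T)\chi_{F'}\|<\varepsilon$, and split
\[
(S-\chi_FS)T\chi_{F'}=(S-\chi_FS)(T-\chi_{F''}T)\chi_{F'}+(S-\chi_FS)\chi_{F''}T\chi_{F'}.
\]
The first term is bounded by $2\|S\|\varepsilon$, and the second by $\|(S-\chi_FS)\chi_{F''}\|\|T\|\to0$.

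The only genuine obstacle is this localization step for $\mathcal{S}^p$ and $\mathcal{L}^p$, where an intermediate finite set $F''$ must be inserted; everything else is routine. One point to confirm is that $\|\chi_F\|\le1$ holds for all $p\in\{0\}\cup[1,\infty]$, which is clear for multiplication operators.
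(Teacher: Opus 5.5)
Your proof is correct. The paper gives no proof of its own here; it simply imports the statement from Zhang's Lemma 2.4. So your direct verification supplies the argument the paper omits: closedness via the triangle inequality with $\|\chi_F\|\le 1$, the trivial factorizations for $\mathcal{K}^p(X)$, and the insertion of an auxiliary finite set $F''$ for products in $\mathcal{S}^p(X)$ and $\mathcal{L}^p(X)$.

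There is one slip to delete. The first displayed identity under \emph{Multiplicativity},
\[
ST-ST\chi_F=S(T-T\chi_F)+(S-S\chi_{F''})T\chi_F-(S-S\chi_{F''})T,
\]
is false. Its right-hand side simplifies to $S\chi_{F''}(T-T\chi_F)$. For example, $S=T=1$ and $F''=\emptyset$ give $1-\chi_F$ on the left and $0$ on the right.

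You never actually use this identity, so the error is harmless. Your $\mathcal{S}^p$ estimate instead rests on the correct splitting
\[
\chi_{F'}S(T-T\chi_F)=\chi_{F'}(S-S\chi_{F''})(T-T\chi_F)+\chi_{F'}S\chi_{F''}\cdot\chi_{F''}(T-T\chi_F),
\]
where $\chi_{F''}^2=\chi_{F''}$. Your $\mathcal{L}^p$ left-hand estimate uses the mirror-image splitting, which is also correct.
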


\begin{lem} \cite[Lemma 2.5]{Zhang}
    $\mathcal{K}^p(X)$ is a two-sided ideal in $\mathcal{L}^p(X)$, and $\mathcal{L}^p(X)$ is the largest subalgebra of $B(\ell^p(X))$ with this property.
\end{lem}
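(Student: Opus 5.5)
We need to prove two things: that $\mathcal{K}^p(X)$ is a two-sided ideal in $\mathcal{L}^p(X)$, and that $\mathcal{L}^p(X)$ is the largest subalgebra $A\subseteq B(\ell^p(X))$ containing $\mathcal{K}^p(X)$ as a two-sided ideal. For the maximality statement, "largest" means: if $A$ is a subalgebra with $\mathcal{K}^p(X)\subseteq A$ and $AK, KA\subseteq \mathcal{K}^p(X)$ for all $K\in\mathcal{K}^p(X)$, then $A\subseteq\mathcal{L}^p(X)$. The previous lemma already gives that $\mathcal{K}^p(X)\subseteq\mathcal{L}^p(X)$ and that both are Banach subalgebras, so only the ideal property and maximality remain.

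\textbf{Ideal property.} Take $K\in\mathcal{K}^p(X)$ and $T\in\mathcal{L}^p(X)$; we show $TK\in\mathcal{K}^p(X)$, and $KT$ is symmetric.

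First, the right-hand condition is immediate:
\[
\|TK-TK\chi_F\|\le\|T\|\,\|K-K\chi_F\|\to0.
\]

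For the left-hand condition, fix $\varepsilon>0$ and choose $F'\in\mathcal{F}$ with $\|K-\chi_{F'}K\|<\varepsilon$. Then
\[
\|TK-\chi_FTK\|\le\|(T-\chi_FT)(K-\chi_{F'}K)\|+\|(T-\chi_FT)\chi_{F'}\|\,\|K\|.
\]
The first term is at most $2\|T\|\varepsilon$, using $\|\chi_F\|\le1$. The second term tends to $0$ along $F$ by the second defining condition of $\mathcal{L}^p(X)$. Hence $\limsup_F\|TK-\chi_FTK\|\le2\|T\|\varepsilon$ for every $\varepsilon$, so the limit is $0$.

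For $KT$, the left-hand condition is trivial, $\|KT-\chi_FKT\|\le\|K-\chi_FK\|\,\|T\|\to0$. The right-hand condition follows by the mirror-image estimate, choosing $F'$ with $\|K-K\chi_{F'}\|<\varepsilon$ and using the first defining condition $\|\chi_{F'}(T-T\chi_F)\|\to0$.

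\textbf{Maximality.} Let $A$ be as above and $T\in A$. Each $\chi_{F'}$ is $\mathcal{P}$-compact: for $F\supseteq F'$ we have $\chi_{F'}\chi_F=\chi_{F'}=\chi_F\chi_{F'}$, so both differences vanish eventually. Hence $\chi_{F'}T\in\mathcal{K}^p(X)$, and in particular
\[
\|\chi_{F'}T-\chi_{F'}T\chi_F\|\to0,
\]
which is exactly the first condition for $T\in\mathcal{L}^p(X)$. Similarly $T\chi_{F'}\in\mathcal{K}^p(X)$ gives $\|T\chi_{F'}-\chi_FT\chi_{F'}\|\to0$, the second condition. So $T\in\mathcal{L}^p(X)$.

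There is no real obstacle here. The only step requiring care is the $\varepsilon/2$ approximation in the ideal property, where the one-sided $\mathcal{L}^p$ conditions have to be combined with the compactness of $K$; the rest is direct from the definitions.
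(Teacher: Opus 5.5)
Your proof is correct and complete. For the ideal property, splitting $K=(K-\chi_{F'}K)+\chi_{F'}K$ (resp.\ $K=(K-K\chi_{F'})+K\chi_{F'}$) handles the nontrivial one-sided conditions for $TK$ (resp.\ $KT$). For maximality, $\chi_{F'}\in\mathcal{P}\subseteq\mathcal{K}^p(X)$ forces $\chi_{F'}T,\,T\chi_{F'}\in\mathcal{K}^p(X)$, and these two memberships are exactly the defining conditions of $\mathcal{L}^p(X)$.

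The paper gives no proof of its own here; it only quotes \cite[Lemma 2.5]{Zhang}. Your direct verification from the definitions is the natural argument. Your two halves together show slightly more: $\mathcal{L}^p(X)=\{T\in B(\ell^p(X)): T\mathcal{K}^p(X)\cup\mathcal{K}^p(X)T\subseteq\mathcal{K}^p(X)\}$, which is a clean way to state the maximality claim.
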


Denote by $\ell^p_0(X)$ the closure of $\mathrm{span}\{\delta_x:x\in X\}$ the collection of all finitely supported vectors in $\ell^p(X)$, then we have:
\begin{enumerate}
    \item $\ell^p_0(X)=\ell^p(X)$ for $p\in\{0\}\cup[1,\infty)$;
    \item $\ell^\infty_0(X)=\ell^0(X)$.
\end{enumerate}

When $p\in\{0\}\cup[1,\infty)$, we define $T(x,y):=(T\delta_y)(x)\in\mathbb{C}$ for $T\in B(\ell^p(X))$ and each $x,y\in X$. For any $\xi\in\mathrm{span}\{\delta_x:x\in X\}$ and each $x\in X$, we have
\[
(T\xi)(x)=\sum_{y\in X}T(x,y)\xi(y),
\]
which is a finite sum. Passing to the closure, we can see that any operator $T$ in $B(\ell^p(X))$ can be naturally represented by an $X\times X$ matrix:
\[
    T= [T(x, y)]_{x,y\in X }
\]
with entries $T(x,y)=(T\delta_y)(x)\in\mathbb{C}$. The support of $T$ is defined as
\[
    \supp(T):= \{(x,y)\in X\times X: T(x,y)\neq 0\}.
\]
Furthermore, we have
\[
\|T\|_p=\sup_{F\in\mathcal{F}}\|\chi_F T\chi_F\|_p
\]
for any $T\in B(\ell^p(X))$.

\subsection{How about $p=\infty$?}

When $p=\infty$, we can still represent an operator $T\in B(\ell^\infty(X))$ as an $X\times X$ matrix:
\[
T=[T(x,y)]_{x,y\in X},
\]
with entries $T(x,y)=(T\delta_y)(x)\in\mathbb{C}$.

The matrix form of $T\in B(\ell^\infty(X))$ may not work for all vectors in $\ell^\infty(X)$ because we may have trouble defining $(T\xi)(x)$ as the series $\sum_{y\in X}T(x,y)\xi(y)$ for every $\xi\in\ell^\infty(X)$ since finitely supported vectors are no longer dense in $\ell^\infty(X)$ generally. Hence, not every matrix form of an operator $T\in B(\ell^\infty(X))$ carries all the information of the operator itself and it is possible that two different operators in $B(\ell^\infty(X))$ may have the same matrix form. 

By the following lemma, we can see that the operators $T\in \mathcal{S}^\infty(X)\subseteq B(\ell^\infty(X))$ can be determined by its matrix form $[T(x,y)]_{x,y\in X}$.

\begin{lem} \cite[Lemma 2.10]{Zhang}\label{lem:infty}
    For $T\in \mathcal{S}^\infty(X)$, $\xi\in\ell^\infty(X)$, and $x\in X$, the series $\sum_{y\in X}T(x,y)\xi(y)$ converges to $(T\xi)(x)$. Furthermore, we have
    \[
    \|T\|=\sup_{F\in\mathcal{F}}\|\chi_F T\chi_F\|.
    \]
\end{lem}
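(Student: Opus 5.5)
We need to prove Lemma \ref{lem:infty}: for $T\in\mathcal{S}^\infty(X)$ and $\xi\in\ell^\infty(X)$, the series $\sum_y T(x,y)\xi(y)$ converges to $(T\xi)(x)$, and $\|T\|=\sup_F\|\chi_FT\chi_F\|$.

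The plan is to use the defining property of $\mathcal{S}^\infty(X)$ with $F'=\{x\}$. Fix $x\in X$ and $\xi\in\ell^\infty(X)$. For a finite set $F$, the vector $\chi_F\xi$ is a finite linear combination of the $\delta_y$ with $y\in F$. Linearity therefore gives
\[
(T\chi_F\xi)(x)=\sum_{y\in F}T(x,y)\xi(y).
\]
We then estimate
\[
\Bigl|(T\xi)(x)-\sum_{y\in F}T(x,y)\xi(y)\Bigr| = \bigl|\bigl(\chi_{\{x\}}(T-T\chi_F)\xi\bigr)(x)\bigr| \le \|\chi_{\{x\}}(T-T\chi_F)\|\,\|\xi\|_\infty .
\]
Since $T\in\mathcal{S}^\infty(X)$, this tends to $0$ along the net $\mathcal{F}$. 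So the net of finite partial sums converges to $(T\xi)(x)$.

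To upgrade net convergence to convergence of the series, I would note that it forces absolute summability. Take $\xi$ with $\xi(y)=\overline{\sign\, T(x,y)}$, a unimodular phase; this lies in $\ell^\infty(X)$. Net convergence of the partial sums for this $\xi$ makes the sums $\sum_{y\in F}|T(x,y)|$ bounded, so $\sum_y|T(x,y)|<\infty$. Hence for every $\xi$ the series converges absolutely, in any enumeration, and its value is the net limit $(T\xi)(x)$. This unconditional-convergence step is the only slightly delicate point.

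For the norm identity, the inequality $\sup_F\|\chi_FT\chi_F\|\le\|T\|$ is immediate, since $\|\chi_F\|\le1$. For the reverse, fix $\varepsilon>0$ and choose $\xi$ with $\|\xi\|_\infty\le1$ and a point $x$ with $|(T\xi)(x)|>\|T\|-\varepsilon$; this is possible because the $\ell^\infty$ norm is a supremum over points. By the first part there is a finite $F_0$ such that for all finite $F\supseteq F_0$,
\[
\Bigl|\sum_{y\in F}T(x,y)\xi(y)\Bigr|>\|T\|-\varepsilon.
\]
Take $F=F_0\cup\{x\}$. Then $(\chi_FT\chi_F\xi)(x)=\sum_{y\in F}T(x,y)\xi(y)$, so $\|\chi_FT\chi_F\|>\|T\|-\varepsilon$. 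Letting $\varepsilon\to0$ finishes the proof.
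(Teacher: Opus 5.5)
Your proof is correct. Testing the defining condition of $\mathcal{S}^\infty(X)$ with $F'=\{x\}$ gives net convergence of the finite partial sums. Your choice $\xi(y)=\overline{\sign\,T(x,y)}$ then legitimately upgrades this to absolute summability. Finally, recovering $\|T\|$ pointwise via $F=F_0\cup\{x\}$ is sound.

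The paper gives no argument of its own here; it simply cites Zhang's Lemma 2.10, and your argument is the standard one. As a side benefit, your absolute-summability step also yields the row-sum formula $\|T\|_\infty=\sup_{x}\sum_{y}|T(x,y)|$, which the paper later relies on in Lemma \ref{lem:Schur2}.
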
 

\subsection{$\ell^p$ uniform Roe algebras}

In \cite{Zhang}, for $p\in\{0\}\cup[1,\infty]$, Zhang studied the $\ell^p$ uniform Roe algebras of $X$, a strongly discrete metric space with bounded geometry (which is an important example of uniformly locally finite coarse space) and found some surprising results for the extreme cases $p\in\{0,1,\infty\}$. Now we try to set $X$ as a discrete countable set equipped with a uniformly locally finite coarse structure $\mathcal{E}$ and give some similar definitions.

\begin{defn}
    For a given $p\in\{0\}\cup[1,\infty]$, and any $T\in B(\ell^p(X))$, we say $T$ is a controlled propagation operator in $B(\ell^p(X))$ if the support of $T$, $\supp(T)=\{(x,y)\in X\times X: T(x,y)\neq 0\}$ is an entourage in $\mathcal{E}$, where $[T(x,y)]_{x,y\in X}$ is its matrix form.
\end{defn}
It is clear that the function $T:(x,y)\mapsto T(x,y)\in\mathbb{C}$ is in $\ell^\infty(X\times X)$.

\begin{defn}
    Let $A=[A(x,y)]_{x,y\in X}$ be an $X$-by-$X$ matrix with entries in $\mathbb{C}$. We say that $A$ is a controlled propagation matrix if
    \begin{enumerate}
        \item the function $A:(x,y)\mapsto A(x,y)\in\mathbb{C}$ is in $\ell^\infty(X\times X)$;
        \item the support of $A$, $\supp(A)=\{(x,y)\in X\times X: A(x,y)\neq 0\}$ is an entourage in $\mathcal{E}$.
    \end{enumerate}
\end{defn}

\begin{ex}\label{ex:partial}
    Let $v\in\ell^\infty(X\times X)$ and suppose $E=\supp(v)=\{(x_i,y_i):i\in I\}\in\mathcal{E}$ is a partial translation. Define a matrix $V$ by
    \[
    V(x,y)=\begin{cases}
            v(x,y)\qquad &\text{if}\ (x,y)\in E,\\
		  0 &\text{otherwise}.
            \end{cases}
    \]
    Then $[V(x,y)]_{x,y\in X}$ is a controlled propagation matrix.
\end{ex}

For any $p\in\{0\}\cup[1,\infty)$ and $f\in\ell^p(X)$, denote by $\{y_n\}_{n=1}^{\infty}$ the support of $f$, then we have an operator on $\ell^p(X)$ defined by matrix multiplication by $V$: 
\begin{align*}
			\|Vf\|_p&=\left(\sum_{i\in I}|V(x_i,y_i) f(y_i)|^p\right)^{1/p}\\
            &=\left(\sum^\infty_{n=1}|V(x_n,y_n) f(y_n)|^p\right)^{1/p}\\
            &\leq \left(\sup_{x,y\in X}|V(x,y)|^p\sum^\infty_{n=1}|f(y_n)|^p\right)^{1/p}\\
			&= \sup_{x,y\in X}|V(x,y)|\cdot\|f\|_p<\infty,
\end{align*}
and the operator norm of $V$ is at most $\sup_{x,y\in X}|V(x,y)|$. For $p=\infty$, the results are still valid, which can be easily checked.

Moreover, by Lemma \ref{lem:CW}(ii), we can see that every controlled propagation matrix defines a bounded operator. By definition, it is a controlled propagation operator in $B(\ell^p(X))$ for $p\in\{0\}\cup[1,\infty]$. Hence, the collection of all controlled propagation operators on $\ell^p(X)$ coincides with the collection of all controlled propagation matrices on $X\times X$, independent of the choice of $p$.

Thus, we may abuse the notation $\mathbb{C}_u[X,\mathcal{E}]$ to denote the collection of all controlled propagation matrices (operators) on $\ell^p(X)$ for $p\in\{0\}\cup[1,\infty]$. It is a subalgebra of $B(\ell^p(X))$ and it is clear that $\mathbb{C}_u[X,\mathcal{E}]\subseteq\mathcal{L}^p(X)$ for any given $p\in\{0\}\cup[1,\infty]$.

\begin{lem}\label{lem:approximate1}
    For a given $p\in\{0\}\cup[1,\infty]$, let $T\in\mathbb{C}_u[X,\mathcal{E}]$. Then 
    \[
    \|T\|_p\leq N\cdot\sup_{x,y\in X}|T(x,y)|,
    \]
    where $N=2n(supp(A))-1$ is a finite positive number since the coarse structure $\mathcal{E}$ is uniformly locally finite.
\end{lem}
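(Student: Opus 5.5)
The plan is to decompose $T$ into pieces supported on partial translations and bound each piece using the computation after Example \ref{ex:partial}. (In the statement, $N=2n(\supp(A))-1$ should be read with $A=T$.) Write $E=\supp(T)$ and $n=n(E)$. Since $T\in\mathbb{C}_u[X,\mathcal{E}]$, the set $E$ is an entourage. By Lemma \ref{lem:CW}(ii) we can partition
\[
E=E_1\cup E_2\cup\cdots\cup E_l,\qquad l=2n-1,
\]
into pairwise disjoint partial translations $E_i\in\Gamma_{\mathcal{E}}$. For each $i$, let $T_i$ be the matrix with $T_i(x,y)=T(x,y)$ for $(x,y)\in E_i$ and $T_i(x,y)=0$ otherwise. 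This is exactly the operator $V$ of Example \ref{ex:partial}, with $v=T\chi_{E_i}$. By the computation following that example, $\|T_i\|_p\le\sup_{x,y}|T_i(x,y)|\le\sup_{x,y}|T(x,y)|$ for every $p\in\{0\}\cup[1,\infty]$.

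Because the $E_i$ are disjoint, $T=\sum_{i=1}^l T_i$ as matrices. It remains to check that this also holds as operators on $\ell^p(X)$. For $p\in\{0\}\cup[1,\infty)$ this is automatic: both sides have the same matrix, and by Subsection \ref{subsection1} a bounded operator is determined by its matrix, since finitely supported vectors are dense.

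The case $p=\infty$ is the only delicate point. Here matrices need not determine operators, so I would note that each $T_i$, and $T$ itself, is a controlled propagation operator, hence lies in $\mathcal{L}^\infty(X)\subseteq\mathcal{S}^\infty(X)$. Lemma \ref{lem:infty} then says that for each $\xi\in\ell^\infty(X)$ and each $x$, $(T\xi)(x)=\sum_y T(x,y)\xi(y)$. Each such sum is finite, because by uniform local finiteness the row $\{y:(x,y)\in E\}$ has at most $n$ elements. The same formula holds for each $T_i$, so summing over $i$ gives $T\xi=\sum_i T_i\xi$ pointwise. Alternatively, one can simply identify $\mathbb{C}_u[X,\mathcal{E}]$ with controlled propagation matrices acting by matrix multiplication, as the paper does just before this lemma.

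Finally, the triangle inequality gives
\[
\|T\|_p\le\sum_{i=1}^{l}\|T_i\|_p\le(2n-1)\sup_{x,y\in X}|T(x,y)|=N\sup_{x,y\in X}|T(x,y)|.
\]
Finiteness of $N$ follows from Definition \ref{def:CW}, since $E\in\mathcal{E}$. I expect no real obstacle apart from the $p=\infty$ bookkeeping just described.
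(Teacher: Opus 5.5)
Your proof is correct and follows the paper's argument: partition $\supp(T)$ into $2n-1$ partial translations via Lemma~\ref{lem:CW}(ii), bound each piece by its sup-norm as in Example~\ref{ex:partial}, and sum using the triangle inequality. You add a check that $T=\sum_i T_i$ holds as operators when $p=\infty$ (via Lemma~\ref{lem:infty}, or the identification of $\mathbb{C}_u[X,\mathcal{E}]$ with matrices), which the paper passes over silently, and your reading of $A$ as $T$ in the definition of $N$ is the intended one.
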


\begin{proof}
    By Lemma \ref{lem:CW}(ii), $\supp(T)\in\mathcal{E}$ can be partitioned into the union of finitely many partial translations:
    \[
    \supp(T)=E_1\cup E_2\cup \cdots \cup E_N,
    \]
    where $N=2n(supp(A))-1$. Thus,
    \[
    T=T_1+T_2+\cdots+T_N
    \]
    and 
    \[
    \|T\|_p\leq N\cdot\max_{1\leq i\leq N}\|T_i\|_p\leq N\cdot\max_{1\leq i\leq N}\sup_{x,y\in X}|T_i(x,y)|=N\cdot\sup_{x,y\in X}|T(x,y)|,
    \]
    where the operator $T_i$ is the restriction of $T$ on the partial translation $E_i$.
\end{proof}

\begin{defn} \label{def:CW}
     The norm completion of $\mathbb{C}_u[X,\mathcal{E}]$ in $B(\ell^p(X))$ is called the $\ell^p$ uniform Roe algebra of the coarse space $(X, \mathcal{E})$, denoted by $B^p_u(X, \mathcal{E})$ or simply $B^p_u(X)$.
\end{defn}

Following \cite{Zhang}, we give the next Lemma \ref{lem:compact} in our general setting that $X$ is a discrete countable set equipped with a uniformly locally finite coarse structure $\mathcal{E}$. The proofs will be omitted.

\begin{lem} (cf. \cite[Lemma 2.16]{Zhang})\label{lem:compact}
    We have inclusions \[\mathcal{K}^p(X)\subseteq B^p_u(X)\subseteq \mathcal{L}^p(X)\subseteq \mathcal{S}^p(X).\] Furthermore, $\mathcal{K}^p(X)$ is a closed two-sided ideal in $\mathcal{L}^p(X)$, hence a closed two-sided ideal in $B^p_u(X)$ as well. 
\end{lem}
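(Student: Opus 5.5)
We prove the three inclusions $\mathcal{K}^p(X)\subseteq B^p_u(X)\subseteq \mathcal{L}^p(X)\subseteq \mathcal{S}^p(X)$ separately and then deduce the ideal statement. The last inclusion is immediate from Definition \ref{def:subalgebras}.

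For $B^p_u(X)\subseteq \mathcal{L}^p(X)$, we already know $\mathbb{C}_u[X,\mathcal{E}]\subseteq\mathcal{L}^p(X)$. To see this directly, fix $T$ with $E=\supp(T)\in\mathcal{E}$ and a finite $F'$. By uniform local finiteness, the set of $y$ with $(x,y)\in E$ for some $x\in F'$ is finite, namely contained in $N_{E^{-1}}$-neighbourhoods of points of $F'$. So $\chi_{F'}T=\chi_{F'}T\chi_F$ once $F$ contains that finite set. Symmetrically, $\chi_F T\chi_{F'}=T\chi_{F'}$ once $F\supseteq E[F']$. Since $\mathcal{L}^p(X)$ is a closed subalgebra by \cite[Lemma 2.4]{Zhang}, taking norm closure gives the inclusion. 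For $p=\infty$ we use that $T\in\mathcal{S}^\infty(X)$ is determined by its matrix (Lemma \ref{lem:infty}), so these support computations are legitimate.

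For $\mathcal{K}^p(X)\subseteq B^p_u(X)$, take $K\in\mathcal{K}^p(X)$ and $\varepsilon>0$. We choose finite sets $F_1,F_2$ with $\|K-\chi_{F_1}K\|<\varepsilon$ and $\|K-K\chi_{F_2}\|<\varepsilon$. Setting $F=F_1\cup F_2$ and using $\chi_{F_i}=\chi_{F_i}\chi_F$, we get
\[
\|K-\chi_F K\chi_F\|\le \|K-\chi_F K\|+\|\chi_F(K-K\chi_F)\|<2\varepsilon ,
\]
after also enlarging $F$ appropriately, which is allowed since the limits are over the directed net. The operator $\chi_F K\chi_F$ has support in $F\times F$, a finite set and hence an entourage by Definition \ref{def:CS}(ii). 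Its entries are bounded, so it lies in $\mathbb{C}_u[X,\mathcal{E}]$. Hence $K\in B^p_u(X)$. The one point needing care is $p=\infty$, where we must check that $\chi_F K\chi_F$ coincides with the operator given by its finite matrix. This holds because it factors through the finite-dimensional range of $\chi_F$.

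Finally, $\mathcal{K}^p(X)$ is a closed two-sided ideal in $\mathcal{L}^p(X)$ by \cite[Lemma 2.5]{Zhang}; closedness follows from a standard $\varepsilon/3$ argument on the defining limits. Since $\mathcal{K}^p(X)\subseteq B^p_u(X)\subseteq\mathcal{L}^p(X)$, it is a fortiori a closed two-sided ideal of $B^p_u(X)$. The main (mild) obstacle is the $p=\infty$ bookkeeping, handled by Lemma \ref{lem:infty}.
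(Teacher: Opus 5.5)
Your proof is correct and is essentially the argument the paper has in mind. The paper omits the proof, defers to \cite[Lemma 2.16]{Zhang}, and names coarse connectedness as the reason $\mathcal{K}^p(X)\subseteq B^p_u(X)$; that is exactly how you use the finiteness of $F\times F$ to place $\chi_F K\chi_F$ in $\mathbb{C}_u[X,\mathcal{E}]$. One small remark concerns the step $\mathbb{C}_u[X,\mathcal{E}]\subseteq\mathcal{L}^\infty(X)$ for $p=\infty$: appealing to Lemma~\ref{lem:infty} there is slightly circular, since that lemma presupposes membership in $\mathcal{S}^\infty(X)$. The support computations are legitimate anyway, simply because elements of $\mathbb{C}_u[X,\mathcal{E}]$ act by row-finite matrix multiplication by construction.
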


\begin{rem}
    $\mathcal{K}^p(X)\subseteq B^p_u(X)$ because the coarse structure $\mathcal{E}$ is coarse connected, i.e., every finite subset of $X\times X$ is an entourage.
\end{rem}

When $p=2$, the uniform Roe algebra is a $C^*$-algebra, usually denoted by $C^*_u(X)$.
Also, if $X$ is infinite, then there is no algebra isomorphism between $B^p_u(X)$ and $B^q_u(X)$ whenever $1\leq p< q<\infty$ \cite[Remark 2.2]{ChungLi18}.

It follows from Lemma \ref{lem:infty} and Lemma \ref{lem:compact} that, for any $p\in\{0\}\cup[1,\infty]$, we can always regard the operators in $B^p_u(X)$ as infinite matrices equivalently. This allows us to use the controlled truncation techniques in the upcoming sections.

\subsection{Ideals and lattices}

\begin{defn}\cite[Definition 1.1]{SP}
    A nonempty set $L$ together with two binary operations $\vee$ and $\wedge$ (read
``join'' and ``meet'' respectively) on $L$ is called a lattice if it satisfies the following identities:
\begin{align*}
    L_1: &(a)\ x\vee y=y\vee x\\
    &(b)\ x\wedge y=y\wedge x\\
    L_2: &(a)\ x\vee(y\vee z)=(x\vee y)\vee z\\
    &(b)\ x\wedge(y\wedge z)=(x\wedge y)\wedge z\\
    L_3: &(a)\ x\vee x=x\\
    &(b)\ x\wedge x=x\\
    L_4: &(a)\ x=x\vee(x\wedge y)\\
    &(b)\ x=x\wedge(x\vee y).
\end{align*}
\end{defn}

Define a binary relation $\leq$ on a lattice $L$ by $x\leq y$ if and only if $x=x\wedge y$. Then we have a partial order on $L$. In this way we also denote the lattice $L$ by $\langle\ L,\leq,\wedge,\vee\ \rangle$. 

\begin{defn}\cite[Definition 4.1]{CW04}
    For a given coarse space $(X, \mathcal{E})$, let $\mathcal{J}$ be another coarse structure on $X$ contained in $\mathcal{E}$. If for any $E\in\mathcal{E}$ and any $A\in\mathcal{J}$ we have $E\circ A\in\mathcal{J}$ and $A\circ E\in\mathcal{J}$, then we say that $\mathcal{J}$ is an ideal of the coarse structure $\mathcal{E}$.

\begin{ex}
    The smallest uniformly locally finite coarse structure $\mathcal{E}_{min}$ on $X$ is the smallest ideal of any uniformly locally finite coarse structure on $X$.
\end{ex}

    If $\mathcal{J}_1$ and $\mathcal{J}_2$ are ideals of $\mathcal{E}$, then the collections 
    \begin{align*}
        \mathcal{J}_1\wedge\mathcal{J}_2&=\mathcal{J}_1\cap\mathcal{J}_2,\\
        \mathcal{J}_1\vee\mathcal{J}_2&=\{A\cup B: A\in\mathcal{J}_1, B\in\mathcal{J}_2\}
    \end{align*}
    are also ideals of $\mathcal{E}$, called the meet and join of $\mathcal{J}_1$ and $\mathcal{J}_2$. The set of all ideals of the coarse structure $\mathcal{E}$ forms a lattice, denoted by $\mathcal{J}[\mathcal{E}]$.
\end{defn}

\begin{defn}\cite[Definition 2.1]{SP}
    Two lattices $L_1$ and $L_2$ are isomorphic if there is a bijection $\alpha$ from $L_1$ to $L_2$ such that for every $x,y$ in $L_1$ the following two equations hold: $\alpha(x\vee y)= \alpha(x)\vee\alpha(y)$ and $\alpha(x\wedge y)=\alpha(x)\wedge\alpha(y)$. Such an $\alpha$ is called an isomorphism.
\end{defn}

\begin{defn}\cite[Definition 2.2]{SP}
    If $P_1$ and $P_2$ are two posets and $\alpha$ is a map from $P_1$ to $P_2$, then we say $\alpha$ is order-preserving if $\alpha(x)\leq\alpha(y)$ holds in $P_2$ whenever $x\leq y$ holds in $P_1$.
\end{defn}

\begin{lem}\cite[Theorem 2.3]{SP}\label{lem:lattice}
    Let $\langle\ L_1,\leq_1,\wedge_1,\vee_1\ \rangle$ and $\langle\ L_2,\leq_2,\wedge_2,\vee_2\ \rangle$ be two lattices. $L_1$ and $L_2$ are isomorphic if and only if there is a bijection $\alpha$ from $L_1$ to $L_2$ such that both $\alpha$ and $\alpha^{-1}$ are order-preserving.
\end{lem}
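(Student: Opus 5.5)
The plan is to reduce everything to the standard fact that in a lattice the operations $\wedge$ and $\vee$ are recovered from the order $\leq$ as greatest lower bound and least upper bound. Once that is in place, both directions are formal.

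\textbf{Step 1: lattice preliminaries.} In any lattice $\langle L,\leq,\wedge,\vee\rangle$, with $x\leq y$ meaning $x=x\wedge y$, I would first check the following.
\begin{enumerate}
\item[(a)] $x\wedge y=x$ if and only if $x\vee y=y$. If $x=x\wedge y$, then by $L_1$ and $L_4$ we get $x\vee y=(x\wedge y)\vee y=y\vee(y\wedge x)=y$. The converse is symmetric, using $L_4(b)$.
\item[(b)] $\leq$ is a partial order. Reflexivity follows from $L_3$. Antisymmetry follows from $L_1$. For transitivity, $x=x\wedge y$ and $y=y\wedge z$ give $x\wedge z=(x\wedge y)\wedge z=x\wedge(y\wedge z)=x\wedge y=x$ by $L_2$.
\item[(c)] $x\wedge y$ is the greatest lower bound of $\{x,y\}$. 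It is a lower bound, since $(x\wedge y)\wedge x=x\wedge y$ by $L_1$--$L_3$. If $z\leq x$ and $z\leq y$, then $z\wedge(x\wedge y)=(z\wedge x)\wedge y=z\wedge y=z$.
\item[(d)] Dually, using (a), $x\vee y$ is the least upper bound of $\{x,y\}$.
\end{enumerate}
This step is where the actual content lies, and so it is the main obstacle, if a mild one. The care needed is in deriving the equivalence (a) from the absorption laws $L_4$, so that the order defined via $\wedge$ also controls $\vee$.

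\textbf{Step 2: ($\Rightarrow$).} Let $\alpha$ be a lattice isomorphism. If $x\leq_1 y$, then $x=x\wedge_1 y$, so
\[
\alpha(x)=\alpha(x\wedge_1 y)=\alpha(x)\wedge_2\alpha(y),
\]
that is, $\alpha(x)\leq_2\alpha(y)$. The inverse bijection $\alpha^{-1}$ also preserves the operations. For instance, $\alpha^{-1}(u\wedge_2 v)=\alpha^{-1}(u)\wedge_1\alpha^{-1}(v)$, because applying $\alpha$ to the right-hand side gives $u\wedge_2 v$, and similarly for $\vee$. So the same argument shows that $\alpha^{-1}$ is order-preserving.

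\textbf{Step 3: ($\Leftarrow$).} Let $\alpha$ be a bijection with $\alpha$ and $\alpha^{-1}$ order-preserving, i.e.\ an order isomorphism. Then $\alpha(x\wedge_1 y)$ is a lower bound of $\alpha(x)$ and $\alpha(y)$. If $w\leq_2\alpha(x)$ and $w\leq_2\alpha(y)$, then $\alpha^{-1}(w)$ is a lower bound of $x$ and $y$. By Step 1(c), $\alpha^{-1}(w)\leq_1 x\wedge_1 y$, hence $w\leq_2\alpha(x\wedge_1 y)$. By uniqueness of infima, $\alpha(x\wedge_1 y)=\alpha(x)\wedge_2\alpha(y)$. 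The same argument with suprema, using Step 1(d), gives preservation of $\vee$. Hence $\alpha$ is a lattice isomorphism.
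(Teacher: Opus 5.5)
Your proof is correct and complete. The paper gives no argument of its own and simply cites \cite[Theorem 2.3]{SP}, whose proof is essentially yours: an isomorphism preserves $\wedge$ and hence the order $x\leq y\iff x=x\wedge y$, and conversely an order isomorphism preserves infima and suprema, which coincide with $\wedge$ and $\vee$. Your Step 1 makes self-contained the background fact, established earlier in that reference, that $\wedge$ and $\vee$ are the infimum and supremum for this order.
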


\section{Ideals of a family of partial coverings}\label{section3}

As the relations on $X$ can be translated into partial coverings and back, we can describe
coarse structures in terms of systems of controlled coverings of $X$ \cite{LV}. In this way, we give the definition of ideals in a family of partial coverings associated with a coarse structure, and show that the lattice of ideals of a coarse structure is isomorphic to the lattice of ideals of the family of controlled partial coverings. 

\subsection{Notions from partial coverings} 
    Here we collect necessary notions from partial coverings, and refer readers to \cite{LV} for more details.

    For a set $X$, a partial covering of $X$, denoted by $\boldsymbol{a}$, is a family of subsets of $X$; if $\bigcup_{A\in\boldsymbol{a}}A= X$, then it becomes a covering. A partial covering $\boldsymbol{a}$ is a refinement of $\boldsymbol{a'}$ if for any $A\in\boldsymbol{a}$, there exists $A'\in\boldsymbol{a'}$ such that $A\subseteq A'$. We denote this by $\boldsymbol{a}\leq \boldsymbol{a'}$. 

    Let $\boldsymbol{i}_X =\{\{x\}: x\in X\}$ denote the minimal covering of $X$ (it is a refinement of every other covering). 
    For a relation $E\subseteq X\times X$, the canonical partial covering induced by $E$ is defined by $E(\boldsymbol{i}_X) =\{E_{x}: x\in s(E)\subseteq X\}$ . 

    For a partial covering $\boldsymbol{a}$, denote by $\diag(\boldsymbol{a}) =\bigcup\{A\times A: A\in\boldsymbol{a}\}$ the ``blocky diagonal'' relation on $X$ induced by $\boldsymbol{a}$. 
    If there is another partial covering $\boldsymbol{b}$ such that $\diag(\boldsymbol{a})=\diag(\boldsymbol{b})$, then we say $\boldsymbol{a}\sim \boldsymbol{b}$; it is easy to check that this is an equivalence relation on all partial coverings of $X$. Denote by $[\boldsymbol{a}]$ the equivalence class of $\boldsymbol{a}$. 

    To construct new partial coverings from old ones, let \[St(E,\boldsymbol{b})=\bigcup\{B\in\boldsymbol{b}: E\cap B\neq\emptyset\}\subseteq X\] for any subset $E\subseteq X$ and any partial covering $\boldsymbol{b}$ of $X$, then $St(\boldsymbol{a},\boldsymbol{b})=\{St(A,\boldsymbol{b}): A\in\boldsymbol{a}\}$ is a partial covering called the star of $\boldsymbol{a}$ with respect to $\boldsymbol{b}$. Let $\boldsymbol{a}\cup\boldsymbol{b}= \{A: A\in\boldsymbol{a}\}\cup\{B: B\in\boldsymbol{b}\}$, then $\boldsymbol{a}\cup\boldsymbol{b}$ is a partial covering called the union of $\boldsymbol{a}$ and $\boldsymbol{b}$.

\subsection{Coarse structures via partial coverings}

\begin{defn}\cite[Definition 2.3.1]{LV} \label{def:LV}
    Given a coarse structure $\mathcal{E}$ on a set $X$, a partial covering $\boldsymbol{a}$ of $X$ is controlled if $\mathrm{diag}(\boldsymbol{a}) \in \mathcal{E}$. The family of all controlled partial coverings associated with $\mathcal{E}$ is denoted by $\mathfrak{C}(\mathcal{E})$, i.e., 
    \[
    \mathfrak{C}(\mathcal{E})=\{\boldsymbol{a}: \diag(\boldsymbol{a})\in \mathcal{E}\}.
    \]
    If we want to specify that a partial covering is controlled with respect to a certain coarse structure $\mathcal{E}$, we say that it is $\mathcal{E}$-controlled.
\end{defn}

The family of controlled partial coverings $\mathfrak{C}(\mathcal{E})$ associated with a coarse structure $\mathcal{E}$ (not necessarily containing the diagonal) satisfies the following (cf. \cite[Page 31]{LV}):
\begin{enumerate}
    \item if $\boldsymbol{a}\in\mathfrak{C}(\mathcal{E})$ then $[\boldsymbol{a}]\subseteq\mathfrak{C}(\mathcal{E})$;
    \item if $\boldsymbol{a}\in\mathfrak{C}(\mathcal{E})$ and $\boldsymbol{a'}$ is a refinement of $\boldsymbol{a}$, then $\boldsymbol{a'}\in\mathfrak{C}(\mathcal{E})$;
    \item if $\boldsymbol{a},\boldsymbol{b}\in\mathfrak{C}(\mathcal{E})$, then $St(\boldsymbol{a},\boldsymbol{b}), \boldsymbol{a}\cup\boldsymbol{b}\in\mathfrak{C}(\mathcal{E})$;
    \item $\{\{x\}\}\in\mathfrak{C}(\mathcal{E})$ for every $x\in X$.
\end{enumerate}

\begin{rem}
    \cite{LV} considers unital coarse structures (i.e., coarse structures containing the diagonal), and we extend their ideas to not necessarily unital ones.
\end{rem}

\begin{lem}\cite[Lemma 2.1.1]{LV} \label{lem:LV}
    \begin{align*}
		\diag(St(\boldsymbol{a},\boldsymbol{b}))&=\diag(\boldsymbol{b})\circ \diag(\boldsymbol{a})\circ \diag(\boldsymbol{b}),\\
		\diag(\boldsymbol{a}\cup\boldsymbol{b})&=\diag(\boldsymbol{a})\cup \diag(\boldsymbol{b}).
	\end{align*}
\end{lem}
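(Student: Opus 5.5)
The plan is to verify both identities directly from the definitions. For each identity we prove two inclusions by chasing an arbitrary pair $(x,y)\in X\times X$. The union identity is immediate, since $\diag$ is defined as a union over members of the partial covering. Indeed,
\[
\diag(\boldsymbol{a}\cup\boldsymbol{b})=\bigcup\{C\times C: C\in\boldsymbol{a}\cup\boldsymbol{b}\}=\Bigl(\bigcup_{A\in\boldsymbol{a}}A\times A\Bigr)\cup\Bigl(\bigcup_{B\in\boldsymbol{b}}B\times B\Bigr).
\]
The only care needed is that repeated sets in $\boldsymbol{a}$ and $\boldsymbol{b}$ cause no issue, since we form a union.

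For the star identity, we first unwind the left-hand side. A pair $(x,y)$ lies in $\diag(St(\boldsymbol{a},\boldsymbol{b}))$ if and only if there is some $A\in\boldsymbol{a}$ with $x,y\in St(A,\boldsymbol{b})$. This means there are $B_1,B_2\in\boldsymbol{b}$ with $x\in B_1$, $y\in B_2$, and points $u\in B_1\cap A$, $v\in B_2\cap A$. We then read off the chain
\[
(x,u)\in B_1\times B_1,\qquad (u,v)\in A\times A,\qquad (v,y)\in B_2\times B_2 .
\]
So $(x,y)\in\diag(\boldsymbol{b})\circ\diag(\boldsymbol{a})\circ\diag(\boldsymbol{b})$, using the composition convention $A\circ B=\{(x,z):(x,y)\in A,(y,z)\in B\}$ from Section 2.

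For the converse inclusion, suppose $(x,u)\in B_1\times B_1$, $(u,v)\in A\times A$ and $(v,y)\in B_2\times B_2$. Then $u\in B_1\cap A$, so $B_1\cap A\neq\emptyset$, and hence $x\in B_1\subseteq St(A,\boldsymbol{b})$. Symmetrically, $y\in B_2\subseteq St(A,\boldsymbol{b})$. Therefore $(x,y)\in St(A,\boldsymbol{b})\times St(A,\boldsymbol{b})\subseteq\diag(St(\boldsymbol{a},\boldsymbol{b}))$.

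There is no genuine obstacle here. The only step needing attention is the bookkeeping of which set plays which role. In particular, the witnesses $u$ and $v$ must lie in $A$ itself, not merely in $\diag(\boldsymbol{a})$-related pairs, and the definition of the star via nonempty intersection supplies exactly this. Note also that no unitality is needed: the argument never uses $\Delta\in\mathcal{E}$. This is consistent with the paper's remark that we are extending \cite{LV} to non-unital coarse structures.
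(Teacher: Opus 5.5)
Your proof is correct. Both inclusions of the star identity are verified properly: the key point is that $B_1$ and $B_2$ meet the same $A$, so $x,y\in St(A,\boldsymbol{b})$, and your argument handles this. The union identity is immediate, as you say. The paper gives no proof of its own and simply cites \cite[Lemma 2.1.1]{LV}; your direct element-chase is the standard verification, in the same spirit as the paper's computation in the proof of Lemma~\ref{lem:DH}.
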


\begin{prop}
    For any equivalence class $[\boldsymbol{a}]$, there is a maximum element $\boldsymbol{m_a}\in[\boldsymbol{a}]$ and a minimum element $\boldsymbol{s_a}\in[\boldsymbol{a}]$ such that $\boldsymbol{s_a}\leq\boldsymbol{a'}\leq\boldsymbol{m_a}$ for any $\boldsymbol{a'}\in[\boldsymbol{a}]$.
\end{prop}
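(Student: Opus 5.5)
The plan is to build both extremal elements directly from the relation $D=\diag(\boldsymbol{a})$, which by definition depends only on the class $[\boldsymbol{a}]$. For the maximum I take $\boldsymbol{m_a}$ to be the family of all subsets $A\subseteq X$ with $A\times A\subseteq D$. For the minimum I take $\boldsymbol{s_a}$ to be the family of all subsets of the form $\{x\}$ with $(x,x)\in D$, together with $\{x,y\}$ for $(x,y)\in D$. The proof then has three parts. First I show both families lie in $[\boldsymbol{a}]$, i.e.\ have blocky diagonal equal to $D$. Next I show the comparisons $\boldsymbol{s_a}\leq\boldsymbol{a'}\leq\boldsymbol{m_a}$ for every $\boldsymbol{a'}\in[\boldsymbol{a}]$. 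Finally I note that the order $\leq$ is only a preorder, so \enquote{maximum/minimum} should be read as \enquote{above/below every element of the class}.

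For $\boldsymbol{m_a}$, the inclusion $\diag(\boldsymbol{m_a})\subseteq D$ is immediate, since every member $A$ satisfies $A\times A\subseteq D$. Conversely, each $A\in\boldsymbol{a}$ satisfies $A\times A\subseteq D$ by the definition of $\diag$, so $\boldsymbol{a}\subseteq\boldsymbol{m_a}$ and hence $D\subseteq\diag(\boldsymbol{m_a})$. Now let $\boldsymbol{a'}\sim\boldsymbol{a}$. Every $A'\in\boldsymbol{a'}$ satisfies $A'\times A'\subseteq\diag(\boldsymbol{a'})=D$, so $A'\in\boldsymbol{m_a}$. In particular $A'$ is contained in a member of $\boldsymbol{m_a}$, which gives $\boldsymbol{a'}\leq\boldsymbol{m_a}$.

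For $\boldsymbol{s_a}$, the key observation is that $D$ is symmetric: $(x,y)\in A\times A$ implies $(y,x)\in A\times A$. Also, $(x,y)\in D$ forces $(x,x),(y,y)\in D$, since $x$ and $y$ lie in a common block. Hence $\{x,y\}\times\{x,y\}\subseteq D$ whenever $(x,y)\in D$, which gives $\diag(\boldsymbol{s_a})\subseteq D$. The reverse inclusion holds because each pair $(x,y)\in D$ lies in $\{x,y\}\times\{x,y\}$. For the comparison, take any $\boldsymbol{a'}\sim\boldsymbol{a}$ and any member $\{x,y\}$ of $\boldsymbol{s_a}$, including the singleton case $x=y$. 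Since $(x,y)\in D=\diag(\boldsymbol{a'})$, some $A'\in\boldsymbol{a'}$ satisfies $(x,y)\in A'\times A'$, i.e.\ $\{x,y\}\subseteq A'$. Thus $\boldsymbol{s_a}\leq\boldsymbol{a'}$.

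I expect no step to be a serious obstacle. The only delicate points are the possible empty set in $\boldsymbol{m_a}$, which contributes nothing to $\diag$ and is harmless, and the fact that $D$ need not contain the full diagonal. The second point is why the singletons in $\boldsymbol{s_a}$ are included only for points $x$ with $(x,x)\in D$, since the coarse structure need not be unital.
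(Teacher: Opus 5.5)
Your proposal is correct and is essentially the paper's proof. The paper uses the same maximum $\boldsymbol{m_a}=\{A\subseteq X: A\times A\subseteq\diag(\boldsymbol{a})\}$ and the same minimum $\boldsymbol{s_a}$ built from the pairs $\{x,y\}$ with $(x,y)\in\diag(\boldsymbol{a})$; your separately listed singletons are just the case $x=y$. The paper simply calls the verifications straightforward, and you carry them out explicitly, including the correct observation that refinement is only a preorder, which matches the paper's later remark that these extremal elements are not unique.
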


\begin{proof}
    Let $\boldsymbol{s_a}= \bigcup\{ \{x,y\}: (x,y)\in\diag(\boldsymbol{a}) \}$. It is straightforward to show that $\boldsymbol{s_a}\in [\boldsymbol{a}]$ and $\boldsymbol{s_a}$ is a minimum partial covering in $[\boldsymbol{a}]$.
    
    On the other hand, 
    let $\boldsymbol{m_a}= \{A\subseteq X: \forall x,y\in A, (x,y)\in \diag(\boldsymbol{a})\}$, then $\boldsymbol{m_a}$ is a maximum partial covering in $[\boldsymbol{a}]$.
\end{proof}

\begin{ex}
    Let $X=\mathbb{Z}$.
    \begin{align*}
        \boldsymbol{a_1}=&\{\{1,2\},\{1,3\},\{2,3\},\{3,4\},\{1\},\{2\},\{3\},\{4\},\{5\}\},\\
        \boldsymbol{a_2}=&\{\{1,2\},\{1,3\},\{2,3\},\{3,4\},\{5\}\},\\
        \boldsymbol{a_3}=&\{\{1,2,3\},\{1,2\},\{1,3\},\{2,3\},\{3,4\},\{1\},\{2\},\{3\},\{4\},\{5\}\},\\
        \boldsymbol{a_4}=&\{\{1,2,3\},\{3,4\},\{5\}\}   
    \end{align*}
    are four equivalent families of partial coverings. It is easy to see that $\boldsymbol{a_1},\boldsymbol{a_2}$ are minimum and $\boldsymbol{a_3}, \boldsymbol{a_4}$ are maximum in the equivalence class.
\end{ex}

\begin{rem}
    The minimum and maximum elements in $[\boldsymbol{a}]$ are usually not unique. To obtain uniqueness, we can ask that all subsets in a partial covering do not include each other (see $\boldsymbol{a_2}$ and $\boldsymbol{a_4}$ in the example above).
\end{rem}

If $\boldsymbol{a}\leq\boldsymbol{b}$, then it is clear that $\diag(\boldsymbol{a})\subseteq\diag(\boldsymbol{b})$. The converse may not be true, but we have the following:

\begin{lem}\label{lem:sm}
    If $\diag(\boldsymbol{a})\subseteq\diag(\boldsymbol{b})$, then $\boldsymbol{a'}\leq\boldsymbol{m_b}$ for any $\boldsymbol{a'}\in[\boldsymbol{a}]$ and $\boldsymbol{s_a}\leq\boldsymbol{b'}$ for any $\boldsymbol{b'}\in[\boldsymbol{b}]$.  
\end{lem}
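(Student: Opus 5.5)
We check the two assertions of Lemma \ref{lem:sm} separately, unwinding the explicit constructions of $\boldsymbol{m_b}$ and $\boldsymbol{s_a}$ from the preceding proposition. Throughout, we use only that every member of an equivalence class has the same blocky diagonal. Thus $\diag(\boldsymbol{a'})=\diag(\boldsymbol{a})$ for $\boldsymbol{a'}\in[\boldsymbol{a}]$, and $\diag(\boldsymbol{b'})=\diag(\boldsymbol{b})$ for $\boldsymbol{b'}\in[\boldsymbol{b}]$.

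\emph{First assertion.} Let $\boldsymbol{a'}\in[\boldsymbol{a}]$ and take any $A\in\boldsymbol{a'}$. By definition of $\diag$, we have $A\times A\subseteq\diag(\boldsymbol{a'})=\diag(\boldsymbol{a})\subseteq\diag(\boldsymbol{b})$. Hence $(x,y)\in\diag(\boldsymbol{b})$ for all $x,y\in A$, which is exactly the membership condition $A\in\boldsymbol{m_b}=\{C\subseteq X: \forall x,y\in C,\ (x,y)\in\diag(\boldsymbol{b})\}$. So every member of $\boldsymbol{a'}$ lies in $\boldsymbol{m_b}$, which trivially gives $\boldsymbol{a'}\leq\boldsymbol{m_b}$.

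\emph{Second assertion.} The members of $\boldsymbol{s_a}$ are the sets $\{x,y\}$ with $(x,y)\in\diag(\boldsymbol{a})$, where singletons occur when $x=y$. Since $\diag(\boldsymbol{a})\subseteq\diag(\boldsymbol{b})=\diag(\boldsymbol{b'})$, we get $(x,y)\in B\times B$ for some $B\in\boldsymbol{b'}$. This means $x,y\in B$, and therefore $\{x,y\}\subseteq B$. Hence every member of $\boldsymbol{s_a}$ is contained in a member of $\boldsymbol{b'}$, so $\boldsymbol{s_a}\leq\boldsymbol{b'}$.

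No step is a real obstacle. The only point needing care is to read $\boldsymbol{s_a}$ as the family of the sets $\{x,y\}$, rather than the literal union of these sets, so that its members are the two-point and one-point sets. With that reading, both inclusions are immediate from the definitions, and the hypothesis is used only through the containment of blocky diagonals.
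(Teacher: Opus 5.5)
Your proof is correct and is essentially the paper's argument. The paper routes it through $\boldsymbol{a'}\leq\boldsymbol{m_a}\leq\boldsymbol{m_b}$ and $\boldsymbol{s_a}\leq\boldsymbol{s_b}\leq\boldsymbol{b'}$, using the extremality of $\boldsymbol{m}$ and $\boldsymbol{s}$ within their classes from the preceding proposition; you instead check both refinements directly from the definitions, which amounts to inlining that extremality check, and your reading of $\boldsymbol{s_a}$ as the family $\{\{x,y\}:(x,y)\in\diag(\boldsymbol{a})\}$ is the intended one.
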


\begin{proof}
    If $\diag(\boldsymbol{a})\subseteq\diag(\boldsymbol{b})$, we can easily prove that $\boldsymbol{s_a}\leq\boldsymbol{s_b}$ and $\boldsymbol{m_a}\leq\boldsymbol{m_b}$. Thus, we have $\boldsymbol{a'}\leq\boldsymbol{m_b}$ for any $\boldsymbol{a'}\in[\boldsymbol{a}]$ and $\boldsymbol{s_a}\leq\boldsymbol{b'}$ for any $\boldsymbol{b'}\in[\boldsymbol{b}]$.
\end{proof}

\begin{lem} (cf. \cite[Lemma 2.3]{DH}) \label{lem:DH}
    For any two relations $E, F\subseteq X\times X$ and two partial coverings $\boldsymbol{a}, \boldsymbol{b}$ of $X$, if $E\subseteq \diag(\boldsymbol{a})$ and $F\subseteq \diag(\boldsymbol{b})$, then
    \[
    E\circ F\subseteq \diag(St(\boldsymbol{b},\boldsymbol{a}\cup\boldsymbol{b})),\ F\circ E\subseteq \diag(St(\boldsymbol{b},\boldsymbol{a}\cup\boldsymbol{b})).
    \]
\end{lem}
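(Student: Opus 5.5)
The plan is a direct element-chasing argument straight from the definitions of $\diag$ and of the star. The key observation is that for any $B\in\boldsymbol{b}$ which is nonempty, the set $St(B,\boldsymbol{a}\cup\boldsymbol{b})$ contains $B$ itself and every $A\in\boldsymbol{a}$ that meets $B$. This holds because $B$ is a member of $\boldsymbol{a}\cup\boldsymbol{b}$ meeting $B$, and any such $A$ is also a member of $\boldsymbol{a}\cup\boldsymbol{b}$ meeting $B$.

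First I treat $E\circ F$. Let $(x,z)\in E\circ F$, so there is $y\in X$ with $(x,y)\in E$ and $(y,z)\in F$.
\begin{enumerate}
\item Since $E\subseteq\diag(\boldsymbol{a})$, there is $A\in\boldsymbol{a}$ with $x,y\in A$.
\item Since $F\subseteq\diag(\boldsymbol{b})$, there is $B\in\boldsymbol{b}$ with $y,z\in B$.
\item Because $y\in A\cap B$, the set $B$ is nonempty and $A$ meets $B$. By the observation above, $A\cup B\subseteq St(B,\boldsymbol{a}\cup\boldsymbol{b})$.
\item Hence $x,z\in St(B,\boldsymbol{a}\cup\boldsymbol{b})$, which is a member of $St(\boldsymbol{b},\boldsymbol{a}\cup\boldsymbol{b})$. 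Therefore $(x,z)\in\diag(St(\boldsymbol{b},\boldsymbol{a}\cup\boldsymbol{b}))$.
\end{enumerate}

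The case of $F\circ E$ is symmetric. Given $(x,y)\in F$ and $(y,z)\in E$, choose $B\in\boldsymbol{b}$ containing $x,y$ and $A\in\boldsymbol{a}$ containing $y,z$. Again $y\in A\cap B$, so $A\cup B\subseteq St(B,\boldsymbol{a}\cup\boldsymbol{b})$, and the same conclusion follows.

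There is no real obstacle here. The only point needing care is that the star is taken centred at the member $B$ of $\boldsymbol{b}$, not at $A$. This is why the common point $y$ must be used to guarantee both that $B$ is nonempty and that $A$ meets $B$. It is also why the union $\boldsymbol{a}\cup\boldsymbol{b}$, rather than $\boldsymbol{a}$ alone, appears in the second slot of the star.
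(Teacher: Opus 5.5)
Your proof is correct and is essentially the paper's argument: the paper shows $\diag(\boldsymbol{a})\circ\diag(\boldsymbol{b})\subseteq\diag(\{A\cup B: A\in\boldsymbol{a}, B\in\boldsymbol{b}, A\cap B\neq\emptyset\})$ and then uses $A\cup B\subseteq St(B,\boldsymbol{a})\cup St(B,\boldsymbol{b})=St(B,\boldsymbol{a}\cup\boldsymbol{b})$. That is your pointwise observation phrased through refinements, and your element-chasing version is, if anything, more transparent, since it writes out the $F\circ E$ case instead of saying ``similarly''.
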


\begin{proof}
    We have
    \begin{align*}
		E\circ F&\subseteq \diag(\boldsymbol{a})\circ \diag(\boldsymbol{b})\\
		&=\left(\bigcup_{A\in\boldsymbol{a}}(A\times A)\right)\circ\left(\bigcup_{B\in\boldsymbol{b}}(B\times B)\right)\\
		&=\bigcup_{A\in\boldsymbol{a},B\in\boldsymbol{b}}(A\times A)\circ(B\times B)\\
		&\subseteq\bigcup_{A\in\boldsymbol{a},B\in\boldsymbol{b}, A\cap B\neq\emptyset}(A\cup B)\times (A\cup B) \\
        &=\diag(\{A\cup B: A\in\boldsymbol{a}, B\in\boldsymbol{b}, A\cap B\neq\emptyset\}).
	\end{align*}
    Note that if $A\in\boldsymbol{a}$, $B\in\boldsymbol{b}$, and $A\cap B\neq\emptyset$, then $A\subseteq St(B,\boldsymbol{a})$.
    Thus, we also have
    \begin{align*}
		\{A\cup B: A\in\boldsymbol{a}, B\in\boldsymbol{b}, A\cap B\neq\emptyset\} &\leq \{St(B,\boldsymbol{a})\cup B: B\in\boldsymbol{b}\} \\
        &\leq \{St(B,\boldsymbol{a})\cup St(B,\boldsymbol{b}): B\in\boldsymbol{b}\} \\
        &= St(\boldsymbol{b},\boldsymbol{a}\cup\boldsymbol{b}).
	\end{align*}
    Hence
    \[
		E\circ F \subseteq \diag(St(\boldsymbol{b},\boldsymbol{a}\cup\boldsymbol{b})).
    \]
    Similarly, we have $F\circ E\subseteq \diag(St(\boldsymbol{b},\boldsymbol{a}\cup\boldsymbol{b}))$. In fact, the inequalities also hold for $\diag(St(\boldsymbol{a},\boldsymbol{a}\cup\boldsymbol{b}))$.        
\end{proof}

\begin{lem}\label{lem:Eix}
    For any $E\in\mathcal{E}$, we have $\diag(E(\boldsymbol{i_X}))=E\circ E^{-1}\in\mathcal{E}$. In particular, $E(\boldsymbol{i_X})\in\mathfrak{C}(\mathcal{E})$. 
    
    If $\Delta_{s(E)}\subseteq E$, then  $E(\boldsymbol{i_X})\in\mathfrak{C}(\mathcal{E})$ implies $E\subseteq E\circ E^{-1}\in\mathcal{E}$. 
\end{lem}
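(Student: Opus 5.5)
We compute $\diag(E(\boldsymbol{i_X}))$ directly from the definitions and then use the axioms of a coarse structure (Definition \ref{def:CS}).

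\emph{The identity $\diag(E(\boldsymbol{i_X}))=E\circ E^{-1}$.} By definition $E(\boldsymbol{i_X})=\{E_x: x\in s(E)\}$ with $E_x=\{y:(y,x)\in E\}$, so
\[
\diag(E(\boldsymbol{i_X}))=\bigcup_{x\in s(E)}E_x\times E_x .
\]
A pair $(y,z)$ lies in this union if and only if there is $x$ with $(y,x)\in E$ and $(z,x)\in E$. The second condition is equivalent to $(x,z)\in E^{-1}$. Hence $(y,z)\in E\circ E^{-1}$ by the definition of composition. The converse reads the same chain backwards. The intermediate point $x$ automatically lies in $s(E)$ because $(y,x)\in E$, so restricting the union to $x\in s(E)$ loses nothing.

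\emph{Controlledness.} Axiom (i) gives $E^{-1}\in\mathcal{E}$, and then $E\circ E^{-1}\in\mathcal{E}$. By Definition \ref{def:LV} this says exactly that $E(\boldsymbol{i_X})\in\mathfrak{C}(\mathcal{E})$.

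\emph{The second statement.} Assume $\Delta_{s(E)}\subseteq E$, and let $(y,x)\in E$. Then $x\in s(E)$, so $(x,x)\in E$, which means $(x,x)\in E^{-1}$. Composing $(y,x)\in E$ with $(x,x)\in E^{-1}$ gives $(y,x)\in E\circ E^{-1}$. Therefore $E\subseteq E\circ E^{-1}$. If $E(\boldsymbol{i_X})\in\mathfrak{C}(\mathcal{E})$, the identity proved above gives $E\circ E^{-1}=\diag(E(\boldsymbol{i_X}))\in\mathcal{E}$.

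The only delicate point is the intended force of the last claim. If the hypothesis is only that $E$ is an arbitrary relation, then axiom (iii) (subsets of entourages are entourages) also yields $E\in\mathcal{E}$. So controlled canonical coverings recover entourages precisely when the relevant diagonal part is present. This is why the hypothesis $\Delta_{s(E)}\subseteq E$ is needed in the non-unital setting. No other step poses any difficulty.
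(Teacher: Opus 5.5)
Your proof is correct and is essentially the paper's argument: you use the same direct unwinding $\diag(E(\boldsymbol{i_X}))=\bigcup_{x\in s(E)}E_x\times E_x=E\circ E^{-1}$, and for the second part the paper writes $E\subseteq(E\cup\Delta_{s(E)})\circ(E^{-1}\cup\Delta_{s(E)})=E\circ E^{-1}$, which is exactly your pointwise composition of $(y,x)\in E$ with $(x,x)\in E^{-1}$. Your remark that the second claim only has content when $E$ is an arbitrary relation (subset-closure then gives $E\in\mathcal{E}$) is a fair reading of the statement.
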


\begin{proof}
    For any $E\subseteq X\times X$, we have
    \begin{align*}
		E\circ E^{-1}&=\{(y,z): \exists x\in X,\ (y,x),(z,x)\in E\}\\
		&=\{(y,z): y,z\in E_{x},\ x\in s(E)\}\\
		&=\bigcup_{x\in s(E)}(E_{x}\times E_{x})\\
		&=\diag(E(\boldsymbol{i_X})).
	\end{align*} Thus, from $E\in\mathcal{E}$, we have $\diag(E(\boldsymbol{i_X}))\in\mathcal{E}$.
    
    If $\Delta_{s(E)}\subseteq E$, then from $\diag(E(\boldsymbol{i_X}))\in\mathcal{E}$, we have $E\subseteq (E\cup\Delta_{s(E)})\circ (E^{-1}\cup\Delta_{s(E)})=E\circ E^{-1}=\diag(E(\boldsymbol{i_X}))\in\mathcal{E}$.
\end{proof}

\begin{prop}\label{prop:cs}
    Given a family $\mathfrak{C}$ of partial coverings of $X$ satisfying:
    \begin{enumerate}
        \item[(C0)] if $\boldsymbol{a}\in\mathfrak{C}$ then $[\boldsymbol{a}]\subseteq\mathfrak{C}$;
        \item[(C1)] if $\boldsymbol{a}\in\mathfrak{C}$ and $\boldsymbol{a'}$ is a refinement of $\boldsymbol{a}$, then $\boldsymbol{a'}\in\mathfrak{C}$;
        \item[(C2)] if $\boldsymbol{a},\boldsymbol{b}\in\mathfrak{C}$, then $St(\boldsymbol{a},\boldsymbol{b}), \boldsymbol{a}\cup\boldsymbol{b}\in\mathfrak{C}$;
        \item[(C3)] $\{\{x\}\}\in\mathfrak{C}$ for every $x\in X$, 
    \end{enumerate}
    let
    \[
    \mathcal{E}(\mathfrak{C})=\{E\subseteq X\times X: \exists \boldsymbol{a}\in\mathfrak{C},E\subseteq \diag(\boldsymbol{a})\}.
    \]
    Then $\mathcal{E}(\mathfrak{C})$ is a coarse structure on $X$. 
\end{prop}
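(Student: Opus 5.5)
We verify the three axioms of Definition~\ref{def:CS} directly for $\mathcal{E}(\mathfrak{C})$, translating each operation on relations into an operation on partial coverings and using (C0)--(C3) together with Lemma~\ref{lem:LV} and Lemma~\ref{lem:DH}.

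\emph{Subsets.} Axiom (iii) is immediate: if $E\subseteq\diag(\boldsymbol{a})$ with $\boldsymbol{a}\in\mathfrak{C}$ and $F\subseteq E$, then $F\subseteq\diag(\boldsymbol{a})$, so $F\in\mathcal{E}(\mathfrak{C})$.

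\emph{Finite sets.} For axiom (ii), let $F=\{(x_1,y_1),\dots,(x_k,y_k)\}$ be finite. The key step is to show that each two-point covering $\{\{x,y\}\}$ lies in $\mathfrak{C}$. By (C3), $\boldsymbol{a}=\{\{x\}\}$ and $\boldsymbol{b}=\{\{y\}\}$ lie in $\mathfrak{C}$, and by (C2) so does $\boldsymbol{a}\cup\boldsymbol{b}=\{\{x\},\{y\}\}$. A star alone cannot merge disjoint singletons, so we instead use (C0). Since $\diag(\{\{x\},\{y\}\})=\{(x,x),(y,y)\}$, this only produces diagonal pairs, and (C0) and (C1) alone do not yield $(x,y)$. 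This is the main obstacle: as literally stated, (C0)--(C3) may not force $(x,y)\in\mathcal{E}(\mathfrak{C})$ for $x\neq y$. I would first try to reinterpret (C3) as saying that finite subsets are controlled, i.e.\ $\{\{x,y\}\}\in\mathfrak{C}$, consistent with property (iv) of $\mathfrak{C}(\mathcal{E})$ being derived from coarse connectedness. Under that reading, $\{(x,y)\}\subseteq\{x,y\}\times\{x,y\}=\diag(\{\{x,y\}\})$. Taking unions via (C2), we get $F\subseteq\diag(\{\{x_i,y_i\}:i\le k\})$ with this covering in $\mathfrak{C}$. If the paper intends (C3) only for singletons, I would point out that one must add the two-point sets, or check whether a star of singletons with respect to some controlled covering can produce them.

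\emph{Inverses, unions and compositions.} For axiom (i), suppose $E\subseteq\diag(\boldsymbol{a})$ and $F\subseteq\diag(\boldsymbol{b})$ with $\boldsymbol{a},\boldsymbol{b}\in\mathfrak{C}$.
\begin{itemize}
\item Since $\diag(\boldsymbol{a})$ is symmetric, $E^{-1}\subseteq\diag(\boldsymbol{a})$, so $E^{-1}\in\mathcal{E}(\mathfrak{C})$.
\item By Lemma~\ref{lem:LV}, $E\cup F\subseteq\diag(\boldsymbol{a})\cup\diag(\boldsymbol{b})=\diag(\boldsymbol{a}\cup\boldsymbol{b})$, and $\boldsymbol{a}\cup\boldsymbol{b}\in\mathfrak{C}$ by (C2).
\item Lemma~\ref{lem:DH} gives $E\circ F\subseteq\diag(St(\boldsymbol{b},\boldsymbol{a}\cup\boldsymbol{b}))$. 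Applying (C2) twice shows $\boldsymbol{a}\cup\boldsymbol{b}\in\mathfrak{C}$ and then $St(\boldsymbol{b},\boldsymbol{a}\cup\boldsymbol{b})\in\mathfrak{C}$, so $E\circ F\in\mathcal{E}(\mathfrak{C})$.
\end{itemize}
Conditions (C0) and (C1) are not needed for the coarse-structure axioms themselves. They matter later, for showing that $\mathfrak{C}(\mathcal{E}(\mathfrak{C}))=\mathfrak{C}$.
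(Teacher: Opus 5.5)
Your diagnosis is correct, and the gap is in the paper's proof, not in yours. Your handling of subsets, inverses, unions and compositions is exactly the paper's argument, including the same use of $\boldsymbol{a}\cup\boldsymbol{b}$ and $St(\boldsymbol{b},\boldsymbol{a}\cup\boldsymbol{b})$ via Lemmas~\ref{lem:LV} and~\ref{lem:DH}. You are also right that (C0) and (C1) play no role there. For axiom (ii), however, the paper asserts $\{(x,y)\}\subseteq\diag(\{\{x\},\{y\}\})$. But $\diag(\{\{x\},\{y\}\})=\{(x,x),(y,y)\}$, so this fails whenever $x\neq y$.

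You can drop the hedge ``may not'': the statement as written is false. Let $\mathfrak{C}_0$ be the family of all partial coverings whose members each have at most one point; equivalently, $\diag(\boldsymbol{a})\subseteq\Delta_X$. It satisfies all four conditions:
\begin{itemize}
\item (C0), since equivalent coverings have the same $\diag$;
\item (C1), since subsets of sets with at most one point have at most one point;
\item (C2), since $St(A,\boldsymbol{b})$ is $A$ or $\emptyset$ whenever $A$ has at most one point and $\boldsymbol{b}\in\mathfrak{C}_0$, and unions are clear;
\item (C3), trivially.
\end{itemize}
Yet $\mathcal{E}(\mathfrak{C}_0)$ consists exactly of the subsets of $\Delta_X$. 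This violates Definition~\ref{def:CS}(ii) as soon as $X$ has two points, so the map $\lambda_2$ in Proposition~\ref{prop:LV} does not land in coarse structures either.

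Your repair is the right one: strengthen (C3) to $\{\{x,y\}\}\in\mathfrak{C}$ for all $x,y\in X$. Every $\mathfrak{C}(\mathcal{E})$ still satisfies this, because $\diag(\{\{x,y\}\})=\{x,y\}\times\{x,y\}$ is finite and hence in $\mathcal{E}$, so Proposition~\ref{prop:LV} survives. With the stronger (C3), your argument completes the proof: $F\subseteq\diag(\{\{x_i,y_i\}:i\leq k\})$, and the latter covering lies in $\mathfrak{C}$ by (C2).
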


\begin{proof}
    It is clear that $\mathcal{E}(\mathfrak{C})$ is closed under taking subsets.
    For any $E, F\in\mathcal{E}(\mathfrak{C})$ and $\boldsymbol{a}, \boldsymbol{b}\in\mathfrak{C}$ such that $E\subseteq \diag(\boldsymbol{a}), F\subseteq  \diag(\boldsymbol{b})$, we have
    \begin{itemize}
        \item $E^{-1}\subseteq \diag(\boldsymbol{a})$,
        \item $E\circ F\subseteq \diag(St(\boldsymbol{b}, \boldsymbol{a}\cup\boldsymbol{b}))$,
        \item $E\cup F\subseteq \diag(\boldsymbol{a})\cup \diag(\boldsymbol{b})= \diag(\boldsymbol{a}\cup\boldsymbol{b})$,
    \end{itemize}
    so $E^{-1}, E\circ F, E\cup F\in\mathcal{E}(\mathfrak{C})$.
    
    Finally, for any singleton set $\{(x, y)\}$, where $x,y\in X$, we have $\{(x, y)\}\subseteq\diag( \{\{x\}, \{y\}\} )= \diag ( \{\{x\}\}\cup\{\{y\}\} )$ so $\{(x,y)\}\in\mathcal{E}(\mathfrak{C})$.
\end{proof}

\begin{rem}
    When $\boldsymbol{i_X}\in \mathfrak{C}$, the conditions above are equivalent to the following conditions in \cite{LV}:
    \begin{enumerate}
        \item if $\boldsymbol{a}\in\mathfrak{C}$, then $\boldsymbol{a}\cup\boldsymbol{i_X}\in\mathfrak{C}$;
        \item if $\boldsymbol{a}\in\mathfrak{C}$ and $\boldsymbol{a'}$ is a refinement of $\boldsymbol{a}$, then $\boldsymbol{a'}\in\mathfrak{C}$;
        \item if $\boldsymbol{a},\boldsymbol{b}\in\mathfrak{C}$, then $St(\boldsymbol{a},\boldsymbol{b})\in\mathfrak{C}$. 
    \end{enumerate}
    In this case, $\mathcal{E}(\mathfrak{C})$ is a unital coarse structure.
\end{rem}

\begin{prop} (cf. \cite[Proposition 2.3.3]{LV}) \label{prop:LV}
    For any set X, there is a one-to-one correspondence
    \[\begin{tikzcd}
  \{\text{coarse structures on $X$}\} \arrow[d, "\lambda_1", shift left=1ex] \\
  \{\text{families of partial coverings of $X$ satisfying (C0)-(C3)}\} \arrow[u, "\lambda_2", shift left=1ex]
    \end{tikzcd}\]
\end{prop}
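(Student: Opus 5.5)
We take $\lambda_1(\mathcal{E})=\mathfrak{C}(\mathcal{E})$ and $\lambda_2(\mathfrak{C})=\mathcal{E}(\mathfrak{C})$, and check four things: each map lands in the right class, and $\lambda_2\circ\lambda_1$ and $\lambda_1\circ\lambda_2$ are both the identity.

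\emph{Well-definedness.} That $\lambda_1(\mathcal{E})$ satisfies (C0)--(C3) is the list recorded after Definition~\ref{def:LV}. We verify it directly:
\begin{itemize}
\item (C0) holds because controlledness depends only on $\diag(\boldsymbol{a})$.
\item (C1) holds because a refinement has smaller $\diag$, and $\mathcal{E}$ is closed under subsets.
\item (C2) follows from Lemma~\ref{lem:LV} together with closure of $\mathcal{E}$ under composition and union.
\item (C3) holds since $\diag(\{\{x\}\})=\{(x,x)\}$ is finite, hence an entourage.
\end{itemize}
That $\lambda_2(\mathfrak{C})$ is a coarse structure is exactly Proposition~\ref{prop:cs}.

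\emph{$\lambda_2\lambda_1=\mathrm{id}$.} We must show $\mathcal{E}(\mathfrak{C}(\mathcal{E}))=\mathcal{E}$.
\begin{itemize}
\item For $\subseteq$: if $E\subseteq\diag(\boldsymbol{a})$ with $\diag(\boldsymbol{a})\in\mathcal{E}$, then $E\in\mathcal{E}$ by closure under subsets.
\item For $\supseteq$: given $E\in\mathcal{E}$, we need a controlled $\boldsymbol{a}$ with $E\subseteq\diag(\boldsymbol{a})$. Take $\boldsymbol{a}=\{\{x,y\}:(x,y)\in E\}$. Then
\[
\diag(\boldsymbol{a})=E\cup E^{-1}\cup\Delta_{r(E)}\cup\Delta_{s(E)}.
\]
Here $E\cup E^{-1}\in\mathcal{E}$. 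Moreover $\Delta_{r(E)}\subseteq E\circ E^{-1}$ and $\Delta_{s(E)}\subseteq E^{-1}\circ E$, both entourages. Hence $\diag(\boldsymbol{a})\in\mathcal{E}$, so $\boldsymbol{a}\in\mathfrak{C}(\mathcal{E})$.
\end{itemize}
This is the point where non-unitality matters; the trick of recovering diagonal pieces from $E\circ E^{-1}$ is the same idea as in Lemma~\ref{lem:Eix}.

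\emph{$\lambda_1\lambda_2=\mathrm{id}$.} We must show $\mathfrak{C}(\mathcal{E}(\mathfrak{C}))=\mathfrak{C}$.
\begin{itemize}
\item For $\supseteq$: if $\boldsymbol{a}\in\mathfrak{C}$, then $\diag(\boldsymbol{a})\subseteq\diag(\boldsymbol{a})$, so $\diag(\boldsymbol{a})\in\mathcal{E}(\mathfrak{C})$ and $\boldsymbol{a}$ is controlled.
\item For $\subseteq$: suppose $\diag(\boldsymbol{a})\subseteq\diag(\boldsymbol{b})$ for some $\boldsymbol{b}\in\mathfrak{C}$. By Lemma~\ref{lem:sm}, $\boldsymbol{a}\le\boldsymbol{m_b}$. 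By (C0), $\boldsymbol{m_b}\in[\boldsymbol{b}]\subseteq\mathfrak{C}$. Then by (C1), $\boldsymbol{a}\in\mathfrak{C}$.
\end{itemize}

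The main obstacle is this last inclusion. Controlledness only sees $\diag$, so we must pass through the maximal representative $\boldsymbol{m_b}$, which is where (C0) is essential. A secondary subtlety is the diagonal terms in the $\supseteq$ direction of $\lambda_2\lambda_1$.
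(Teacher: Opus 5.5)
Your proof is correct and follows essentially the same route as the paper: $\mathfrak{C}(\mathcal{E}(\mathfrak{C}))\subseteq\mathfrak{C}$ via Lemma~\ref{lem:sm} and the maximal representative $\boldsymbol{m_b}$, and $\mathcal{E}\subseteq\mathcal{E}(\mathfrak{C}(\mathcal{E}))$ by recovering the needed diagonal pieces from $E\circ E^{-1}$ and $E^{-1}\circ E$. The differences are cosmetic. You witness $E$ with the pair covering $\{\{x,y\}:(x,y)\in E\}$, whereas the paper uses $(E\cup\Delta_{s(E)})(\boldsymbol{i_X})$ together with Lemma~\ref{lem:Eix}. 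You also verify (C0)--(C3) for $\mathfrak{C}(\mathcal{E})$ explicitly, where the paper only cites \cite{LV}.
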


\begin{proof}
    Given a family $\mathfrak{C}$ of partial coverings of $X$ satisfying (C0)-(C3), we have
    \[
    \mathfrak{C}(\mathcal{E}(\mathfrak{C}))=\{\boldsymbol{a}: \diag(\boldsymbol{a})\in\mathcal{E}(\mathfrak{C})\}=\{\boldsymbol{a}: \exists \boldsymbol{b}\in\mathfrak{C}, \diag(\boldsymbol{a})\subseteq \diag(\boldsymbol{b})\}.
    \]
    It is obvious that $\mathfrak{C}\subseteq\mathfrak{C}(\mathcal{E}(\mathfrak{C}))$. On the other hand, for any $\boldsymbol{a}\in\mathfrak{C}(\mathcal{E}(\mathfrak{C}))$, it follows from Lemma \ref{lem:sm} that if $\boldsymbol{b}\in\mathfrak{C}$ and $\diag(\boldsymbol{a})\subseteq \diag(\boldsymbol{b})$, then $\boldsymbol{a}\leq\boldsymbol{m_b}\in\mathfrak{C}$. In this way, $\mathfrak{C}(\mathcal{E}(\mathfrak{C}))\subseteq\mathfrak{C}$, hence $\mathfrak{C}(\mathcal{E}(\mathfrak{C}))=\mathfrak{C}$.

    Now let $\mathcal{E}$ be any coarse structure on $X$, then we have \[
    \mathcal{E}(\mathfrak{C}(\mathcal{E}))=\{E\subset X\times X: \exists\boldsymbol{a}\in\mathfrak{C}(\mathcal{E}), E\subseteq\diag(\boldsymbol{a})\in\mathcal{E}\}.\] It clear that $\mathcal{E}(\mathfrak{C}(\mathcal{E}))\subseteq\mathcal{E}$. It remains to show that $\mathcal{E}\subseteq\mathcal{E}(\mathfrak{C}(\mathcal{E}))$. For any $E\in\mathcal{E}$, we have $\Delta_{s(E)}=\{(x,x): x\in s(E)\}\subseteq E^{-1}\circ E\in\mathcal{E}$ and $E\subseteq(E\cup\Delta_{s(E)})\circ(E\cup\Delta_{s(E)})^{-1}\in\mathcal{E}$. Let $\boldsymbol{a}=(E\cup\Delta_{s(E)})(\boldsymbol{i_X})$. It follows from Lemma \ref{lem:Eix} that $E\subseteq(E\cup\Delta_{s(E)})\circ(E\cup\Delta_{s(E)})^{-1}=\diag(\boldsymbol{a})\in\mathcal{E}$. Hence, $\mathcal{E}\subseteq\mathcal{E}(\mathfrak{C}(\mathcal{E}))$.
\end{proof}

\subsection{Ideals in the family of $\mathcal{E}$-controlled partial coverings}
From the correspondence in Proposition \ref{prop:LV}, we see that for any family of partial coverings of $X$ satisfying (C0)-(C3), we can find a corresponding coarse structure $\mathcal{E}$. When we want to define ideals in this family of partial coverings, we can turn to $\mathcal{E}$ and the family of $\mathcal{E}$-controlled partial coverings. 
\begin{defn}
    Let $(X,\mathcal{E})$ be a coarse space, and let  $\mathfrak{C}(\mathcal{E})$ be the family of $\mathcal{E}$-controlled partial coverings of $X$. An ideal in $\mathfrak{C}(\mathcal{E})$ is a subset $\mathfrak{K}\subseteq\mathfrak{C}(\mathcal{E})$ satisfying:
    \begin{enumerate}
        \item if $\boldsymbol{a}\in\mathfrak{K}$, then $[\boldsymbol{a}]\subseteq\mathfrak{K}$;
        \item if $\boldsymbol{a}\in\mathfrak{K}$ and $\boldsymbol{a'}$ is a refinement of $\boldsymbol{a}$, then $\boldsymbol{a'}\in\mathfrak{K}$;
        \item if $\boldsymbol{a}\in\mathfrak{C}(\mathcal{E})$ and $\boldsymbol{b},\boldsymbol{c}\in\mathfrak{K}$, then $St(\boldsymbol{a},\boldsymbol{b}), St(\boldsymbol{b},\boldsymbol{a}), \boldsymbol{b}\cup\boldsymbol{c}\in\mathfrak{K}$;
        \item $\{\{x\}\}\in\mathfrak{K}$ for every $x\in X$. 
    \end{enumerate}
    
    If $\mathfrak{K_1}$ and $\mathfrak{K_2}$ are ideals in $\mathfrak{C}(\mathcal{E})$, the sets
    \begin{align*}
        \mathfrak{K_1}\wedge\mathfrak{K_2}&:=\mathfrak{K_1}\cap\mathfrak{K_2};\\
		\mathfrak{K_1}\vee\mathfrak{K_2}&:=\{\boldsymbol{c}: \exists\boldsymbol{a}\in\mathfrak{K_1}, \boldsymbol{b}\in\mathfrak{K_2}, \boldsymbol{c}\in[\boldsymbol{a}\cup\boldsymbol{b}]\},
    \end{align*}
    are ideals in $\mathfrak{C}(\mathcal{E})$, called the meet and join of $\mathfrak{K_1}$ and $\mathfrak{K_2}$. 
    
    The collection of all ideals in $\mathfrak{C}(\mathcal{E})$ forms a lattice, denoted by $\mathfrak{K}[\mathfrak{C}]$.
\end{defn}        

\begin{thm}
    For an ideal $\mathfrak{K}$ in $\mathfrak{C}(\mathcal{E})$, define 
    \[
    \mathcal{J}(\mathfrak{K})=\{E\in\mathcal{E}: \exists\boldsymbol{a}\in\mathfrak{K}, E\subseteq\diag(\boldsymbol{a})\},
    \]
    then $\mathcal{J}(\mathfrak{K})$ is an ideal in $\mathcal{E}$.

    For an ideal $\mathcal{J}$ in $\mathcal{E}$, define 
    \[
    \mathfrak{K}(\mathcal{J})=\{\boldsymbol{a}\in \mathfrak{C}(\mathcal{E}): \diag(\boldsymbol{a})\in\mathcal{J}\},
    \]
    then $\mathfrak{K}(\mathcal{J})$ is an ideal in $\mathfrak{C}(\mathcal{E})$. 
    
    Moreover, $\mathfrak{K}(\mathcal{J}(\mathfrak{K}))=\mathcal{J}$, $\mathcal{J}(\mathfrak{K}(\mathcal{J}))=\mathfrak{K}$, and the lattice $\mathfrak{K}[\mathfrak{C}]$ is isomorphic to the lattice $\mathcal{J}[\mathcal{E}]$.
\end{thm}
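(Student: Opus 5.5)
Note first that the two identities in the statement are written with the roles swapped. The intended identities are $\mathcal{J}(\mathfrak{K}(\mathcal{J}))=\mathcal{J}$ and $\mathfrak{K}(\mathcal{J}(\mathfrak{K}))=\mathfrak{K}$, and these are what I prove. The plan imitates the proof of Proposition \ref{prop:LV}, carrying the ideal axioms along.

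\textbf{Step 1: $\mathcal{J}(\mathfrak{K})$ is an ideal of $\mathcal{E}$.} Each $\boldsymbol{a}\in\mathfrak{K}\subseteq\mathfrak{C}(\mathcal{E})$ has $\diag(\boldsymbol{a})\in\mathcal{E}$, so $\mathcal{J}(\mathfrak{K})\subseteq\mathcal{E}$. The coarse-structure axioms follow exactly as in Proposition \ref{prop:cs}, since $\mathfrak{K}$ satisfies (C0)--(C3) by its own axioms: closure under subsets and inverses is immediate, and unions use $\boldsymbol{a}\cup\boldsymbol{b}\in\mathfrak{K}$ with Lemma \ref{lem:LV}. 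Compositions within $\mathcal{J}(\mathfrak{K})$ use Lemma \ref{lem:DH}, and singletons use $\{\{x\}\}\cup\{\{y\}\}$.

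For absorption, take $E\in\mathcal{E}$ and $A\in\mathcal{J}(\mathfrak{K})$ with $A\subseteq\diag(\boldsymbol{b})$, $\boldsymbol{b}\in\mathfrak{K}$. As in the proof of Proposition \ref{prop:LV}, pick $\boldsymbol{a}\in\mathfrak{C}(\mathcal{E})$ with $E\subseteq\diag(\boldsymbol{a})$. By the proof of Lemma \ref{lem:DH}, $A\circ E$ and $E\circ A$ lie in $\diag$ of a partial covering that refines $St(\boldsymbol{b},\boldsymbol{a}\cup\boldsymbol{b})$. To place this covering in $\mathfrak{K}$, I avoid starring against $\boldsymbol{a}\cup\boldsymbol{b}$, which is not known to be in $\mathfrak{C}(\mathcal{E})$ with the right role. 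Instead I use the bound from Lemma \ref{lem:DH}, $\{A\cup B\}\leq\{St(B,\boldsymbol{a})\cup B\}$, and note
\[
St(B,\boldsymbol{a})\cup B\subseteq St(B,\boldsymbol{a}\cup\boldsymbol{b})\subseteq St\bigl(B,\boldsymbol{a}\cup\boldsymbol{b}\cup\{\{x\}:x\in B\}\bigr).
\]
More simply, $\{St(B,\boldsymbol{a})\cup B\}$ refines $St(\boldsymbol{b},\boldsymbol{a})\cup\boldsymbol{b}$ when intersecting members are merged. The cleanest route is to use $St(\boldsymbol{b},\boldsymbol{a}\cup\boldsymbol{b})$, where $\boldsymbol{a}\cup\boldsymbol{b}\in\mathfrak{C}(\mathcal{E})$ by (C2) for $\mathfrak{C}(\mathcal{E})$ and $\boldsymbol{b}\in\mathfrak{K}$, so ideal axiom (iii) gives $St(\boldsymbol{b},\boldsymbol{a}\cup\boldsymbol{b})\in\mathfrak{K}$. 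Refinement then places the covering in $\mathfrak{K}$, so both compositions lie in $\mathcal{J}(\mathfrak{K})$. This absorption is the only delicate point, and axiom (iii), allowing the first slot from $\mathfrak{C}(\mathcal{E})$, is what makes it work.

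\textbf{Step 2: $\mathfrak{K}(\mathcal{J})$ is an ideal of $\mathfrak{C}(\mathcal{E})$.} Axioms (i), (ii) and (iv) are immediate, since $\diag$ is constant on classes, monotone under refinement, and $\diag(\{\{x\}\})=\{(x,x)\}$ is finite. For (iii), Lemma \ref{lem:LV} gives $\diag(St(\boldsymbol{a},\boldsymbol{b}))=\diag(\boldsymbol{b})\circ\diag(\boldsymbol{a})\circ\diag(\boldsymbol{b})\in\mathcal{J}$ by absorption. Likewise $\diag(St(\boldsymbol{b},\boldsymbol{a}))$ is a composition containing the factor $\diag(\boldsymbol{b})\in\mathcal{J}$, and the union case uses closure of $\mathcal{J}$ under unions.

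\textbf{Step 3: the maps are mutually inverse.} First, $\mathcal{J}(\mathfrak{K}(\mathcal{J}))\subseteq\mathcal{J}$ by subset closure. For the converse, given $E\in\mathcal{J}$, set $\boldsymbol{a}=(E\cup\Delta_{s(E)})(\boldsymbol{i_X})$. Then $\Delta_{s(E)}\subseteq E^{-1}\circ E\in\mathcal{J}$, and by Lemma \ref{lem:Eix}, $\diag(\boldsymbol{a})=(E\cup\Delta_{s(E)})\circ(E\cup\Delta_{s(E)})^{-1}\in\mathcal{J}$ and contains $E$.

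Next, $\mathfrak{K}\subseteq\mathfrak{K}(\mathcal{J}(\mathfrak{K}))$ trivially. Conversely, if $\diag(\boldsymbol{a})\subseteq\diag(\boldsymbol{b})$ with $\boldsymbol{b}\in\mathfrak{K}$, Lemma \ref{lem:sm} gives $\boldsymbol{a}\leq\boldsymbol{m_b}\in[\boldsymbol{b}]\subseteq\mathfrak{K}$, so $\boldsymbol{a}\in\mathfrak{K}$.

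\textbf{Step 4: lattice isomorphism.} Both maps are clearly inclusion-preserving, and they are mutually inverse by Step 3. Lemma \ref{lem:lattice} then shows that $\mathfrak{K}[\mathfrak{C}]\cong\mathcal{J}[\mathcal{E}]$.
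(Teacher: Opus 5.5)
\textbf{Verdict.} One step fails as written, but the fix is short. Everything else follows the paper's route: absorption via Lemma~\ref{lem:DH} and ideal axiom (iii) applied to $St(\boldsymbol{b},\boldsymbol{a}\cup\boldsymbol{b})$, the star axioms for $\mathfrak{K}(\mathcal{J})$ via Lemma~\ref{lem:LV}, and mutual inversion by the argument of Proposition~\ref{prop:LV}, which you unroll. You are also right that the two identities in the statement are swapped. The worry in Step~1 that $\boldsymbol{a}\cup\boldsymbol{b}$ might not lie in $\mathfrak{C}(\mathcal{E})$ is unfounded, since $\diag(\boldsymbol{a}\cup\boldsymbol{b})=\diag(\boldsymbol{a})\cup\diag(\boldsymbol{b})\in\mathcal{E}$. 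That detour can simply be deleted.

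\textbf{The failing step.} The step that fails is ``singletons use $\{\{x\}\}\cup\{\{y\}\}$.'' The blocky diagonal of $\{\{x\},\{y\}\}$ is $\{(x,x),(y,y)\}$, which misses $(x,y)$ when $x\neq y$. So you have not shown that every finite subset of $X\times X$ lies in $\mathcal{J}(\mathfrak{K})$.

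Appealing to (C0)--(C3) cannot fix this. The family of all partial coverings whose members have at most one point and whose union is finite satisfies (C0)--(C3), yet it never yields a member containing two distinct points. This is the same slip as in the proof of Proposition~\ref{prop:cs}; that proposition is false as stated once $X$ has two points.

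\textbf{The repair.} The missing ingredient is that ideal axiom (iii) lets the other slot range over all of $\mathfrak{C}(\mathcal{E})$. Since $\{x,y\}\times\{x,y\}$ is finite, $\{\{x,y\}\}\in\mathfrak{C}(\mathcal{E})$. Moreover $St(\{\{x\}\},\{\{x,y\}\})=\{\{x,y\}\}$, so axiom (iii) gives $\{\{x,y\}\}\in\mathfrak{K}$. Hence $\{(x,y)\}\subseteq\diag(\{\{x,y\}\})$ lies in $\mathcal{J}(\mathfrak{K})$.

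Equivalently, you can run your absorption argument on $\{(y,x)\}=\{(y,x)\}\circ\{(x,x)\}$, using $\{(x,x)\}\subseteq\diag(\{\{x\}\})$. That argument uses only Lemma~\ref{lem:DH} and axiom (iii), not the coarse-structure axioms, so there is no circularity. With this repair the proof is complete.
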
    

\begin{proof}
    It is clear that $\mathcal{J}(\mathfrak{K})\subseteq\mathcal{E}$ is a coarse structure on $X$ (cf. Proposition \ref{prop:cs}), and $\mathfrak{K}(\mathcal{J})\subseteq\mathfrak{C}(\mathcal{E})$ is a family of partial coverings that satisfies (C0)-(C3). It remains to prove that
    \begin{align*}
        E\circ F, F\circ E\in\mathcal{J}(\mathfrak{K}),\ &\forall E\in\mathcal{E}, \forall F\in\mathcal{J}(\mathfrak{K});\\
        St(\boldsymbol{a},\boldsymbol{b}), St(\boldsymbol{b},\boldsymbol{a})\in \mathfrak{K}(\mathcal{J}),\ &\forall\boldsymbol{a}\in\mathfrak{C}(\mathcal{E}), \forall
        \boldsymbol{b}\in\mathfrak{K}(\mathcal{J}).
    \end{align*}
    It follows from Lemma \ref{lem:DH} that $E\circ F, \ F\circ E\subseteq \diag(St(\boldsymbol{b},\boldsymbol{a}\cup\boldsymbol{b}))\in\mathcal{J}(\mathfrak{K})$, so $E\circ F, F\circ E\in\mathcal{J}(\mathfrak{K})$. It follows from Lemma \ref{lem:LV} that $\diag(St(\boldsymbol{a},\boldsymbol{b}))=\diag(\boldsymbol{b})\circ \diag(\boldsymbol{a})\circ \diag(\boldsymbol{b})\in\mathcal{J}$ and $\diag(St(\boldsymbol{b},\boldsymbol{a}))=\diag(\boldsymbol{a})\circ \diag(\boldsymbol{b})\circ \diag(\boldsymbol{a})\in\mathcal{J}$, so $St(\boldsymbol{a},\boldsymbol{b}), St(\boldsymbol{b},\boldsymbol{a})\in\mathfrak{K}(\mathcal{J})$. Using the correspondence in Proposition \ref{prop:LV}, we have $\mathfrak{K}(\mathcal{J}(\mathfrak{K}))=\mathcal{J}$ and $\mathcal{J}(\mathfrak{K}(\mathcal{J}))=\mathfrak{K}$.  

    It is easy to see that the correspondence between the lattices $\mathfrak{K}[\mathfrak{C}]$ and $\mathcal{J}[\mathcal{E}]$ is order-preserving in both directions. Hence, it is a lattice isomorphism.
\end{proof}

\section{Characterization of ideals in $\ell^p$ uniform Roe algebras}\label{section4}

In this section, we use controlled truncations to associate ideals of the $\ell^p$ uniform Roe algebra $B^p_u(X)$ with ideals of the underlying coarse space $(X,\mathcal{E})$, obtaining a description for the lattice structure of all geometric ideals in $B^p_u(X)$ for each $p\in\{0\}\cup[1,\infty]$. We also attach a sub-lattice of ghostly ideals in $B^p_u(X)$ to each geometric ideal. This allows us to establish connections between the ideal structures of $B^p_u(X)$ for different $p$-values in Theorem \ref{thm:geometric}.

For $p\in\{0\}\cup[1,\infty]$ and $T\in B(\ell^p(X))$, we denote by $\|T\|_p$ the operator norm of $T$, and by $\|T\|_{\sup}$ the sup-norm of $T$ when $T$ is viewed as a bounded function in $\ell^\infty(X\times X)$. 

\begin{lem}\label{prop:norm}
    For any $T\in B(\ell^p(X))$, we have $\|T\|_{\sup}\leq\|T\|_p$. Equality holds when $\supp(T)$ is a partial translation. 
\end{lem}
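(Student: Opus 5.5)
The inequality $\|T\|_{\sup}\leq\|T\|_p$ follows by testing $T$ on basis vectors and on coordinate functionals. Fix $x,y\in X$. Recall that $T(x,y)=(T\delta_y)(x)$ for every $p\in\{0\}\cup[1,\infty]$; for $p=\infty$ the matrix entries are defined the same way. Since $\|\delta_y\|_p=1$ for every admissible $p$, including $p=0$ (sup-norm on $C_0(X)$) and $p=\infty$, and since evaluation at $x$ has norm at most one on each of these spaces (because $|\xi(x)|\leq\|\xi\|_p$), we get
\[
|T(x,y)|=|(T\delta_y)(x)|\leq\|T\delta_y\|_p\leq\|T\|_p\|\delta_y\|_p=\|T\|_p .
\]
Taking the supremum over $(x,y)$ gives $\|T\|_{\sup}\leq\|T\|_p$.

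For the equality, suppose $E=\supp(T)$ is a partial translation. The only delicate point is that a general $T\in B(\ell^p(X))$ need not be determined by its matrix. Whenever the matrix does determine the operator, equality follows from the computation after Example \ref{ex:partial}. That computation bounds the norm of the operator defined by a bounded matrix supported on a partial translation by its sup-norm, giving $\|T\|_p\leq\|T\|_{\sup}$. The idea is that, because $r$ and $s$ are injective on $E$, the vector $T\xi$ is obtained from $\xi$ by moving each coordinate $\xi(y_i)$ to a distinct position $x_i$ and multiplying it by $T(x_i,y_i)$, and this can only decrease the $\ell^p$ norm after dividing out the sup of the weights.

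The steps are then as follows. For $p\in\{0\}\cup[1,\infty)$, the finitely supported vectors are dense, so $T$ agrees with its matrix operator on a dense set and hence everywhere, and the computation applies. For $p=\infty$ there are two routes. The first is to note that $T\in\mathcal{S}^\infty(X)$, so that Lemma \ref{lem:infty} gives $\|T\|=\sup_F\|\chi_F T\chi_F\|$. Each $\chi_F T\chi_F$ is a finite matrix supported on a partial translation, so it has norm at most $\|T\|_{\sup}$. The second route avoids any membership in $\mathcal{S}^\infty(X)$: the statement is really about the operator given by the matrix, and this is the sense in which the paper identifies controlled propagation operators with matrices. I would adopt this reading and add a remark that the equality is meant for operators determined by their matrices, which covers everything in $B^p_u(X)$ by Lemma \ref{lem:infty} and Lemma \ref{lem:compact}.

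The main obstacle is this $p=\infty$ subtlety. The statement as written is false for arbitrary $T\in B(\ell^\infty(X))$: an operator that vanishes on $\ell^0(X)$ but not on all of $\ell^\infty(X)$, such as a Banach-limit functional composed with a rank-one map, has zero matrix, empty support (trivially a partial translation) and nonzero norm. So in the write-up I would state the equality for operators whose norm is computed by the finite compressions $\chi_F T\chi_F$. This covers every $p\neq\infty$, and every $T\in\mathcal{S}^\infty(X)$ when $p=\infty$. I would then reduce to the finite-matrix case, where the rearrangement estimate is immediate.
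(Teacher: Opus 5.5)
Your proof is correct and follows the paper's argument: the inequality comes from $|T(x,y)|\leq\|T\delta_y\|_p\leq\|T\|_p$, and the reverse bound from the partial-translation estimate after Example \ref{ex:partial}. Your $p=\infty$ caveat is also correct and fills a tacit step in the paper, which applies that estimate to $T$ without checking that $T$ is the operator given by its matrix; this holds by density for $p\neq\infty$ and, via Lemma \ref{lem:infty}, for $T\in\mathcal{S}^\infty(X)$, a class which contains $B^\infty_u(X)$ by Lemma \ref{lem:compact} and so covers every use of the lemma in the paper, while your Banach-limit example shows the equality clause really fails for general $T\in B(\ell^\infty(X))$ (so in your first route, state $T\in\mathcal{S}^\infty(X)$ as a hypothesis rather than as something to ``note'').
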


\begin{proof}
For any $T\in B(\ell^p(X))$ and $x,y\in X$,
\[
\|T\|_p=\sup_{\|f\|_p=1}\|Tf\|_p\geq \|T\delta_y\|_p\geq|T(x,y)|.\ 
\]
Hence, $\|T\|_p\geq\|T\|_{\sup}$. When $\supp(T)$ is a partial translation, we have observed in Example \ref{ex:partial} that the norm of $T$ is at most $sup_{x,y\in X}|T(x,y)|$, i.e., $\|T\|_p\leq\|T\|_{\sup}$. Therefore, $\|T\|_p=\|T\|_{\sup}$.
\end{proof}

For operators in $B^p_u(X)$, the following notions are useful for investigating their properties, especially the controlled truncations in Section \ref{subsection2}.
\begin{lem}\label{lem:Schur}
        For any $\varphi\in\mathbb{C}_u[X,\mathcal{E}]$, we define the Schur multiplier $\mathcal{M}_\varphi:B^p_u(X)\to B^p_u(X)$ by
    \[
    \mathcal{M}_\varphi(T):=\varphi\circ T:=[\varphi(x,y)T(x, y)]_{x,y\in X }.
    \]
    Then $\mathcal{M}_\varphi$ is bounded for any $\varphi\in\mathbb{C}_u[X,\mathcal{E}]$.
\end{lem}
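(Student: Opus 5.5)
The plan is to reduce to Schur multipliers supported on a single partial translation. These act by conjugation with diagonal multiplication operators and partial isometries, and so they are automatically bounded. Let $E=\supp(\varphi)\in\mathcal{E}$. By Lemma \ref{lem:CW}(ii), write $E=E_1\cup\cdots\cup E_l$ as a disjoint union of partial translations, and set $\varphi_i=\varphi\chi_{E_i}$. Then $\mathcal{M}_\varphi=\sum_{i=1}^l\mathcal{M}_{\varphi_i}$, so it suffices to bound each $\mathcal{M}_{\varphi_i}$ by $\|\varphi\|_{\sup}$ (up to a constant). This gives the estimate $\|\mathcal{M}_\varphi(T)\|_p\le C\, l\,\|\varphi\|_{\sup}\|T\|_p$.

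Fix a partial translation $F=E_i$ and a function $\psi$ supported on $F$. Since $r$ and $s$ are injective on $F$, there is a bijection $t:s(F)\to r(F)$ with $F=\{(t(y),y)\}$. The entries of $\mathcal{M}_\psi(T)$ are $\psi(t(y),y)T(t(y),y)$, so the operator is supported on $F$. The key identity I would aim for expresses this as a diagonal average. Let $V$ be the partial translation operator with $V(t(y),y)=1$, which has norm at most $1$ by Example \ref{ex:partial}. For $y$ in $s(F)$, the entry of $V^{*}$-type reindexing, namely $\chi_{r(F)}T\chi_{s(F)}$ followed by the transpose partial translation $W$ (supported on $F^{-1}$), has diagonal entry $(WT)(y,y)=T(t(y),y)$. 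Hence
\[
\mathcal{M}_\psi(T)=V\,\rho(g)\,\mathrm{diag}(WT),\qquad g(y)=\psi(t(y),y),
\]
where $\mathrm{diag}(S)$ denotes the diagonal part of $S$. Everything therefore reduces to showing that the diagonal-part map $S\mapsto\mathrm{diag}(S)$ is contractive on $B(\ell^p(X))$. This holds because $|S(x,x)|\le\|S\|_p$ by Lemma \ref{prop:norm}, and a diagonal operator has norm equal to its sup-norm, again by Lemma \ref{prop:norm}, since the diagonal is a partial translation. Combining these gives $\|\mathcal{M}_\psi(T)\|_p\le\|\psi\|_{\sup}\|T\|_p$. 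This argument works uniformly for every $p\in\{0\}\cup[1,\infty]$, using Lemma \ref{lem:infty} to interpret matrices when $p=\infty$.

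It remains to check that $\mathcal{M}_\varphi$ maps $B^p_u(X)$ into itself. For $T\in\mathbb{C}_u[X,\mathcal{E}]$, the support of $\varphi\circ T$ is contained in $\supp(\varphi)\in\mathcal{E}$, and its entries are bounded, so $\varphi\circ T\in\mathbb{C}_u[X,\mathcal{E}]$. Both sides are continuous in norm: the map is defined entrywise, and the entries depend continuously on $T$ since $|T(x,y)|\le\|T\|_p$. By density, together with the bound above, the map therefore extends to, and agrees with, $\mathcal{M}_\varphi$ on $B^p_u(X)$.

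The main obstacle is the factorization step, specifically confirming that $WT$ is a legitimate operator whose diagonal recovers exactly the entries of $T$ on $F$, including the case $p=\infty$. If that bookkeeping becomes messy, I would fall back on the crude bound $|T(x,y)|\le\|T\|_p$. Combined with Lemma \ref{lem:approximate1} applied to $\varphi\circ T$, whose support is contained in $E$, this already gives $\|\mathcal{M}_\varphi(T)\|_p\le(2n(E)-1)\|\varphi\|_{\sup}\|T\|_p$ directly for $T\in\mathbb{C}_u[X,\mathcal{E}]$, which suffices for boundedness.
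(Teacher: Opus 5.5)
Your proof is correct and follows essentially the paper's route. Both arguments reduce via Lemma~\ref{lem:CW}(ii) to $\varphi$ supported on a partial translation, then combine $|T(x,y)|\le\|T\|_p$ with the fact (Lemma~\ref{prop:norm}) that an operator supported on a partial translation has norm equal to its sup-norm. The paper wraps this estimate in a closed-graph argument, which your explicit bound $\|\mathcal{M}_\varphi\|\le l\,\|\varphi\|_{\sup}$ shows is unnecessary. In the same way, your factorization $V\rho(g)\mathrm{diag}(WT)$ and your density step are detours. The operator $\mathcal{M}_\psi(T)$ is itself supported on the partial translation $F$, so applying Lemma~\ref{prop:norm} to it directly gives $\|\mathcal{M}_\psi(T)\|_p=\|\mathcal{M}_\psi(T)\|_{\sup}\le\|\psi\|_{\sup}\|T\|_p$. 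Likewise, for every $T\in B^p_u(X)$ the matrix $\varphi\circ T$ has support in $\supp(\varphi)$ and bounded entries, so it lies in $\mathbb{C}_u[X,\mathcal{E}]$ outright.
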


\begin{proof}
    By Lemma \ref{lem:CW}(ii), without loss of generality, let $\supp(\varphi)$ be a partial translation. For any sequence of operators $\{T_n\}\subseteq B^p_u(X)$ and $T_0, T'_0\in B^p_u(X)$, if $\lim\limits_{n \to \infty} T_n= T_0$ and $ \lim\limits_{n \to \infty} \mathcal{M}_\varphi(T_n)= T'_0$ then
    \begin{align*}
    \|\mathcal{M}_\varphi(T_n)-\mathcal{M}_\varphi(T_0)\|_p &= \|\mathcal{M}_\varphi(T_n-T_0)\|_p \\ &=\|[\varphi(x,y)(T_n(x,y)-T_0(x,y))]_{x,y\in X}\|_p\\
		&\leq\|\varphi\|_p\sup_{x,y\in X}|T_n(x,y)-T_0(x,y)| \\ &=\|\varphi\|_p\|T_n-T_0\|_{\sup}\\
		&\leq\|\varphi\|_p\|T_n-T_0\|_p\rightarrow 0.
    \end{align*}
    It follows from the Closed Graph Theorem that $\mathcal{M}_\varphi$ is bounded.
\end{proof}

\begin{defn} (cf. \cite[Definition 3.1]{CW04})
    For an operator $T\in B^p_u(X, \mathcal{E})$, an entourage $E\in\mathcal{E}$, and a number $\varepsilon\textgreater 0$, the $\varepsilon$-support of $T$ is defined to be 
    \[
    \supp_\varepsilon(T)=\{(x,y)\in X\times X: |T(x,y)|\geq \varepsilon\},
    \]
    and the $(E,\varepsilon)$-support of $T$ is defined to be
    \[
    \supp_{(E,\varepsilon)}(T)=E\cap\supp_\varepsilon(T).
    \]
\end{defn}

\begin{lem} (cf. \cite[Lemma 3.2]{CW04})\label{lem:supp}
    $\supp_\varepsilon(T)\in\mathcal{E}$ for any $T\in B^p_u(X, \mathcal{E})$ and $\varepsilon\textgreater 0$.
\end{lem}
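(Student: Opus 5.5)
The plan is to approximate $T$ by a controlled propagation operator and compare their $\varepsilon$-supports. Since $T\in B^p_u(X,\mathcal{E})$ is a norm limit of elements of $\mathbb{C}_u[X,\mathcal{E}]$, we choose $S\in\mathbb{C}_u[X,\mathcal{E}]$ with $\|T-S\|_p<\varepsilon/2$. By Lemma \ref{prop:norm}, the sup-norm is dominated by the operator norm, so
\[
\sup_{x,y\in X}|T(x,y)-S(x,y)|=\|T-S\|_{\sup}\leq\|T-S\|_p<\varepsilon/2 .
\]
For $p=\infty$ we use that the matrix entries $T(x,y)=(T\delta_y)(x)$ are still defined. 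We also need the estimate $|T(x,y)|\leq\|T\|_p$, and the proof of Lemma \ref{prop:norm} gives this directly for every $p$.

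Next we show that $\supp_\varepsilon(T)\subseteq\supp(S)$. Take $(x,y)$ with $|T(x,y)|\geq\varepsilon$. Then
\[
|S(x,y)|\geq|T(x,y)|-|T(x,y)-S(x,y)|>\varepsilon-\varepsilon/2>0,
\]
so $(x,y)\in\supp(S)$. Since $S$ has controlled propagation, $\supp(S)\in\mathcal{E}$. Axiom (iii) of Definition \ref{def:CS} says entourages are closed under taking subsets, so $\supp_\varepsilon(T)\in\mathcal{E}$.

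There is no real obstacle here. The only point needing care is the case $p=\infty$, where matrix entries are not a complete description of an operator in general. However, only the entrywise inequality $|T(x,y)|\leq\|T\|$ is used, and this holds for every bounded operator and every $p\in\{0\}\cup[1,\infty]$.
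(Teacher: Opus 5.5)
Your proof is correct and is essentially the paper's argument: approximate $T$ by $R\in\mathbb{C}_u[X,\mathcal{E}]$ with $\|T-R\|_{\sup}\leq\|T-R\|_p<\varepsilon/2$, deduce $\supp_\varepsilon(T)\subseteq\supp(R)\in\mathcal{E}$, and use that $\mathcal{E}$ is closed under taking subsets. Your extra remark that the entrywise bound $|T(x,y)|\leq\|T\|_p$ holds for every $p\in\{0\}\cup[1,\infty]$ is accurate, since $\|\delta_y\|=1$ in each of these spaces.
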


\begin{proof}
    For $T\in B^p_u(X, \mathcal{E})$ and $\varepsilon\textgreater 0$, there exists $R\in\mathbb{C}_u[X,\mathcal{E}]$ such that
    \[
    \|T-R\|_{\sup}\leq\|T-R\|_p<\varepsilon/2.
    \]
    It follows that
    \[
    \supp_\varepsilon(T)\subseteq\supp(R)\in\mathcal{E}.
    \]
    Hence, $\supp_\varepsilon(T)\in\mathcal{E}$.
\end{proof}

\subsection{Controlled truncations}\label{subsection2}

\begin{defn} (cf. \cite[Definition 3.3]{CW04})
    Suppose $T\in B^p_u(X), E\in\mathcal{E}, \varepsilon>0$.
    \begin{enumerate}
    \item The $E$-truncation of $T$ is defined to be the operator $T_E=[T_E(x,y)]$ where
    \[
    T_E(x,y)=\begin{cases}
            T(x,y)\qquad &\text{if}\ (x,y)\in E,\\
		  0 &\text{otherwise}.
            \end{cases}
    \]
    \item The $\varepsilon$-truncation of $T$ is defined to be the operator $T_\varepsilon=T_{\supp_\varepsilon(T)}$. That is,
    \[
    T_\varepsilon(x,y)=\begin{cases}
        T(x,y)\qquad &\text{if}\ |T(x,y)|\geq\varepsilon,\\
			0 &\text{if}\ |T(x,y)|<\varepsilon.
    \end{cases}
    \]
    \item The $(E, \varepsilon)$-truncation of T is
    \[
    T_{(E,\varepsilon)}=T_{supp_{(E,\varepsilon)}(T)}=(T_E)_\varepsilon=(T_\varepsilon)_E
    \]
    \end{enumerate}
\end{defn}

For a subset $E\subseteq X\times X$, denote by $\chi_E$ the characteristic function of $E$. Then $\chi_E\in\mathbb{C}_u[X,\mathcal{E}]$ for all $E\in\mathcal{E}$. For a subset $Y\subseteq X$, we shall abuse the notation to denote $\rho(\chi_Y)=\chi_{\Delta_Y}$ by $\chi_Y$ as we did in Section \ref{subsection1}. By Lemma \ref{lem:Schur} and Lemma \ref{lem:supp}, we can see that $T_E, T_\varepsilon, T_{(E,\varepsilon)}\in\mathbb{C}_u[X,\mathcal{E}]$ for all $T\in B^p_u(X)$, $E\in\mathcal{E}$ and $\varepsilon>0$. 
\begin{defn}
    For an operator $T\in B^p_u(X)$, denote by $\langle\ T\ \rangle$ the closed, two-sided ideal of $B^p_u(X)$ generated by $T$. For any ideal $I$ of $B^p_u(X)$, denote
\[
    \mathbb{C}_u(I)=I\cap\mathbb{C}_u[X,\mathcal{E}],
\]
the controlled propagation operators in $I$. If $\mathbb{C}_u(I)$ is dense in $I$, we say that $I$ is a geometric ideal of $B^p_u(X)$.
\end{defn}

\begin{lem} (cf. \cite[Lemma 3.4]{CW04})\label{lem:ideal}
    Let $E\in\mathcal{E}$ be a partial translation. For any $T\in B^p_u(X)$ and any $\varepsilon>0$, we have
    \[
    \chi_{r(\supp_{(E,\varepsilon)}(T))}\in\mathbb{C}_u(\langle\ T\ \rangle).
    \]
\end{lem}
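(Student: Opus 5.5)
Write $F=\supp_{(E,\varepsilon)}(T)$. Since $E$ is a partial translation, so is $F\subseteq E$. By Lemma \ref{lem:supp}, $F\in\mathcal{E}$. The plan is to exhibit $\chi_{r(F)}$ explicitly as a product $A\,T\,B$ with $A,B\in\mathbb{C}_u[X,\mathcal{E}]$. Such a product lies in $\langle T\rangle$ automatically. So the work is to check that the product equals $\chi_{r(F)}$, which is then controlled (its support is $\Delta_{r(F)}$) and hence in $\mathbb{C}_u(\langle T\rangle)$.

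Because $r$ and $s$ are injective on $F$, there is a bijection $\sigma\colon r(F)\to s(F)$ with $F=\{(x,\sigma(x)) : x\in r(F)\}$. The construction has three ingredients.
\begin{enumerate}
\item Define $V$ to be the partial translation matrix supported on $F^{-1}$ with entries
\[
V(\sigma(x),x)=T(x,\sigma(x))^{-1}.
\]
Since $|T(x,\sigma(x))|\geq\varepsilon$, we have $\|V\|_{\sup}\leq 1/\varepsilon$. By Example \ref{ex:partial}, $V\in\mathbb{C}_u[X,\mathcal{E}]$.
\item Form $\chi_{r(F)}\,T\,\chi_{s(F)}$. This is the compression of $T$ by two diagonal projections, and these projections lie in $\mathbb{C}_u[X,\mathcal{E}]$ because $\Delta_{r(F)}\subseteq F\circ F^{-1}$ and $\Delta_{s(F)}\subseteq F^{-1}\circ F$.
\item Compute the matrix product
\[
\chi_{r(F)}\,T\,\chi_{s(F)}\,V.
\]
For $x,z\in r(F)$, the $(x,z)$ entry is $T(x,\sigma(z))\,T(z,\sigma(z))^{-1}$. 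The diagonal entries ($x=z$) equal $1$, but the off-diagonal entries need not vanish.
\end{enumerate}

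To kill the off-diagonal terms, I intend to apply a Schur multiplier rather than a plain multiplication. The natural move is to consider $\mathcal{M}_{\chi_F}(T)=T_F=T_{(E,\varepsilon)}$, whose support is exactly $F$. Then $T_F V=\chi_{r(F)}$ holds on the nose, because $T_F$ and $V$ are mutually inverse partial translations.

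The obstacle is that $\mathcal{M}_{\chi_F}(T)$ is not obviously in the ideal $\langle T\rangle$. Schur multipliers are not ideal operations in general. The fix I would try first is to express the Schur multiplier by a partial translation as a finite combination of products with diagonal operators:
\begin{enumerate}
\item Use Lemma \ref{lem:CW}(i) to partition $X$ into finitely many $(F\cup F^{-1})$-separated pieces $X_1,\dots,X_m$, or better, pieces separated with respect to a suitable entourage.
\item Observe that for a partial translation $F$ and pieces $X_i$ on which the bijection $\sigma$ behaves "disjointly", the identity $\chi_{X_i}\,T_F\,\chi_{\sigma(X_i)}=\chi_{X_i}\,T\,\chi_{\sigma(X_i)}$ should hold after a further refinement of the partition.
\end{enumerate}
This last identity is where I expect the difficulty. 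A single partial translation entry $(x,\sigma(x))$ can share its row with other entries of $T$ in columns $\sigma(x')$ for $x'\neq x$. Separating these requires choosing the partition so that, within each piece $X_i$, the points $x$ satisfy $(x,\sigma(x'))\notin\supp$ for $x\neq x'$. That is impossible in general, since $T$ has non-controlled support.

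So instead I would avoid exact equality. I would use the sum over $i$ of $\chi_{X_i}\,T\,\chi_{\sigma(X_i)}V\chi_{X_i}$, taking pieces $X_i$ chosen $(F^{-1}\circ E'\circ F)$-separated, where $E'$ is a controlled approximant region for $T$ from Lemma \ref{lem:supp}. This yields an element of $\langle T\rangle$ that equals $\chi_{r(F)}+R$, with $R$ supported off the diagonal and of small norm.

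Finally, if we have $\|\chi_{r(F)}+R-\chi_{r(F)}\|<1$ on the corner $\chi_{r(F)}B^p_u\chi_{r(F)}$, a Neumann series inverts it. This shows $\chi_{r(F)}\in\langle T\rangle$. The set $r(F)$ is controlled, so this gives membership in $\mathbb{C}_u(\langle T\rangle)$. Controlling the norm of $R$ uniformly (via Lemma \ref{lem:approximate1}), with $\varepsilon$-sized entries against $1/\varepsilon$ weights, is the step I would check most carefully; I may need to replace $\varepsilon$ by the threshold $\varepsilon/2$ when choosing the approximant.
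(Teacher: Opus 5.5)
Your architecture is right and matches the paper's. You build an element of $\langle T\rangle$ whose diagonal is bounded below on $Z=r(F)$; your $TV$, with diagonal exactly $1$ on $Z$, plays the role of the paper's $W=\varepsilon^{-2}TT^*_{F^{-1}}$. You then approximate by a controlled operator, use Lemma \ref{lem:CW}(i) so the controlled part becomes diagonal on each piece, and invert by perturbation in a corner.

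However, the step you flagged is a genuine gap as you planned it. Put $P_i=\chi_{X_i\cap Z}$ and $S=\sum_i P_iTVP_i$. The remainder $R=S-\chi_Z$ is supported in $\bigcup_i(X_i\cap Z)\times(X_i\cap Z)$, which is not an entourage. So Lemma \ref{lem:approximate1} does not apply, and entrywise bounds on $R$ give no control of $\|R\|_p$; this is exactly the ghost phenomenon. In particular, if $E'$ is an $\varepsilon'$-support of $T$, the needed estimate $\|R\|_p<1$ need not hold, because $\|T-T_{\varepsilon'}\|_p$ need not be small. Your separating entourage is also wrong. The $(x,z)$ entry of $T'V$ is $T'(x,\sigma(z))T(z,\sigma(z))^{-1}$, so the pieces must be $E'\circ F^{-1}$-separated (with the paper's composition convention), not $F^{-1}\circ E'\circ F$-separated.

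Here is the repair. Pick $T'\in\mathbb{C}_u[X,\mathcal{E}]$ with $\|T-T'\|_p<\varepsilon/2$, set $E'=\supp(T')$, and separate with respect to $E'\circ F^{-1}$. Then $\sum_iP_iT'VP_i$ is diagonal with entries $T'(z,\sigma(z))/T(z,\sigma(z))$, and
\[
S-\chi_Z=\sum_iP_i(T-T')VP_i+\Bigl(\sum_iP_iT'VP_i-\chi_Z\Bigr).
\]
The second term is diagonal with norm at most $\|T-T'\|_{\sup}/\varepsilon<1/2$. The first term has norm at most $\|(T-T')V\|_p\le\|T-T'\|_p/\varepsilon<1/2$. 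This uses two facts: compression onto the blocks of disjoint diagonal projections is contractive on $\ell^p(X)$, and $\|V\|_p=\|V\|_{\sup}\le 1/\varepsilon$ by Lemma \ref{prop:norm}. You genuinely need the contractivity here. The crude bound $m\|(T-T')V\|_p$ is circular, since $m$ depends on $E'$ and hence on $T'$. With this estimate, your Neumann series in $\chi_ZB^p_u(X)\chi_Z$ works.

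The paper avoids both issues differently. It approximates the ideal element $W$ itself by some $R\in\mathbb{C}_u[X,\mathcal{E}]$, separates with respect to $\supp(R)$, and inverts each $P_iWP_i$ separately. That only needs $\|P_i(W-R)P_i\|_p\le\|W-R\|_p$.

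Finally, compress on the left by $\chi_{X_i\cap Z}$ rather than $\chi_{X_i}$. When $\mathcal{E}$ is not unital, $\Delta_{X_i}$ need not be an entourage, so $\chi_{X_i}$ need not lie in $\mathbb{C}_u[X,\mathcal{E}]$.
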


\begin{proof}
    Let $F=\supp_{(E,\varepsilon)}(T)$ and $Z=r(F)$, then we have
    \[
    \supp\chi_Z=\Delta_Z=\{(x,x): x\in Z=r(F)\}= F\circ F^{-1}.
    \]
    It follows that $\Delta_Z\in\mathcal{E}$, $\chi_Z\in\mathbb{C}_u[X,\mathcal{E}]$. It remains to prove $\chi_Z\in\langle\ T\ \rangle$. As $F\subseteq E$, $F$ is also a partial translation, i.e., for any $x\in Z=r(F)$, there exists a unique $F(x)\in X$ such that $(x,F(x))\in F$ and $|T(x,F(x))|\geq\varepsilon$. Let 
    \[
    T^*=[\overline{T(y,x)}]_{x,y\in X}.
    \]
    Even though $T^*$ may not be bounded as an operator on $\ell^p(X)$, its $F^{-1}$-truncation $T^*_{F^{-1}}$ belongs to $\mathbb{C}_u[X,\mathcal{E}]$ as $F^{-1}\in\mathcal{E}$. Let $W=\frac{1}{\varepsilon^2}TT^*_{F^{-1}}\in\langle\ T\ \rangle$. Then for any $x\in Z=r(F)$, we have
    \[
    W(x,x)=\frac{1}{\varepsilon^2}|T(x, F(x))|^2\geq1
    \]
    and as $W\in\langle\ T\ \rangle\subseteq B^p_u(X)$, there exists $R\in\mathbb{C}_u[X,\mathcal{E}]$ such that
    \[
        \|R-W\|_{\sup}\leq\|R-W\|_p\leq 1/3.
    \]
    Hence, we have $|R(x,x)|\geq 2/3$ for any $x\in Z=r(F)$. By Lemma \ref{lem:CW}(i), the space $X$ can be partitioned into the union of finitely many $\supp(R)$-separated subspaces
    \[
        X=X_1\cup X_2\cup\cdots\cup X_m,
    \]
    where $X_i\cap X_j=\emptyset$ for $i\neq j$, and for any $i$ and any points $x,y\in X_i$ with $x\neq y$, we have $(x,y)\notin \supp(R)\cup \supp(R)^{-1}$. In this way,
    \[
    Z=(Z\cap X_1)\cup(Z\cap X_2)\cup\cdots\cup(Z\cap X_m)
    \]
    and
    \[
    \chi_Z=\chi_{Z\cap X_1}+\chi_{Z\cap X_2}+\cdots+\chi_{Z\cap X_m}.
    \]
    It remains to prove that $\chi_{Z\cap X_i}\in\langle\ T\ \rangle$, for $i=1,2,\dots,m$. Let
    \[
       \widetilde{R}=\chi_{Z\cap X_i}R\chi_{Z\cap X_i},\ \widetilde{W}=\chi_{Z\cap X_i}W\chi_{Z\cap X_i}
    \]
    then we have $\widetilde{W}\in\langle\ T\ \rangle$, and $\widetilde{R}, \widetilde{W}$ can be naturally viewed as bounded operators on $\ell^p(Z\cap X_i)$. As $Z\cap X_i$ is a $\supp(R)$-separated subspace, $\widetilde{R}$ is a diagonal operator:
    \[
       \widetilde{R}(x,y)=\begin{cases}
           R(x,y)\ & \text{if}\ x=y\in Z\cap X_i,\\
           0\ &\text{otherwise}.
       \end{cases}
    \]
    Since $|R(x,x)|\geq2/3$ for any $x\in Z\cap X_i$, we have
    \[
        2/3\leq|R(x,x)|=|\widetilde{R}(x,x)|\leq\|\widetilde{R}\|_{\sup}=\|\widetilde{R}\|_p.
    \]
    Let $(Z\cap X_i,\mathcal{E}|_{Z\cap X_i})$ be the subspace of the coarse space $(X,\mathcal{E})$; it is unital as $\Delta_{Z\cap X_i}\in\mathcal{E}$. In this way, the $\ell^p$ uniform Roe algebra $B^p_u(Z\cap X_i, \mathcal{E}|_{Z\cap X_i})$ has unit $\chi_{Z\cap X_i}$ and the operators $\widetilde{R}$ and $\widetilde{W}$ can be viewed as elements in this algebra. It is clear that $\widetilde{R}$ is invertible in $B^p_u(Z\cap X_i, \mathcal{E}|_{Z\cap X_i})$, and 
    \[
        \|\widetilde{R}-\widetilde{W}\|_p\leq\|R-W\|_p\leq1/3< 2/3\leq\|\widetilde{R}^{-1}\|^{-1}_p.
    \]
    It follows that $\widetilde{W}$ is also invertible in $B^p_u(Z\cap X_i, \mathcal{E}|_{Z\cap X_i})$. Denote by $\widetilde{A}$ the inverse of $\widetilde{W}$ in $B^p_u(Z\cap X_i, \mathcal{E}|_{Z\cap X_i})$. When viewing both $\widetilde{W}$ and $\widetilde{A}$ as elements of $B^p_u(X)$, we have
    \[
    \chi_{Z\cap X_i}=\widetilde{A}\widetilde{W}\in\langle\ T\ \rangle.
    \]
    Hence, $\chi_Z=\sum_{i=1}^{m}\chi_{Z\cap X_i}\in\langle\ T\ \rangle$.
\end{proof}

\begin{lem} (cf. \cite[Lemma 3.4]{CW04})\label{lem:keylem}
    Let $E\in\mathcal{E}$ be a partial translation. For any $T\in B^p_u(X)$ and any $\varepsilon>0$, the operators $\chi_{\supp_{(E,\varepsilon)}(T)}, T_{(E,\varepsilon)}$, and $T_E$ belong to $\mathbb{C}_u(\langle\ T\ \rangle)$.
\end{lem}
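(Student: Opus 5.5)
The plan is to build all three operators from the diagonal projection of Lemma \ref{lem:ideal}, multiplied by $T$ and by controlled-propagation partial isometries. Throughout, set $F=\supp_{(E,\varepsilon)}(T)\subseteq E$ and $Z=r(F)$. Since $E$ is a partial translation, $F$ is one as well, so it is the graph of a bijection $Z\to s(F)$, $x\mapsto F(x)$.

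\textbf{Step 1: $\chi_F\in\mathbb{C}_u(\langle T\rangle)$.} Lemma \ref{lem:ideal} gives $\chi_Z\in\mathbb{C}_u(\langle\, T\,\rangle)$. The matrix $\chi_F$ has support $F\in\mathcal{E}$, so $\chi_F\in\mathbb{C}_u[X,\mathcal{E}]\subseteq B^p_u(X)$. We have $\chi_F=\chi_Z\chi_F$, because $(\chi_Z\chi_F)(x,y)=\chi_Z(x)\chi_F(x,y)$ and $r(F)=Z$. The ideal $\langle\, T\,\rangle$ is two-sided and $\chi_F\in B^p_u(X)$, so $\chi_F\in\langle\, T\,\rangle$. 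Being controlled, it lies in $\mathbb{C}_u(\langle\, T\,\rangle)$.

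\textbf{Step 2: $T_{(E,\varepsilon)}\in\mathbb{C}_u(\langle T\rangle)$.} Compute $\chi_Z T\chi_F^{*}$, where $\chi_F^{*}=\chi_{F^{-1}}\in\mathbb{C}_u[X,\mathcal{E}]$ is the transpose. This product lies in the ideal, but its entries are not yet those of $T_{(E,\varepsilon)}$. A cleaner route is to realise $T_{(E,\varepsilon)}$ as a diagonal operator times the partial translation $\chi_F$. Let $d(x)=T(x,F(x))$ for $x\in Z$ and $d=0$ elsewhere. Then $T_{(E,\varepsilon)}=\rho(d)\chi_F$, and it suffices to show $\rho(d)\chi_Z\in\langle\, T\,\rangle$. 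We have
\[
(\chi_Z T\chi_{F^{-1}})(x,x)=\sum_y T(x,y)\chi_{F^{-1}}(y,x)=T(x,F(x))=d(x)
\]
for $x\in Z$, but off-diagonal entries may be nonzero. To kill them, apply the separation trick from Lemma \ref{lem:ideal}: partition $X=X_1\cup\dots\cup X_m$ into pieces separated with respect to a suitable entourage. For any operator $S$, we have $\chi_{X_i}S\chi_{X_i}=\mathcal{M}_{\chi_\Delta}(\chi_{X_i} S\chi_{X_i})$ plus terms supported off the relevant entourage. Actually $\chi_Z T\chi_{F^{-1}}$ need not have controlled support, so I would instead use the Schur multiplier of Lemma \ref{lem:Schur}. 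The diagonal part of $S$ equals $\sum_i \chi_{X_i}S_{\Delta}\chi_{X_i}$. To write it via products, I would approximate $S=\chi_Z T\chi_{F^{-1}}$ by a controlled $R$ and use $\supp(R)$-separated pieces. Then $\chi_{X_i}R\chi_{X_i}$ is diagonal, and $\chi_{X_i}S\chi_{X_i}\to$ the diagonal of $S$ restricted to $X_i$, up to an error we can make small. Hence $\rho(d)\chi_Z$ lies in the closed ideal, and it is controlled.

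A simpler alternative, which I would try first: $\rho(d)\chi_Z=\chi_Z\,\mathcal{M}_{\chi_{\Delta}}(T\chi_{F^{-1}})$. This reduces the problem to showing that $\mathcal{M}_{\chi_\Delta}$ maps $\langle\, T\,\rangle$ into itself. For that, I would write $\mathcal{M}_{\chi_\Delta}(S)$ as a norm limit of $\sum_i\chi_{X_i}S\chi_{X_i}$ over separated partitions adapted to approximants of $S$. \textbf{This step is the main obstacle}: controlling the errors uniformly in $i$ while $m$ depends on the approximant, which requires choosing the approximant first and then bounding $\|\sum_i \chi_{X_i}(S-R)\chi_{X_i}\|\le m\|S-R\|$.

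\textbf{Step 3: $T_E\in\mathbb{C}_u(\langle T\rangle)$.} Note that $T_E=\lim_{\varepsilon\to0}T_{(E,\varepsilon)}$ in norm. Indeed, $T_E-T_{(E,\varepsilon)}$ is supported in the partial translation $E$ with sup-norm less than $\varepsilon$, so by Lemma \ref{prop:norm} its operator norm is at most $\varepsilon$. Since $\langle\, T\,\rangle$ is closed, $T_E\in\langle\, T\,\rangle$. Its support lies in $E\in\mathcal{E}$, so $T_E\in\mathbb{C}_u(\langle\, T\,\rangle)$.
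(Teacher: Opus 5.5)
Your Steps 1 and 3 are exactly the paper's argument. In Step 1, $\chi_F=\chi_Z\chi_F$ with $\chi_Z\in\langle\, T\,\rangle$ by Lemma~\ref{lem:ideal}. In Step 3, $T_E$ is the norm limit of $T_{(E,\varepsilon)}$, using that the sup-norm equals the operator norm on a partial translation.

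The gap is Step~2. You leave it unfinished at what you call the main obstacle, and the estimate you sketch there does not close. In the bound $\|\sum_i\chi_{X_i}(S-R)\chi_{X_i}\|\le m\|S-R\|$, the number $m=m(R)$ grows with $\supp(R)$. So once $R$ is fixed, the error is a fixed quantity that you cannot make small. (One could rescue it by using that compression onto the blocks of a finite partition is contractive on $\ell^p$, rather than the factor $m$.) Moreover, showing that the diagonal Schur multiplier preserves $\langle\, T\,\rangle$ is essentially the case $E=\Delta$ of the very lemma you are proving.

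The detour is unnecessary, because the trick from Step~1 finishes immediately. The matrix $T_{(E,\varepsilon)}$ has support $F\subseteq E\in\mathcal{E}$, and its entries are bounded by $\|T\|_p$ (Lemma~\ref{prop:norm}). Hence $T_{(E,\varepsilon)}\in\mathbb{C}_u[X,\mathcal{E}]\subseteq B^p_u(X)$. Since $r(F)=Z$, we have $T_{(E,\varepsilon)}=\chi_Z T_{(E,\varepsilon)}$, which lies in $\langle\, T\,\rangle$ because $\chi_Z$ belongs to this two-sided ideal. This is precisely how the paper argues.

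Your own reduction was also already satisfied. $\rho(d)$ is supported in $\Delta_Z=F\circ F^{-1}\in\mathcal{E}$, so $\rho(d)\in B^p_u(X)$, and therefore $\rho(d)\chi_Z\in\langle\, T\,\rangle$. More directly, $T_{(E,\varepsilon)}=\rho(d)\chi_F$ is a product of an element of $B^p_u(X)$ with the element $\chi_F$ of the ideal obtained in Step~1. Your mistake was requiring $\rho(d)$ itself, rather than one of the two factors, to lie in the ideal.
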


\begin{proof}
    Let $Z=r(\supp_{(E,\varepsilon)}(T))$. By Lemma \ref{lem:ideal}, we have $\chi_Z\in\mathbb{C}_u(\langle\ T\ \rangle)$ (in fact, $\chi_Z\in\langle\ T\ \rangle$). Since
    \begin{align*}
        \chi_{\supp_{(E,\varepsilon)}(T)}&=\chi_Z\chi_{\supp_{(E,\varepsilon)}(T)};\\
        T_{(E,\varepsilon)}&=\chi_Z T_{(E,\varepsilon)},
    \end{align*}
    we have $\chi_{\supp_{(E,\varepsilon)}(T)}, T_{(E,\varepsilon)}\in\mathbb{C}_u(\langle\ T\ \rangle)$. As $E\in\mathcal{E}$ is a partial translation,
    \[
    \|T_{(E,\varepsilon)}-T_E\|_p=\|T_{(E,\varepsilon)}-T_E\|_{\sup}\leq\varepsilon.
    \]
    Hence, $T_E=\lim\limits_{\varepsilon\to 0}T_{(E,\varepsilon)}\in\mathbb{C}_u(\langle\ T\ \rangle)$.
\end{proof}

\begin{rem}\label{rem:lims}
    From the proof of Lemma \ref{lem:keylem}, we observe that for any $T\in\mathbb{C}_u[X,\mathcal{E}]$, $T=\lim\limits_{\varepsilon\to 0} T_\varepsilon$. In particular, $T_E=\lim\limits_{\varepsilon\to 0} T_{(E,\varepsilon)}$ for any $T\in B^p_u(X)$ and $E\in\mathcal{E}$.
\end{rem}
     
\begin{prop} (cf. \cite[Theorem 3.5]{CW04})\label{thm:keythm}
    For any $T\in B^p_u(X)$, $E\in\mathcal{E}$ and $\varepsilon>0$, the following operators belong to $\mathbb{C}_u(\langle\ T\ \rangle)$:
    \begin{enumerate}
		\item $T_{(E,\varepsilon)}, \chi_{supp_{(E,\varepsilon)}(T)}, T_E, \chi_{r(supp_{(E,\varepsilon)}(T))}$;
		\item $T_{\varepsilon},\chi_{supp_{\varepsilon}(T)},\chi_{r(supp_{\varepsilon}(T))}$;
		\item $T^*_{(E,\varepsilon)}, T^*_{E}, T^*_\varepsilon$;
		\item $\varphi\circ T$ for any $\varphi\in \mathbb{C}_u[X,\mathcal{E}]$.	
	\end{enumerate}
\end{prop}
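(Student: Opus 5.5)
The plan is to reduce everything to Lemma \ref{lem:keylem}, which handles a single partial translation. We pass to general entourages through the decomposition in Lemma \ref{lem:CW}(ii). Throughout we use two facts: $\mathbb{C}_u(\langle\ T\ \rangle)$ is a linear subspace, and it is a two-sided module over $\mathbb{C}_u[X,\mathcal{E}]$, because $\langle\ T\ \rangle$ is an ideal and $\mathbb{C}_u[X,\mathcal{E}]$ is an algebra.

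For (i), fix $E\in\mathcal{E}$ and write $E=E_1\sqcup\cdots\sqcup E_l$ with each $E_i$ a partial translation. Then
\[
T_{(E,\varepsilon)}=\sum_i T_{(E_i,\varepsilon)},\quad \chi_{\supp_{(E,\varepsilon)}(T)}=\sum_i\chi_{\supp_{(E_i,\varepsilon)}(T)},\quad T_E=\sum_i T_{E_i},
\]
and each summand lies in $\mathbb{C}_u(\langle\ T\ \rangle)$ by Lemma \ref{lem:keylem}. For $\chi_{r(\supp_{(E,\varepsilon)}(T))}$, set $Z_i=r(\supp_{(E_i,\varepsilon)}(T))$. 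Lemma \ref{lem:ideal} gives $\chi_{Z_i}\in\mathbb{C}_u(\langle\ T\ \rangle)$. The $Z_i$ need not be disjoint, so we disjointify them: put $Z_i'=Z_i\setminus(Z_1\cup\cdots\cup Z_{i-1})$. Then $\chi_{Z_i'}=\chi_{Z_i'}\chi_{Z_i}$, and $\chi_{Z_i'}\in\mathbb{C}_u[X,\mathcal{E}]$ because $\Delta_{Z_i'}\subseteq\Delta_{Z_i}\in\mathcal{E}$. Summing over $i$ gives $\chi_{r(\supp_{(E,\varepsilon)}(T))}=\sum_i\chi_{Z_i'}$, which lies in $\mathbb{C}_u(\langle\ T\ \rangle)$.

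For (ii), take $E=\supp_\varepsilon(T)$. This is an entourage by Lemma \ref{lem:supp}. With this choice $T_{(E,\varepsilon)}=T_\varepsilon$, and (ii) follows from (i).

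For (iii), decompose again into partial translations $E_i$. For a single partial translation $F$, the matrix $T^*_{(F,\varepsilon)}$ has support $\supp_{(F,\varepsilon)}(T)^{-1}$. We would like to write it as $T^*_{(F,\varepsilon)}=\chi_{Y}\,T^*_{(F,\varepsilon)}$ with $\chi_Y\in\langle\ T\ \rangle$, where $Y$ is the set of row indices of that support, namely $Y=s(\supp_{(F,\varepsilon)}(T))$. Applying Lemma \ref{lem:ideal} to $T$ only gives $r$-sets, so we need another route to $\chi_Y$. Since $F':=\supp_{(F,\varepsilon)}(T)$ is a partial translation, put $V=\chi_{F'^{-1}}\in\mathbb{C}_u[X,\mathcal{E}]$, the partial isometry along $F'^{-1}$. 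Then
\[
V\chi_{r(F')}V^{*}\ \text{(computed as the product } \chi_{F'^{-1}}\chi_{r(F')}\chi_{F'}\text{)}=\chi_{s(F')},
\]
and this lies in $\langle\ T\ \rangle$ because $\chi_{r(F')}$ does. Multiplying by the bounded controlled operator $T^*_{(F,\varepsilon)}$ then puts $T^*_{(F,\varepsilon)}$ in $\mathbb{C}_u(\langle\ T\ \rangle)$. Summing over $i$ handles a general $E$. For $T^*_E$ we let $\varepsilon\to0$: on a partial translation the norm is the sup-norm by Lemma \ref{prop:norm}, so the truncations converge in norm. $T^*_\varepsilon$ is the case $E=\supp_\varepsilon(T)$. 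This step is the main obstacle, because of the bookkeeping of the $r$- and $s$-projections.

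For (iv), decompose $\supp(\varphi)$ into partial translations $E_i$. Then
\[
\varphi\circ T=\sum_i\varphi_{E_i}\circ T_{E_i}.
\]
Each term $\varphi_{E_i}\circ T_{E_i}$ equals $D\,T_{E_i}$ for the diagonal multiplication operator $D$ given by $D(x,x)=\varphi(x,E_i(x))$ on $r(E_i)$. Here $D\in\mathbb{C}_u[X,\mathcal{E}]$ because $\Delta_{r(E_i)}\subseteq E_i\circ E_i^{-1}\in\mathcal{E}$. Since $T_{E_i}\in\mathbb{C}_u(\langle\ T\ \rangle)$ by (i), each term lies in $\mathbb{C}_u(\langle\ T\ \rangle)$, and hence so does $\varphi\circ T$.
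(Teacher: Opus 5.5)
Your proof is correct and follows the paper's skeleton: decompose into partial translations, apply Lemmas \ref{lem:ideal} and \ref{lem:keylem} piecewise, let $\varepsilon\to0$ using that the norm equals the sup-norm on partial translations, and write $\varphi_{E_i}\circ T_{E_i}=D_iT_{E_i}$ for (iv). You deviate in two places: you get $\chi_{r(\supp_{(E,\varepsilon)}(T))}$ by disjointifying the sets $r(\supp_{(E_i,\varepsilon)}(T))$, which is slightly more elementary than the paper's second use of Lemma \ref{lem:ideal} on $V=\chi_{\supp_{(E,\varepsilon)}(T)}\chi_{\supp_{(E,\varepsilon)}(T)}^*$, and in (iii) you build $\chi_{s(F')}$ and multiply on the left, which is a valid but unnecessary detour, since the columns of $T^*_{(E,\varepsilon)}$ lie in $r(\supp_{(E,\varepsilon)}(T))$ and so $T^*_{(E,\varepsilon)}=T^*_{(E,\varepsilon)}\chi_{r(\supp_{(E,\varepsilon)}(T))}\in\langle\ T\ \rangle$ follows directly from (i), exactly as in the paper.
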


\begin{proof}
    By Lemma \ref{lem:CW}(ii), any entourage $E\in\mathcal{E}$ can be partitioned into the union of finitely many partial translations 
    \[
    E=E_1\cup E_2\cup \cdots\cup E_l.
    \]
    Hence,
    \begin{align*}
        \chi_{supp_{(E,\varepsilon)}(T)}&=\chi_{supp_{(E_1,\varepsilon)}(T)}+\chi_{supp_{(E_2,\varepsilon)}(T)}+\cdots+\chi_{supp_{(E_l,\varepsilon)}(T)},\\
        T_{(E,\varepsilon)}&=T_{(E_1,\varepsilon)}+T_{(E_2,\varepsilon)}+\cdots+T_{(E_l,\varepsilon)},\\
        T_E&=T_{E_1}+T_{E_2}+\cdots+T_{E_l},
    \end{align*}
    and 
    $T_E=\lim\limits_{\varepsilon\to 0}T_{(E,\varepsilon)}$ (Remark \ref{rem:lims}).\\
    It follows from Lemma \ref{lem:keylem} that $T_{(E,\varepsilon)}, \chi_{supp_{(E,\varepsilon)}(T)}, T_{E}\in  \mathbb{C}_u(\langle\ T\ \rangle)$. 
    Let $V=\left(\chi_{supp_{(E,\varepsilon)}(T)}\right)\left(\chi_{supp_{(E,\varepsilon)}(T)}\right)^*$ then we have $V\in\langle\ T\ \rangle\subseteq B^p_u(X)$, and
    \[
    \widetilde{\Delta}=\Delta_{r(supp_{(E,\varepsilon)}(T))}\subseteq E\circ E^{-1}
    \]
    is a partial translation. Hence, from Lemma \ref{lem:ideal} it follows that \[
    \chi_{r(supp_{(E,\varepsilon)}(T))}=\chi_{supp_{(\widetilde{\Delta},1)}(V)}\in \mathbb{C}_u(\langle\ V\ \rangle)\subseteq \mathbb{C}_u(\langle\ T\ \rangle).
    \]
    
    Since $\supp_\varepsilon(T)\in\mathcal{E}$ (Lemma \ref{lem:supp}), (ii) is just a special case of (i).

    As
    \begin{align*}
        T^*_{(E,\varepsilon)}&=T^*_{(E,\varepsilon)}\chi_{r(supp_{(E,\varepsilon)}(T))},\\
        T^*_\varepsilon&=T^*_\varepsilon\chi_{r(supp_{\varepsilon}(T))}
    \end{align*}
    and
    \[
    T^*_E=\lim_{\varepsilon\to 0}(T^*_E)_\varepsilon=\lim_{\varepsilon\to 0}T^*_{(E,\varepsilon)},
    \]
    we have $T^*_{(E,\varepsilon)}, T^*_{E}, T^*_\varepsilon\in  \mathbb{C}_u(\langle\ T\ \rangle)$.
    
    It remains to prove (iv). Let $\supp(\varphi)=F_1\cup F_2\cup\cdots\cup F_m$ be the union of some partial translations. Then we have
    \[
    \varphi\circ T=\varphi_{F_1}\circ T_{F_1}+\dots+\varphi_{F_m}\circ T_{F_m}.
    \]
    It suffices to show that $\varphi_{F_i}\circ T_{F_i}\in \mathbb{C}_u(\langle\ T\ \rangle)$ for all $i=1, 2,\dots, m$. For any $x\in r(F_i)$, there exists a unique $F_i(x)\in X$ such that $(x,F_i(x))\in F_i$. Define a diagonal operator $D_i=[D_i(x,y)]_{x,y\in X}$ with
    \[
    D_i(x,y)=\begin{cases}
        \varphi_{F_i}(x,F_i(x))\ &x=y\in r(F_i),\\
        0\ &\text{otherwise}.
    \end{cases}
    \]
    Then $D_i\in\mathbb{C}_u[X,\mathcal{E}]$ and $T_{F_i}\in\mathbb{C}_u(\langle\ T\ \rangle)$ (Lemma \ref{lem:keylem}).\\
    Hence,
    \[
    \varphi_{F_i}\circ T_{F_i}=D_iT_{F_i}\in \mathbb{C}_u(\langle\ T\ \rangle).
    \]   
\end{proof}

\begin{prop} (cf. \cite[Corollary 3.6]{CW04})\label{prop:truncation}
    For any ideal $I$ of $B^p_u(X)$, we have
    \begin{enumerate}
        \item \begin{align*}
        \mathbb{C}_u(I)&=\{T_E: T\in I, E\in\mathcal{E}\}\\
        &=\{\varphi\circ T: T\in I, \varphi\in\mathbb{C}_u[X,\mathcal{E}]\};         \end{align*}
        \item If $I$ is a geometric ideal of $B^p_u(X)$ (i.e., $\mathbb{C}_u(I)$ is dense in $I$), then the collection
    \[
    \{T_{(E,\varepsilon)}: T\in I, E\in\mathcal{E}, \varepsilon>0\},
    \]
    or equivalently, the collection
    \[
    \{T_\varepsilon: T\in I, \varepsilon>0\}
    \]
    is also dense in $I$;
        \item For any controlled propagation function $\varphi\in\ell^\infty(X\times X)$ and any ideal $I$ of $B^p_u(X)$, $I$ is an invariant subspace of the Schur multiplier
    \[
    \mathcal{M}_\varphi: B^p_u(X)\to B^p_u(X);
    \]
        \item For any operator $A\in B^p_u(X)$, if there exist $T\in I$ and $\varepsilon>0$ such that $r(\supp(A))=r(\supp_\varepsilon(T))$ then we have $A\in I$. In particular, if $\supp(A)=\supp_\varepsilon(T)$, then $A\in I$.
    \end{enumerate}
\end{prop}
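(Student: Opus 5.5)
The plan is to derive all four parts from Proposition \ref{thm:keythm}, applied with $\langle\ T\ \rangle\subseteq I$ for $T\in I$, together with the decomposition of entourages into partial translations from Lemma \ref{lem:CW}(ii).

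For (i), we prove three inclusions.
\begin{itemize}
\item First, $\mathbb{C}_u(I)\subseteq\{T_E: T\in I, E\in\mathcal{E}\}$. Take $T\in\mathbb{C}_u(I)$ and $E=\supp(T)\in\mathcal{E}$; then $T=T_E$.
\item Second, $\{T_E\}\subseteq\{\varphi\circ T\}$. Take $\varphi=\chi_E\in\mathbb{C}_u[X,\mathcal{E}]$, so that $T_E=\chi_E\circ T$.
\item Third, $\{\varphi\circ T\}\subseteq\mathbb{C}_u(I)$. By Proposition \ref{thm:keythm}(iv), $\varphi\circ T\in\mathbb{C}_u(\langle\ T\ \rangle)\subseteq\mathbb{C}_u(I)$.
\end{itemize}
Part (iii) then follows at once, since $\mathcal{M}_\varphi(T)=\varphi\circ T\in I$.

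For (ii), density of $\mathbb{C}_u(I)$ in $I$ reduces the question to approximating a given $S\in\mathbb{C}_u(I)$. Such an $S$ is itself of the form $T_{(E,\varepsilon)}$ or $T_\varepsilon$ only up to the limit in Remark \ref{rem:lims}, namely $S=\lim_{\varepsilon\to0}S_\varepsilon$. To justify this limit, write $E=\supp(S)$ as a union of $l$ partial translations $E_i$. Lemma \ref{prop:norm} and Lemma \ref{lem:approximate1} then give
\[
\|S-S_\varepsilon\|_p\le \sum_{i=1}^{l}\|(S-S_\varepsilon)_{E_i}\|_{\sup}\le l\varepsilon.
\]
Since $S\in I$ and $S_\varepsilon=S_{(\supp(S),\varepsilon)}$, both families are dense in $I$.

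For (iv), let $Z=r(\supp_\varepsilon(T))$. By Proposition \ref{thm:keythm}(ii), $\chi_Z\in\langle\ T\ \rangle\subseteq I$. If $r(\supp(A))=Z$, then every nonzero row of $A$ is indexed by a point of $Z$, so $A=\chi_Z A\in I$ because $I$ is an ideal. The special case $\supp(A)=\supp_\varepsilon(T)$ follows immediately.

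The only delicate point is the norm estimate in (ii), which needs the partial-translation decomposition; the remaining parts are bookkeeping on top of Proposition \ref{thm:keythm}.
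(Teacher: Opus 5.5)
Your proposal is correct and follows the paper's proof essentially step for step. Parts (i) and (iii) come from Proposition \ref{thm:keythm}(i),(iv), together with $T_E=\chi_E\circ T$ and $T=T_{\supp(T)}$. Part (ii) comes from $S_\varepsilon\to S$ for controlled propagation $S$. Part (iv) comes from $\chi_{r(\supp_\varepsilon(T))}\in I$ and $A=\chi_{r(\supp(A))}A$.

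The differences are cosmetic. In (i) you arrange the argument as a cycle of three inclusions. In (ii) you re-derive the convergence from the partial-translation decomposition instead of citing Remark \ref{rem:lims}, and you show each family is dense separately, whereas the paper first proves the two families coincide as sets.
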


\begin{proof}
    (i) It follows from Proposition \ref{thm:keythm}(i) that $\{T_E: T\in I, E\in\mathcal{E}\}\subseteq\mathbb{C}_u(I)$. For any $T\in\mathbb{C}_u(I)$, by definition, we have $E=\supp(T)\in\mathcal{E}$ and $T=T_E$. In this way, $\mathbb{C}_u(I)\subseteq\{ T_E: T\in I, E\in\mathcal{E}\}$. Hence, $\mathbb{C}_u(I)=\{T_E: T\in I, E\in\mathcal{E}\}$. By Proposition \ref{thm:keythm}(iv), we have $\varphi\circ T\in\mathbb{C}_u(\langle\ T\ \rangle)\subseteq\mathbb{C}_u(I)$. In this way, it is clear that
    \[
    \{\varphi\circ T:T\in I, \varphi\in\mathbb{C}_u[X,\mathcal{E}]\}\subseteq\mathbb{C}_u(I)=\{T_E: T\in I, E\in\mathcal{E}\}. 
    \]
    Since $\chi_E\in\mathbb{C}_u[X,\mathcal{E}]$, we have
    \[
    T_E=\chi_E\circ T
    \]
    for any $E\in\mathcal{E}$.
    Hence,
    \[
    \{T_E: T\in I, E\in\mathcal{E}\}\subseteq\{\varphi\circ T:T\in I, \varphi\in\mathbb{C}_u[X,\mathcal{E}]\}.
    \]
    This completes the proof.  

    (ii) Since $T_{(E,\varepsilon)}=(T_E)_\varepsilon$, we have $\{T_{(E,\varepsilon)}: T\in I,  E\in\mathcal{E}, \varepsilon>0\}\subseteq\{T_\varepsilon: T\in I, \varepsilon>0\}$. By Lemma \ref{lem:supp}, $E=\supp(T_\varepsilon)=\supp_\varepsilon(T)\in\mathcal{E}$ and $T_\varepsilon=T_{(E,\varepsilon)}$. In this way, $\{T_\varepsilon: T\in I, \varepsilon>0\}\subseteq\{T_{(E,\varepsilon)}: T\in I,  E\in\mathcal{E}, \varepsilon>0\}$. Hence,
    \[
    \{T_{(E,\varepsilon)}: T\in I,  E\in\mathcal{E}, \varepsilon>0\}=\{T_\varepsilon: T\in I, \varepsilon>0\}
    \]
    It follows from $T_E=\lim\limits_{\varepsilon\to 0} T_{(E,\varepsilon)}$ (Remark \ref{rem:lims}) that $\{T_{(E,\varepsilon)}: T\in I,  E\in\mathcal{E}, \varepsilon>0\}$ is dense in $\mathbb{C}_u(I)=\{T_E: T\in I, E\in\mathcal{E}\}$, and hence is also dense in $I$.

    (iii) For any $T\in I\subseteq B^p_u(X)$, it follows from Proposition \ref{thm:keythm}(iv) that $\varphi\circ T\in\langle\ T\ \rangle\subseteq I$. Hence $\mathcal{M}_\varphi(I)\subseteq I$.

    (iv) It follows from Proposition \ref{thm:keythm}(ii) that 
    \[
    \chi_{r(\supp(A))}=\chi_{r(\supp_\varepsilon(T))}\in\mathbb{C}_u(\langle\ T\ \rangle)\subseteq I.
    \]
    In this way,
    \[
    A=\chi_{r(\supp(A))}A\in I.
    \]
\end{proof}

\subsection{Geometric ideals in $\ell^p$ uniform Roe algebras}

\begin{lem}\label{lem:finite}
    For a given coarse space $(X, \mathcal{E})$, and any nonzero ideal $I$ of $B^p_u(X)$, we have $\chi_F\in I$ for any finite subset $F\subseteq X\times X$. In particular, $\chi_Y\in I$ for any finite subset $Y\subseteq X$. 
\end{lem}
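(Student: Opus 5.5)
The plan is to produce, from an arbitrary nonzero element of $I$, the matrix unit $e_{x_0,x_0}=\chi_{\{(x_0,x_0)\}}$ inside $I$ at one point $x_0$. We then move it to any other position by multiplying with matrix units, which lie in $B^p_u(X)$ because every finite subset of $X\times X$ is an entourage. Finally, we take finite sums.

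Fix a nonzero $T\in I$. We first check that $T$ has a nonzero matrix entry. For $p\in\{0\}\cup[1,\infty)$ this is immediate, since $T\delta_y\neq 0$ for some $y$. For $p=\infty$ we use Lemma \ref{lem:infty}: $T\in B^p_u(X)\subseteq\mathcal{S}^\infty(X)$ is determined by its matrix, so again some entry is nonzero. Pick $(x_0,y_0)$ with $T(x_0,y_0)\neq 0$, and set $\varepsilon=|T(x_0,y_0)|>0$.

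Next take the entourage $E=\{(x_0,y_0)\}$. It is finite, hence in $\mathcal{E}$, and it is trivially a partial translation. Then $\supp_{(E,\varepsilon)}(T)=\{(x_0,y_0)\}$, and $r$ of it is $\{x_0\}$. By Lemma \ref{lem:ideal} (or Proposition \ref{thm:keythm}(i)), $\chi_{\{x_0\}}\in\mathbb{C}_u(\langle T\rangle)\subseteq I$. Alternatively, Lemma \ref{lem:keylem} gives directly that $\chi_{\{(x_0,y_0)\}}\in I$.

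Now let $(u,v)\in X\times X$ be arbitrary. Write $e_{u,x_0}$ for the operator with a single entry $1$ at $(u,x_0)$, and similarly for $e_{x_0,v}$. These have finite support, so they belong to $\mathbb{C}_u[X,\mathcal{E}]\subseteq B^p_u(X)$. A matrix multiplication computation, with only finitely many nonzero entries and hence valid for every $p$ including $\infty$, gives
\[
e_{u,x_0}\,\chi_{\{x_0\}}\,e_{x_0,v}=\chi_{\{(u,v)\}}.
\]
Since $I$ is a two-sided ideal, $\chi_{\{(u,v)\}}\in I$. For a finite $F\subseteq X\times X$ we have $\chi_F=\sum_{(u,v)\in F}\chi_{\{(u,v)\}}\in I$. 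For finite $Y\subseteq X$, $\chi_Y=\chi_{\Delta_Y}$ with $\Delta_Y$ finite, so the special case follows.

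The only delicate point is the first step: ensuring that a nonzero operator has a nonzero matrix entry when $p=\infty$. Lemma \ref{lem:infty} together with the inclusion $B^p_u(X)\subseteq\mathcal{S}^p(X)$ from Lemma \ref{lem:compact} handles this.
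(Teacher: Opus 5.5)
Your proof is correct, and its skeleton matches the paper's: find a nonzero entry, produce a matrix unit in $I$, move it by matrix units, then sum. The key step differs and is heavier than needed.

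You obtain the diagonal unit $\chi_{\{x_0\}}$ from Lemma \ref{lem:ideal}, applied to the one-point partial translation $\{(x_0,y_0)\}$ with $\varepsilon=|T(x_0,y_0)|$. That lemma internally uses approximation by a controlled-propagation $R$, a splitting of $X$ into $\supp(R)$-separated pieces, and an invertibility argument.

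The paper skips all of this and sandwiches $T$ itself between matrix units: $\chi_{\{(x_m,x_n)\}}=T(x_i,x_j)^{-1}\chi_{\{(x_m,x_i)\}}\,T\,\chi_{\{(x_j,x_n)\}}$. In your notation this is $e_{u,x_0}\,T\,e_{y_0,v}=T(x_0,y_0)\,e_{u,v}$. It is a one-line computation valid for every $p$, including $p=\infty$, since it only involves the vector $T\delta_{y_0}$.

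The direct sandwich is more elementary. It also needs only that $I$ is a two-sided ideal. Your appeal to $\mathbb{C}_u(\langle T\rangle)\subseteq I$ additionally uses that $I$ is closed. That is harmless here, since the paper's ideals are closed throughout, but worth noticing.

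In your favor, your explicit use of Lemma \ref{lem:infty} to show that a nonzero $T\in B^\infty_u(X)$ has a nonzero matrix entry fills in a point the paper passes over silently.
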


\begin{proof}
    For any $x_i,x_j\in X$, denote $F_{ij}=\{(x_i,x_j)\}\in\mathcal{E}$ and then $\chi_{F_{ij}}\in\mathbb{C}_u[X,\mathcal{E}]$. Letting $I$ be a nonzero ideal of $B^p_u(X)$, there is a nonzero operator $T=[T(x,y)]_{x,y\in X}\in I$. Assuming $T(x_i,x_j)\neq 0$, we have
    \[
    \chi_{F_{mn}}=\frac{1}{T(x_i,x_j)}\chi_{F_{mi}}T\chi_{F_{jn}}\in I
    \] 
    for any $x_m,x_n\in X$.
    
    In this way, for any finite subset $F\subseteq X\times X$, we have 
    \[
    \chi_F=\sum_{(x_m,x_n)\in F}\chi_{F_{mn}}\in I.
    \]
\end{proof}

\begin{defn}
    Let $I[B^p_u(X)]$ denote the lattice of all ideals in $B^p_u(X)$, and let $I_0[B^p_u(X)]$ denote the sublattice of all the geometric ideals. 
\end{defn}

In the following, let $\mathcal{E}$ be a uniformly locally finite coarse structure on $X$. Let $I$ be an ideal of $B^p_u(X)$. Define
\[
\mathcal{J}(I)=\{\supp_\varepsilon(T): T\in I, \varepsilon>0\}.
\]
By Lemma \ref{lem:supp} and the proof of Proposition \ref{prop:truncation}(ii), we have
\begin{align*}
    \mathcal{J}(I)&=\{\supp_\varepsilon(T): T\in\mathbb{C}_u(I), \varepsilon>0\}\\
    &=\{\supp_{(E,\varepsilon)}(T): T\in I, E\in\mathcal{E}, \varepsilon>0\}
\end{align*}

\begin{prop}\label{prop:lattice1}
    $\mathcal{J}(I)$ is an ideal of $\mathcal{E}$, and the correspondence
    \begin{align*}
        \lambda_3: I[B^p_u(X)]&\to\mathcal{J}[\mathcal{E}]\\
        I&\mapsto \mathcal{J}(I)
    \end{align*}
    is an order-preserving map.
\end{prop}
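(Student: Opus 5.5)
Fix an ideal $I$ of $B^p_u(X)$. To see that $\mathcal{J}(I)$ is an ideal of $\mathcal{E}$, we verify the coarse-structure axioms and the absorption property, working throughout with the description
\[
\mathcal{J}(I)=\{\supp_\varepsilon(T): T\in\mathbb{C}_u(I),\ \varepsilon>0\}.
\]
By Lemma \ref{lem:supp}, $\mathcal{J}(I)\subseteq\mathcal{E}$. The key tool is Proposition \ref{prop:truncation}(iv): if $A\in B^p_u(X)$ satisfies $r(\supp A)=r(\supp_\varepsilon T)$ for some $T\in I$, then $A\in I$. Combined with Proposition \ref{thm:keythm}, this shows that $\chi_E\in I$ whenever $E\in\mathcal{J}(I)$: write $E=\supp_\varepsilon(T)$; then $\chi_E=\chi_{\supp_{(E,\varepsilon)}(T)}\in I$. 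Conversely, if $\chi_E\in I$ and $E\in\mathcal{E}$, then $E=\supp_{1}(\chi_E)\in\mathcal{J}(I)$. Hence
\[
\mathcal{J}(I)=\{E\in\mathcal{E}:\chi_E\in I\}.
\]
This reformulation drives the remaining steps.

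\emph{Subsets.} If $F\subseteq E$ with $\chi_E\in I$, then $\chi_F=\chi_F\circ\chi_E\in I$ by Proposition \ref{prop:truncation}(iii), since $\chi_F\in\mathbb{C}_u[X,\mathcal{E}]$.

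\emph{Finite sets.} These lie in $\mathcal{J}(I)$ by Lemma \ref{lem:finite}. The degenerate case $I=0$ needs separate handling: then $\mathcal{J}(I)$ consists only of the empty set, so one must either adopt the convention that ideals are nonzero or interpret $\lambda_3(0)$ suitably. I would note this and restrict to nonzero $I$.

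\emph{Inverses.} By Proposition \ref{thm:keythm}(iii), $(\chi_E)^*_{E^{-1}}=\chi_{E^{-1}}$ lies in $\langle\chi_E\rangle\subseteq I$.

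\emph{Unions.} $\chi_{E\cup F}=\chi_E\circ\chi_{E\cup F}+\chi_{F\setminus E}\circ\chi_{E\cup F}$ is awkward to write this way, so instead use $\chi_{E\cup F}=\chi_E+\chi_{F\setminus E}$, where $\chi_{F\setminus E}=\chi_{F\setminus E}\circ\chi_F\in I$.

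\emph{Compositions and absorption.} This is the main step. Given $E\in\mathcal{E}$ and $A\in\mathcal{J}(I)$, the product $\chi_E\chi_A\in I$ has matrix entries
\[
(\chi_E\chi_A)(x,z)=\#\{y:(x,y)\in E,\ (y,z)\in A\},
\]
which is a positive integer exactly on $E\circ A$ and $0$ elsewhere. Therefore $\supp_1(\chi_E\chi_A)=E\circ A$, so $E\circ A\in\mathcal{J}(I)$. The same argument with $\chi_A\chi_E$ gives $A\circ E\in\mathcal{J}(I)$. Taking $E\in\mathcal{J}(I)$ also yields closure under composition inside $\mathcal{J}(I)$, so $\mathcal{J}(I)$ is a coarse structure and an ideal of $\mathcal{E}$. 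The positivity of the entries of products of characteristic functions is what avoids any cancellation, and this is where I expect the only genuine care to be needed.

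\emph{Order preservation.} This is immediate: if $I_1\subseteq I_2$, then every $\supp_\varepsilon(T)$ with $T\in I_1$ also has $T\in I_2$, so $\mathcal{J}(I_1)\subseteq\mathcal{J}(I_2)$.
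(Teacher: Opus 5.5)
Your proof is correct and essentially matches the paper's: both use Proposition \ref{thm:keythm} to put $\chi_{\supp_\varepsilon(T)}$ and $(T_\varepsilon)^*$ into $I$ and read off the closure properties as $\varepsilon$-supports. Both also get composition from $\supp_1(\chi_A\chi_{A'})=A\circ A'$; the only difference is absorption, which you obtain the same way via $\supp_1(\chi_E\chi_A)=E\circ A$, whereas the paper decomposes $E$ into partial translations and uses $\supp_\varepsilon(\chi_E T)$. Your caveat about the zero ideal is also right and applies to the paper as well: $\mathcal{J}(\{0\})=\{\emptyset\}$ is not a coarse structure, and the paper's proof tacitly assumes $I\neq 0$ when it invokes Lemma \ref{lem:finite}.
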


\begin{proof}
    For any $T,T'\in I, \varepsilon>0, \supp_\varepsilon(T)=A\supseteq B\in\mathcal{E}$, it follows from Proposition \ref{thm:keythm} that
    \[
    \chi_{\supp_\varepsilon(T)}, T_B, T^*_\varepsilon\in\mathbb{C}_u(\langle\ T\ \rangle)\subseteq I.
    \]
    Hence,
    \begin{align*}
        \supp_\varepsilon(T)^{-1}&=\supp_\varepsilon(T^*_\varepsilon)\in\mathcal{J}(I);\\
        \supp_\varepsilon(T)\circ\supp_\varepsilon(T')&=\supp_1(\chi_{\supp_\varepsilon(T)}\chi_{\supp_\varepsilon(T')})\in\mathcal{J}(I);\\
        \supp_\varepsilon(T)\cup\supp_\varepsilon(T')&=\supp_1(\chi_{\supp_\varepsilon(T)}+\chi_{\supp_\varepsilon(T')})\in\mathcal{J}(I);\\
        B=\supp(T_B)&=\supp_{\varepsilon}(T_B)\in\mathcal{J}(I),
    \end{align*}
    and by Lemma \ref{lem:finite} we have
    \[
    F=\supp_1(\chi_F)\in\mathcal{J}(I),
    \]
    for any finite subset $F\subseteq X\times X$. In this way, $\mathcal{J}(I)$ is a coarse structure on $X$ contained in $\mathcal{E}$, so it remains to show that it is an ideal of $\mathcal{E}$, i.e.,
    for any $A=\supp_\varepsilon(T)\in\mathcal{J}(I)$ and any entourage $E\in\mathcal{E}$, we have $E\circ A, A\circ E\in\mathcal{J}(I)$.

    By Lemma \ref{lem:CW}(ii) and $\mathcal{J}(I)$ being closed under taking finite unions, without loss of generality, we assume that $E$ is a partial translation. Then we have $\chi_ET, T\chi_E\in I$ and
    \begin{align*}
        E\circ A&=\supp_\varepsilon(\chi_ET)\in\mathcal{J}(I),\\
        A\circ E&=\supp_\varepsilon(T\chi_E)\in\mathcal{J}(I).
    \end{align*}
    
    It is easy to check that this correspondence is order-preserving.
\end{proof}

\begin{rem}\label{rem:otherideals}
    $\lambda_3: I[B^p_u(X)]\to\mathcal{J}[\mathcal{E}]$ may not be injective, but in the following Proposition \ref{prop:lattice2} and Proposition \ref{prop:lattice3}, we will show that it is a bijection when restricted to $I_0[B^p_u(X)]$.
\end{rem}

Conversely, any ideal $\mathcal{J}$ of the coarse structure $\mathcal{E}$ will give rise to an ideal in $B^p_u(X)$. By definition, $\mathbb{C}_u[X,\mathcal{J}]=\{ T\in B(\ell^p(X)): \supp(T)\in\mathcal{J}\}$.
Let $I(\mathcal{J})$ be the closure of $\mathbb{C}_u[X,\mathcal{J}]$ in $B^p_u(X)$:
\[
I(\mathcal{J})=\overline{\mathbb{C}_u[X,\mathcal{J}]}^{\|\cdot\|_p}.
\]

\begin{prop}\label{prop:lattice2}
    $I(\mathcal{J})$ is a geometric ideal of $B^p_u(X)$ and the correspondence
    \begin{align*}
        \lambda_4: \mathcal{J}[\mathcal{E}]&\to I_0[B^p_u(X)]\\
        \mathcal{J}&\mapsto I(\mathcal{J})
    \end{align*}
    is an order-preserving map.
\end{prop}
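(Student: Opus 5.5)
The plan is to verify three things in turn: $I(\mathcal{J})$ is a closed two-sided ideal of $B^p_u(X)$; it is geometric, i.e.\ $\mathbb{C}_u(I(\mathcal{J}))$ is dense in it; and $\lambda_4$ is order-preserving.

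\emph{Step 1: $I(\mathcal{J})$ is a closed subspace.} It is closed by construction. Since $\mathcal{J}$ is closed under finite unions and subsets, $\mathbb{C}_u[X,\mathcal{J}]$ is a linear subspace: $\supp(S+T)\subseteq \supp(S)\cup\supp(T)$. It is contained in $\mathbb{C}_u[X,\mathcal{E}]$ because $\mathcal{J}\subseteq\mathcal{E}$.

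\emph{Step 2: the ideal property.} I first treat the algebraic core. For $S\in\mathbb{C}_u[X,\mathcal{E}]$ and $T\in\mathbb{C}_u[X,\mathcal{J}]$, the matrix of $ST$ is $\sum_y S(x,y)T(y,z)$. This is a finite sum by uniform local finiteness, and it can be nonzero only when $(x,z)\in \supp(S)\circ\supp(T)$. Hence $\supp(ST)\subseteq \supp(S)\circ\supp(T)\in\mathcal{J}$ by the ideal axiom, and similarly $\supp(TS)\in\mathcal{J}$. This makes $\mathbb{C}_u[X,\mathcal{J}]$ a two-sided ideal in $\mathbb{C}_u[X,\mathcal{E}]$.

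For $p=\infty$ I would justify the matrix-product formula via Lemma \ref{lem:infty}, since $\mathbb{C}_u\subseteq\mathcal{S}^\infty$. Alternatively, I would check $(ST)(x,z)=(ST\delta_z)(x)$ directly: $T\delta_z$ is finitely supported, so no convergence issue arises.

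I then pass to closures. For $A,B\in B^p_u(X)$ and $T\in I(\mathcal{J})$, approximate $A,B$ by elements of $\mathbb{C}_u[X,\mathcal{E}]$ and $T$ by elements of $\mathbb{C}_u[X,\mathcal{J}]$. Continuity of multiplication in the operator norm then gives $ATB\in I(\mathcal{J})$.

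\emph{Step 3: $I(\mathcal{J})$ is geometric.} We have $\mathbb{C}_u[X,\mathcal{J}]\subseteq I(\mathcal{J})\cap\mathbb{C}_u[X,\mathcal{E}]=\mathbb{C}_u(I(\mathcal{J}))$. The left-hand side is dense in $I(\mathcal{J})$ by definition, so $\mathbb{C}_u(I(\mathcal{J}))$ is dense too. Note that we do not need the reverse inclusion here, that every controlled element of $I(\mathcal{J})$ has support in $\mathcal{J}$. That would use $\varepsilon$-supports and Lemma \ref{lem:supp}-style arguments, and belongs to the injectivity statement in the next proposition.

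\emph{Step 4: order-preservation.} If $\mathcal{J}_1\subseteq\mathcal{J}_2$, then $\mathbb{C}_u[X,\mathcal{J}_1]\subseteq\mathbb{C}_u[X,\mathcal{J}_2]$, and taking closures gives $I(\mathcal{J}_1)\subseteq I(\mathcal{J}_2)$.

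The only mildly delicate point is the support computation for products in Step 2 when $p=\infty$, which is handled as described there. Everything else is formal.
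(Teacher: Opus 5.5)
Your proposal is correct and follows essentially the same argument as the paper. Both show $\supp(BB')\subseteq\supp(B)\circ\supp(B')\in\mathcal{J}$ for $B\in\mathbb{C}_u[X,\mathcal{J}]$ and $B'\in\mathbb{C}_u[X,\mathcal{E}]$, pass to closures by norm approximation, and read off geometricity and order-preservation from $\mathbb{C}_u[X,\mathcal{J}]\subseteq\mathbb{C}_u(I(\mathcal{J}))$ and monotonicity of closures. Your explicit check of the matrix-product formula for $p=\infty$ via finitely supported columns is a detail the paper leaves implicit.
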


\begin{proof}
    It is clear that $\mathbb{C}_u[X,\mathcal{J}]$ is a subalgebra of $B^p_u(X)$ and $I(\mathcal{J})$ is a closed subalgebra. For any $\varepsilon>0$, $T\in I(\mathcal{J})$ and $T'\in B^p_u(X)$, there exist $B\in\mathbb{C}_u[X,\mathcal{J}]$ and $B'\in\mathbb{C}_u[X,\mathcal{E}]$ such that 
    \[
    \|T-B\|_p\leq\varepsilon,\|T'-B'\|_p\leq\varepsilon.
    \]
    Then we have
    \begin{align*}
        \|TT'-BB'\|_p&\leq\|TT'-TB'\|_p+\|TB'-BB'\|_p\\
        &\leq\|T\|_p\|T'-B'\|_p+\|T-B\|_p\|B'\|_p\\
        &\leq(\|T\|_p+\|T'\|_p)\varepsilon+\varepsilon^2
    \end{align*}
    and
    \[\supp(BB')\subseteq\supp(B)\circ\supp(B')\in\mathcal{J}.
    \]
    Hence, $BB'\in\mathbb{C}_u[X,\mathcal{J}]$ and then $TT'\in I(\mathcal{J})$. Similarly, $T'T\in I(\mathcal{J})$. In this way, $I(\mathcal{J})$ is an ideal of $B^p_u(X)$ and by definition is a geometric ideal.

    It is easy to check that this correspondence is order-preserving.
\end{proof}

\begin{rem}\label{rem:ideal}
    Let $\mathcal{J}$ be an ideal of $\mathcal{E}$.
    \begin{enumerate}
    \item By definition, we can see that $\mathbb{C}_u[X,\mathcal{J}]$ is an ideal of $\mathbb{C}_u[X,\mathcal{E}]$.
    
    \item It is clear that $I(\mathcal{J})$ is not only a geometric ideal of $B^p_u(X)$, but also the $\ell^p$ uniform Roe algebra $B^p_u(X, \mathcal{J})$ on the coarse structure $\mathcal{J}\subseteq\mathcal{E}$.
    \end{enumerate}
\end{rem}

A natural question is whether every geometric ideal of $B^p_u(X,\mathcal{E})$ is exactly also an $\ell^p$ uniform Roe algebra $B^p_u(X, \mathcal{J})$ for some ideal $\mathcal{J}$ of $\mathcal{E}$. We have the following:

\begin{prop} (cf. \cite[Proposition 4.2]{CW04})\label{prop:lattice3}
    There is a one-to-one correspondence
    \[
    \begin{tikzcd}
  \{\text{ideals of the coarse structure $\mathcal{E}$}\} \arrow[d, "\lambda_4", shift left=1ex] \\
  \{\text{geometric ideals of the $\ell^p$ uniform Roe algebra $B^p_u(X)$}\} \arrow[u, "\lambda_3", shift left=1ex]
    \end{tikzcd}
    \]
    i.e., $\lambda_3|_{I_0[B^p_u(X)]}\lambda_4=id_{\mathcal{J}[\mathcal{E}]}$ and $\lambda_4\lambda_3|_{I_0[B^p_u(X)]}=id_{I_0[B^p_u(X)]}$. Equivalently, for any ideal $\mathcal{J}$ of $\mathcal{E}$, we have
    \[
    \mathcal{J}=\mathcal{J}(I(\mathcal{J})),
    \]
    and for any geometric ideal $I$ of $B^p_u(X)$, we have
    \[
    I=I(\mathcal{J}(I)).
    \]
\end{prop}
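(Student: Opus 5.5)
We prove the two identities $\mathcal{J}=\mathcal{J}(I(\mathcal{J}))$ and $I=I(\mathcal{J}(I))$ separately. Since $\lambda_3$ and $\lambda_4$ are already known to be well defined (Propositions \ref{prop:lattice1} and \ref{prop:lattice2}), these identities give the bijection.

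For the first identity, take $E\in\mathcal{J}$. Then $\chi_E\in\mathbb{C}_u[X,\mathcal{J}]\subseteq I(\mathcal{J})$, and $E=\supp_1(\chi_E)$, so $E\in\mathcal{J}(I(\mathcal{J}))$. Conversely, let $A=\supp_\varepsilon(T)$ with $T\in I(\mathcal{J})$. Choose $B\in\mathbb{C}_u[X,\mathcal{J}]$ with $\|T-B\|_p<\varepsilon/2$. By Lemma \ref{prop:norm}, $\|T-B\|_{\sup}<\varepsilon/2$. Hence every $(x,y)$ with $|T(x,y)|\geq\varepsilon$ satisfies $|B(x,y)|>\varepsilon/2>0$, so $A\subseteq\supp(B)\in\mathcal{J}$. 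Since $\mathcal{J}$ is closed under subsets, $A\in\mathcal{J}$. This is the same argument as in Lemma \ref{lem:supp}.

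For the second identity, let $I$ be geometric. First we show $I(\mathcal{J}(I))\subseteq I$, and it suffices to show $\mathbb{C}_u[X,\mathcal{J}(I)]\subseteq I$ since $I$ is closed. Take $S$ with $\supp(S)=A\in\mathcal{J}(I)$, say $A=\supp_\varepsilon(T)$ with $T\in I$.
\begin{itemize}
\item By Proposition \ref{thm:keythm}(ii), $\chi_A=\chi_{\supp_\varepsilon(T)}\in I$.
\item Then $S=S\circ\chi_A=\mathcal{M}_{S}(\chi_A)$, where $S\in\mathbb{C}_u[X,\mathcal{E}]$ is used as a Schur multiplier symbol.
\item By Proposition \ref{prop:truncation}(iii), $\mathcal{M}_S(\chi_A)\in I$, so $S\in I$.
\end{itemize}
Alternatively, $r(\supp(S))\subseteq r(A)$, so $S=\chi_{r(\supp_\varepsilon(T))}S\in I$.

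Next we show $I\subseteq I(\mathcal{J}(I))$. Since $I$ is geometric, $\mathbb{C}_u(I)$ is dense in $I$, so it suffices to show $\mathbb{C}_u(I)\subseteq\mathbb{C}_u[X,\mathcal{J}(I)]$. Take $T\in\mathbb{C}_u(I)$. Its entries are bounded but may accumulate at $0$, so $\supp(T)$ need not equal any $\supp_\varepsilon(T)$ directly. We handle this with truncations. By Remark \ref{rem:lims}, $T=\lim_{\varepsilon\to0}T_\varepsilon$. Each $T_\varepsilon$ satisfies $\supp(T_\varepsilon)=\supp_\varepsilon(T)\in\mathcal{J}(I)$, so $T_\varepsilon\in\mathbb{C}_u[X,\mathcal{J}(I)]$. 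Therefore $T$ lies in the closure $I(\mathcal{J}(I))$.

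The main delicate point is this truncation limit. It relies on the partial-translation decomposition: on each partial translation the operator norm equals the sup norm (Lemma \ref{prop:norm}), so $\|T-T_\varepsilon\|_p\leq N\varepsilon$ with $N$ given by Lemma \ref{lem:approximate1}. This estimate is already encoded in Remark \ref{rem:lims}, so no new work is needed beyond citing it.
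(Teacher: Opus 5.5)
Your proof is correct and follows essentially the same route as the paper. The paper approximates $\chi_{E'}$ (obtained via Proposition~\ref{thm:keythm}(ii)) rather than $T$ itself for $\mathcal{J}(I(\mathcal{J}))\subseteq\mathcal{J}$, uses Proposition~\ref{prop:truncation}(iv) (your ``alternative'' $S=\chi_{r(\supp_\varepsilon(T))}S$) for $I(\mathcal{J}(I))\subseteq I$, and gets $I\subseteq I(\mathcal{J}(I))$ from the density of the truncations $T_\varepsilon$ exactly as you do. The only slip is expository: the reduction should read ``it suffices to show $\mathbb{C}_u(I)\subseteq I(\mathcal{J}(I))$'', because $\mathbb{C}_u(I)\subseteq\mathbb{C}_u[X,\mathcal{J}(I)]$ is false in general (e.g.\ a diagonal operator with entries $1/n$ lying in the compact ideal), and this is precisely what your truncation argument gets around.
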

\begin{proof}
    For any $E\in\mathcal{J}\subseteq\mathcal{E}$, we have $\chi_E\in\mathbb{C}_u[X,\mathcal{J}]\subseteq I(\mathcal{J})$. Since $E=\supp_1(\chi_E)\in\mathcal{J}(I(\mathcal{J}))$, it follows that $\mathcal{J}\subseteq\mathcal{J}(I(\mathcal{J}))$.

    For any $E'\in\mathcal{J}(I(\mathcal{J}))$, there exist $T\in I(\mathcal{J})$ and $\varepsilon>0$ such that
    \[
    \supp_\varepsilon(T)=E'.
    \] It follows from Proposition \ref{thm:keythm}(ii) that
    \[
    \chi_{E'}=\chi_{\supp_\varepsilon(T)}\in\mathbb{C}_u(\langle\ T\ \rangle)\subseteq I(\mathcal{J})
    \]
    and there exists $B\in\mathbb{C}_u[X,\mathcal{J}]$ such that
    \[
    \|\chi_{E'}-B\|_{\sup}\leq\|\chi_{E'}-B\|_p\leq 1/2.
    \]
    In this way, $E'\subseteq\supp(B)\in\mathcal{J}$,
    i.e., $\mathcal{J}(I(\mathcal{J}))\subseteq\mathcal{J}$. Hence, $\mathcal{J}=\mathcal{J}(I(\mathcal{J}))$.

    For any $T\in I, \varepsilon>0$, we have
    \[
    \supp(T_\varepsilon)=\supp_\varepsilon(T)\in\mathcal{J}(I).
    \]
    Then $T_\varepsilon\in\mathbb{C}_u[X,\mathcal{J}(I)]\subseteq I(\mathcal{J}(I))$,
    i.e.,
    \[
    \{T_\varepsilon: T\in I, \varepsilon>0\}\subseteq\mathbb{C}_u[X,\mathcal{J}(I)].
    \]
    Since $I$ is a geometric ideal, $\mathbb{C}_u(I)=\mathbb{C}_u[X,\mathcal{E}]\cap I$ is dense in $I$, and it follows from Proposition \ref{prop:truncation}(ii) that $\{T_\varepsilon: T\in I, \varepsilon>0\}$ is dense in $I$.
    In this way,
    \[
    I=\overline{\{T_\varepsilon: T\in I, \varepsilon>0\}}^{\|\cdot\|_p}\subseteq\overline{\mathbb{C}_u[X,\mathcal{J}(I)]}^{\|\cdot\|_p}=I(\mathcal{J}(I)).
    \]
    It remains to prove $I(\mathcal{J}(I))\subseteq I$. For any $T\in\mathbb{C}_u[X,\mathcal{J}(I)]$, by definition, $\supp(T)\in \mathcal{J}(I)$. Then there exist $M\in I$ and $\varepsilon>0$ such that $\supp(T)=\supp_\varepsilon(M)$. It follows from Proposition \ref{prop:truncation}(iv) that $T\in I$. Therefore, $\mathbb{C}_u[X,\mathcal{J}(I)]\subseteq I$, which implies that $I(\mathcal{J}(I))\subseteq I$.
\end{proof}

This implies that all the geometric ideals of $B^p_u(X,\mathcal{E})$ are exactly also the $\ell^p$ uniform Roe algebras $B^p_u(X, \mathcal{J})$ for all ideals $\mathcal{J}$ of $\mathcal{E}$.

\begin{thm}\label{thm:geometric}
    For the coarse space $(X,\mathcal{E})$, we have the following results for the ideal structure of the coarse structure $\mathcal{E}$ and the $\ell^p$ uniform Roe algebra $B^p_u(X)$ ($B^p_u(X,\mathcal{E})$).
    \begin{enumerate}
        \item $\lambda_4$ is a lattice isomorphism from $\mathcal{J}[\mathcal{E}]$ to $I_0[B^p_u(X)]$;
        \item For any $1\leq p<q\leq\infty$, the lattices $I_0[B^p_u(X)]$ and $I_0[B^q_u(X)]$ are isomorphic. In particular, the lattices $I_0[B^p_u(X)]$ and $I_0[C^*_u(X)]$ are isomorphic.
    \end{enumerate}
\end{thm}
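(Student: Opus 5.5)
Part (i) will follow from Lemma \ref{lem:lattice} together with Proposition \ref{prop:lattice3}. By Proposition \ref{prop:lattice3}, $\lambda_4:\mathcal{J}[\mathcal{E}]\to I_0[B^p_u(X)]$ is a bijection with inverse $\lambda_3|_{I_0[B^p_u(X)]}$. By Proposition \ref{prop:lattice2}, $\lambda_4$ is order-preserving, and by Proposition \ref{prop:lattice1}, $\lambda_3$ is order-preserving. So both $\lambda_4$ and $\lambda_4^{-1}$ preserve order.

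One point must be checked first: $I_0[B^p_u(X)]$ has to be a lattice in its own right, since Lemma \ref{lem:lattice} applies only to two lattices. The meet is $I_1\cap I_2$. I would show this is geometric by using that $\lambda_4$ is an order-isomorphism of posets. Then $I(\mathcal{J}(I_1)\cap\mathcal{J}(I_2))$ is the greatest geometric ideal below both $I_1$ and $I_2$, so meets exist in the poset $I_0$. For joins I would take $\overline{I_1+I_2}$. Its controlled part contains $\mathbb{C}_u(I_1)+\mathbb{C}_u(I_2)$, and this set is dense in the closed sum, so the join is geometric.

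In fact, an order-isomorphism between posets automatically carries existing meets and joins to meets and joins. Since $\mathcal{J}[\mathcal{E}]$ is a lattice, $I_0[B^p_u(X)]$ inherits its meets and joins through $\lambda_4$, and Lemma \ref{lem:lattice} then gives the lattice isomorphism. I expect this point — making sure the lattice operations on $I_0$ are the ones transported by $\lambda_4$, and not just poset meets that might differ from intersection — to be the only subtle step. Intersection is the poset meet once we show $I_1\cap I_2$ is geometric. To do that, for $T\in I_1\cap I_2$ the truncations $T_\varepsilon$ lie in $\mathbb{C}_u(\langle T\rangle)\subseteq I_1\cap I_2$ by Proposition \ref{thm:keythm}. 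Also $T_\varepsilon$ has support in $\mathcal{J}(I_1)\cap\mathcal{J}(I_2)$. Approximating $T$ by elements of $\mathbb{C}_u(I_1)$ then shows that the $T_\varepsilon$ approximate $T$.

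Part (ii) follows by composition. For $p,q$ in the stated range, $\lambda_4^{(q)}\circ(\lambda_4^{(p)})^{-1}:I_0[B^p_u(X)]\to I_0[B^q_u(X)]$ is a composition of lattice isomorphisms through the $p$-independent lattice $\mathcal{J}[\mathcal{E}]$. Explicitly, it sends $\overline{\mathbb{C}_u[X,\mathcal{J}]}^{\|\cdot\|_p}$ to $\overline{\mathbb{C}_u[X,\mathcal{J}]}^{\|\cdot\|_q}$. Taking $q=2$ gives the statement for $C^*_u(X)=B^2_u(X)$. The same argument also covers $p=0$.
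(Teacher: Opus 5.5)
Your main line is the paper's proof. Proposition \ref{prop:lattice3} makes $\lambda_4$ a bijection with inverse $\lambda_3|_{I_0[B^p_u(X)]}$. Propositions \ref{prop:lattice1} and \ref{prop:lattice2} make both directions order-preserving. Lemma \ref{lem:lattice} then gives (i), and (ii) is the composite through the $p$-independent lattice $\mathcal{J}[\mathcal{E}]$. Your join argument via $\overline{I_1+I_2}$ is also fine.

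The flaw is in the step you single out as subtle, showing that $I_1\cap I_2$ is geometric. It is not true that $T_\varepsilon\to T$ for $T$ in a geometric ideal. Proposition \ref{prop:truncation}(ii) only says that the set of all truncations $\{S_\varepsilon: S\in I,\ \varepsilon>0\}$ is dense in $I$, not that each $T$ is the limit of its own truncations. For a counterexample, $C^*_u(X)=B^2_u(X)$ is a geometric ideal of itself. If $X$ is a box space of expanders, the ghost projection $P$ onto functions constant on each piece has $P_\varepsilon$ finitely supported, hence compact, while $\|P-P_\varepsilon\|_2=1$ for every $\varepsilon>0$. Approximating $T$ by some $S\in\mathbb{C}_u(I_1)$ does not rescue this: $S$ need not lie in $I_2$, and its truncations are not those of $T$.

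The conclusion is nevertheless true, via an approximate-unit argument instead of truncations. Write $\mathcal{J}_i=\mathcal{J}(I_i)$, so $I_i=I(\mathcal{J}_i)$ by Proposition \ref{prop:lattice3}. Given $T\in I_1\cap I_2$ and $\delta>0$, pick $A\in\mathbb{C}_u[X,\mathcal{J}_1]$ and $B\in\mathbb{C}_u[X,\mathcal{J}_2]$ with $\|T-A\|_p<\delta$ and $\|T-B\|_p<\delta$, and let $Y=r(\supp(A))$.
\begin{enumerate}
\item Since $\Delta_Y\subseteq\supp(A)\circ\supp(A)^{-1}\in\mathcal{J}_1$ and $\chi_YA=A$, we get $\|\chi_YT-T\|_p<2\delta$.
\item We have $\supp(\chi_YB)\subseteq\Delta_Y\circ\supp(B)$. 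This set lies in $\mathcal{J}_1$ because $\mathcal{J}_1$ is an ideal of $\mathcal{E}$, and in $\mathcal{J}_2$ because it is contained in $\supp(B)$.
\item Hence $\chi_YB\in\mathbb{C}_u[X,\mathcal{J}_1\cap\mathcal{J}_2]\subseteq\mathbb{C}_u(I_1\cap I_2)$ and $\|T-\chi_YB\|_p<3\delta$.
\end{enumerate}
So $I_1\cap I_2=I(\mathcal{J}_1\cap\mathcal{J}_2)$ is geometric. This is exactly what makes $I_0[B^p_u(X)]$ a genuine sublattice of $I[B^p_u(X)]$ with meet given by intersection. The paper asserts this in its definition of $I_0[B^p_u(X)]$ but does not verify it. As you yourself observe, the order-isomorphism argument alone already gives the isomorphism for the lattice structure induced by inclusion.
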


\begin{proof}
    (i) It follows from Lemma \ref{lem:lattice} and Propositions \ref{prop:lattice1}, \ref{prop:lattice2}, and \ref{prop:lattice3} that $\lambda_4$ is a lattice isomorphism from $\mathcal{J}[\mathcal{E}]$ to $I_0[B^p_u(X)]$.
    
    (ii) For any $1\leq p<q\leq\infty$, we have
    \[
	I_0[B^{p}_u(X)]\overset{\lambda_3}{\underset{\lambda_4}{\rightleftarrows}}\mathcal{J}[\mathcal{E}]\overset{\widetilde{\lambda_4}}{\underset{\widetilde{\lambda_3}}{\rightleftarrows}}I_0[B^{q}_u(X)].
	\]
	Specifically,
    \begin{align*}
		\widetilde{\lambda_4}\lambda_3: I_0[B^{p}_u(X)]&\rightarrow I_0[B^{q}_u(X)]\\
		I&\mapsto\widetilde{I}(\mathcal{J}(I))\\
        \lambda_4\widetilde{\lambda_3}: I_0[B^{q}_u(X)]&\rightarrow I_0[B^{p}_u(X)]\\
		\widetilde{I}&\mapsto I(\mathcal{J}(\widetilde{I})).
    \end{align*}
    are order-preserving maps and
	\begin{align*}
		(\lambda_4\widetilde{\lambda_3})(\widetilde{\lambda_4}\lambda_3)=id_{I_0[B^{p}_u(X)]},\\
		(\widetilde{\lambda_4}\lambda_3)(\lambda_4\widetilde{\lambda_3})=id_{I_0[B^{q}_u(X)]}.
	\end{align*}
	It follows from Lemma \ref{lem:lattice} that $\widetilde{\lambda_4}\lambda_3$ is a lattice isomorphism from $I_0[B^{p}_u(X)]$ to $I_0[B^{q}_u(X)]$. 
\end{proof}

    One sees that the correspondence between the geometric ideals of the algebras is built ``geometrically'':
    \[       \overline{\mathbb{C}_u[X,\mathcal{J}]}^{\|\cdot\|_{p}}=B^p_u(X,\mathcal{J})\longleftrightarrow B^q_u(X,\mathcal{J})=\overline{\mathbb{C}_u[X,\mathcal{J}]}^{\|\cdot\|_{q}}
    \]
    for every ideal $\mathcal{J}$ of $\mathcal{E}$.

\begin{rem}
    We should pay attention to the abuse of notation $\mathbb{C}_u[X,\mathcal{E}]$ for different $p$. It represents operators on different $\ell^p$ spaces, all of which have the same controlled propagation matrix form with respect to $\{\delta_x:x\in X\}$. 
\end{rem}

\subsection{Ghostly ideals in $\ell^p$ uniform Roe algebras}

Note that from Proposition \ref{prop:lattice1} and Proposition \ref{prop:lattice3}, we can see that there is an order-preserving surjection from the lattice $I[B^p_u(X)]$ of all ideals in $B^p_u(X)$ to the lattice $\mathcal{J}[\mathcal{E}]$ of all ideals of the coarse structure $\mathcal{E}$. In this way, we can divide the entire lattice $I[B^p_u(X)]$ into disjoint blocks of sub-lattices 
\[
I[\mathcal{J}]:=\{I\in I[B^p_u(X)]:\mathcal{J}(I)=\mathcal{J}\},
\]
for each $\mathcal{J}\in\mathcal{J}[\mathcal{E}]$. It remains to figure out the lattice structure of $I[\mathcal{J}]$.

\begin{defn}
    For any ideal $I$ of $B^p_u(X)$, we define the inner support of $I$ by
    \[
    \insupp(I):=\mathcal{J}(I)=\{\supp_\varepsilon(T):T\in I,\varepsilon>0\}.
    \]
\end{defn}
It follows from Proposition \ref{prop:lattice1} that the inner support $\insupp(I)$ of $I$ is an ideal of the underlying coarse structure $\mathcal{E}$, and each sub-lattice $I[\mathcal{J}]$ is the collection of all ideals in $B^p_u(X)$ with inner support $\mathcal{J}$. 

Moreover, from the proof of Proposition \ref{prop:lattice3} we can see that the geometric ideal $I(\mathcal{J})$ associated to $\mathcal{J}$ is the smallest element in $I[\mathcal{J}]$.
We may then naturally ask whether there exists a largest element in $I[\mathcal{J}]$. For $\mathcal{J}\in\mathcal{J}[\mathcal{E}]$, denote by 
    \[
    \widetilde{I}(\mathcal{J}):=\{T\in B^p_u(X):\supp_\varepsilon(T)\in\mathcal{J}\ \text{for every}\ \varepsilon>0\},
    \]
the collection of operators with $\varepsilon$-support in $\mathcal{J}$ for any $\varepsilon>0$. Then we have:

\begin{prop}\label{prop:ghostly}
    $\widetilde{I}(\mathcal{J})$ is an ideal in $B^p_u(X)$ with inner support 
    \[
    \insupp(\widetilde{I}(\mathcal{J}))=\mathcal{J},
    \]
    which we call a ghostly ideal associated to $\mathcal{J}$. Moreover, $\widetilde{I}(\mathcal{J})$ is the largest element in $I[\mathcal{J}]$.
\end{prop}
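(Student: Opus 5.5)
The plan is to verify, in order, that $\widetilde{I}(\mathcal{J})$ is a closed linear subspace, that it is a two-sided ideal, that its inner support is exactly $\mathcal{J}$, and that it contains every ideal with inner support $\mathcal{J}$. The main tool throughout will be the $\varepsilon/2$-approximation trick from Lemma \ref{lem:supp}, combined with the ideal property of $\mathcal{J}$.

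\emph{Linear structure.} If $T,S\in\widetilde{I}(\mathcal{J})$ and $\lambda\in\mathbb{C}$, then $\supp_\varepsilon(T+S)\subseteq\supp_{\varepsilon/2}(T)\cup\supp_{\varepsilon/2}(S)\in\mathcal{J}$. Also $\supp_\varepsilon(\lambda T)=\supp_{\varepsilon/|\lambda|}(T)$ for $\lambda\neq0$. Since $\mathcal{J}$ is closed under unions and subsets, $\widetilde{I}(\mathcal{J})$ is a subspace.

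\emph{Closedness.} Let $T_n\to T$ in $\|\cdot\|_p$ with $T_n\in\widetilde{I}(\mathcal{J})$. Choose $n$ with $\|T-T_n\|_p<\varepsilon/2$. By Lemma \ref{prop:norm}, $\|T-T_n\|_{\sup}<\varepsilon/2$, so $\supp_\varepsilon(T)\subseteq\supp_{\varepsilon/2}(T_n)\in\mathcal{J}$.

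\emph{Ideal property.} Take $T\in\widetilde{I}(\mathcal{J})$, $S\in B^p_u(X)$ and $\varepsilon>0$. First approximate $S$ by some $R\in\mathbb{C}_u[X,\mathcal{E}]$ with $\|S-R\|_p<\varepsilon/(2\|T\|_p+1)$, so that $\|TS-TR\|_{\sup}<\varepsilon/2$. It then suffices to show $\supp_{\varepsilon/2}(TR)\in\mathcal{J}$.

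To do this, split $T=T_\delta+(T-T_\delta)$ for a small $\delta>0$. The operator $T-T_\delta$ has sup-norm less than $\delta$, but its operator norm need not be small, and this is the step I expect to be the main obstacle. The way around it is to look at the entries of $(T-T_\delta)R$ directly. Write $F=\supp(R)$ and $N=n(F)$. Each entry satisfies
\[
|((T-T_\delta)R)(x,y)|\leq\sum_{z:(z,y)\in F}|(T-T_\delta)(x,z)||R(z,y)|\leq N\delta\|R\|_{\sup},
\]
because each column of $R$ has at most $N$ nonzero entries. Choose $\delta$ so that $N\delta\|R\|_{\sup}<\varepsilon/4$. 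Then
\[
\supp_{\varepsilon/2}(TR)\subseteq\supp_{\varepsilon/4}(T_\delta R)\subseteq\supp_\delta(T)\circ F.
\]
Here $\supp_\delta(T)\in\mathcal{J}$, and $\mathcal{J}$ is an ideal of $\mathcal{E}$, so the right-hand side lies in $\mathcal{J}$. The left multiplication $ST$ is handled symmetrically, using the row bound $\#N_{F^{-1}}$.

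\emph{Inner support and maximality.} Directly from the definition, $\insupp(\widetilde{I}(\mathcal{J}))\subseteq\mathcal{J}$. Conversely, each $\chi_E$ with $E\in\mathcal{J}$ has $\supp_\varepsilon(\chi_E)\subseteq E$, so $\chi_E\in\widetilde{I}(\mathcal{J})$, and $E=\supp_1(\chi_E)$ gives equality. Finally, if $I$ is any ideal with $\insupp(I)=\mathcal{J}$, then every $T\in I$ has $\supp_\varepsilon(T)\in\mathcal{J}$ for all $\varepsilon>0$, so $I\subseteq\widetilde{I}(\mathcal{J})$. Hence $\widetilde{I}(\mathcal{J})$ is the largest element of $I[\mathcal{J}]$.
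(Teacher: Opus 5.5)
Your proof is correct and follows essentially the same route as the paper. You reduce to a controlled-propagation factor and show that the $\varepsilon$-support of the product lies in $\supp_{\delta}(T)\circ\supp(R)$, which is in $\mathcal{J}$ by the ideal property; inner support and maximality then follow directly from the definitions. The only difference is that the paper also splits $\supp(S)$ into partial translations and then gets $\supp_\varepsilon(TS)\subseteq\supp_{\varepsilon/\|S\|_p}(T)\circ\supp(S)$ from a single-term estimate, whereas you bound by the row/column count $n(F)$ and use a $\delta$-truncation split — correct, but the split is unnecessary, since pigeonholing over the at most $n(F)$ summands gives the inclusion directly.
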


\begin{proof}
    It is clear that $\widetilde{I}(\mathcal{J})$ is a closed linear subspace of $B^p_u(X)$. Given $T\in\widetilde{I}(\mathcal{J})$ and $S\in B^p_u(X)$, we need to show $TS,ST\in\widetilde{I}(\mathcal{J})$. As $\widetilde{I}(\mathcal{J})$ is closed, we only need to consider $S\in \mathbb{C}_u[X]$. Without loss of generality, we assume that $\supp(S)$ is a partial translation, then $\|S\|_{\sup}=\|S\|_p$. 
    
    Since $\mathcal{J}$ is an ideal of $\mathcal{E}$, we have 
    \[
    \supp_\varepsilon(TS)\subseteq\supp_{\varepsilon/\|S\|_p}(T)\circ\supp(S)\in\mathcal{J},
    \]
    \[
    \supp_\varepsilon(ST)\subseteq\supp(S)\circ\supp_{\varepsilon/\|S\|_p}(T)\in\mathcal{J},
    \]
    for every $\varepsilon>0$. Thus, $TS,ST\in\widetilde{I}(\mathcal{J})$, $\widetilde{I}(\mathcal{J})$ is an ideal in $B^p_u(X)$. By definition $\insupp(\widetilde{I}(\mathcal{J}))=\mathcal{J}$, and any ideal $I$ in $I[\mathcal{J}]$ is contained in $\widetilde{I}(\mathcal{J})$. 
\end{proof}

\begin{rem}
    The idea of defining a ghostly ideal comes from \cite{WZ25} for the case of $p=2$ and $X$ a uniformly discrete metric space with bounded geometry. The definition here may look different from \cite[Definition 4.1]{WZ25}, but we will show in Proposition \ref{prop:restrictive} that they are actually the same.  
\end{rem}

\section{Relating to ideals in groupoid $L^p$ operator algebras}\label{section5}

In this section, we shall make connections with results about ideals in groupoid $L^p$ operator algebras when $p\in[1,\infty]$.

For a locally compact Hausdorff \'{e}tale groupoid $G$, we denote its unit space by $G^{(0)}$ and we denote by $C_c(G)$ the space of compactly supported continuous functions on $G$.
If $u\in G^{(0)}$, then we denote by $G_u$ the source fiber $s^{-1}(u)$ and by $\lambda_u$ the representation of $C_c(G)$ on $\ell^p(G_u)$ given by
\[ \lambda_u(f)\xi(\alpha)=\sum_{\gamma\in G_u}f(\alpha\gamma^{-1})\xi(\gamma) \]
for $f\in C_c(G)$, $\xi\in\ell^p(G_u)$, and $\alpha\in G_u$.

The $p$-reduced norm on $C_c(G)$ is defined to be
\[ \Vert f\Vert_{p,red}:=\sup_{u\in G^{(0)}}\Vert \lambda_u(f)\Vert_{B(\ell^p(G_u))}, \]
and the reduced $L^p$ operator algebra of $G$, denoted by $F^p_{red}(G)$, is defined to be the completion of $C_c(G)$ with respect to the $p$-reduced norm.

Now, we turn to the coarse groupoid $G(X)$ of a uniformly locally finite coarse space $(X,\mathcal{E})$ defined in Definition \ref{def:coarsegroupoid}. Before clarifying the relation between the $\ell^p$ uniform Roe algebra $B^p_u(X)$ and the reduced $L^p$ operator algebra $F^p_{red}(G(X))$ of the coarse groupoid $G(X)$, we need to give some basic properties of $G(X)$ from \cite{STY, Roe03} and introduce the tools of limit coarse spaces and limit operators. Most of the ideas and techniques are motivated by \cite{SW17} which uses a metric approach.

\subsection{Limit coarse spaces and limit operators}

For any partial translation $E\in\Gamma_\mathcal{E}$ defined in Definition \ref{def:partial}, since the maps $r$ and $s$ are both injective on $E$, setting $S=s(E)$ and $R=r(E)$, we have a bijection
$t:S\to R$, and $E=\Gamma_t:=\{(t(x),x):x\in S\}$, where $\Gamma_t$ is the graph of the function $t$. 
Hence, we may also call a bijection $t:S\to R$ that has controlled graph a partial translation.

Recall that every uniformly locally finite coarse structure $\mathcal{E}$ can be generated by $\Gamma_{\mathcal{E}}$, the collection of all partial translations in $\mathcal{E}$. So we have
\[
G(X,\mathcal{E}):=\bigcup_{E\in\mathcal{E}}\overline{E}^{\beta(X\times X)}=\bigcup_{\Gamma_t\in\Gamma_{\mathcal{E}}}\overline{\Gamma_t}^{\beta(X\times X)}\subseteq\beta(X\times X),
\]
and
\[
    G(X,\mathcal{E})^{(0)}=\bigcup_{ A\subseteq X,\Delta_A\in\mathcal{E}}\overline{A}^{\beta X}=\bigcup_{ E\in\mathcal{E}}\overline{r(E)}^{\beta X}=\bigcup_{ \Gamma_t\in\Gamma_{\mathcal{E}}}\overline{r(\Gamma_t)}^{\beta X}\subseteq\beta X.
\]
It allows us to use partial translations to figure out the structure of the coarse groupoid $G(X)$. It was shown in \cite[Lemma 2.7]{STY} that the map $(r,s):G(X)\to\beta X\times\beta X$ is injective, as a set $G(X,\mathcal{E})$ identifies with $\bigcup_{E\in\mathcal{E}}\overline{E}^{\beta X\times\beta X}$, and the groupoid operations are the restrictions of the pair groupoid operations from $\beta X\times\beta X$.

\begin{defn}\cite[Definition 3.2]{SW17}\label{def:compatible}
    Fix an ultrafilter $\omega\in G(X)^{(0)}$. A partial translation $t:S\to R$ on $X$ is compatible with $\omega$ if $\omega(S)=1$ (i.e. $\omega$ is in the closure of $S$ in $\beta X$). If $\omega$ is compatible with $t$, then considering $t$ as a function $t:S\to\beta X$ we define
    \[
    t(\omega):=\lim_{\omega}t\in G(X)^{(0)}
    \]
    to be the ultralimit of $t$ along $\omega$, or the $\omega$-limit of $t$. 
\end{defn}

For a fixed ultrafilter $\omega\in G(X)^{(0)}$, an ultrafilter $\alpha\in G(X)^{(0)}$ is compatible with $\omega$ if there exists a partial translation $t$ which is compatible with $\omega$ and which satisfies $t(\omega)=\alpha$. Denote this relation by $\alpha\overset{t}{\sim}\omega$, or by $\alpha\sim\omega$ when the partial translation $t$ is understood.

Denote by $X(\omega)$ the collection of all ultrafilters in $G(X)^{(0)}\subseteq\beta X$ that are compatible with $\omega$.

\begin{lem} (cf. \cite[Remark 3.3 and Lemma C.3]{SW17}, \cite[discussion in 10.18-10.24]{Roe03})\label{lem:compatible}
    For a given ultrafilter $\omega\in G(X)^{(0)}$, the following are equivalent:
    \begin{enumerate}
        \item The ultrafilter $\alpha\in G(X)^{(0)}$ is compatible with $\omega\in G(X)^{(0)}$, i.e., $\alpha\in X(\omega)$;
        \item There exists a partial translation $t:S\to R$ such that $\omega(S)=1$ and such that for any $Z\subseteq X$, we have $\alpha(Z)=1$ if and only if $\omega(t^{-1}(Z\cap R))=1$;
        \item There exists a partial translation $t:S\to R$ and \[\delta\in\overline{\Gamma_t}^{\beta(X\times X)}:=\overline{\{(t(x),x):x\in S\}}^{\beta(X\times X)}\] such that $r(\delta)=\alpha, s(\delta)=\omega$. Or equivalently,
            \[
            (\alpha,\omega)\in\overline{\Gamma_t}^{\beta X\times\beta X}=\overline{\{(t(x),x):x\in S\}}^{\beta X\times\beta X};
            \]
        \item There exists $E\in\mathcal{E}$ such that $(\alpha,\omega)\in\overline{E}^{\beta X\times\beta X}$;
        \item $\alpha\in r(G(X)_\omega)$.    
    \end{enumerate} 
\end{lem}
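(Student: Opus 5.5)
I will prove the cycle (i)$\Rightarrow$(ii)$\Rightarrow$(iii)$\Rightarrow$(iv)$\Rightarrow$(v)$\Rightarrow$(i). Throughout, I use that for a partial translation $t:S\to R$ and $\omega\in\overline{S}$, the ultralimit $t(\omega)=\lim_\omega t$ in the compact space $\beta X$ is the ultrafilter $\{Z\subseteq X: t^{-1}(Z\cap R)\in\omega\}$. Recall that $t(Z\cap S)=Z'$ ranges over subsets of $R$.

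\emph{(i)$\Leftrightarrow$(ii).} Suppose $\alpha=t(\omega)$. Then $\alpha(Z)=1$ iff $\lim_\omega \chi_{\overline Z}\circ t=1$, iff $\{x\in S: t(x)\in Z\}\in\omega$, iff $\omega(t^{-1}(Z\cap R))=1$. Conversely, the condition in (ii) says exactly that $\alpha$ is the pushforward ultrafilter $t_*\omega$, which equals $\lim_\omega t$.

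\emph{(ii)$\Rightarrow$(iii).} Consider the bijection $\Phi:S\to\Gamma_t$, $x\mapsto (t(x),x)$. Push $\omega$ forward to an ultrafilter $\delta=\Phi_*\omega$ on $X\times X$ concentrated on $\Gamma_t$, so $\delta\in\overline{\Gamma_t}^{\beta(X\times X)}$. The extensions of $r,s$ are continuous and satisfy $r\circ\Phi=t$ and $s\circ\Phi=\mathrm{id}_S$. Hence $s(\delta)=\omega$ and $r(\delta)=\lim_\omega t=\alpha$. The equivalent formulation in $\beta X\times\beta X$ follows by applying the continuous map $(r,s)$ and using \cite[Lemma 2.7]{STY}, which identifies $\overline{\Gamma_t}^{\beta(X\times X)}$ with its image.

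\emph{(iii)$\Rightarrow$(iv)} is immediate with $E=\Gamma_t$. For \emph{(iv)$\Rightarrow$(v)}, I use the identification of $G(X)$ with $\bigcup_E\overline{E}^{\beta X\times\beta X}$ from \cite[Lemma 2.7]{STY}. Under it, $(\alpha,\omega)$ is an element $\gamma\in G(X)$ with $s(\gamma)=\omega$ and $r(\gamma)=\alpha$, so $\alpha\in r(G(X)_\omega)$.

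\emph{(v)$\Rightarrow$(i)} is the main step. Take $\gamma\in G(X)_\omega$ with $r(\gamma)=\alpha$. Then $\gamma\in\overline{E}$ for some $E\in\mathcal{E}$. By Lemma \ref{lem:CW}(ii), write $E=E_1\sqcup\dots\sqcup E_l$ with each $E_i$ a partial translation. Since $\gamma$ is an ultrafilter on $X\times X$ containing $E$, it contains exactly one $E_i=\Gamma_t$, with $t:S\to R$. Then $s(\gamma)=\omega$ forces $S=s(\Gamma_t)\in\omega$, so $t$ is compatible with $\omega$. I then need $\gamma=\Phi_*\omega$. For $A\subseteq \Gamma_t$ we have $A=\Phi(s(A))$. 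Since $s$ is injective on $\Gamma_t$, $A\in\gamma$ iff $s(A)\in s_*\gamma=\omega$. Hence $\gamma=\Phi_*\omega$, and $\alpha=r(\gamma)=\lim_\omega t=t(\omega)$. The delicate point is justifying that the extended $s$ on $\overline{\Gamma_t}$ agrees with the pushforward of ultrafilters; this follows from the Stone--\v{C}ech universal property applied to $s|_{X\times X}:X\times X\to X\subseteq\beta X$. Finally, all limits lie in $G(X)^{(0)}$, since $\alpha$ has $R\in\alpha$ and $\Delta_R\in\mathcal{E}$ because $\Delta_R=\Gamma_t\circ\Gamma_t^{-1}$.
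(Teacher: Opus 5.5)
Your proof is correct. Its cycle of implications matches the paper's, but you handle the two substantive steps differently.

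\emph{(ii)$\Rightarrow$(iii).} The paper takes a net $(x_\lambda)$ in $S$ converging to $\omega$ and passes to a subnet along which $t(x_\lambda)\to\alpha$. You instead name the limit point explicitly as the pushforward $\delta=\Phi_*\omega$ under $\Phi:x\mapsto(t(x),x)$. You then read off $r(\delta)=t_*\omega$ and $s(\delta)=\omega$ from functoriality of the Stone--\v{C}ech extension. This is cleaner and avoids the somewhat informal subnet bookkeeping.

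\emph{(v)$\Rightarrow$(i).} The paper simply cites \cite[Lemma C.3]{SW17}, which is proved for metric spaces. You give a self-contained argument that works for any uniformly locally finite coarse structure:
\begin{enumerate}
\item decompose $E$ into finitely many disjoint partial translations via Lemma \ref{lem:CW}(ii);
\item note that the ultrafilter $\gamma$ must contain exactly one piece $\Gamma_t$;
\item get $S\in\omega$ from $s_*\gamma=\omega$;
\item use injectivity of $s$ on $\Gamma_t$ to conclude $\gamma=\Phi_*\omega$, hence $r(\gamma)=t_*\omega=t(\omega)$.
\end{enumerate}
Your closing check that $\Delta_R=\Gamma_t\circ\Gamma_t^{-1}\in\mathcal{E}$ also confirms directly that $t(\omega)$ lands in $G(X)^{(0)}$, as Definition \ref{def:compatible} requires.

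The paper's route buys brevity. Yours verifies, in the coarse-space setting, the one step the paper imports from the metric literature. It also shows that an arrow of $G(X)$ in $\overline{\Gamma_t}$ with source $\omega$ is forced to be $\Phi_*\omega$.

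A minor point: in the ``equivalently'' clause of (iii) and in (iv)$\Rightarrow$(v), you invoke the injectivity part of \cite[Lemma 2.7]{STY}, but you do not need it. Continuity of $(r,s)$ and compactness of $\overline{E}^{\beta(X\times X)}$ already show that $(r,s)$ maps this set onto $\overline{E}^{\beta X\times\beta X}$, and only surjectivity is used there.
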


\begin{proof}
    (i) $\Rightarrow$ (ii) by \cite[Remark 3.3]{SW17} is just unraveling the definition of the ultralimit $\lim_\omega$;
    
      (ii) $\Rightarrow$ (iii) Since $\omega(S)=1$, let $\{x_\lambda\}_{\lambda\in\Lambda}$ be a net in $S$ that converges to $\omega$, then (possibly after passing to a subnet of $x_\lambda$ and letting $Z:=\bigcup_\lambda\{t(x_\lambda)\}$) we have $(t(x_\lambda))$ converging to $\alpha$, thus $(\alpha,\omega)\in\overline{\Gamma_t}^{\beta X\times\beta X}$;
    
    (iii) $\Rightarrow$ (iv) and (iv) $\Rightarrow$ (v) are obvious; 
    
    (v) $\Rightarrow$ (i) by \cite[Lemma C.3]{SW17}.
\end{proof}

\begin{rem}\label{rem:compatible}\leavevmode
    \begin{enumerate}
        \item For a given $\omega\in X\subseteq G(X)^{(0)}$ (i.e., $\omega$ is a principal ultrafilter), it follows from (ii) that if an ultrafilter $\alpha\in G(X)^{(0)}$ is compatible with $\omega$, then $\alpha$ is necessarily a principal ultrafilter. Moreover, we have $\{(\alpha,\omega)\}\in\mathcal{E}$ for any $\alpha\in X\subseteq G(X)^{(0)}$ (since $\mathcal{E}$ is coarse connected). Combining with (iv) above, we have $\alpha\in X(\omega)$. Thus, $X(\omega)$, the collection of ultrafilters compatible with $\omega$, consists precisely of all principal ultrafilters on $X$, i.e., all points of $X$ itself.
        \item For ultrafilters in $G(X)^{(0)}=\bigcup_{ A\subseteq X,\Delta_A\in\mathcal{E}}\overline{A}^{\beta X}$, it follows from (iii) that compatibility $\sim$ is an equivalence relation on $G(X)^{(0)}$, and $X\subseteq G(X)^{(0)}$ is one of the equivalence classes. Moreover, for each $\omega\in G(X)^{(0)}$, from \cite[Lemma C.3]{SW17}, the map
        \[
        F:X(\omega)\to G(X)_\omega,\ \alpha\mapsto(\alpha,\omega)
        \] is a bijection. $X(\omega)$ is the smallest invariant subset of $G(X)^{(0)}$ containing $\omega$. Since any open subset of $G(X)^{(0)}$ necessarily contains points in $X$, it follows that $X$ is the only invariant open subset in the collection $\{X(\omega)\}_{\omega\in G(X)^{(0)}}$ of minimum invariant subsets, and $X$ is the smallest open (and dense) invariant subset of $G(X)^{(0)}$. Any invariant open (and dense) subset is of the form $X\sqcup\bigcup_{\omega\in I}X(\omega)$ for some $I\subseteq \partial G(X)^{(0)}$, where $\partial G(X)^{(0)}:=G(X)^{(0)}\setminus X$.
        \item For a partial translation $t:S\to R$, we have
        \[
        \overline{\Gamma_t}^{\beta X\times\beta X}=\bigcup_{\omega\in G(X)^{(0)}}\{(t(\omega),\omega):\omega(S)=1\}.
        \]
    \end{enumerate}
\end{rem}

The following lemma from \cite{SW17} shows an essential uniqueness statement: if $\omega$ is an ultrafilter, then
any two partial translations $t_1, t_2$ that are compatible with $\omega$ and such that $t_1(\omega)=t_2(\omega)$ are essentially the same, where ``essentially'' means ``off'' a set of $\omega$-measure zero.

\begin{lem}\cite[Lemma 3.4]{SW17}
    Let $(X,\mathcal{E})$ be a uniformly locally finite coarse space, and $\omega$ be an ultrafilter in $G(X)^{(0)}\subseteq\beta X$. Say $t_1: S_1\to R_1$ and $t_2: S_2\to R_2$ are two partial translations compatible with $\omega$ such that $t_1(\omega)=t_2(\omega)$. Then if
    \[
    S:=\{x\in S_1\cap S_2: t_1(x)=t_2(x)\},
    \]
    we have $\omega(S)=1$.
\end{lem}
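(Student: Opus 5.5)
We argue directly from the ultrafilter description of $t_i(\omega)$ in Lemma \ref{lem:compatible}(ii). Suppose, for contradiction, that $\omega(S)\neq 1$. Since $\omega(S_1\cap S_2)=1$ (ultrafilters are closed under finite intersections), we get $\omega(S')=1$, where
\[
S':=\{x\in S_1\cap S_2: t_1(x)\neq t_2(x)\}.
\]
The aim is then to split $S'$ into finitely many pieces on each of which the images of $t_1$ and $t_2$ are disjoint. One piece then carries $\omega$-measure $1$, and the two ultralimits would give that piece's image sets incompatible measures.

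To build the splitting, consider the relation
\[
E:=\Gamma_{t_2}\circ\Gamma_{t_1}^{-1}=\{(t_2(x),t_1(x)):x\in S_1\cap S_2\}.
\]
It is an entourage, since $\mathcal{E}$ is closed under inverses and compositions. Because $t_1$ and $t_2$ are injective, $E$ is itself a partial translation, namely the graph of $u:=t_2\circ t_1^{-1}$ on $t_1(S_1\cap S_2)$. On $t_1(S')$ the map $u$ has no fixed points. Apply Lemma \ref{lem:CW}(i) to $E$ to partition $X=X_1\cup\cdots\cup X_m$ into $E$-separated pieces. For $y=t_1(x)\in X_j$ with $x\in S'$, we have $(u(y),y)\in E$ and $u(y)\neq y$, so $u(y)\notin X_j$.

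Set $S'_j:=S'\cap t_1^{-1}(X_j)$. Then $S'=\bigcup_j S'_j$, so $\omega(S'_j)=1$ for some $j$. Let $Z:=t_1(S'_j)\subseteq X_j$. By Lemma \ref{lem:compatible}(ii) applied to $t_1$, we have $\alpha(Z)=1$, where $\alpha=t_1(\omega)$, because $t_1^{-1}(Z\cap R_1)\supseteq S'_j$. On the other hand $t_2(S'_j)\cap X_j=\emptyset$, so $t_2^{-1}(X_j\cap R_2)\cap S'_j=\emptyset$. Applying Lemma \ref{lem:compatible}(ii) to $t_2$, and using $t_2(\omega)=\alpha$, gives $\alpha(X_j)=0$, since the set $t_2^{-1}(X_j\cap R_2)$ misses the $\omega$-full set $S'_j$. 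But $Z\subseteq X_j$ and $\alpha(Z)=1$, which contradicts $\alpha(X_j)=0$. Hence $\omega(S)=1$.

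The only real obstacle is producing the fixed-point-free colouring, and Lemma \ref{lem:CW}(i) supplies it directly through uniform local finiteness. Everything else is routine bookkeeping of ultrafilter measures via the equivalence (i)$\Leftrightarrow$(ii) of Lemma \ref{lem:compatible}.
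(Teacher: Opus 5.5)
Your proof is correct. The paper does not prove this lemma; it only cites \cite[Lemma 3.4]{SW17}, and your argument follows essentially the same route as that proof: split the disagreement set into finitely many pieces on which $t_1$ and $t_2$ have disjoint images, pick a piece of full $\omega$-measure, and contradict $t_1(\omega)=t_2(\omega)$ there, with Lemma \ref{lem:CW}(i) applied to $\Gamma_{t_2}\circ\Gamma_{t_1}^{-1}$ replacing the bounded-geometry partition of the metric setting. One citation detail: Lemma \ref{lem:compatible}(ii) is phrased existentially, but the identity $\alpha(Z)=\omega(t_i^{-1}(Z\cap R_i))$ for the specific $t_i$ with $t_i(\omega)=\alpha$ is just the definition of the ultralimit (the (i)$\Rightarrow$(ii) step), which is all you actually use.
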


In this way, for a given ultrafilter $\omega$, we denote by $[t_\alpha]$ the class of partial translations that make $\alpha$ compatible with $\omega$, i.e.,
\[
[t_\alpha]:=\{t:S\to R: \Gamma_t\in\Gamma_\mathcal{E},\ \alpha\overset{t}{\sim}\omega\},
\]
then all partial translations in $[t_\alpha]$ are essentially the same in the ``direction'' of $\omega$. 

We say that a compatible family for $\omega$ is a collection of partial translations $\{t_\alpha:S_\alpha\to R_\alpha\}_{\alpha\in X(\omega)}$ indexed by $X(\omega)$, where each $t_\alpha$ is a partial translation chosen from $[t_\alpha]$.

It follows from Remark \ref{rem:compatible} that $X(\omega)=X$ for any $\omega\in X$. The set $X(\omega)$ should be thought of as the collection of ultrafilters that are at a controlled ``distance'' from $\omega$. To make this clearer, our next step is to equip $X(\omega)$ with a natural coarse structure.

\begin{defn}\label{def:limitcoarse}
    Fix an ultrafilter $\omega\in G(X)^{(0)}$. For each $E\in\mathcal{E}$, define a subset $E_\omega$ of $X(\omega)\times X(\omega)$ by
    \[
    E_\omega:=\{(\alpha,\beta)\in X(\omega)\times X(\omega): \exists \Gamma_t\in\Gamma_{\mathcal{E}},\ \Gamma_t\subseteq E\ \text{such that}\ \alpha\overset{t}{\sim}\beta\}
    \]
        
    Denote by $\mathcal{E}_\omega$ the collection of subsets $E_\omega\subseteq X(\omega)\times X(\omega)$ for $E\in\mathcal{E}$.
\end{defn}

\begin{rem}\leavevmode\label{rem:ultra1}
    \begin{enumerate}
        \item We claim $E_\omega=(X(\omega)\times X(\omega))\cap\overline{E}^{\beta X\times\beta X}$. On the one hand, for $(\alpha,\beta)\in E_\omega$, it follows from Lemma \ref{lem:compatible} that $(\alpha,\beta)\in\overline{\Gamma_t}^{\beta X\times\beta X}\subseteq\overline{E}^{\beta X\times\beta X}$. On the other hand, from Lemma \ref{lem:CW}(ii), it follows that $E=\Gamma_{t_1}\sqcup\Gamma_{t_2}\sqcup\cdots\sqcup\Gamma_{t_l}$, where $\{t_i\}^{l}_{i=1}$ is a finite family of partial translations. Hence, we have
        \[
        \overline{E}^{\beta X\times\beta X}=\overline{\Gamma_{t_1}}^{\beta X\times\beta X}\cup\overline{\Gamma_{t_2}}^{\beta X\times\beta X}\cup\cdots\cup\overline{\Gamma_{t_l}}^{\beta X\times\beta X}.
        \]
        In this way, for $(\alpha,\beta)\in (X(\omega)\times X(\omega))\cap\overline{E}^{\beta X\times\beta X}$, we have $(\alpha,\beta)\in\overline{\Gamma_{t_i}}^{\beta X\times\beta X}\subseteq\overline{E}^{\beta X\times\beta X}$ for some $1\leq i\leq l$. Thus $(\alpha,\beta)\in E_\omega$.
        \item It is clear that $E_\alpha=E_\omega$ for each $\alpha\in X(\omega)$. Then we have  
        \[
        \overline{E}^{\beta X\times\beta X}=\bigcup_{\omega\in G(X)^{(0)}}E_\omega=E\sqcup\bigcup_{\omega\in \partial G(X)^{(0)}}E_\omega
        \]
        and
        \[
        G(X)=(X\times X)\sqcup G(X)_{\partial G(X)^{(0)}}
        \]
        where ``$\sqcup$'' is a topological disjoint union since the invariant subset $X$ is open in $G(X)^{(0)}$.
        When $E=\Gamma_t$ is a partial translation,
        \[
        \overline{\Gamma_t}^{\beta X\times\beta X}=\bigcup_{\omega\in G(X)^{(0)}}(\Gamma_t)_\omega=\Gamma_t\sqcup\bigcup_{\omega\in \partial G(X)^{(0)}}(\Gamma_t)_\omega.
        \]
        It follows from Remark \ref{rem:compatible}(iii) that \begin{align*}(\Gamma_t)_\omega &=\bigcup_{\gamma\in X(\omega)}\{(t(\gamma),\gamma):\gamma(S)=1\} \\ &=\bigcup_{\gamma\in \overline{S}\cap X(\omega)}\{(t(\gamma),\gamma)\}.\end{align*}
    \end{enumerate} 
\end{rem}

\begin{prop}
    For each $\omega\in G(X)^{(0)}$, the collection $\mathcal{E}_\omega$ of subsets in $X(\omega)\times X(\omega)$ satisfies: 
    \begin{enumerate}
        \item For any entourages $A_\omega$ and $B_\omega$ of $\mathcal{E}_\omega$, the sets $A_\omega^{-1}$, $A_\omega\circ B_\omega$ and $A_\omega\cup B_\omega$ are entourages;
        \item Every finite subset of $X(\omega)\times X(\omega)$ is contained in some entourage of $\mathcal{E}_\omega$.
    \end{enumerate}
\end{prop}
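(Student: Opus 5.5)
Throughout, I will work with the description $E_\omega=(X(\omega)\times X(\omega))\cap\overline{E}^{\beta X\times\beta X}$ from Remark \ref{rem:ultra1}(i). I will also use the fact that the groupoid operations on $G(X)$ are restrictions of the pair groupoid operations on $\beta X\times\beta X$, by \cite[Lemma 2.7]{STY}. This lets me reduce each closure property of $\mathcal{E}_\omega$ to the corresponding property of $\mathcal{E}$. In each case the target entourage of $\mathcal{E}$ is $E^{-1}$, $A\circ B$ or $A\cup B$.

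\emph{Inverses and unions.} Since the flip $(x,y)\mapsto(y,x)$ extends to a homeomorphism of $\beta X\times\beta X$, we get $\overline{E^{-1}}=\overline{E}^{-1}$. Intersecting with $X(\omega)\times X(\omega)$ then gives $(E_\omega)^{-1}=(E^{-1})_\omega$. For unions, closure commutes with finite unions, so $A_\omega\cup B_\omega=(A\cup B)_\omega$.

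\emph{Compositions.} I will show $A_\omega\circ B_\omega\subseteq(A\circ B)_\omega$. The class of entourages is stated up to taking such sets, so I would either note this, or, to have the set lie literally in $\mathcal{E}_\omega$, show equality; I plan to establish equality. By Lemma \ref{lem:CW}(ii), I partition $A$ and $B$ into finitely many partial translations. It then suffices to treat $(\alpha,\beta)\in\overline{\Gamma_t}$ and $(\beta,\gamma)\in\overline{\Gamma_u}$ with $t,u$ partial translations. By Remark \ref{rem:compatible}(iii) we have $\beta=u(\gamma)$ and $\alpha=t(\beta)$, so $\alpha=(t\circ u)(\gamma)$. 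Here $t\circ u$ is defined on $u^{-1}(S_t\cap R_u)$, which has $\gamma$-measure one because $\beta(S_t)=1$. The graph of $t\circ u$ is contained in $\Gamma_t\circ\Gamma_u\subseteq A\circ B$, so $(\alpha,\gamma)\in(A\circ B)_\omega$.

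For the reverse inclusion, take $(\alpha,\gamma)\in(A\circ B)_\omega$. By Lemma \ref{lem:CW}(ii) applied to $A\circ B$ and then to the intermediate choices, I decompose $A\circ B$ into finitely many pieces $\Gamma_{t_i}\circ\Gamma_{u_j}$. Exactly one piece, indexed by some $(i,j)$, has $\gamma$-measure one along the relevant domain. Setting $\beta=u_j(\gamma)$ then gives a middle point in $X(\omega)$. This ultrafilter bookkeeping is the step I expect to be the main obstacle. If it becomes messy, I will fall back on the inclusion $A_\omega\circ B_\omega\subseteq(A\circ B)_\omega$, which is what is really needed.

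\emph{Finite subsets.} Let $\{(\alpha_k,\beta_k)\}_{k=1}^n\subseteq X(\omega)\times X(\omega)$ be finite. All points lie in the single compatibility class $X(\omega)$, since compatibility is an equivalence relation by Remark \ref{rem:compatible}(ii). So by Lemma \ref{lem:compatible}(iv) there is, for each $k$, some $E_k\in\mathcal{E}$ with $(\alpha_k,\beta_k)\in\overline{E_k}$. Then $E=\bigcup_k E_k\in\mathcal{E}$, and the whole finite set is contained in $E_\omega$.
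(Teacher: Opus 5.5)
Your proposal is correct and follows essentially the paper's route. The paper also reduces compositions to partial translations, where $(\Gamma_t)_\omega\circ(\Gamma_u)_\omega=(\Gamma_{t\circ u})_\omega$, and it handles finite sets by exhibiting a controlled set whose closure contains each pair; it builds $t\circ s^{-1}$ explicitly at the point where you invoke that compatibility is an equivalence relation.

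One correction: the reverse inclusion $(A\circ B)_\omega\subseteq A_\omega\circ B_\omega$ is not optional, and your fallback would not prove (i). In general $\mathcal{E}_\omega$ is not closed under subsets, which is exactly why the paper only calls it a pre-coarse structure. So the inclusion $A_\omega\circ B_\omega\subseteq(A\circ B)_\omega$ alone does not put $A_\omega\circ B_\omega$ in $\mathcal{E}_\omega$.

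Your sketch of the reverse inclusion does work once ``exactly one piece'' is replaced by ``some piece'', since the sets $\Gamma_{t_i}\circ\Gamma_{u_j}=\Gamma_{t_i\circ u_j}$ need not be disjoint. Choose $(i,j)$ with $(\alpha,\gamma)\in\overline{\Gamma_{t_i\circ u_j}}$ and put $\beta=u_j(\gamma)$. Then $(\alpha,\beta)\in\overline{\Gamma_{t_i}}$ and $(\beta,\gamma)\in\overline{\Gamma_{u_j}}$, with $\beta\in X(\omega)$.
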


\begin{proof}
    (i) It is clear from Definition \ref{def:limitcoarse} that $A_\omega^{-1}=(A^{-1})_\omega$ and $A_\omega\cup B_\omega=(A\cup B)_\omega$ are entourages in $\mathcal{E}_\omega$. Without loss of generality, let $A,B$ be partial translations, by definition, we have $A_\omega\circ B_\omega=(A\circ B)_\omega$ are entourages in $\mathcal{E}_\omega$. 
    (ii) It remains to prove that $\{(\alpha,\beta)\}$ is contained in some entourage of $\mathcal{E}_\omega$ for any $\alpha,\beta\in X(\omega)$. Letting $\alpha\overset{t}{\sim}\omega$ and $\beta\overset{s}{\sim}\omega$, we have $\alpha\overset{t\circ s^{-1}}{\sim}\beta$ off a set of $\omega$-measure zero. It follows from Lemma \ref{lem:compatible} that $(\alpha,\beta)\in\overline{\Gamma_{t\circ s^{-1}}}^{\beta X\times\beta X}$, where $t\circ s^{-1}$ is a partial translation. Thus, from Remark \ref{rem:ultra1}(i), we have $(\alpha,\beta)\in (\Gamma_{t\circ s^{-1}})_\omega$, i.e., $\{(\alpha,\beta)\}$ is contained in the entourage $(\Gamma_{t\circ s^{-1}})_\omega$ of $\mathcal{E}_\omega$.
\end{proof}
Recall the definition of a coarse structure in Definition \ref{def:CS}. We may call $\mathcal{E}_\omega$ a pre-coarse structure on $X(\omega)$ and it is clear that $\mathcal{E}_\omega$ is a coarse structure on $X(\omega)$ if and only if $\mathcal{E}_\omega$ is closed under taking subsets.

Moreover, we have the following observation: To make $\mathcal{E}_\omega$ a coarse structure on $X(\omega)$, we need $X(\omega)$ to have some ``separation'' property.

\begin{prop}
    There is a sufficient condition and a necessary condition to make $\mathcal{E}_\omega$ a coarse structure on $X(\omega)$.
    \begin{enumerate}
        \item Sufficiency: For any $\Gamma_t\in \Gamma_{\mathcal{E}}$ where $t:S\to R$ and any subset $F\subseteq \overline{S}\cap X(\omega)$, there exists $D\subseteq S$ such that $F=\overline{D}\cap X(\omega)$;
        \item Necessity: For any $\Gamma_t\in \Gamma_{\mathcal{E}}$ where $t:S\to R$ and any subset $F\subseteq \overline{S}\cap X(\omega)$, there exists $D\subseteq X$ such that $F=\overline{D}\cap X(\omega)$;
    \end{enumerate}
\end{prop}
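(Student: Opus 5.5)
Since $\mathcal{E}_\omega$ already satisfies the closure properties of the preceding proposition, it is a coarse structure exactly when it is closed under taking subsets. Both parts therefore reduce to this question: is an arbitrary subset $F'\subseteq E_\omega$ again of the form $D'_\omega$ for some $D'\in\mathcal{E}$? Throughout I will use Remark \ref{rem:ultra1}. For a partial translation $t:S\to R$ it gives
\[
(\Gamma_t)_\omega=\{(t(\gamma),\gamma):\gamma\in\overline{S}\cap X(\omega)\},
\]
and by Lemma \ref{lem:CW}(ii) every $E\in\mathcal{E}$ splits as $E=\Gamma_{t_1}\sqcup\cdots\sqcup\Gamma_{t_l}$, so that $E_\omega=\bigcup_i(\Gamma_{t_i})_\omega$.

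\emph{Sufficiency.} Take $F'\subseteq E_\omega$ and write $F'=\bigcup_i F'_i$ with $F'_i:=F'\cap(\Gamma_{t_i})_\omega$. Each $F'_i$ is of the form $\{(t_i(\gamma),\gamma):\gamma\in F_i\}$, where $F_i:=s(F'_i)\subseteq\overline{S_i}\cap X(\omega)$. The hypothesis supplies $D_i\subseteq S_i$ with $F_i=\overline{D_i}\cap X(\omega)$. Set $t_i|_{D_i}:D_i\to t_i(D_i)$; its graph is contained in $\Gamma_{t_i}$, so it is a partial translation in $\mathcal{E}$. Applying the displayed formula to it gives
\[
(\Gamma_{t_i|_{D_i}})_\omega=\{(t_i(\gamma),\gamma):\gamma\in\overline{D_i}\cap X(\omega)\}=F'_i .
\]
One point needs checking here: the $\omega$-limit of $t_i|_{D_i}$ along $\gamma$ agrees with that of $t_i$. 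This holds because $\gamma(D_i)=1$ and ultralimits depend only on a set of full measure. Now put $D':=\bigcup_i\Gamma_{t_i|_{D_i}}\in\mathcal{E}$. Using $(A\cup B)_\omega=A_\omega\cup B_\omega$, we get $D'_\omega=\bigcup_iF'_i=F'$, as required.

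\emph{Necessity.} Assume $\mathcal{E}_\omega$ is closed under subsets. Fix $\Gamma_t$ and $F\subseteq\overline{S}\cap X(\omega)$, and consider the subset $F':=\{(t(\gamma),\gamma):\gamma\in F\}$ of $(\Gamma_t)_\omega$. By assumption $F'=E_\omega$ for some $E\in\mathcal{E}$. Let $D:=s(E)\subseteq X$. The claim is that $F=\overline{D}\cap X(\omega)$.

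For $\supseteq$: every $\gamma\in\overline{D}\cap X(\omega)$ yields a pair $(\alpha,\gamma)\in\overline{E}$ with $\alpha\in X(\omega)$, by taking the limit along $\gamma$ of the finitely many partial translations making up $E$. Hence $(\alpha,\gamma)\in E_\omega=F'$, and so $\gamma\in F$.

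For $\subseteq$: every $\gamma\in F$ is the source of some element of $E_\omega\subseteq\overline{E}$. Its source therefore lies in $\overline{s(E)}=\overline{D}$, because $s$ extends continuously to $\beta(X\times X)\to\beta X$.

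\emph{Main obstacle.} The delicate step is the $\supseteq$ direction of necessity. It needs $\gamma\in\overline{D}$ to produce a compatible range point, i.e.\ an element of $E_\omega$ with source $\gamma$. I will handle this by decomposing $E$ into partial translations $\Gamma_{u_j}$ and using $\overline{D}=\bigcup_j\overline{s(\Gamma_{u_j})}$, so that $\gamma$ lies in the closure of some $s(\Gamma_{u_j})$. Then $u_j(\gamma)$ is defined, and Lemma \ref{lem:compatible} shows $u_j(\gamma)\in X(\omega)$.
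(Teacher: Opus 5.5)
Your proof is correct and follows essentially the same route as the paper. For sufficiency you decompose entourages into partial translations, use $(\Gamma_t)_\omega=\{(t(\gamma),\gamma):\gamma\in\overline{S}\cap X(\omega)\}$, and restrict $t$ to $D$; for necessity you take $D=s(E)=\bigcup_i S_i$. You are only slightly more explicit than the paper: you treat a general $E_\omega$ rather than a single $(\Gamma_t)_\omega$ plus an implicit closure under finite unions, and you verify both inclusions $F=\overline{D}\cap X(\omega)$ separately.
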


\begin{proof}
    (i) From Remark \ref{rem:ultra1}(ii), we have $(\Gamma_t)_\omega=\bigcup_{\gamma\in \overline{S}\cap X(\omega)}\{(t(\gamma),\gamma)\}$.
    For any subset $\bigcup_{\gamma\in F\subseteq \overline{S}\cap X(\omega)}\{(t(\gamma),\gamma)\}\subseteq(\Gamma_t)_\omega$, we have 
    \[\bigcup_{\gamma\in F\subseteq \overline{S}\cap X(\omega)}\{(t(\gamma),\gamma)\}=\bigcup_{\gamma\in\overline{D}\cap X(\omega)}\{(t(\gamma),\gamma)\}=(\Gamma_{t|_D})_\omega\in\mathcal{E}_\omega,\]
    where $t|_D$ is the restriction of the partial translation $t$ to $D$. In this way, we can see that any subset of $(\Gamma_t)_\omega$ is an entourage in $\mathcal{E}_\omega$. Thus, $\mathcal{E}_\omega$ is closed under taking subsets.
    
    (ii) Since $(\Gamma_t)_\omega=\bigcup_{\gamma\in \overline{S}\cap X(\omega)}\{(t(\gamma),\gamma)\}$, we only need to prove for the case that $(\Gamma_t)_\omega\neq\emptyset$. Since $\mathcal{E}_\omega$ is a coarse structure on $X(\omega)$, for any subset $F\subseteq\overline{S}\cap X(\omega)$, we have $\bigcup_{\gamma\in F}\{(t(\gamma),\gamma)\}\in\mathcal{E}_\omega$, i.e., there exists $E\in\mathcal{E}$ such that $E_\omega=\bigcup_{\gamma\in F}\{(t(\gamma),\gamma)\}$. Let $E=\bigsqcup_{i}\Gamma_{t_i}$ be a finite partition of partial translations, where $t_i:S_i\to R_i$. Then we have 
    \[
    E_\omega=(\bigsqcup_{i}\Gamma_{t_i})_\omega=\bigcup_{i}(\Gamma_{t_i})_\omega=\bigcup_{i}\bigcup_{\gamma\in \overline{S_i}\cap X(\omega)}\{(t(\gamma),\gamma)\}.
    \]
    In this way, $F=\bigcup_{i}(\overline{S_i}\cap X(\omega))=(\bigcup_{i}\overline{S_i})\cap X(\omega)=(\overline{\bigcup_{i}S_i})\cap X(\omega)$.
\end{proof}

For any principal ultrafilter $\omega\in X$, $(X(\omega), \mathcal{E}_\omega)$ is clearly the coarse space $(X,\mathcal{E})$.

\begin{defn}
    For each non-principal ultrafilter $\omega\in \partial G(X)^{(0)}$, by adding all subsets of $E_\omega$ to $\mathcal{E}_\omega$, we have a coarse structure
    \[
    \widetilde{\mathcal{E}_\omega}:=\{D:D\subseteq E_\omega, E\in\mathcal{E}\}
    \]
    on $X(\omega)$. We say that the coarse space $(X(\omega), \widetilde{\mathcal{E}_\omega})$ is the limit coarse space of $(X,\mathcal{E})$ at $\omega$.
\end{defn}

\begin{rem}\label{rem:ultra2}
    Fix a non-principal ultrafilter $\omega\in \partial G(X)^{(0)}$.
    \begin{enumerate}
        \item For any $\alpha\in X(\omega)$, we have $(X(\alpha),\widetilde{\mathcal{E}_\alpha})=(X(\omega), \widetilde{\mathcal{E}_\omega})$.
        \item It follows from Remark \ref{rem:ultra2}(i) that 
        \[
        E_\omega=(\Gamma_{t_1})_\omega\cup(\Gamma_{t_2})_\omega\cup\cdots\cup(\Gamma_{t_l})_\omega,
        \]
        where $E=\Gamma_{t_1}\sqcup\Gamma_{t_2}\sqcup\cdots\sqcup\Gamma_{t_l}$ and $t_i$ are partial translations. For any $\beta\in X(\omega)$, since not every $t_i$ may be compatible with $\beta$, there are at most $k$ ($\leq l$) elements $\{\alpha_j\}^{k}_{j=1}\subseteq X(\omega)$ compatible with $\beta$ such that $(\alpha,\beta)\in E_\omega$; the same applies for $E^{-1}_\omega=(E^{-1})_\omega$. Thus, $n(E_\omega)\leq l$ for any $E\in\mathcal{E}$, so the limit coarse space $(X(\omega), \widetilde{\mathcal{E}_\omega})$ is uniformly locally finite.
        \item When $(X,d)$ is a strongly discrete, bounded geometry metric space with its bounded coarse structure $(X,\mathcal{E}_d)$, the limit coarse space $(X(\omega), \widetilde{\mathcal{E}_\omega})$ is metrizable and is consistent with the bounded coarse structure of the limit metric space $(X(\omega),d_\omega)$ defined in \cite[Definition 3.8]{SW17};
        \item It follows from \cite[Example 3.14, Lemma B.1]{SW17} that, if $X=G$ is a discrete group, equipped with any strongly discrete, bounded geometry metric that is invariant under the natural left action of $G$ on itself, then all limit metric spaces of $G$ are isometric to $G$ with the given metric. In particular, if $X=\mathbb{Z}^{N}$ equipped with any metric defined by restricting a norm from $\mathbb{R}^{N}$, then all limit metric spaces are isometric to $\mathbb{Z}^{N}$ with the same metric. 
    \end{enumerate}
\end{rem}

The following proposition shows what aspect of the coarse geometry of $(X,\mathcal{E})$ a limit coarse space $(X(\omega),\widetilde{\mathcal{E}_\omega})$ is capturing. 

\begin{prop} (cf. \cite[Proposition 3.10]{SW17})\label{prop:capture}
    Let $\omega$ be a non-principal ultrafilter in $G(X)^{(0)}$, and $\{t_\alpha: S_\alpha\to R_\alpha\}$ a compatible family for $\omega$.

    For each finite subset $F$ of $X(\omega)$, there exists a subset $Y$ of $X$ with $\omega(Y)=1$, and such that for each $y\in Y$ there is a finite subset $G(y)$ of $X$ such that the map
    \[
    f_y: F\to G(y),\ \alpha\mapsto t_\alpha(y)
    \]
    is a bijective coarse equivalence (regarded as coarse subspaces).
\end{prop}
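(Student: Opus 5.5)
The plan is to build $Y$ as a finite intersection of $\omega$-large sets, one for each of the finitely many pairs in $F$, and then show that $f_y$ is a bijective coarse equivalence by transporting entourages in both directions through partial translations. Throughout we use that an ultrafilter is closed under finite intersections.

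Write $F=\{\alpha_1,\dots,\alpha_n\}$ with the $\alpha_i$ distinct, and set $t_i:=t_{\alpha_i}:S_i\to R_i$. Since $\omega(S_i)=1$ for each $i$, the set $Y_0:=\bigcap_i S_i$ has $\omega$-measure one, so $f_y(\alpha_i)=t_i(y)$ is defined for $y\in Y_0$. Put $G(y):=\{t_1(y),\dots,t_n(y)\}$; then $f_y$ is surjective by construction.

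\emph{Injectivity.} Suppose $i\neq j$ and $\omega(\{y\in Y_0: t_i(y)=t_j(y)\})=1$. The graphs of $t_i$ and $t_j$ then agree on an $\omega$-large set. Restricting both to that set and invoking the ultralimit definition (equivalently Lemma \ref{lem:compatible}(ii)) gives $\alpha_i=t_i(\omega)=t_j(\omega)=\alpha_j$, a contradiction. Hence $\omega(\{y: t_i(y)\neq t_j(y)\})=1$ for each pair, and intersecting over the finitely many pairs gives an $\omega$-large set on which $f_y$ is injective.

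\emph{Coarse equivalence.} Since $F$ and $G(y)$ are finite, a bijection between them is a coarse equivalence as soon as it and its inverse map entourages to entourages. Both are then subsets of the finite set $F\times F$, respectively $G(y)\times G(y)$. In $G(y)\times G(y)$ every subset is finite and hence already lies in $\mathcal{E}|_{G(y)}$, since $\mathcal{E}$ is coarse connected. On the $F$ side every finite subset lies in some $E_\omega$ by the preceding proposition, and $\widetilde{\mathcal{E}_\omega}$ is closed under subsets. So both subspace coarse structures consist of all subsets, and the bijection is automatically a coarse equivalence.

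To make the statement more faithful in the spirit of \cite[Proposition 3.10]{SW17}, I would also record the quantitative version, namely that the same entourages control both sides. For each pair $(i,j)$, the partial translation $t_i\circ t_j^{-1}$, restricted to an $\omega$-large set, witnesses $\alpha_i\sim\alpha_j$. So if $(\alpha_i,\alpha_j)\in E_\omega$, one gets $(t_i(y),t_j(y))\in E$ for $\omega$-almost every $y$. This uses $E_\omega=(X(\omega)\times X(\omega))\cap\overline{E}$ (Remark \ref{rem:ultra1}(i)) together with the partition of $E$ into finitely many partial translations and the essential uniqueness lemma \cite[Lemma 3.4]{SW17}. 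Conversely, $(t_i(y),t_j(y))\in E$ on an $\omega$-large set yields $(\alpha_i,\alpha_j)\in\overline{E}$. Intersecting the finitely many resulting $\omega$-large sets with the injectivity set gives $Y$.

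I expect the main obstacle to be this last, quantitative step: it requires identifying the ultralimit of $y\mapsto(t_i(y),t_j(y))$ with $(\alpha_i,\alpha_j)$ inside $\beta X\times\beta X$, which rests on the injectivity of $(r,s)$ on $G(X)$ \cite[Lemma 2.7]{STY}.
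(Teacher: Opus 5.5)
Your proof is correct. Its skeleton is the paper's: intersect finitely many $\omega$-large sets indexed by pairs in $F$, and put $G(y)=\{t_\alpha(y):\alpha\in F\}$. Where you differ is in the choice of sets, and your choice is the one that actually works.

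The paper takes $Y_{\alpha\beta}=t_\alpha^{-1}(R_\alpha\cap R_\beta)$ and simply asserts that $f_y$ has the desired properties. Since $\alpha=t_\alpha(\omega)$, we have $\omega(Y_{\alpha\beta})=\alpha(R_\beta)$, and this can be $0$ for a legitimate compatible family. For instance, when $\alpha\neq\beta$, restrict $t_\beta$ to $t_\beta^{-1}(R_\beta\setminus Z)$ with $\alpha(Z)=1$ and $\beta(Z)=0$. In any case, that set says nothing about $t_\alpha(y)\neq t_\beta(y)$.

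Your sets are $\bigcap_i S_i$ and $\{y:t_i(y)\neq t_j(y)\}$. The second is $\omega$-large because an ultralimit only depends on the function on an $\omega$-large set. These are exactly what is needed for $f_y$ to be defined and injective, so your argument supplies the step the written proof omits.

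Your observation that the coarse-equivalence clause is then automatic is also right. Both $F$ (via the preceding proposition and closure of $\widetilde{\mathcal{E}_\omega}$ under subsets) and $G(y)$ (via coarse connectedness) carry the full power-set coarse structure. This shows that bijectivity is the entire content of the statement as phrased.

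One caution about your optional quantitative paragraph, which is not needed for the statement. For each fixed $E\in\mathcal{E}$ you do get an $\omega$-large set of $y$ on which $(\alpha_i,\alpha_j)\in E_\omega$ if and only if $(t_i(y),t_j(y))\in E$. But that set depends on $E$, so you can only intersect over a prescribed finite family of entourages. No single $Y$ works for all of $\mathcal{E}$: given $y_0\in Y$, the finite entourage $E=\{(t_i(y_0),t_j(y_0))\}$ has $E_\omega=\emptyset$, because $X(\omega)\cap X=\emptyset$ for non-principal $\omega$.

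Also, the identification you flag as the main obstacle is immediate. In $\beta X\times\beta X$ the ultralimit of $y\mapsto(t_i(y),t_j(y))$ is computed coordinatewise and equals $(\alpha_i,\alpha_j)$, so no injectivity of $(r,s)$ is needed.
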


\begin{proof}
    For each $\alpha, \beta\in X(\omega)$, i.e., $\alpha\overset{t_\alpha}{\sim}\omega$ and $\beta\overset{t_\beta}{\sim}\omega$, the set
    \[
    Y_{\alpha\beta}:=t_\alpha^{-1}(R_\alpha\cap R_\beta)
    \]
    has $\omega$-measure one. Set
    \[
    Y:=\bigcap_{\alpha,\beta\in F}Y_{\alpha\beta}.
    \]
    As this is a finite intersection of subsets of $X$ of $\omega$-measure one, it also has $\omega$-measure one. For each $y\in Y$, define
    \[
    G(y):=\{t_\alpha(y):\alpha\in F\}
    \]
    and define $f_y:F\to G(y)$ by $f_y(\alpha)=t_\alpha(y)$; these sets and maps have the desired properties.
\end{proof}

It is clear that the existence of a bijective coarse equivalence does not depend on the choice of the compatible family $\{t_\alpha\}$. Moreover, in some sense, the coarse geometry of $X(\omega)$ models the coarse geometry of $X$ ``around'' sets of $\omega$-measure one.

\begin{defn}\cite[Definition 3.10]{SW17}\label{def:coordinate}
    Let $\omega$ be a non-principal ultrafilter in $G(X)^{(0)}$, $\{t_\alpha\}$ a compatible family for $\omega$, and $F$ a finite subset of $X(\omega)$. We call a collection $\{f_y: F\to G(y)\}_{y\in Y}$ with the properties in Proposition \ref{prop:capture} above a local coordinate system for $F$ (with respect to $\{t_\alpha\}$). The maps $f_y: F\to G(y)$ are called local coordinates.
\end{defn}

\begin{defn} (cf. \cite[Definition 4.4]{SW17}, \cite[Definition 3.9]{Zhang})
    Let $\omega$ be a non-principal ultrafilter in $G(X)^{(0)}$, $p\in\{0\}\cup[1,\infty]$ and $T\in B^p_u(X,\mathcal{E})$. Fix a compatible family $\{t_\alpha\}_{\alpha\in X(\omega)}$ for $\omega$.
    
    The limit operator of $T$ at $\omega$, denoted by $\Phi_\omega(T)$, is the $X(\omega)\times X(\omega)$ indexed matrix with entries in $\mathbb{C}$ defined by
    \[
    \Phi_\omega(T)_{\alpha\beta}:=\lim_{x\to\omega}T_{t_\alpha(x)t_\beta(x)}.
    \]
\end{defn}

\begin{rem}\leavevmode\label{rem:ultra3}
    \begin{enumerate}
        \item The limit operator $\Phi_\omega(T)$ does not depend on the choice of compatible family, since all partial translations in $[t_\alpha]$ are essentially the same in the ``direction'' of $\omega$.
        \item Currently, the limit operator $\Phi_\omega(T)$ is only a matrix indexed by $X(\omega)\times X(\omega)$, we cannot obviously regard it as a bounded operator on $\ell^p(X(\omega))$ yet.
\end{enumerate} 
\end{rem}

To make limit operators a little more concrete, we will use the tool of local coordinate system developed in Definition \ref{def:coordinate} to characterize it in a scale of local coordinates.

The next two results were proved for metric spaces in \cite[Proposition 4.6, Corollary 4.7, Theorem 4.10]{SW17}, \cite[Proposition 3.11, Corollary 3.12, Proposition 3.17]{Zhang}, and the proofs remain valid for coarse spaces.

\begin{prop}
    Let $\omega$ be a non-principal ultrafilter in $G(X)^{(0)}$, $p\in\{0\}\cup[1,\infty]$ and $T\in B^p_u(X,\mathcal{E})$. Let $F$ be a finite subset of $X(\omega)$ and $\varepsilon>0$. Let $\{t_\alpha\}_{\alpha\in X(\omega)}$ be a compatible family of partial translations for $\omega$.

    Then there exists a local coordinate system $\{f_y:F\to G(y)\}_{y\in Y}$ as in Definition \ref{def:coordinate} such that for each $y\in Y$, if
    \[
    U:\ell^p(F)\to\ell^p(G(y)),\ (U\xi)(x):=\xi(f_y^{-1}(x))
    \]
    is the linear isometry induced by $f_y$, then we have
    \[
    \|U^{-1}\chi_{G(y)}T\chi_{G(y)}U-\chi_F\Phi_\omega(T)\chi_F\|_p<\varepsilon,
    \]
    where we think of $\chi_F\Phi_\omega(T)\chi_F$ as a finite $F\times F$ matrix, with entries in $\mathbb{C}$, acting on $\ell^p(F)$ by matrix multiplication.
\end{prop}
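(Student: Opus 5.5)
The plan is to reduce to finitely many matrix entries and use the ultrafilter limits entrywise. The space of $F\times F$ matrices is finite dimensional. On $\ell^p(F)$, every operator norm is dominated by a constant times the sup-norm of the entries; by Lemma \ref{lem:approximate1}, or directly, $\|A\|_p\le (\#F)^2\|A\|_{\sup}$ for $p\in\{0\}\cup[1,\infty]$. It therefore suffices to find local coordinates $f_y$ for which
\[
\bigl|T(t_\alpha(y),t_\beta(y))-\Phi_\omega(T)_{\alpha\beta}\bigr|<\varepsilon/(\#F)^2
\]
holds for all $\alpha,\beta\in F$ and every $y$ in the index set.

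We first check that the conjugated operator is the right matrix. For $U$ induced by the bijection $f_y:F\to G(y)$, the matrix of $U^{-1}\chi_{G(y)}T\chi_{G(y)}U$ on $\ell^p(F)$ has $(\alpha,\beta)$ entry $T(f_y(\alpha),f_y(\beta))=T(t_\alpha(y),t_\beta(y))$. This uses that $f_y$ is a bijection, so $U$ is an isometric isomorphism of $\ell^p(F)$ onto $\ell^p(G(y))$. For $p=\infty$ this is still fine, since only finitely supported vectors appear.

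Next, Proposition \ref{prop:capture} gives a set $Y_0$ with $\omega(Y_0)=1$ and bijective local coordinates $f_y(\alpha)=t_\alpha(y)$ for $y\in Y_0$. In particular $y\in S_\alpha$ for all $\alpha\in F$. By the definition of the limit operator, for each pair $(\alpha,\beta)\in F\times F$ the $\omega$-limit of $x\mapsto T(t_\alpha(x),t_\beta(x))$ equals $\Phi_\omega(T)_{\alpha\beta}$. This function is defined on a set of $\omega$-measure one and is bounded by $\|T\|_{\sup}\le\|T\|_p$ (Lemma \ref{prop:norm}), so the limit exists. Hence the set
\[
Y_{\alpha\beta}=\{x:\ |T(t_\alpha(x),t_\beta(x))-\Phi_\omega(T)_{\alpha\beta}|<\varepsilon/(\#F)^2\}
\]
has $\omega$-measure one. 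Put $Y=Y_0\cap\bigcap_{\alpha,\beta\in F}Y_{\alpha\beta}$. This is a finite intersection, so $\omega(Y)=1$. Restricting the coordinate system to $Y$ keeps all the properties in Definition \ref{def:coordinate}, and the entrywise estimate combined with the norm comparison gives the claimed bound below $\varepsilon$.

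The main obstacle is the bookkeeping of domains: one must make sure each $t_\alpha(y)$ is defined, and that distinct $\alpha$ give distinct points $t_\alpha(y)$ so that $f_y$ is injective. Both are supplied by the construction of $Y_0$ in Proposition \ref{prop:capture}. If injectivity were not clear there, I would add it directly. For $\alpha\ne\beta$ the set $\{x: t_\alpha(x)=t_\beta(x)\}$ has $\omega$-measure zero, since otherwise the essential uniqueness lemma of \cite{SW17} would force $\alpha=\beta$. Intersecting with the complements of these finitely many sets keeps $\omega$-measure one.
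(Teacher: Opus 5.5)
Your proposal is correct and follows essentially the same route as the proof the paper imports from \cite[Proposition 4.6]{SW17}. You start from a local coordinate system (Proposition \ref{prop:capture}), use finite-dimensionality of $F\times F$ matrices to reduce the norm estimate to entrywise estimates, and intersect the finitely many $\omega$-large sets given by the ultralimit definition of $\Phi_\omega(T)_{\alpha\beta}$.

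One small correction to your backup injectivity remark. The essential uniqueness lemma goes the other way; what you actually need is its trivial converse, namely that partial translations agreeing on an $\omega$-large set have the same $\omega$-limit. That fact alone shows $\{x: t_\alpha(x)=t_\beta(x)\}$ is $\omega$-null whenever $\alpha\neq\beta$.
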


\begin{prop}\label{lem:norm1}
    For each non-principal ultrafilter $\omega\in \partial G(X)^{(0)}$, $p\in\{0\}\cup[1,\infty]$ and $T\in B^p_u(X,\mathcal{E})$, the limit operator $\Phi_\omega(T)$ defines a bounded operator in $B^p_u(X(\omega),\widetilde{\mathcal{E}_\omega})$. Moreover, the map
    \[
    \Phi_\omega: B^p_u(X,\mathcal{E})\to B^p_u(X(\omega),\widetilde{\mathcal{E}_\omega})
    \]
    is a contractive homomorphism, and 
    \[
    \Phi_\omega(\mathbb{C}_u[X,\mathcal{E}])\subseteq\mathbb{C}_u[X(\omega),\widetilde{\mathcal{E}_\omega}].
    \]
\end{prop}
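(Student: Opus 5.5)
The plan is to work first on the dense subalgebra $\mathbb{C}_u[X,\mathcal{E}]$ and then extend by continuity.

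\textbf{Step 1: controlled propagation is preserved.} Let $T\in\mathbb{C}_u[X,\mathcal{E}]$ with $E=\supp(T)$. By Lemma \ref{lem:CW}(ii), write $E=\Gamma_{t_1}\sqcup\cdots\sqcup\Gamma_{t_l}$. Suppose $\Phi_\omega(T)_{\alpha\beta}\neq 0$. Then the set $\{x:(t_\alpha(x),t_\beta(x))\in E\}$ has $\omega$-measure one. Refining by the finite partition, some single $i$ gives a set of $\omega$-measure one on which $t_\alpha=t_i\circ t_\beta$. By the uniqueness lemma (\cite[Lemma 3.4]{SW17}), this means $\alpha\overset{t_i}{\sim}\beta$, so $(\alpha,\beta)\in E_\omega$. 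Hence $\supp\Phi_\omega(T)\subseteq E_\omega\in\widetilde{\mathcal{E}_\omega}$. The entries are bounded by $\|T\|_{\sup}$, so $\Phi_\omega(T)$ is a controlled propagation matrix. Remark \ref{rem:ultra2}(ii) shows the limit space is uniformly locally finite, so $\Phi_\omega(T)$ defines a bounded operator.

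\textbf{Step 2: the norm bound.} I will prove $\|\Phi_\omega(T)\|_p\le\|T\|_p$. By Lemma \ref{lem:infty} (for $p=\infty$) and the matrix description (for the other $p$), we have $\|\Phi_\omega(T)\|_p=\sup_F\|\chi_F\Phi_\omega(T)\chi_F\|_p$, with $F$ ranging over finite subsets of $X(\omega)$. The preceding proposition provides local coordinates and isometries $U$ with
\[
\|\chi_F\Phi_\omega(T)\chi_F\|_p\le \|U^{-1}\chi_{G(y)}T\chi_{G(y)}U\|_p+\varepsilon\le\|T\|_p+\varepsilon .
\]
This gives contractivity for every $T\in B^p_u(X)$. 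Linearity of $\Phi_\omega$ is immediate from linearity of ultralimits.

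\textbf{Step 3: extension and multiplicativity.} Since each entry map $T\mapsto\Phi_\omega(T)_{\alpha\beta}$ is $\|\cdot\|_{\sup}$-continuous, and $\|\cdot\|_{\sup}\le\|\cdot\|_p$, the contractive map sends norm limits to norm limits entrywise. So for $T=\lim T_n$ with $T_n\in\mathbb{C}_u[X,\mathcal{E}]$, the images $\Phi_\omega(T_n)$ form a Cauchy sequence in $B^p_u(X(\omega))$, and its limit has entries $\Phi_\omega(T)_{\alpha\beta}$. Thus $\Phi_\omega(T)\in B^p_u(X(\omega),\widetilde{\mathcal{E}_\omega})$.

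For multiplicativity, by density and continuity it suffices to take $S,T\in\mathbb{C}_u[X,\mathcal{E}]$, and by linearity we may assume their supports are partial translations. Write
\[
(ST)_{t_\alpha(x)t_\beta(x)}=\sum_z S_{t_\alpha(x)z}T_{zt_\beta(x)} .
\]
This sum has at most one nonzero term, with $z=s^{-1}(t_\alpha(x))$ where $s$ is the partial translation supporting $S$. On an $\omega$-large set, this $z$ equals $t_\gamma(x)$ for a $\gamma\in X(\omega)$ that is determined uniquely, namely $\gamma=s^{-1}(\alpha)$ if $\alpha$ lies in the range closure. Taking ultralimits then gives the matrix product $\sum_\gamma\Phi_\omega(S)_{\alpha\gamma}\Phi_\omega(T)_{\gamma\beta}$.

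\textbf{Main obstacle.} I expect the main obstacle to be this last bookkeeping: identifying the intermediate index $z$ with some $t_\gamma(x)$ on an $\omega$-measure-one set. It needs Lemma \ref{lem:compatible}(ii) and the uniqueness lemma applied to the composite partial translation $s^{-1}\circ t_\alpha$.
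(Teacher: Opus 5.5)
Your proposal is correct and follows essentially the argument the paper defers to (\v{S}pakula--Willett and Zhang): you bound the finite sections via the local-coordinate approximation, extend from $\mathbb{C}_u[X,\mathcal{E}]$ by density using $\|\cdot\|_{\sup}\le\|\cdot\|_p$, and check multiplicativity on controlled-propagation operators. Your reduction to partial-translation supports, with the uniqueness lemma applied to $s^{-1}\circ t_\alpha$, is the bookkeeping that replaces the metric ``balls correspond to balls'' step in the coarse setting, and so makes concrete the paper's claim that the metric proofs ``remain valid''.
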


\subsection{Isometric isomorphism between $B^p_u(X)$ and $F^p_{red}(G(X))$}

From \cite[Proposition 10.28]{Roe03}, we have an algebra isomorphism 
\begin{align*}
\Psi_X:C_c(G(X))&\rightarrow \mathbb{C}_u[X,\mathcal{E}]\\
        f&\mapsto T_{f}
\end{align*}
given as follows:
A function $f\in C_c(G(X))$ with compact support on $G(X)$ is, by definition, a continuous function on some $\overline{E}$, where $E$ is a controlled set. It can be equivalently regarded as a bounded function on $E$ due to the Stone-\v{C}ech compactification, and thus an element in $\mathbb{C}_u[X,\mathcal{E}]$. Inspired by this, we have the following

\begin{prop}
    For a given function $f\in C_c(G(X))$ with $\supp(f)=\overline{E}$ and each ultrafilter $\omega\in G(X)^{(0)}$, we have an algebra isomorphism:
    \begin{align*}
        \Psi_{X(\omega)}:C_c(G(X))&\rightarrow \mathbb{C}_u[X(\omega),\widetilde{\mathcal{E}_\omega}]\\
        f&\mapsto T_{f,\omega}
    \end{align*}
    where $T_{f,\omega}$ is $f|_{X(\omega)\times X(\omega)}$  naturally viewed as an $X(\omega)\times X(\omega)$ matrix. Moreover, for each ultrafilter $\omega\in G(X)^{(0)}$, we have
    \[
    \Psi_{X(\omega)}(f)=\Phi_\omega(\Psi_X(f))
    \]
    for all $f\in C_c(G(X))$ (when $\omega=x\in X$ is a principal ultrafilter, we set $\Phi_x=id$ since $X(x)=X$).
\end{prop}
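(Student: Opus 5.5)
The plan is to define $\Psi_{X(\omega)}$ concretely, verify it is well defined and multiplicative, and then deduce both bijectivity and the identity $\Psi_{X(\omega)}=\Phi_\omega\circ\Psi_X$ from one computation on partial translations.

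\textbf{Reduction to partial translations.} Since $C_c(G(X))$ is spanned by functions supported in sets $\overline{\Gamma_t}$, it suffices to understand such $f$. By the identification of $G(X)$ with $\bigcup_E\overline{E}^{\beta X\times\beta X}$ \cite[Lemma 2.7]{STY}, $f$ is a continuous function on $\overline{E}\subseteq\beta X\times\beta X$. Its restriction to $X(\omega)\times X(\omega)$ is supported in $\overline{E}\cap(X(\omega)\times X(\omega))=E_\omega$ by Remark \ref{rem:ultra1}(i). It is bounded by $\|f\|_\infty$, so $T_{f,\omega}$ is a controlled propagation matrix for $\widetilde{\mathcal{E}_\omega}$.

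Linearity of $\Psi_{X(\omega)}$ is clear. For multiplicativity I would compare convolution in $G(X)$,
\[
(f*g)(\alpha,\beta)=\sum_{\gamma}f(\alpha,\gamma)g(\gamma,\beta),
\]
where $\gamma$ ranges over $r(G(X)_\beta)$, with matrix multiplication over $X(\omega)$. When $(\alpha,\beta)\in X(\omega)^2$, these index sets coincide, since $r(G(X)_\beta)=X(\beta)=X(\omega)$ by Lemma \ref{lem:compatible} and Remark \ref{rem:compatible}(ii).

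\textbf{Compatibility with limit operators.} This is the key computation. Fix a compatible family $\{t_\alpha\}$ and $\alpha,\beta\in X(\omega)$. By definition,
\[
\Phi_\omega(\Psi_X(f))_{\alpha\beta}=\lim_{x\to\omega}f(t_\alpha(x),t_\beta(x)).
\]
The map $x\mapsto(t_\alpha(x),t_\beta(x))$ sends a set of $\omega$-measure one into $X\times X$. Its $\omega$-limit in $\beta X\times\beta X$ is $(t_\alpha(\omega),t_\beta(\omega))=(\alpha,\beta)$, because the coordinate projections are continuous and each ultralimit is computed coordinatewise. If $(t_\alpha(x),t_\beta(x))$ lies in $\overline{E}$ for $\omega$-almost all $x$, then continuity of $f$ on the compact set $\overline{E}$ gives that the limit equals $f(\alpha,\beta)$.

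If instead it lies outside $E$ $\omega$-almost surely, then the value is $0$. In that case $(\alpha,\beta)\notin\overline{E}$, using that the relevant pairs lie in a single partial translation $\Gamma_{t_\alpha\circ t_\beta^{-1}}$, and $\overline{A}\cap\overline{B}=\emptyset$ for disjoint $A,B\subseteq X\times X$ in $\beta(X\times X)$. I expect this step to be the main obstacle: one must pass correctly between $\beta(X\times X)$ and $\beta X\times\beta X$ on controlled sets, which is exactly \cite[Lemma 2.7]{STY}. This establishes $\Psi_{X(\omega)}=\Phi_\omega\circ\Psi_X$. For principal $\omega$ the identity is the trivial case $\Phi_x=\mathrm{id}$.

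\textbf{Bijectivity.} For injectivity, note that $G(X)=\bigsqcup_\omega G(X)_{X(\omega)}$, but $f$ is not determined by its restriction to one orbit. So injectivity onto $\mathbb{C}_u[X(\omega),\widetilde{\mathcal{E}_\omega}]$ must be read modulo this, or restricted to $\omega$ principal, where it is \cite[Proposition 10.28]{Roe03}. I would flag that for non-principal $\omega$ only a surjective homomorphism should be claimed.

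For surjectivity, a matrix supported in a subset $D$ of $(\Gamma_t)_\omega$ with bounded values extends by a continuous function on $\overline{\Gamma_t}$. I would use the Stone–\v{C}ech extension of a bounded function on $\Gamma_t\cong S$ chosen to interpolate prescribed values at the points $\gamma\in\overline{S}\cap X(\omega)$. This requires these points to form a discrete, "separated" set, in the spirit of the sufficiency condition in the earlier proposition on $\mathcal{E}_\omega$ being a coarse structure. I would invoke that condition here.
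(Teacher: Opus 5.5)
For the parts of the statement that are actually true, you follow the paper's route. The paper checks that $f|_{X(\omega)\times X(\omega)}$ is bounded with support in $E_\omega$ (Remark \ref{rem:ultra1}(i)), calls the homomorphism property obvious, and asserts $\Phi_\omega(T_f)=T_{f,\omega}$ ``by definition''. Your convolution computation over $r(G(X)_\beta)=X(\omega)$ supplies the omitted multiplicativity check. Your ultralimit argument (continuity of $f$ on the compact set $\overline{E}$, plus injectivity of $(r,s)$ on $G(X)$ to exclude $(\alpha,\beta)\in\overline{E}$ in the other case) supplies the omitted limit-operator identity; that identity is essentially the content of Lemma \ref{lem:limit}.

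Your objection to bijectivity is correct. Here the gap is in the paper's proof, which declares the isomorphism obvious. For non-principal $\omega$ the map is never injective. By Remark \ref{rem:compatible}, $X(\omega)\cap X=\emptyset$. On the other hand, for $x,y\in X$ the point $(x,y)$ is isolated in $G(X)$. Hence $\chi_{\{(x,y)\}}$ is a nonzero element of $C_c(G(X))$ with $\Psi_{X(\omega)}(\chi_{\{(x,y)\}})=0$.

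You should not fall back to ``surjective homomorphism'' either. The separation condition you propose to invoke does not follow from the hypotheses, and it can fail. Take $X=\mathbb{Z}$ with its bounded coarse structure, and $\omega=e$ a non-principal idempotent of $(\beta\mathbb{Z},+)$. Every translate $k+e=\lim_{x\to e}(x+k)$ lies in $X(e)$, and these translates are distinct because $\mathbb{Z}$ acts freely on $\beta\mathbb{Z}$. Since $e$ is non-principal, $e=e+e=\lim_{k\to e}(k+e)$ puts $e$ in the closure of $\{k+e:k\neq0\}$. Now consider the matrix on $X(e)$ with a single entry $1$ at $(e,e)$. Its support lies in $(\Delta_X)_e$, so it belongs to $\mathbb{C}_u[X(e),\widetilde{\mathcal{E}_e}]$. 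If it were $\Psi_{X(e)}(f)$, then $f$ restricted to the clopen unit space $\beta\mathbb{Z}$ would be continuous, equal to $1$ at $e$ and $0$ at every $k+e$ with $k\neq0$. That is impossible.

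So the defensible statement is this: $\Psi_{X(\omega)}$ is a homomorphism satisfying $\Psi_{X(\omega)}=\Phi_\omega\circ\Psi_X$, and it is bijective for principal $\omega$ by \cite[Proposition 10.28]{Roe03}. This is all that Corollary \ref{lem:norm2} and Proposition \ref{prop:isometric} use, so nothing downstream depends on the false part.
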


\begin{proof}
    It remains to prove the case that $\omega\in \partial G(X)^{(0)}$ is a non-principal ultrafilter. It is clear that $\supp(T_{f,\omega})\subseteq E_\omega\in\mathcal{E}_\omega$ by Remark \ref{rem:ultra1}(i). Since the limit coarse space $(X(\omega),\widetilde{\mathcal{E}_\omega})$ is uniformly locally finite and $f|_{X(\omega)\times X(\omega)}$ is bounded, we have $T_{f,\omega}\in\mathbb{C}_u[X(\omega),\widetilde{\mathcal{E}_\omega}]$. The algebra isomorphism is obvious, and by definition $\Phi_\omega(T_f)=T_{f,\omega}$, i.e., $\Psi_{X(\omega)}(f)=\Phi_\omega(\Psi_X(f))$.
\end{proof}

\begin{cor} (cf. \cite[Lemma C.3]{SW17})\label{lem:norm2}
Let $\omega$ be an ultrafilter in $G(X)^{(0)}$, and let $F:X(\omega)\rightarrow G(X)_\omega,\;\alpha\mapsto(\alpha,\omega)$ be the bijection defined in Remark \ref{rem:compatible}(ii). For $p\in[1,\infty]$, if 
\[ U:\ell^p(G(X)_\omega)\rightarrow\ell^p(X(\omega)),\; (Uv)(\alpha):=v(\alpha,\omega) \] is the invertible isometry induced by $F$, and 
\[ \Psi_{X(\omega)}:C_c(G(X))\rightarrow \mathbb{C}_u[X(\omega),\widetilde{\mathcal{E}_\omega}] \]
is the algebra isomorphism above, then \[ U\lambda_\omega(f)U^{-1}=\Psi_{X(\omega)}(f)=\Phi_\omega(\Psi_X(f)) \] for all $f\in C_c(G(X))$ (set $\Phi_\omega=id$ when $\omega$ is a principal ultrafilter).
\end{cor}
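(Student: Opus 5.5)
The identity $\Psi_{X(\omega)}(f)=\Phi_\omega(\Psi_X(f))$ was established in the preceding proposition. It therefore suffices to prove $U\lambda_\omega(f)U^{-1}=\Psi_{X(\omega)}(f)$ for every $f\in C_c(G(X))$. Both sides are linear in $f$, and by Lemma \ref{lem:CW}(ii) every $f$ is supported on $\overline{E}$ for a controlled $E$ that splits into finitely many partial translations. I would nevertheless compare matrix entries directly for general $f$; the splitting is only needed to ensure that the sums are finite.

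First, I will verify that $U$ is an invertible isometry. By Remark \ref{rem:compatible}(ii), the map $F:X(\omega)\to G(X)_\omega$, $\alpha\mapsto(\alpha,\omega)$, is a bijection, since $G(X)$ identifies with a subset of $\beta X\times\beta X$ and $(r,s)$ is injective. Composition with a bijection of discrete index sets is an isometric isomorphism of $\ell^p$ spaces, including $p=\infty$. Its inverse is $(U^{-1}\xi)(\alpha,\omega)=\xi(\alpha)$.

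Next, I compute $U\lambda_\omega(f)U^{-1}$ on basis vectors $\delta_\beta$ with $\beta\in X(\omega)$. We have $U^{-1}\delta_\beta=\delta_{(\beta,\omega)}$. The formula for $\lambda_\omega$ gives
\[
(\lambda_\omega(f)\delta_{(\beta,\omega)})(\alpha,\omega)=f\big((\alpha,\omega)(\beta,\omega)^{-1}\big)=f\big((\alpha,\omega)(\omega,\beta)\big)=f(\alpha,\beta).
\]
This uses that the groupoid operations are those of the pair groupoid on $\beta X\times\beta X$. Here $(\alpha,\beta)\in G(X)$ because $\alpha,\beta$ are both compatible with $\omega$ and compatibility is an equivalence relation. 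Hence the matrix entry at $(\alpha,\beta)$ of $U\lambda_\omega(f)U^{-1}$ equals $f|_{X(\omega)\times X(\omega)}(\alpha,\beta)=T_{f,\omega}(\alpha,\beta)=\Psi_{X(\omega)}(f)(\alpha,\beta)$.

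The remaining step, and the only delicate one, is to pass from agreement of matrix entries to equality of operators. For $p\in[1,\infty)$ the $\delta_\beta$ span a dense subspace, so agreement on basis vectors suffices. The right-hand side is bounded because it lies in $\mathbb{C}_u[X(\omega),\widetilde{\mathcal{E}_\omega}]$; this uses the uniform local finiteness from Remark \ref{rem:ultra2}(ii), or just the principal case when $\omega\in X$. The left-hand side is bounded since $\lambda_\omega(f)$ is.

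For $p=\infty$ I will argue directly. Both operators have controlled support: only finitely many $\beta$ satisfy $(\alpha,\beta)\in E_\omega$, with the number bounded uniformly by $n(E_\omega)$. So for every $\xi\in\ell^\infty$ both operators are given by the same finite row sums $\sum_\beta f(\alpha,\beta)\xi(\beta)$. For $\lambda_\omega(f)$ this is immediate from its defining formula, because $f$ restricted to $G(X)_\omega$ has finitely supported rows by étaleness and compactness of $\operatorname{supp} f$. For the right-hand side it is Lemma \ref{lem:infty}, since controlled propagation operators lie in $\mathcal{S}^\infty$. Equality thus holds on all of $\ell^\infty$.

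In the principal case $\omega=x\in X$ we have $X(x)=X$, $\Phi_x=\mathrm{id}$, and the same computation applies verbatim.
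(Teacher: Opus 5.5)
Your proof is correct and takes essentially the same route as the paper. The paper gives no separate argument: it gets the second equality from the preceding proposition, and the first from the matrix-coefficient computation $(U\lambda_\omega(f)U^{-1}\delta_\beta)(\alpha)=f\bigl((\alpha,\omega)(\beta,\omega)^{-1}\bigr)=f(\alpha,\beta)$ of \cite[Lemma C.3]{SW17}, via the bijection $F$ of Remark \ref{rem:compatible}(ii). Your separate treatment of $p=\infty$ through finite row sums, rather than density of finitely supported vectors, fills in a detail the paper leaves implicit.
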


\begin{rem}\leavevmode\label{rem:norm}
    \begin{enumerate}
        \item When $\omega=x\in X$ is a principal ultrafilter, we have
        \[
        \|\lambda_x(f)\|_{B(\ell^p(G(X)_x))}=\|\Psi_X(f)\|_{B(\ell^p(X))};
        \]
        \item When $\omega\in \partial G(X)^{(0)}$ is a non-principal ultrafilter, we have
        \[
        \|\lambda_\omega(f)\|_{B(\ell^p(G(X)_\omega))}=\|\Psi_{X(\omega)}(f)\|_{B(\ell^p(X(\omega)))}=\|\Phi_\omega(\Psi_X(f))\|_{B(\ell^p(X(\omega)))}.
        \]
    \end{enumerate}
\end{rem}

\begin{prop}\label{prop:isometric}
The isomorphism $\Psi_X:C_c(G(X))\rightarrow\mathbb{C}_u[X,\mathcal{E}]$ naturally extends to an isometric isomorphism $\Psi:F^p_{red}(G(X))\rightarrow B^p_u(X)$ for $p\in[1,\infty]$.
\end{prop}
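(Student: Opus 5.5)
The plan is to show that $\Psi_X$ is isometric when $C_c(G(X))$ carries the $p$-reduced norm and $\mathbb{C}_u[X,\mathcal{E}]$ carries the operator norm of $B(\ell^p(X))$. Once this holds, $\Psi_X$ extends by continuity to an isometry from the completion $F^p_{red}(G(X))$ into $B^p_u(X)$. Its range is closed, because the map is an isometry, and it contains the dense subalgebra $\mathbb{C}_u[X,\mathcal{E}]$, because $\Psi_X$ is onto it by \cite[Proposition 10.28]{Roe03}. Hence the extension is surjective, and multiplicativity passes to the extension by continuity. So everything reduces to proving
\[
\|f\|_{p,red}=\sup_{\omega\in G(X)^{(0)}}\|\lambda_\omega(f)\|=\|\Psi_X(f)\|_{B(\ell^p(X))}\qquad (f\in C_c(G(X))).
\]

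By Remark \ref{rem:norm}, the supremum splits into two parts. Over principal ultrafilters $x\in X$ each term equals $\|\Psi_X(f)\|_p$; this uses Corollary \ref{lem:norm2} together with $X(x)=X$. Over non-principal $\omega\in\partial G(X)^{(0)}$ the terms equal $\|\Phi_\omega(\Psi_X(f))\|_p$. Since $X\subseteq G(X)^{(0)}$ is nonempty, the supremum is at least $\|\Psi_X(f)\|_p$.

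For the reverse inequality, Proposition \ref{lem:norm1} says that $\Phi_\omega$ is contractive, so $\|\Phi_\omega(\Psi_X(f))\|_p\le\|\Psi_X(f)\|_p$ for every non-principal $\omega$. The supremum is therefore exactly $\|\Psi_X(f)\|_p$, as required.

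The main obstacle is making sure that contractivity of $\Phi_\omega$ and the identification $U\lambda_\omega(f)U^{-1}=\Phi_\omega(\Psi_X(f))$ are valid for all $p\in[1,\infty]$, and in particular for $p=\infty$. There the matrix picture is delicate, and one has to use Lemma \ref{lem:infty}, which says $\|T\|=\sup_F\|\chi_FT\chi_F\|$ for $T\in\mathcal{S}^\infty$, applied both to $\Psi_X(f)$ and to operators on $\ell^\infty(X(\omega))$. I would handle contractivity through the finite-section approximation proposition preceding Proposition \ref{lem:norm1}. For a finite $F\subseteq X(\omega)$ and $\varepsilon>0$, there is a local coordinate $f_y$ such that $\chi_F\Phi_\omega(T)\chi_F$ is within $\varepsilon$ of a compression $U^{-1}\chi_{G(y)}T\chi_{G(y)}U$, whose norm is at most $\|T\|_p$. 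Taking the supremum over $F$ then gives $\|\Phi_\omega(T)\|_p\le\|T\|_p$ for every $p$, including $p=\infty$ via Lemma \ref{lem:infty}.

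Finally, I would check that the representation $\lambda_\omega$ is well defined on $\ell^p(G(X)_\omega)$ for $p=\infty$. Since $f$ has support in some $\overline{E}$ and $E_\omega$ is uniformly locally finite by Remark \ref{rem:ultra2}(ii), the defining sums are finite. So the conjugation identity of Corollary \ref{lem:norm2} holds entrywise, and because both sides have controlled propagation, it determines the operators.
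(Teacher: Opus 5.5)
Your proposal is correct and follows the paper's argument. On principal ultrafilters $\|\lambda_x(f)\|=\|\Psi_X(f)\|_p$, and contractivity of $\Phi_\omega$ (Proposition \ref{lem:norm1}) bounds the non-principal fibres, so $\|f\|_{p,red}=\|\Psi_X(f)\|_p$ and the isometry extends to the completions with closed, dense, hence full range. Your additional checks for $p=\infty$ (the finite-section estimate via Lemma \ref{lem:infty} and the well-definedness of $\lambda_\omega$) only make explicit what the paper takes from the cited limit-operator results.
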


\begin{proof}
    Corollary \ref{lem:norm1} and Remark \ref{rem:norm} give us
    \[
    ||\lambda_\omega(f)||=||\Phi_\omega(\Psi_X(f))||\leq||\Psi_X(f)||=||\lambda_x(f)||
    \]
    for all $f\in C_c(G(X))$, $\omega\in \partial G(X)^{(0)}$, and $x\in X$.
    Hence \[ ||f||_{p,red}=\sup_{\omega\in G(X)^{(0)}}||\lambda_\omega(f)||=\sup_{x\in X}||\lambda_x(f)||=||\Psi_X(f)|| \] for all $f\in C_c(G(X))$.

    By taking norm-closure, we naturally extend the isometric isomorphism between two algebras to an isometric isomorphism between two Banach algebras.
\end{proof}

\begin{rem}
    Recall that when the coarse structure $\mathcal{E}$ has unit $\Delta_X$, then $G(X)^{(0)}=\beta X$ is compact, and we have:
    \begin{enumerate}
        \item $F^p_{red}(G(X))$ and $B^p_u(X)$ are unital $L^p$ operator algebras. Moreover, we have $\Psi(C(G(X)^{(0)}))=\Psi(C(\beta X))=\ell^\infty(X)$, where $C(G(X)^{(0)})$ and $\ell^\infty(X)$ are naturally viewed as subalgebras of $F^p_{red}(G(X))$ and $B^p_u(X)$, respectively;
        \item The notion of $C^*$-core of unital $L^p$ operator algebras ($p\in[1,\infty)$) was introduced in \cite{CGT}, and isometric isomorphisms between unital $L^p$ operator algebras preserve $C^*$-cores.
        For the Hausdorff \'{e}tale groupoid $G(X)$ with compact unit space $G(X)^{(0)}=\beta X$, and $p\in[1,\infty)\setminus\{2\}$, it is known that $\mathrm{core}(F^p_{red}(G(X)))=C(G(X)^{(0)})$ \cite[Proposition 5.1]{CGT}.
        Using \cite[Proposition 2.7]{CGT}, one can also show that $\mathrm{core}(B^p_u(X))=\ell^\infty(X)$ for $p\in[1,\infty)\setminus\{2\}$.
    \end{enumerate} 
\end{rem}

\subsection{Dynamical ideals in $F^p_{red}(G(X))$}

Having established the above isometric isomorphism, it is natural to relate geometric ideals in $B^p_u(X)$ to their counterparts in $F^p_{red}(G(X))$.

\begin{defn} (cf. \cite[Definition 3.1]{BCS24})
Let $G$ be a locally compact Hausdorff \'{e}tale groupoid.
A closed two-sided ideal $I$ in $F^p_{red}(G)$ is called a dynamical ideal if it is generated as an ideal by its intersection with $C_0(G^{(0)})$.
\end{defn}

Given an invariant open subset $U$ of $G^{(0)}$, the reduction $G|_U=\{ \gamma\in G:s(\gamma),r(\gamma)\in U \}$ is an open subgroupoid of $G$, and $C_c(G|_U)$ can be viewed as an ideal in $C_c(G)$.
Let $I_U=\overline{C_c(G|_U)}^{F^p_{red}(G)}$.
Then $I_U$ is a closed ideal in $F^p_{red}(G)$.

By \cite[Proposition 3.6 and Remark 3.8]{BKM2}, for each $p\in[1,\infty]$, the inclusion $C_c(G)\subset C_0(G)$ extends to an injective contractive map $j_p:F^p_{red}(G)\to C_0(G)$ that is isometric on $C_0(G^{(0)})$.
Using this $j$-map, we can define the support of an ideal in $F^p_{red}(G)$.

\begin{defn} (cf. \cite[Page 5]{BCS24})
    Let $I\subseteq F^p_{red}(G)$ be an ideal.
    Define 
    \[
    \supp(I):=\{ \gamma\in G:j_p(I)(\gamma)\neq\{0\} \}\subseteq G
    \]
    to be the support of $I$,
    \begin{align*}
    \insupp(I):=&\{x\in G^{(0)}:f(x)\neq0\ \text{for some}\ f\in I\cap C_0(G^{(0)})\}\\
    =&\bigcup_{f\in I\cap C_0(G^{(0)})}\{x\in G^{(0)}:f(x)\neq0 \}\subseteq G^{(0)}
    \end{align*}
    to be the inner support of $I$ and
    \[
    \outsupp(I):=\supp(I)\cap G^{(0)}\subseteq G^{(0)}
    \]
    to be the outer support of $I$.
\end{defn}

\begin{lem} (cf. \cite[Proposition 3.3]{BCS24}, \cite[Lemma 3.4]{BCS24})
    Let $G$ be a locally compact Hausdorff \'{e}tale groupoid. The inner support $\insupp(I)$ of an ideal $I\subseteq F^p_{red}(G)$ is an invariant open subset of $G^{(0)}$. Moreover,
    for each invariant open subset $U\subseteq G^{(0)}$, we have 
    \begin{align*}
     I_U\cap C_0(G^{(0)})&=C_0(U), \\       
     \insupp(I_U)&=U,\\
     \supp(I_U)&=G|_U.
    \end{align*}
\end{lem}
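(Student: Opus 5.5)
The plan is to prove the three assertions in order: openness and invariance of $\insupp(I)$, then $I_U\cap C_0(G^{(0)})=C_0(U)$, and from that $\insupp(I_U)=U$ and $\supp(I_U)=G|_U$. The basic tools are the injective contractive map $j_p:F^p_{red}(G)\to C_0(G)$, which is isometric on $C_0(G^{(0)})$, and the bisections of the étale groupoid $G$.

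\emph{Openness and invariance.} $\insupp(I)$ is a union of the open sets $\{x:f(x)\neq0\}$ for $f\in I\cap C_0(G^{(0)})$, so it is open. For invariance, take $x\in\insupp(I)$ with $f(x)\neq0$, and let $\gamma\in G$ with $s(\gamma)=x$. Choose an open bisection $B\ni\gamma$ and $g\in C_c(B)$ with $g(\gamma)=1$. A direct convolution computation in $C_c(G)$ gives $g*f*g^*\in C_c(G^{(0)})$ with
\[
(g*f*g^*)(r(\eta))=|g(\eta)|^2f(s(\eta))\qquad(\eta\in B),
\]
where $g^*(\eta)=\overline{g(\eta^{-1})}$ is again supported on the bisection $B^{-1}$. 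Here $f$ is only in $C_0(G^{(0)})$, so I approximate it by elements of $C_c(G^{(0)})$ and use continuity of multiplication in $F^p_{red}(G)$. Then $g*f*g^*\in I\cap C_0(G^{(0)})$, and its value at $r(\gamma)$ is $f(x)\neq0$, so $r(\gamma)\in\insupp(I)$. Using $g^*$ is legitimate here because it is an honest element of $C_c(G)$, even though $*$ is not an isometry for $p\neq2$.

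\emph{The identity $I_U\cap C_0(G^{(0)})=C_0(U)$.} For $\supseteq$: $C_c(U)\subseteq C_c(G|_U)\subseteq I_U$. The norm on $C_0(G^{(0)})$ inside $F^p_{red}(G)$ is the sup norm (it acts by multiplication on each $\ell^p(G_u)$), so taking closures gives $C_0(U)\subseteq I_U$. For $\subseteq$, which is the main obstacle: take $a\in I_U\cap C_0(G^{(0)})$ and $a_n\in C_c(G|_U)$ with $a_n\to a$. Contractivity of $j_p$ gives $j_p(a_n)\to j_p(a)=a$ uniformly on $G$. Each $j_p(a_n)$ vanishes off $G|_U$, so $a$ vanishes on $G^{(0)}\setminus U$, and hence $a\in C_0(U)$. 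The key ingredient is that $j_p$ restricts to the identity on $C_0(G^{(0)})$ and is contractive for the reduced norm; this is exactly what \cite{BKM2} provides.

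\emph{Consequences.} The identity just proved gives $\insupp(I_U)=\bigcup_{f\in C_0(U)}\{f\neq0\}=U$ by Urysohn, since $U$ is open in a locally compact Hausdorff space. For the support, $j_p(I_U)$ is the uniform closure of $C_c(G|_U)$ by the same continuity argument, so it vanishes off $G|_U$, giving $\supp(I_U)\subseteq G|_U$. Conversely, $C_c(G|_U)\subseteq I_U$, and for any $\gamma\in G|_U$ some $g\in C_c(G|_U)$ has $g(\gamma)\neq0$, because $G|_U$ is open. Hence $\supp(I_U)=G|_U$.
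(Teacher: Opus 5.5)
Your proposal is correct and follows essentially the same route as the paper. That route is: conjugation $f\mapsto hfh^*$ by a function supported on an open bisection for invariance, the sup-norm identity on $C_c(U)$ for $C_0(U)\subseteq I_U$, and continuity of $j_p$ for both the reverse inclusion and $\supp(I_U)=G|_U$; for the reverse inclusion you in fact give more detail than the paper's ``straightforward from the definition''. One small correction: $j_p(I_U)$ is only contained in the uniform closure of $C_c(G|_U)$, not equal to it, since the image of $j_p$ need not be closed. Containment is all your argument uses, so nothing breaks.
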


\begin{proof}
$\insupp(I)$ is open because every $f\in C_0(G^{(0)})$ is continuous. To see that $\insupp(I)$ is invariant, take $x\in\insupp(I)$ and fix $\gamma\in G_x$. We will show that $r(\gamma)\in\insupp(I)$. As $x\in\insupp(I)$ there exists $f\in I\cap C_0(G^{(0)})$ such that $f(x)\neq0$. Let $B$ be an open bisection containing $\gamma$ and fix $h\in C_c(B)$ such that $h(\gamma)=1$. Define $h^*\in C_c(B^{-1})$ by $h^*(\alpha^{-1}):=\overline{h(\alpha)}$ for any $\alpha\in B$, where $B^{-1}:=\{\alpha^{-1}:\alpha\in B\}$. Then $hfh^*\in I\cap C_c(r(B))\subseteq I\cap C_c(G^{(0)})$. Moreover, \[hfh^*(r(\gamma))=h(\gamma)f(x)h^*(\gamma^{-1})=f(x)\neq0,\] so we have $r(\gamma)\in \insupp(I)$.

It is straightforward from the definition of $I_U$ that $I_U\cap C_0(G^{(0)})\subseteq C_0(U)$.
On the other hand, $C_c(U)\subseteq C_c(G|_U)$, and $U$ is a bisection, so $\Vert f\Vert_\infty=\Vert f\Vert_{F^p_{red}(G)}$ for $f\in C_c(U)$, and we have $C_0(U)\subseteq I_U\cap C_0(G^{(0)})$. By definition, we have $\insupp(I_U)=U$.

If $\gamma\in G\setminus G|_U$, then $j_p(f)(\gamma)=0$ for all $f\in C_c(G|_U)$, so $j_p(I_U)(\gamma)=\{0\}$ by continuity, thus $\gamma\notin\supp(I_U)$.
On the other hand, if $\gamma\in G|_U$, then there exists $f\in C_c(G|_U)$ such that $f(\gamma)\neq 0$, so $\gamma\in\supp(I_U)$.
\end{proof}

\begin{lem}\label{lem:dynamical}
$I\subseteq F^p_{red}(G)$ is a dynamical ideal if and only if $I=I_U$ for the invariant open subset $U=\insupp(I)$.
\end{lem}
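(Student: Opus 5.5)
One direction is immediate. Suppose $I=I_U$ with $U=\insupp(I)$. By the preceding lemma, $I_U\cap C_0(G^{(0)})=C_0(U)$. So it suffices to show that $C_0(U)$ generates $I_U$ as a closed two-sided ideal.

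Let $J$ denote the closed ideal generated by $C_0(U)$. Clearly $J\subseteq I_U$.

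For the reverse inclusion we only need $C_c(G|_U)\subseteq J$, since $I_U$ is the closure of $C_c(G|_U)$.
\begin{itemize}
\item By a partition of unity subordinate to a cover by open bisections, every $f\in C_c(G|_U)$ is a finite sum of functions supported in open bisections $B\subseteq G|_U$.
\item So take $f\in C_c(B)$ with $B\subseteq G|_U$ an open bisection. Then $r(\supp f)$ is a compact subset of $r(B)\subseteq U$.
\item Choose $g\in C_c(U)$ with $g\equiv1$ on $r(\supp f)$. In the convolution algebra, $g*f=f$, because $(g*f)(\gamma)=g(r(\gamma))f(\gamma)$ for functions on the unit space acting on the left.
\item Hence $f\in J$, and therefore $J=I_U$.
\end{itemize}
Thus $I_U$ is dynamical.

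For the converse, suppose $I$ is dynamical and set $U=\insupp(I)$, which is an invariant open set by the preceding lemma. The plan is to show two things: $I\cap C_0(G^{(0)})=C_0(U)$, and the closed ideal generated by $C_0(U)$ is $I_U$ (the computation above). Then $I=I_U$.

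For the first claim, the inclusion $I\cap C_0(G^{(0)})\subseteq C_0(U)$ holds by the definition of $\insupp$: every such $f$ vanishes off $U$. The reverse inclusion is the main obstacle. Here I would use that $I\cap C_0(G^{(0)})$ is a closed ideal of the commutative $C^*$-algebra $C_0(G^{(0)})$.
\begin{itemize}
\item The norm of $F^p_{red}(G)$ restricts to the sup norm on $C_0(G^{(0)})$, since $j_p$ is isometric there and the unit space is a bisection. So this intersection is closed in the sup norm.
\item It is also an ideal, because $C_0(G^{(0)})$ sits as a subalgebra of $F^p_{red}(G)$.
\item Closed ideals of $C_0(G^{(0)})$ are exactly $C_0(V)$ for open $V$, namely the open set where the ideal does not vanish. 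Here that set is precisely $\insupp(I)=U$.
\end{itemize}
Hence $I\cap C_0(G^{(0)})=C_0(U)$.

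Since $I$ is dynamical, it is generated by $C_0(U)$, so $I=I_U$ by the first part. The step needing care is the identification of closed ideals of $C_0(G^{(0)})$ as $C_0(V)$ inside a non-$C^*$ Banach algebra. This is resolved by the isometry of $j_p$ on $C_0(G^{(0)})$, which reduces it to the standard commutative Gelfand–Stone–Weierstrass fact.
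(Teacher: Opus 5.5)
Your proof is correct and follows the paper's route. For ($\Leftarrow$) you show that $C_0(U)$ generates $I_U$ using local units $g*f=f$ with $g\in C_c(U)$, where the paper appeals to a bounded approximate identity in $C_c(U)$. For ($\Rightarrow$) you derive $I\cap C_0(G^{(0)})=C_0(\insupp(I))$ directly from the closed-ideal structure of $C_0(G^{(0)})$ (using that the reduced norm is the sup norm there), which the paper instead takes from Sims' Lemma 10.3.1, before concluding $I=I_U$ in the same way.
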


\begin{proof}
    ($\Leftarrow$)
    $C_c(U)$ contains a bounded approximate identity for $F^p_{red}(G|_U)$, so $I_U$ is generated as an ideal by $C_c(U)$. It then follows from the previous lemma that $I_U$ is a dynamical ideal.

    ($\Rightarrow$)
    If $I$ is a dynamical ideal, then $I$ is generated as an ideal by $I\cap C_0(G^{(0)})=
    C_0(U)$ for some invariant open subset $U\subseteq G^{(0)}$ (cf. \cite[Lemma 10.3.1]{Sims}). Hence $I=I_U$, and $U=\insupp(I_U)=\insupp(I)$.
\end{proof}

\begin{rem}\label{rem:subgroupoid}\leavevmode
    \begin{enumerate}
        \item For every invariant open subset $U\subseteq G^{(0)}$, $G|_U$ is a subgroupoid of $G$, and the dynamical ideal $I_U$ of $F^p_{red}(G)$ is the reduced groupoid $L^p$ operator algebra $F^p_{red}(G|_U)$ on $G|_U$.
        \item When $p\neq2$, one should pay attention to the slight difference between the two open subsets $\insupp(I)$ and $\outsupp(I)$ as in the sandwiching lemma in \cite{BCS24}, $\insupp(I)$ is invariant but $\outsupp(I)$ may no longer be an invariant subset of $G^{(0)}$. Clearly $\insupp(I)\subseteq\outsupp(I)$, and if $I=I_U$ is a dynamical ideal, then $\insupp(I)=\outsupp(I)$. 
    \end{enumerate}
\end{rem}

Denote by $\mathcal{O}[G]$ the collection of all invariant open subsets of $G^{(0)}$.
\begin{prop} (cf. \cite[Proposition 3.3]{BCS24})\label{prop:lattice4}
    Let $G$ be a locally compact Hausdorff \'{e}tale groupoid. Denote by $\mathbb{I}[F^p_{red}(G(X)]$ the lattice of all ideals of $F^p_{red}(G)$ and by $\mathbb{I}_0[F^p_{red}(G)]$ the sub-lattice of all dynamical ideals. Define
\begin{align*}
    \lambda_7:\mathcal{O}[G]&\to \mathbb{I}_0[F^p_{red}(G)]\\
    U&\mapsto I_U,
\end{align*}
and
\begin{align*}
    \lambda_8:\mathbb{I}[F^p_{red}(G)]&\to\mathcal{O}[G]\\
    I&\mapsto U_I:=\insupp(I).
\end{align*}
Then the maps $\lambda_7,\lambda_8$ are order-preserving, and  $\lambda_8|_{\mathbb{I}_0[F^p_{red}(G)]}=\lambda_7^{-1}$ is a lattice isomorphism from the lattice $\mathbb{I}_0[F^p_{red}(G)]$ of dynamical ideals to the lattice $\mathcal{O}[G]$ of invariant open subsets of $G^{(0)}$.
\end{prop}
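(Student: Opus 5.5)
The plan is to verify three things in order: both maps are well defined, both are order-preserving, and $\lambda_8\circ\lambda_7=\mathrm{id}_{\mathcal{O}[G]}$ while $\lambda_7\circ\lambda_8|_{\mathbb{I}_0}=\mathrm{id}_{\mathbb{I}_0}$. Once these hold, Lemma \ref{lem:lattice} turns the order-preserving bijection with order-preserving inverse into a lattice isomorphism.

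For well-definedness, $\lambda_7(U)=I_U$ is a dynamical ideal by the ($\Leftarrow$) direction of Lemma \ref{lem:dynamical}. For any ideal $I$, $\lambda_8(I)=\insupp(I)$ is an invariant open subset of $G^{(0)}$ by the preceding lemma (cf. \cite[Proposition 3.3]{BCS24}).

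For monotonicity, suppose $U\subseteq V$. Then $G|_U\subseteq G|_V$, so $C_c(G|_U)\subseteq C_c(G|_V)$, and taking closures gives $I_U\subseteq I_V$. Suppose instead $I\subseteq J$. Then $I\cap C_0(G^{(0)})\subseteq J\cap C_0(G^{(0)})$, so $\insupp(I)\subseteq\insupp(J)$ directly from the definition.

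For the inverse relations, the preceding lemma gives $\insupp(I_U)=U$, which is $\lambda_8\lambda_7=\mathrm{id}$. Conversely, if $I$ is dynamical, the ($\Rightarrow$) direction of Lemma \ref{lem:dynamical} gives $I=I_{\insupp(I)}$, which is $\lambda_7\lambda_8|_{\mathbb{I}_0}=\mathrm{id}$. Hence $\lambda_7$ is a bijection onto $\mathbb{I}_0[F^p_{red}(G)]$ with inverse $\lambda_8|_{\mathbb{I}_0}$, and both are order-preserving, so Lemma \ref{lem:lattice} yields the lattice isomorphism.

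The only delicate point is the ($\Rightarrow$) step of Lemma \ref{lem:dynamical}. There one must know that $I\cap C_0(G^{(0)})=C_0(U)$ for $U=\insupp(I)$, and that the ideal generated by $C_0(U)$ is $I_U$. The inclusion $I\cap C_0(G^{(0)})\subseteq C_0(U)$ is immediate. For the reverse inclusion I would argue that $I\cap C_0(G^{(0)})$ is a closed ideal of the commutative algebra $C_0(G^{(0)})$; here one uses that $j_p$ is isometric on $C_0(G^{(0)})$, so this intersection is sup-norm closed. Such an ideal equals $C_0$ of its cozero set, as in \cite[Lemma 10.3.1]{Sims}. Since that lemma is already taken as established, I would simply cite it here.
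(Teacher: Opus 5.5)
Your proposal is correct and follows the same route as the paper: order-preservation is checked directly, and the two preceding lemmas ($\insupp(I_U)=U$ and Lemma \ref{lem:dynamical}) show that $\lambda_7$ is a bijection onto the dynamical ideals with inverse $\lambda_8|_{\mathbb{I}_0[F^p_{red}(G)]}$, after which Lemma \ref{lem:lattice} gives the lattice isomorphism. Your remarks on the ($\Rightarrow$) step add detail the paper leaves implicit: $I\cap C_0(G^{(0)})$ is sup-norm closed because $j_p$ is isometric on $C_0(G^{(0)})$, and you then invoke \cite[Lemma 10.3.1]{Sims}, which is exactly the citation the paper relies on there.
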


\begin{proof}
It is clear that the maps $\lambda_7,\lambda_8$ are order-preserving. The previous two lemmas imply that $\lambda_7:U\mapsto I_U$ is a bijection and $\lambda_7^{-1}=\lambda_8|_{\mathbb{I}_0[F^p_{red}(G)]}$. 
\end{proof}

\subsection{Lattices of all ideals are isomorphic} 
In \cite{CW04}, Chen and Wang established the correspondence between $\mathcal{J}[\mathcal{E}]$, the collection of all ideals of the coarse structure $\mathcal{E}$ and $\mathcal{O}[G(X)]$, the collection of all invariant open subsets of $G(X)^{(0)}$ as follows: 

Let $\mathcal{J}$ be an ideal of $\mathcal{E}$, and $\Gamma_{\mathcal{J}}$ be the set of partial translations in $\mathcal{J}$. Define
\begin{align*}
    U(\mathcal{J}):&=\bigcup_{E\in\mathcal{J}}\overline{r(E)}=\bigcup_{E\in \Gamma_\mathcal{J}}\overline{r(E)}\\
        &=\bigcup_{E\in\mathcal{J}}r(\overline{E})=\bigcup_{E\in \Gamma_\mathcal{J}}r(\overline{E})\\
        &=r(\bigcup_{E\in\mathcal{J}}\overline{E})=r(\bigcup_{E\in \Gamma_\mathcal{J}}\overline{E}).
\end{align*}

\begin{lem}\cite[Lemma 5.2]{CW04}
    $U(\mathcal{J})$ is an invariant open subset of $G(X)^{(0)}$.
\end{lem}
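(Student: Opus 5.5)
We have to show two things about $U(\mathcal{J})$: that it is open in $G(X)^{(0)}$, and that it is invariant under the groupoid $G(X)$. We will use the description
\[
U(\mathcal{J})=\bigcup_{E\in\mathcal{J}}\overline{r(E)}^{\beta X}
\]
together with the fact that $\mathcal{J}$ is closed under composition with arbitrary entourages of $\mathcal{E}$.

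\emph{Openness.} For any $A\subseteq X$, the closure $\overline{A}^{\beta X}$ is clopen in $\beta X$. Hence each $\overline{r(E)}$ is open in $\beta X$, and so also open in the subspace $G(X)^{(0)}$. It remains to check that $\overline{r(E)}\subseteq G(X)^{(0)}$. For $E\in\mathcal{J}\subseteq\mathcal{E}$ we have
\[
\Delta_{r(E)}\subseteq E\circ E^{-1}\in\mathcal{E},
\]
so $\overline{r(E)}$ is one of the sets in the union defining $G(X)^{(0)}$ (Definition \ref{def:coarsegroupoid}). A union of open sets is open, so $U(\mathcal{J})$ is open.

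\emph{Invariance.} Let $\gamma\in G(X)$ with $s(\gamma)\in U(\mathcal{J})$; we must show $r(\gamma)\in U(\mathcal{J})$. (Applying the argument to $\gamma^{-1}$ gives the reverse direction.)
\begin{enumerate}
\item By Lemma \ref{lem:compatible} and Remark \ref{rem:ultra1}, we may write $\gamma=(\alpha,\omega)\in\overline{\Gamma_t}$ for some partial translation $t:S\to R$ with $\Gamma_t\in\mathcal{E}$. Here $\omega=s(\gamma)$ and $\alpha=r(\gamma)=t(\omega)$, with $\omega(S)=1$.
\item Since $\omega\in U(\mathcal{J})$, there is some $E\in\mathcal{J}$ with $\omega\in\overline{r(E)}$. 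Because $\omega$ is an ultrafilter, $\omega(r(E))=1$, and therefore $\omega(S\cap r(E))=1$.
\item Consider $F:=\Gamma_t\circ E$. Since $\mathcal{J}$ is an ideal of $\mathcal{E}$, we have $F\in\mathcal{J}$.
\item We claim $t(S\cap r(E))\subseteq r(F)$. Indeed, if $y\in S\cap r(E)$, choose $z$ with $(y,z)\in E$. Then $(t(y),y)\in\Gamma_t$, so $(t(y),z)\in\Gamma_t\circ E=F$, giving $t(y)\in r(F)$.
\item By Lemma \ref{lem:compatible}(ii), $\alpha(Z)=1$ if and only if $\omega(t^{-1}(Z\cap R))=1$. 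Take $Z=r(F)$. Then
\[
t^{-1}(r(F)\cap R)\supseteq S\cap r(E),
\]
and the right-hand side has $\omega$-measure $1$ by step 2. Hence $\alpha(r(F))=1$, that is, $\alpha\in\overline{r(F)}\subseteq U(\mathcal{J})$.
\end{enumerate}

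The step needing the most care is step 1. It identifies a general element of $G(X)$ with a pair $(\alpha,\omega)$ lying in the closure of the graph of a single partial translation, so that $\alpha=t(\omega)$. This rests on Lemma \ref{lem:CW}(ii), which splits an entourage into finitely many partial translations, and on the closure of a finite union being the union of the closures (Remark \ref{rem:ultra1}(i)). Once that identification is in hand, the rest is bookkeeping with ultrafilter measures, and it uses that $\mathcal{J}$ absorbs left multiplication by entourages of $\mathcal{E}$.
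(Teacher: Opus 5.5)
Your proof is correct: openness via clopenness of $\overline{r(E)}^{\beta X}$ together with $\Delta_{r(E)}\subseteq E\circ E^{-1}\in\mathcal{E}$ is right, and each step of the invariance argument checks out. In Step~5 you use the criterion of Lemma~\ref{lem:compatible}(ii) for the \emph{same} $t$ that satisfies $t(\omega)=\alpha$. That is legitimate, since it is just the definition of the ultralimit. The paper gives no argument of its own here; it only cites \cite[Lemma 5.2]{CW04}, so there is no in-text proof to compare against.

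You can shorten the invariance part by skipping the reduction to a single partial translation in Step~1. Take $\gamma\in\overline{E'}$ with $E'\in\mathcal{E}$, and $s(\gamma)\in\overline{r(E)}$ with $E\in\mathcal{J}$. The set $s^{-1}(\overline{r(E)})=\overline{X\times r(E)}$ is clopen in $\beta(X\times X)$, and for subsets $A,B$ of the discrete set $X\times X$ one has $\overline{A}\cap\overline{B}=\overline{A\cap B}$. Hence $\gamma\in\overline{E'\cap(X\times r(E))}=\overline{E'\circ\Delta_{r(E)}}$. Since $\Delta_{r(E)}\subseteq E\circ E^{-1}\in\mathcal{J}$, the ideal property gives $E'\circ\Delta_{r(E)}\in\mathcal{J}$. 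Therefore $r(\gamma)\in\overline{r(E'\circ\Delta_{r(E)})}\subseteq U(\mathcal{J})$.

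This version uses only elementary Stone--\v{C}ech closure facts and the ideal axiom. Yours goes through the partial-translation and ultralimit machinery of Section~\ref{section5}, which is heavier but equally valid.
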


Moreover, the correspondence denoted by 
\begin{align*}
    \lambda_5:\mathcal{J}[\mathcal{E}]&\to\mathcal{O}[G(X)]\\
    \mathcal{J}&\mapsto U(\mathcal{J}),
\end{align*} 
is an order-preserving map.

Conversely, let $U\subseteq G(X)^{(0)}$ be an invariant open subset, and define
\[
\mathcal{J}(U):=\{E\subseteq X\times X: \overline{E}^{\beta(X\times X)}\subseteq G(X)|_U\}.
\]

\begin{lem}\cite[Lemma 5.3]{CW04}
    $\mathcal{J}(U)$ is an ideal of $\mathcal{E}$.
\end{lem}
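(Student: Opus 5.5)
We must show that $\mathcal{J}(U)$ is an ideal of $\mathcal{E}$ when $U\subseteq G(X)^{(0)}$ is an invariant open subset. The plan is to check the axioms of a coarse structure contained in $\mathcal{E}$ one by one, and then the two-sided absorption property. Throughout we use three facts. First, $G(X)$ is identified with $\bigcup_{E\in\mathcal{E}}\overline{E}^{\beta X\times\beta X}$, and the groupoid operations are restrictions of the pair-groupoid operations. Second, Lemma \ref{lem:CW}(ii) decomposes any entourage into finitely many partial translations. Third, Remark \ref{rem:compatible}(iii) describes $\overline{\Gamma_t}$ as $\{(t(\omega),\omega):\omega(S)=1\}$.

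\emph{Step 1: containment and easy axioms.} We first show $\mathcal{J}(U)\subseteq\mathcal{E}$. If $\overline{E}\subseteq G(X)|_U\subseteq G(X)$, then $\overline{E}$ is a compact subset of $G(X)=\bigcup_{F\in\mathcal{E}}\overline{F}$. Each $\overline{F}$ is open, so finitely many $\overline{F_i}$ cover $\overline{E}$. Since $X\times X$ is dense and each $\overline{F_i}$ is clopen in $\beta(X\times X)$, we get $E\subseteq\bigcup F_i\in\mathcal{E}$.

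Closure under subsets and finite unions is immediate, because closures respect these operations. Closure under inverses follows from the invariance of $G(X)|_U$ under inversion.

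For finite sets, the closure of a finite $F$ is $F$ itself. Its points $(x,y)$ have $x,y\in X$. By Remark \ref{rem:compatible}(ii), $X$ is the smallest invariant open dense subset, so $X\subseteq U$ provided $U\neq\emptyset$. The empty case needs a comment: if $U=\emptyset$ then $\mathcal{J}(U)=\{\emptyset\}$. I will follow \cite{CW04} and treat this degenerate case separately, or assume $U$ is nonempty.

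\emph{Step 2: absorption.} It suffices to prove that $E\circ A\in\mathcal{J}(U)$ and $A\circ E\in\mathcal{J}(U)$ for $A\in\mathcal{J}(U)$ and $E\in\mathcal{E}$. Closure under composition within $\mathcal{J}(U)$ is then the special case $E\in\mathcal{J}(U)\subseteq\mathcal{E}$.

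Using Lemma \ref{lem:CW}(ii) together with finite unions, we may assume $E=\Gamma_t$ and $A=\Gamma_a$ are partial translations. Then $\Gamma_t\circ\Gamma_a=\Gamma_{t\circ a}$ is again a partial translation. By Remark \ref{rem:compatible}(iii), its closure consists of the points $((t\circ a)(\omega),\omega)$ with $\omega$ in the closure of the domain of $t\circ a$.

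The key identity is $\lim_\omega(t\circ a)=t(a(\omega))$, where $a(\omega)$ lies in the closure of the domain of $t$. Hence $((t\circ a)(\omega),\omega)=(t(a(\omega)),a(\omega))\cdot(a(\omega),\omega)$ is a product in $G(X)$. The second factor lies in $\overline{A}\subseteq G(X)|_U$, so $\omega\in U$ and $a(\omega)\in U$. Invariance of $U$ then gives $t(a(\omega))\in U$. So both endpoints lie in $U$, and $\overline{E\circ A}\subseteq G(X)|_U$.

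The case $A\circ E$ is symmetric, with the factor from $\overline{A}$ appearing on the left. Alternatively it follows by taking inverses: $A\circ E=(E^{-1}\circ A^{-1})^{-1}$.

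The main obstacle is justifying the ultralimit identity for composite partial translations, namely that $\overline{\Gamma_{t\circ a}}=\overline{\Gamma_t}\circ\overline{\Gamma_a}$ in $\beta X\times\beta X$. I would prove it with the characterization in Lemma \ref{lem:compatible}(ii). For $Z\subseteq X$, we have $t(a(\omega))(Z)=1$ if and only if $a(\omega)(t^{-1}(Z))=1$, which holds if and only if $\omega(a^{-1}(t^{-1}(Z)))=1$. This is exactly the statement that $(t\circ a)(\omega)(Z)=1$.
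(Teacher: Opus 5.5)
Your proof is correct. The paper gives no argument of its own for this lemma; it only cites \cite{CW04}. So there is no in-paper route to compare against, and your direct verification stands as a self-contained proof.

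The one place where you work harder than necessary is the absorption step. There you do not need to decompose into partial translations or prove the ultralimit identity $\lim_\omega(t\circ a)=t(a(\omega))$. Since $s(E\circ A)\subseteq s(A)$, continuity of $s$ and compactness of $\overline{A}$ give
\[
s(\overline{E\circ A})\subseteq\overline{s(A)}=s(\overline{A})\subseteq U.
\]
Because $E\circ A\in\mathcal{E}$, every $\gamma\in\overline{E\circ A}$ lies in $G(X)$, and invariance of $U$ then forces $r(\gamma)\in U$. Hence $\overline{E\circ A}\subseteq G(X)|_U$, and $A\circ E$ follows by inversion exactly as you say.

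Your route instead proves the finer fact $\overline{\Gamma_{t\circ a}}=\overline{\Gamma_t}\cdot\overline{\Gamma_a}$: closures of composites are groupoid products of closures. That is the structural reason behind the lemma, but it is more than the lemma needs.

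Two small remarks:
\begin{enumerate}
\item In Step 1, the reason $E\subseteq\bigcup_i F_i$ holds is that $\overline{F}\cap(X\times X)=F$. This is because points of $X\times X$ are isolated in $\beta(X\times X)$, not because $X\times X$ is dense.
\item Your caveat about $U=\emptyset$ is right. With this paper's definition of coarse structure, where every finite set is an entourage, $\mathcal{J}(\emptyset)=\{\emptyset\}$ is not an ideal. So the statement, and the bijection it feeds into, tacitly assumes $U\neq\emptyset$.
\end{enumerate}
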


Moreover, the correspondence denoted by 
\begin{align*}
    \lambda_6:\mathcal{O}[G(X)]&\to\mathcal{J}[\mathcal{E}]\\
    U&\mapsto \mathcal{J}(U),
\end{align*} 
is an order-preserving map.

\begin{lem}\cite[Proposition 5.4]{CW04}\label{lem:open}
    $\lambda_5\lambda_6=id_{\mathcal{O}[G(X)]}$ and $\lambda_6\lambda_5=id_{\mathcal{J}[\mathcal{E}]}$. In other words, for any invariant open subset $U$ in $G(X)^{(0)}$, we have
    \[
    U=U(\mathcal{J}(U)),
    \]
    and for any ideal $\mathcal{J}$ of $\mathcal{E}$, we have
    \[
    \mathcal{J}=\mathcal{J}(U(\mathcal{J})).
    \]
\end{lem}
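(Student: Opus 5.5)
We need to prove that $\lambda_5$ and $\lambda_6$ are mutually inverse: $U(\mathcal{J}(U))=U$ for every invariant open $U\subseteq G(X)^{(0)}$, and $\mathcal{J}(U(\mathcal{J}))=\mathcal{J}$ for every ideal $\mathcal{J}$ of $\mathcal{E}$. The two tools are the following. First, by Lemma \ref{lem:CW}(ii), every entourage is a finite union of partial translations, so $\overline{E}$ is a finite union of compact open bisections $\overline{\Gamma_t}$. Second, the compact open sets $\overline{A}$ with $A\subseteq X$ and $\Delta_A\in\mathcal{E}$ form a basis of $G(X)^{(0)}$, because closures of subsets of $X$ are clopen in $\beta X$.

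\emph{Step 1: $U(\mathcal{J}(U))=U$.} For ``$\subseteq$'': if $E\in\mathcal{J}(U)$, then $\overline{E}\subseteq G(X)|_U$, so $\overline{r(E)}=r(\overline{E})\subseteq U$. For ``$\supseteq$'': take $\omega\in U$. Since $U$ is open, there is a basic clopen set with $\omega\in\overline{A}\subseteq U$, where $\Delta_A\in\mathcal{E}$. Because $U$ is invariant, $G(X)|_U$ contains every arrow with source (hence also range) in $U$. Thus $\overline{\Delta_A}$, whose units lie in $\overline{A}\subseteq U$, is contained in $G(X)|_U$. Hence $\Delta_A\in\mathcal{J}(U)$ and $\omega\in\overline{r(\Delta_A)}$.

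\emph{Step 2: $\mathcal{J}\subseteq\mathcal{J}(U(\mathcal{J}))$.} If $E\in\mathcal{J}$, then $E^{-1}\in\mathcal{J}$, so both $r(\overline{E})=\overline{r(E)}$ and $s(\overline{E})=\overline{r(E^{-1})}$ lie in $U(\mathcal{J})$. Therefore $\overline{E}\subseteq G(X)|_{U(\mathcal{J})}$.

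\emph{Step 3: $\mathcal{J}(U(\mathcal{J}))\subseteq\mathcal{J}$.} This is the main obstacle, and compactness is the key. Let $\overline{E}\subseteq G(X)|_{U(\mathcal{J})}$. First note that $E\in\mathcal{E}$: otherwise $\overline{E}$ would not sit inside $G(X)$ at all. Since $E$ is a finite union of partial translations and $\mathcal{J}$ is closed under finite unions, we may assume $E=\Gamma_t$ with $t:S\to R$.

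Then $s(\overline{E})=\overline{S}$ is compact and contained in $U(\mathcal{J})=\bigcup_{F\in\Gamma_{\mathcal{J}}}\overline{r(F)}$, which is an open cover by clopen sets. Extract a finite subcover $\overline{r(F_1)},\dots,\overline{r(F_k)}$. Because $\overline{S}\subseteq\bigcup_i\overline{r(F_i)}=\overline{\bigcup_i r(F_i)}$ and the sets involved are subsets of $X$, we get $S\subseteq\bigcup_i r(F_i)=:Z$. Each $\Delta_{r(F_i)}=F_i\circ F_i^{-1}$ lies in $\mathcal{J}$, so $\Delta_Z\in\mathcal{J}$. Finally,
\[
E=E\circ\Delta_S\subseteq E\circ\Delta_Z\in\mathcal{J},
\]
using the ideal property with $E\in\mathcal{E}$. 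Hence $E\in\mathcal{J}$.

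The delicate points are two. One is justifying the passage from $\overline{S}\subseteq\overline{Z}$ to $S\subseteq Z$, which holds because points of $X$ are isolated in $\beta X$. The other is confirming, via Lemma \ref{lem:compatible} and the fact that $(r,s)$ is injective, that the invariance argument in Step 1 is valid.
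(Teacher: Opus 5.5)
Your proof is correct. The paper gives no proof of its own here (it imports the statement from \cite[Proposition 5.4]{CW04}), and your argument is the standard compactness proof. The clopen basis $\{\overline{A}:\Delta_A\in\mathcal{E}\}$ handles $U\subseteq U(\mathcal{J}(U))$. For the hard inclusion, a finite subcover of the compact set $s(\overline{E})=\overline{s(E)}$ by the clopen sets $\overline{r(F)}$, $F\in\mathcal{J}$, yields $s(E)\subseteq Z$ with $\Delta_Z\in\mathcal{J}$, and then $E=E\circ\Delta_{s(E)}\subseteq E\circ\Delta_Z\in\mathcal{J}$ by the ideal property.

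One step should be tightened. In Step 3, the claim $E\in\mathcal{E}$ is not justified merely by saying that otherwise $\overline{E}$ would not lie in $G(X)$; it needs the same compactness trick you use afterwards. Since $\overline{E}$ is compact and covered by the clopen sets $\overline{F}$, $F\in\mathcal{E}$, you get $\overline{E}\subseteq\overline{F_1\cup\cdots\cup F_n}$. Intersecting with $X\times X$ then gives $E\subseteq F_1\cup\cdots\cup F_n\in\mathcal{E}$.

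Two smaller remarks. Invariance of $U$ plays no role in Step 1: every point of $\overline{\Delta_A}$ is a unit lying over $\overline{A}\subseteq U$, so the ``delicate point'' you flag there is a non-issue. Likewise, the reduction to a single partial translation in Step 3 is harmless but unnecessary, since the covering argument works verbatim for any $E\in\mathcal{E}$.
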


It follows from Lemma \ref{lem:lattice} that $\lambda_5$ is a lattice isomorphism between $\mathcal{J}[\mathcal{E}]$ and $\mathcal{O}[G(X)]$.

Next, we show that the lattice isomorphism between $\mathbb{I}[F^p_{red}(G(X))]$ and $I[B^p_u(X)]$ induced by the isomorphism $\Psi:F^p_{red}(G(X))\to B^p_u(X)$ preserves inner support. It also follows that dynamical ideals correspond to geometric ideals under the isomorphism $\Psi:F^p_{red}(G(X))\to B^p_u(X)$.

\begin{rem}\leavevmode\label{rem:exactly}
    \begin{enumerate}
        \item Every invariant open subset of $G(X,\mathcal{E})^{(0)}$ is exactly also the unit space of a coarse groupoid $G(X,\mathcal{J})$, where $\mathcal{J}$ is some ideal of $\mathcal{E}$. Thus, the subgroupoid of $G(X,\mathcal{E})$ associated with the invariant open subset is exactly the coarse groupoid $G(X,\mathcal{J})$. For the smallest invariant open subset $X\subseteq G(X,\mathcal{E})^{(0)}$, it is clear that $\mathcal{J}(X)=\mathcal{E}_{min}$ is the smallest ideal of $\mathcal{E}$, and the subgroupoid of $G(X,\mathcal{E})$ associated with $X$ is $G(X,\mathcal{E}_{min})=X\times X$, the pair groupoid.
        \item Recall from Remark \ref{rem:ideal}(ii) and Proposition \ref{prop:lattice3} that every geometric ideal of $B^p_u(X,\mathcal{E})$ is exactly also an $\ell^p$ uniform Roe algebra $B^p_u(X,\mathcal{J})$ where $\mathcal{J}$ is some ideal of $\mathcal{E}$. It is clear that the smallest ideal $\mathcal{E}_{min}$ induces the smallest geometric ideal $B^p_u(X,\mathcal{E}_{min})$ of $B^p_u(X,\mathcal{E})$.
        \item Similarly, it follows from Lemma \ref{lem:dynamical}, Remark \ref{rem:subgroupoid} and Lemma \ref{lem:open} that every dynamical ideal of $F^p_{red}(G(X,\mathcal{E}))$ is exactly also a reduced $L^p$ operator algebra of a coarse groupoid $F^p_{red}(G(X,\mathcal{J}))$ where $\mathcal{J}$ is some ideal of $\mathcal{E}$. It is clear that the smallest invariant open subset $X\subseteq G(X,\mathcal{E})^{(0)}$ induces the smallest dynamical ideal $F^p_{red}(G(X,\mathcal{E})|_X)=F^p_{red}(X\times X)\cong B^p_u(X,\mathcal{E}_{min})$. 
    \end{enumerate}
\end{rem}

The next few lemmas are $p$-analogues of \cite[Lemma 2.7, Lemma 2.18, Lemma 3.1]{WZ25}; the proofs are similar hence omitted.

Regarding an operator $T\in B^p_u(X)$ as a function in $\ell^{\infty}(X\times X)$, we denote by $\overline{T}$ the continuous extension of $T$ to $\beta(X\times X)$. Then $\supp(\overline{T})=\overline{\supp(T)}$ and we have:
\begin{lem}\label{lem:jmap}
    There is a commutative diagram:
    \begin{center}
    \begin{tikzcd}
        F^p_{red}(G(X))\rar["\Psi"] \dar["j"]& B^p_u(X)\lar["\Psi^{-1}"]\dlar["\beta"]\\
        C_0(G(X))&
    \end{tikzcd}
    \end{center}
    Moreover, since $j$ and $\beta$ are injections, to see whether an element (resp. an ideal) in $F^p_{red}(G(X))$ corresponds to an element (resp. an ideal) in $B^p_u(X,\mathcal{E})$ under $\Psi$, we only need to see whether the two elements (resp. ideals) have the same image under $j$ and $\beta$.
\end{lem}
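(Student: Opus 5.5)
We need to prove Lemma \ref{lem:jmap}: that $\beta\circ\Psi = j$ on $F^p_{red}(G(X))$, where $\beta: B^p_u(X)\to C_0(G(X))$ sends $T$ to the continuous extension $\overline{T}$ of the matrix function $(x,y)\mapsto T(x,y)$, restricted to $G(X)$. We also need injectivity of $j$ and $\beta$.

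First I check that $\beta$ is well defined, contractive and injective. For $T\in\mathbb{C}_u[X,\mathcal{E}]$, $\supp(T)=E\in\mathcal{E}$. So $\overline{T}$ is supported in the compact open set $\overline{E}\subseteq G(X)$ and lies in $C_c(G(X))$. By Lemma \ref{prop:norm}, $\|\overline{T}\|_\infty=\|T\|_{\sup}\le\|T\|_p$. Hence $\beta$ extends by continuity to a contractive linear map $B^p_u(X)\to C_0(G(X))$. The extension agrees with $T\mapsto\overline{T}|_{G(X)}$: sup-norm convergence on $X\times X$ gives uniform convergence of the extensions on $\beta(X\times X)$. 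Injectivity holds because $X\times X$ is dense in $G(X)$ and $\overline{T}|_{X\times X}=T$. Since the matrix of $T$ determines $T$ (Lemma \ref{lem:infty}, Lemma \ref{lem:compact}), $\overline{T}=0$ forces $T=0$. Injectivity of $j=j_p$ is quoted from \cite{BKM2}.

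Next I check commutativity on the dense subalgebra $C_c(G(X))$. For $f\in C_c(G(X))$, $j(f)=f$. By the construction of $\Psi_X$ (\cite[Proposition 10.28]{Roe03}), $\Psi(f)=T_f$ is the matrix $f|_{X\times X}$. Thus $\beta(\Psi(f))=\overline{f|_{X\times X}}$, which is the unique continuous function on $G(X)$ agreeing with $f$ on the dense set $X\times X$. So $\beta\Psi(f)=f=j(f)$.

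Finally I pass to the closure. The maps $\beta\circ\Psi$ and $j$ are both bounded: $\Psi$ is isometric by Proposition \ref{prop:isometric}, and $\beta$ and $j$ are contractive. They agree on $C_c(G(X))$, so they agree on $F^p_{red}(G(X))$. The same argument with $\Psi^{-1}$ gives $j\circ\Psi^{-1}=\beta$.

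The consequence about elements and ideals is then formal. If $a\in F^p_{red}(G(X))$ and $T\in B^p_u(X)$ satisfy $j(a)=\beta(T)$, then $\beta(\Psi(a))=\beta(T)$, and injectivity of $\beta$ gives $\Psi(a)=T$. Applying this elementwise shows $\Psi(I)=J$ whenever $j(I)=\beta(J)$ as sets.

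The only delicate point is the identification of $\Psi_X(f)$ with $f|_{X\times X}$, together with the density of $X\times X$ in $G(X)$. The latter is clear because each $\overline{E}$ is the closure of $E\subseteq X\times X$. I expect no real obstacle beyond bookkeeping for $p=\infty$. There one needs Lemma \ref{lem:infty} so that matrix entries still determine the operator.
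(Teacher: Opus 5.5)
Your proposal is correct and follows essentially the route the paper intends; the paper omits the proof as a routine $p$-analogue of \cite[Lemma 2.7]{WZ25}. You verify $\beta\circ\Psi=j$ on $C_c(G(X))$ via $\Psi_X(f)=f|_{X\times X}$ and density of $X\times X$ in the Hausdorff space $G(X)$, extend by continuity, and get injectivity of $\beta$ because matrix entries determine operators in $B^p_u(X)\subseteq\mathcal{S}^p(X)$. As a small bonus, injectivity of $j$ need not be quoted from \cite{BKM2}, since it follows from $j=\beta\circ\Psi$ with $\beta$ injective and $\Psi$ an isomorphism.
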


\begin{lem}\label{lem:limit}
    For a non-principal ultrafilter $\omega$ on $X$ and $T\in B^p_u(X)$, we have
    \[
    \Phi_\omega(T)_{\alpha\beta}=\overline{T}(\alpha,\beta)\ \text{for}\ \alpha,\beta\in X(\omega).
    \]
\end{lem}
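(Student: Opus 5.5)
The plan is to reduce both sides to continuous extensions of bounded functions and then to compare them through ultralimits. Fix a non-principal $\omega$ and a compatible family $\{t_\gamma:S_\gamma\to R_\gamma\}_{\gamma\in X(\omega)}$. For $\alpha,\beta\in X(\omega)$, the definition gives
\[
\Phi_\omega(T)_{\alpha\beta}=\lim_{x\to\omega}T(t_\alpha(x),t_\beta(x)).
\]
I would regard the map $x\mapsto (t_\alpha(x),t_\beta(x))$, defined on the set $S_\alpha\cap S_\beta$ of $\omega$-measure one, as a map $\sigma:S_\alpha\cap S_\beta\to X\times X$. Its image is contained in the entourage $\Gamma_{t_\alpha}\circ\Gamma_{t_\beta}^{-1}$, which is controlled. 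Since $\overline{T}$ is the continuous extension of $T$ to $\beta(X\times X)$, I obtain
\[
\lim_{x\to\omega}T(\sigma(x))=\overline{T}\Bigl(\lim_{x\to\omega}\sigma(x)\Bigr),
\]
where the limit of $\sigma$ is taken in the compact space $\beta(X\times X)$ (the universal property of $\beta$: $\overline{T}\circ\beta\sigma=\beta(T\circ\sigma)$).

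The key step is therefore to identify $\delta:=\lim_{x\to\omega}\sigma(x)\in\beta(X\times X)$ with the groupoid element $(\alpha,\beta)$. Because $(r,s)$ is continuous on $\beta(X\times X)$, I get $r(\delta)=\lim_\omega t_\alpha=\alpha$ and $s(\delta)=\lim_\omega t_\beta=\beta$. Moreover, $\delta$ lies in the closure of $\Gamma_{t_\alpha}\circ\Gamma_{t_\beta}^{-1}$, and this set is in $\mathcal{E}$, so $\delta\in G(X)$. By \cite[Lemma 2.7]{STY}, $(r,s)$ is injective on $G(X)$, so $\delta$ is the unique element of $G(X)$ with range $\alpha$ and source $\beta$. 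This is exactly the element denoted $(\alpha,\beta)$ under the identification of $G(X)$ with $\bigcup_E\overline{E}^{\beta X\times\beta X}$. Consequently $\overline{T}(\alpha,\beta)=\overline{T}(\delta)=\Phi_\omega(T)_{\alpha\beta}$.

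Two routine points remain. First, $\overline{T}$ is well defined for $T\in B^p_u(X)$ since $T$ is bounded as a function (Lemma \ref{prop:norm}). Second, the identification does not depend on the choice of compatible family, by Remark \ref{rem:ultra3}(i). The main obstacle is justifying that ultralimits commute with the Stone--\v{C}ech extension when the domain is only an $\omega$-large set $S_\alpha\cap S_\beta$ rather than all of $X$. I would handle this by extending $\sigma$ arbitrarily off that set; this does not change any $\omega$-limit, because ultralimits depend only on values on $\omega$-measure-one sets. Then $\delta=\beta\sigma(\omega)$ and the universal property applies directly.
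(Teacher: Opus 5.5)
Your proof is correct and is essentially the argument the paper has in mind. The paper omits the proof as a routine $p$-analogue of \cite{WZ25}: you write the entry as the $\omega$-limit of $T$ along $x\mapsto(t_\alpha(x),t_\beta(x))$, use continuity of the Stone--\v{C}ech extension $\overline{T}$, and identify the limit point with $(\alpha,\beta)$ via injectivity of $(r,s)$ on $G(X)$ \cite[Lemma 2.7]{STY}, and your extension-off-an-$\omega$-null-set remark handles the only technical point.
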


\begin{lem}\label{lem:supp}
    For any $T\in B^p_u(X)$ and $\varepsilon>0$, we have
    \[
    \supp(\overline{T_\varepsilon})\subseteq\{\delta\in\beta(X\times X):|\overline{T}(\delta)|\geq\varepsilon\}\subseteq\supp(\overline{T_{\varepsilon/2}})
    \]
\end{lem}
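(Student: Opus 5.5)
The statement to prove is the two inclusions
\[
\supp(\overline{T_\varepsilon})\subseteq\{\delta\in\beta(X\times X):|\overline{T}(\delta)|\geq\varepsilon\}\subseteq\supp(\overline{T_{\varepsilon/2}}).
\]
I will argue directly in $\beta(X\times X)$, using only that $\overline{S}$ is the unique continuous extension of a bounded function $S$ on the discrete set $X\times X$. I will also use that $X\times X$ is dense in $\beta(X\times X)$. Here $\supp(\overline{S})$ means the closure of the nonvanishing set, which equals $\overline{\supp(S)}$ as recorded before Lemma \ref{lem:jmap}. All operators involved lie in $B^p_u(X)\subseteq\ell^\infty(X\times X)$ by Lemma \ref{prop:norm}, so these extensions exist.

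\emph{First inclusion.} Since $\supp(T_\varepsilon)=\supp_\varepsilon(T)$, we get $\supp(\overline{T_\varepsilon})=\overline{\supp_\varepsilon(T)}$. The set $C:=\{\delta:|\overline{T}(\delta)|\geq\varepsilon\}$ is closed, because $|\overline{T}|$ is continuous. It contains $\supp_\varepsilon(T)$, because $\overline{T}$ agrees with $T$ on $X\times X$. Hence $C$ contains the closure $\overline{\supp_\varepsilon(T)}$.

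\emph{Second inclusion.} Let $\delta\in C$, and let $V$ be an open neighbourhood of $\delta$. I need $V\cap\supp_{\varepsilon/2}(T)\neq\emptyset$. Continuity of $\overline{T}$ at $\delta$ gives an open $W\subseteq V$ containing $\delta$ on which $|\overline{T}|>\varepsilon/2$, since $|\overline{T}(\delta)|\geq\varepsilon>\varepsilon/2$. Density of $X\times X$ then gives a point $(x,y)\in W$. At that point $|T(x,y)|=|\overline{T}(x,y)|>\varepsilon/2$, so $(x,y)\in\supp_{\varepsilon/2}(T)$. Therefore $\delta\in\overline{\supp_{\varepsilon/2}(T)}=\supp(\overline{T_{\varepsilon/2}})$.

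The only step needing care is the convention for $\supp$. If $\supp(\overline{S})$ were read as the open nonvanishing set rather than its closure, the second inclusion would still hold. Indeed, $\overline{T_{\varepsilon/2}}$ equals $\overline{T}$ times the continuous extension of $\chi_{\supp_{\varepsilon/2}(T)}$. That extension is the indicator of the clopen set $\overline{\supp_{\varepsilon/2}(T)}$, so $\overline{T_{\varepsilon/2}}(\delta)=\overline{T}(\delta)\neq0$ on $C$. The first inclusion holds under either convention, since the nonvanishing set of $\overline{T_\varepsilon}$ is contained in its closure.

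Under the closure convention the density step is the main point. The extension identity above is the fallback if the open-support convention is intended.
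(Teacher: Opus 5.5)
Your proof is correct. The first inclusion holds because $\{\delta:|\overline{T}(\delta)|\geq\varepsilon\}$ is closed and contains $\supp_\varepsilon(T)$, and the second follows from continuity of $\overline{T}$ together with density of $X\times X$ in $\beta(X\times X)$, with the gap between $\varepsilon/2$ and $\varepsilon$ absorbing the strict inequality. The paper omits the proof as a routine $p$-analogue of the $p=2$ lemma of Wang--Zhang, and your argument is the standard one intended there; it uses only that $T$ is a bounded function on $X\times X$, so nothing depends on $p$.
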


\begin{thm}\label{thm:isomorphic}
    There is a lattice isomorphism between the lattice of all ideals of $F^p_{red}(G(X))$ and the lattice of all ideals of $B^p_u(X)$ induced by the isometric isomorphism $\Psi:F^p_{red}(G(X))\to B^p_u(X)$:
    \[
    \Lambda:\mathbb{I}[F^p_{red}(G(X))]\to I[B^p_u(X)],\ I_1\mapsto I_2
    \]
    Moreover, this lattice isomorphism preserves the inner support of the two ideals under the correspondence between $\mathcal{O}[G(X)]$ and $\mathcal{J}[\mathcal{E}]$, i.e., we have the following commutative diagram:
    
    \centering
    \begin{tikzcd}
        \mathbb{I}[F^p_{red}(G(X))]\ar[r,"\Lambda"]\ar[d,"\lambda_8"]& I[B^p_u(X)] \ar[l,"\Lambda^{-1}"]\ar[d,"\lambda_3"]\\
        \mathcal{O}[G(X)]\ar[r,"\lambda_6"]& \mathcal{J}[\mathcal{E}]\ar[l,"\lambda_5"]
    \end{tikzcd}
\end{thm}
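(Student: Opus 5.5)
The first part is formal. Since $\Psi:F^p_{red}(G(X))\to B^p_u(X)$ is an isometric algebra isomorphism (Proposition \ref{prop:isometric}), it carries closed two-sided ideals bijectively to closed two-sided ideals. So I would define $\Lambda(I_1):=\Psi(I_1)$. This map and its inverse $I_2\mapsto\Psi^{-1}(I_2)$ are inclusion-preserving, and Lemma \ref{lem:lattice} then shows that $\Lambda$ is a lattice isomorphism.

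The real content is commutativity of the square. Since $\lambda_5$ and $\lambda_6$ are mutually inverse (Lemma \ref{lem:open}), it suffices to prove
\[
U(\insupp(\Psi(I_1)))=\insupp(I_1)
\]
for every ideal $I_1$ of $F^p_{red}(G(X))$. Here the left-hand inner support is $\mathcal{J}(I_2)$ with $I_2=\Psi(I_1)$. Throughout I would use Lemma \ref{lem:jmap}, which says $j=\beta\circ\Psi$: for $T\in B^p_u(X)$, the function $j(\Psi^{-1}(T))$ is the restriction of $\overline{T}$ to $G(X)$.

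For the inclusion $\subseteq$, take a generator of $U(\mathcal{J}(I_2))$, namely $\overline{r(A)}$ with $A=\supp_\varepsilon(T)$ and $T\in I_2$. By Proposition \ref{thm:keythm}(ii), $\chi_{r(A)}\in I_2$. This is a diagonal operator, so $\Psi^{-1}(\chi_{r(A)})$ lies in $I_1\cap C_0(G(X)^{(0)})$. Its $j$-image is the continuous extension of $\chi_{r(A)}$, which equals $1$ on $\overline{r(A)}$. Hence $\overline{r(A)}\subseteq\insupp(I_1)$. Every element of $U(\mathcal{J}(I_2))$ lies in some $\overline{r(A)}$ with $A\in\mathcal{J}(I_2)$, so this inclusion holds.

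For the inclusion $\supseteq$, take $\omega\in\insupp(I_1)$ and $f\in I_1\cap C_0(G(X)^{(0)})$ with $f(\omega)\neq0$. Set $T=\Psi(f)\in I_2$. Then $T$ lies in the closure of the diagonal controlled operators, so it is a diagonal operator, and $\overline{T}(\omega)=f(\omega)\neq0$. Choose $\varepsilon<|f(\omega)|$. By Lemma \ref{lem:supp} (the $\beta$-version), $\omega$ lies in $\supp(\overline{T_{\varepsilon/2}})=\overline{\supp_{\varepsilon/2}(T)}$. Because $T$ is diagonal, this closure is the closure of $\Delta_{r(\supp_{\varepsilon/2}(T))}$, and so $\omega\in\overline{r(\supp_{\varepsilon/2}(T))}\subseteq U(\mathcal{J}(I_2))$.

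I expect the main obstacle to be the bookkeeping in these identifications. One point is that an element of $I_1\cap C_0(G(X)^{(0)})$ corresponds to a diagonal $T$ in $B^p_u(X)$, and conversely that diagonal elements of $B^p_u(X)$ lying in the ideal correspond to functions on the unit space. This uses injectivity of $j$ and that $j$ maps the closure of $C_c(G^{(0)})$ isometrically onto $C_0(G^{(0)})$. The other point is that closures of diagonal subsets of $X\times X$ in $\beta(X\times X)$ match closures of subsets of $X$ in $\beta X$ under the identification of $G(X)^{(0)}$ with the diagonal. Once these are checked, both inclusions follow, and hence $\lambda_6\lambda_8=\lambda_3\Lambda$, which is the commutativity of the square.
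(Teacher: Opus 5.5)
Your proposal is correct and takes essentially the same approach as the paper. Both reduce to $\lambda_8\circ\Lambda^{-1}=\lambda_5\circ\lambda_3$ via Lemma \ref{lem:jmap}. One inclusion comes from $\chi_{r(\supp_\varepsilon(T))}\in I$ (Proposition \ref{thm:keythm}(ii)), and the reverse inclusion comes from the estimate $\{\delta:|\overline{T}(\delta)|\geq\varepsilon\}\subseteq\supp(\overline{T_{\varepsilon/2}})$. The only cosmetic difference is that you use the diagonality of $\Psi(f)$ to replace $\overline{\supp_{\varepsilon/2}(T)}$ by $\overline{r(\supp_{\varepsilon/2}(T))}$, whereas the paper writes this directly as $r(\supp(\overline{T_{\varepsilon/2}}))$.
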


\begin{proof}
    We only need to prove that $\lambda_8\circ\Lambda^{-1}=\lambda_5\circ\lambda_3$.
    For any ideal $I\subseteq B^p_u(X)$, we define 
    \[
        U(I):=(\lambda_5\circ\lambda_3)(I)=U(\mathcal{J}(I))=\bigcup\limits_{T\in I,\varepsilon>0}\overline{r(\supp_\varepsilon(T))}=\bigcup\limits_{T\in I,\varepsilon>0}r(\supp(\overline{T_\varepsilon})
    \]
    and by Lemma \ref{lem:jmap}
    \[
    \widetilde{U}(I):=(\lambda_8\circ\Lambda^{-1})(I)=\bigcup_{T\in I\cap\ell^{\infty}(X)}\{\gamma\in G(X)^{(0)}:\overline{T}(\gamma,\gamma)\neq0\}.
    \]
    On the one hand, by Proposition \ref{thm:keythm}(ii) we have $\chi_{r(\supp_\varepsilon(T))}\in I\cap\ell^\infty(X)$ for any $T\in I$ and $\varepsilon>0$. Thus, 
    \begin{align*}
    \overline{r(\supp_\varepsilon(T))}=&\overline{\{x\in X:\chi_{r(\supp_\varepsilon(T))}(x,x)\neq0\}}\\
    =&\{\gamma\in G(X)^{(0)}:\overline{\chi_{r(\supp_\varepsilon(T))}}(\gamma,\gamma)\neq0\}
    \end{align*}
    and $U(I)\subseteq\widetilde{U}(I)$.

    On the other hand, it also follows from Lemma \ref{lem:supp} that 
    \begin{align*}
        \widetilde{U}(I)=&\bigcup_{\varepsilon>0}\bigcup_{T\in I\cap\ell^{\infty}(X)}\{\gamma\in G(X)^{(0)}:|\overline{T}(\gamma,\gamma)|\geq\varepsilon\}\\
        \subseteq&\bigcup\limits_{T\in I\cap\ell^{\infty}(X), \varepsilon>0}r(\supp(\overline{T_{\varepsilon/2}}))\subseteq U(I).
    \end{align*}
    Hence, $\widetilde{U}(I)=U(I)$ for any ideal $I\subseteq B^p_u(X)$, i.e., $\lambda_8\circ\Lambda^{-1}=\lambda_5\circ\lambda_3.$
\end{proof}

\begin{rem}\leavevmode
    \begin{enumerate}
        \item We can divide the whole ideal lattice $\mathbb{I}[F^p_{red}(G(X))]$ (or $I[B^p_u(X)]$) into blocks, where each block $\mathbb{I}[U]$ (or $I[\mathcal{J}]$) is a sub-lattice of ideals with the same inner support $U$ (or $\mathcal{J}$), i.e.,
        \begin{align*}
            \mathbb{I}[U]&=\{I\in \mathbb{I}[F^p_{red}(G(X))]: U_I=U\};\\
            I[\mathcal{J}]&=\{I\in I[B^p_u(X)]:\mathcal{J}(I)=\mathcal{J}\}.
        \end{align*}
        \item By definition, we can clearly see that the dynamical ideal $I_U$ is the smallest element in the sub-lattice $\mathbb{I}[U]$ of all ideals with inner support $U$.
    \end{enumerate}
\end{rem}

Recall that the geometric ideal $I(\mathcal{J})$ is also the smallest element in the sub-lattice $I[\mathcal{J}]$ of all ideals with inner support $\mathcal{J}$. The restriction of the lattice isomorphism on each sub-lattice remains a lattice isomorphism, and since lattice isomorphisms preserve the smallest elements, we have the following:

\begin{cor}\label{cor:isomorphic}
    There is a one-to-one correspondence between the dynamical ideals of $F^p_{red}(G(X))$ and the geometric ideals of $B^p_u(X)$. Moreover, this correspondence is a lattice isomorphism naturally induced by the isometric isomorphism $\Psi:F^p_{red}(G(X))\to B^p_u(X)$.
\end{cor}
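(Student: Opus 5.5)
The plan is to deduce the corollary from Theorem \ref{thm:isomorphic} by working block by block. Since $\Psi$ is an isometric algebra isomorphism, $\Lambda(I_1)=\Psi(I_1)$ is a bijection between closed two-sided ideals, and it preserves inclusions in both directions. So by Lemma \ref{lem:lattice} it is a lattice isomorphism. The commutative diagram in Theorem \ref{thm:isomorphic} gives
\[
\lambda_3(\Lambda(I_1))=\lambda_6(\lambda_8(I_1)),
\]
and $\lambda_5,\lambda_6$ are mutually inverse by Lemma \ref{lem:open}. Hence $\Lambda$ maps each block $\mathbb{I}[U]$ bijectively onto the block $I[\mathcal{J}(U)]$. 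Concretely, an ideal $I_1$ has inner support $U$ if and only if $\Lambda(I_1)$ has inner support $\mathcal{J}(U)$. Because $\Lambda$ is inclusion-preserving both ways, its restriction to a block is an order isomorphism onto the corresponding block.

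Next I identify the minimum of each block on both sides. In $\mathbb{I}[U]$ the smallest element is the dynamical ideal $I_U$: it lies in $\mathbb{I}[U]$ because $\insupp(I_U)=U$. If $\insupp(I)=U$, then $C_0(U)\subseteq I$ by the definition of inner support together with the fact that $I\cap C_0(G^{(0)})$ is an ideal of $C_0(G^{(0)})$; since $I_U$ is generated by $C_0(U)$ (Lemma \ref{lem:dynamical}), we get $I_U\subseteq I$. In $I[\mathcal{J}]$ the smallest element is the geometric ideal $I(\mathcal{J})$. It lies in the block by Proposition \ref{prop:lattice3}, and for any $I$ with $\mathcal{J}(I)=\mathcal{J}$ we have $I(\mathcal{J})=I(\mathcal{J}(I))\subseteq I$, exactly as in the last paragraph of the proof of Proposition \ref{prop:lattice3}. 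An order isomorphism preserves minimum elements, so $\Lambda(I_U)=I(\mathcal{J}(U))$.

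It remains to show that these minima are precisely the dynamical and geometric ideals. By Lemma \ref{lem:dynamical}, every dynamical ideal has the form $I_U$ with $U=\insupp(I)$. By Proposition \ref{prop:lattice3}, every geometric ideal has the form $I(\mathcal{J})$ with $\mathcal{J}=\mathcal{J}(I)$. Therefore $\Lambda$ restricts to a bijection $\mathbb{I}_0[F^p_{red}(G(X))]\to I_0[B^p_u(X)]$, and it coincides with $\lambda_4\circ\lambda_6\circ\lambda_8|_{\mathbb{I}_0}$. As the restriction of a lattice isomorphism, it preserves order both ways, so it is a lattice isomorphism by Lemma \ref{lem:lattice}. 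Alternatively, it is a composition of the lattice isomorphisms from Proposition \ref{prop:lattice4}, Lemma \ref{lem:open} and Theorem \ref{thm:geometric}.

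The main point needing care is the claim that $I_U$ is the least element of $\mathbb{I}[U]$. Specifically, one must check that $I\cap C_0(G^{(0)})\supseteq C_0(U)$ when $\insupp(I)=U$. I would argue this through a partition of unity on compact subsets of $U$. The functions in $I\cap C_0(G^{(0)})$ that are nonvanishing at a given point can be multiplied by elements of $C_c(G^{(0)})\subseteq F^p_{red}(G)$ to produce bump functions. Since the norm on $C_0(G^{(0)})$ is the sup norm, closedness of $I$ then gives all of $C_0(U)$.
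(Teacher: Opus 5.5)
Your proposal is correct and follows essentially the same route as the paper. Theorem~\ref{thm:isomorphic} shows that $\Lambda$ carries each block $\mathbb{I}[U]$ onto $I[\mathcal{J}(U)]$, the least elements of these blocks are exactly the dynamical ideals $I_U$ and the geometric ideals $I(\mathcal{J})$, and an order isomorphism must match least elements. You also fill in the two facts the paper treats as clear: that $C_0(U)\subseteq I$ whenever $\insupp(I)=U$, and that $I(\mathcal{J}(I))\subseteq I$ for an arbitrary ideal $I$. Finally, you note that the resulting correspondence is $\lambda_4\lambda_6\lambda_8$.
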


\begin{rem}\leavevmode\label{rem:correspondence}
    \begin{enumerate}
        \item From Theorem \ref{thm:geometric}(ii), we can see that the correspondence between the geometric ideals of the $\ell^p$ uniform Roe algebras for different $p$ values is built ``geometrically'':
        \[
        B^p_u(X,\mathcal{J})\longleftrightarrow B^q_u(X,\mathcal{J});
        \]
        \item From Corollary \ref{cor:isomorphic}, for a given $p$, we can also see that the correspondence between the geometric ideals and the dynamical ideals is built ``isomorphically'':
        \[
        B^p_u(X,\mathcal{J})\longleftrightarrow F^p_{red}(G(X,\mathcal{J}))
        \]
    \end{enumerate}
\end{rem}

For each invariant open subset $U\subseteq G(X)^{(0)}$, define 
\[
\widetilde{I}_U:=\{f\in F^p_{red}(G(X)): j_p(f)(\gamma)=0, \forall\gamma\notin G(X)|_U\}.
\]
Then $\widetilde{I}_U\supseteq I_U$. We show that $\widetilde{I}_U$ is the largest element in the sub-lattice $\mathbb{I}[U]$, which corresponds to a ghostly ideal defined in Proposition \ref{prop:ghostly}.

\begin{prop}\label{prop:restrictive}
    $\widetilde{I}_U$ is an ideal of $F^p_{red}(G(X))$ with inner support 
    \[
    \insupp(\widetilde{I}_U)=U.
    \]
    Thus, $\widetilde{I}_U\in\mathbb{I}[U]$. Moreover, $\Lambda$ sends $\widetilde{I}_U$ to $\widetilde{I}(\mathcal{J}(U))\in I[\mathcal{J}(U)]$, the ghostly ideal associated to $\mathcal{J}(U)$, and $\widetilde{I}_U$ is the largest element in the sub-lattice $\mathbb{I}[U]$, which we call a restrictive ideal associated to $U$. 
\end{prop}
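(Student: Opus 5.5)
The plan is to transport everything to the $B^p_u(X)$ side via $\Psi$ and Lemma \ref{lem:jmap}. There, Proposition \ref{prop:ghostly} and Theorem \ref{thm:isomorphic} already give the required structure, so the content of the statement reduces to one identification:
\[
\Psi(\widetilde{I}_U)=\widetilde{I}(\mathcal{J}(U)).
\]
Once this holds, the remaining claims follow formally. $\widetilde{I}_U=\Psi^{-1}(\widetilde{I}(\mathcal{J}(U)))$ is a closed two-sided ideal, because $\Psi$ is an isometric algebra isomorphism (Proposition \ref{prop:isometric}). By Theorem \ref{thm:isomorphic},
\[
\insupp(\widetilde{I}_U)=\lambda_8(\Lambda^{-1}(\widetilde{I}(\mathcal{J}(U))))=\lambda_5(\lambda_3(\widetilde{I}(\mathcal{J}(U))))=U(\mathcal{J}(U)).
\]
Here $\lambda_3(\widetilde{I}(\mathcal{J}(U)))=\mathcal{J}(U)$ by Proposition \ref{prop:ghostly}, and $U(\mathcal{J}(U))=U$ by Lemma \ref{lem:open}. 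Maximality in $\mathbb{I}[U]$ also transfers. If $I\in\mathbb{I}[U]$, then $\Lambda(I)\in I[\mathcal{J}(U)]$ by the commutative diagram, so $\Lambda(I)\subseteq\widetilde{I}(\mathcal{J}(U))$ by Proposition \ref{prop:ghostly}. Since $\Lambda$ is order-preserving in both directions, $I\subseteq\widetilde{I}_U$.

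To prove the identification, fix $T\in B^p_u(X)$ and $f=\Psi^{-1}(T)$. By Lemma \ref{lem:jmap}, $j_p(f)=\overline{T}|_{G(X)}$. So $f\in\widetilde{I}_U$ if and only if $\overline{T}$ vanishes on $G(X)\setminus G(X)|_U$, and we must show this is equivalent to $\supp_\varepsilon(T)\in\mathcal{J}(U)$ for all $\varepsilon>0$. Recall $\mathcal{J}(U)=\{E:\overline{E}\subseteq G(X)|_U\}$. Note also that $\overline{\supp_\varepsilon(T)}\subseteq G(X)$ by Lemma \ref{lem:supp}, since $\supp_\varepsilon(T)\in\mathcal{E}$.

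($\Leftarrow$) Suppose $\supp_\varepsilon(T)\in\mathcal{J}(U)$ for every $\varepsilon$, and let $\gamma\in G(X)\setminus G(X)|_U$. Take $\varepsilon>0$. By the second inclusion in Lemma \ref{lem:supp} (the $\beta(X\times X)$ version), if $|\overline{T}(\gamma)|\geq\varepsilon$ then
\[
\gamma\in\supp(\overline{T_{\varepsilon/2}})=\overline{\supp_{\varepsilon/2}(T)}\subseteq G(X)|_U,
\]
a contradiction. Hence $|\overline{T}(\gamma)|<\varepsilon$ for every $\varepsilon>0$, so $\overline{T}(\gamma)=0$.

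($\Rightarrow$) Suppose $\overline{T}$ vanishes off $G(X)|_U$, and let $E=\supp_\varepsilon(T)$. Every $\delta\in\overline{E}$ satisfies $|\overline{T}(\delta)|\geq\varepsilon$ by continuity, since $|T|\geq\varepsilon$ on $E$. Also $\overline{E}\subseteq G(X)$. So $\overline{T}(\delta)\neq0$ forces $\delta\in G(X)|_U$, which gives $\overline{E}\subseteq G(X)|_U$, that is, $E\in\mathcal{J}(U)$.

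The main obstacle is making sure $j_p(\Psi^{-1}(T))$ really equals the Stone–Čech extension $\overline{T}$ restricted to $G(X)$, including at boundary points. For $T\in\mathbb{C}_u[X,\mathcal{E}]$ this is the definition of $\Psi_X$. For general $T$ it follows by density and continuity: both $j_p$ and $T\mapsto\overline{T}|_{G(X)}$ are contractive into sup-norm, the latter because $\|T\|_{\sup}\leq\|T\|_p$ by Lemma \ref{prop:norm}. This is the content of Lemma \ref{lem:jmap}, which I will invoke. A second, minor point is the use of closures. $\overline{E}$ is taken in $\beta(X\times X)$, whereas $G(X)|_U$ is described via $r,s$ into $\beta X$. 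The identification of $G(X)$ with $\bigcup_{E}\overline{E}^{\beta X\times\beta X}$ from \cite[Lemma 2.7]{STY} handles this consistently.
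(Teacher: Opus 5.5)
Your proposal is correct and takes essentially the same route as the paper. Both reduce everything, via Proposition~\ref{prop:ghostly} and Theorem~\ref{thm:isomorphic}, to the identification $\Lambda(\widetilde{I}_U)=\widetilde{I}(\mathcal{J}(U))$, and both prove it by showing that $\widetilde{I}(\mathcal{J}(U))$ consists exactly of those $T$ whose extension $\overline{T}$ vanishes off $G(X)|_U$, using the $\varepsilon$-support inclusions for $\overline{T}$ and then Lemma~\ref{lem:jmap}. You spell out the formal transfer of the ideal property, inner support and maximality, and you argue directly with $\overline{\supp_\varepsilon(T)}\subseteq G(X)|_U$ rather than the paper's intermediate form $\overline{r(\supp_\varepsilon(T))}\subseteq U$; these differences are only in presentation.
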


\begin{proof}
    From Proposition \ref{prop:ghostly} and Theorem \ref{thm:isomorphic}, it remains to prove $\Lambda(\widetilde{I}_U)=\widetilde{I}(\mathcal{J}(U))$. Since
    \begin{align*}
        \widetilde{I}(\mathcal{J}(U)):=&\{T\in B^p_u(X):\supp_\varepsilon(T)\in\mathcal{J}(U)\ \text{for every}\ \varepsilon>0\}\\
        =&\{T\in B^p_u(X):\overline{r(\supp_\varepsilon(T))}\subseteq U\ \text{for every}\ \varepsilon>0\}\\
        =&\{T\in B^p_u(X):\overline{T}(\delta)=0, \forall\delta\notin G(X)|_U\},
    \end{align*}
    where the last equation comes from Lemma \ref{lem:supp}. It follows from Lemma \ref{lem:jmap} that $\Lambda(\widetilde{I}_U)=\widetilde{I}(\mathcal{J}(U))$.
\end{proof}

\begin{rem}\leavevmode
    \begin{enumerate}
        \item Since any non-zero ideal $I$ in $B^p_u(X)$ has a non-empty inner support $\mathcal{J}(I)$, by Theorem \ref{thm:isomorphic} we see that any non-zero ideal $I$ in $F^p_{red}(G(X))$ has a non-empty inner support $U_I$, and $I$ lies between $I_{U_I}$ and $\widetilde{I}_{U_I}$. The study of ideal structure for $F^p_{red}(G(X))$ (or $B^p_u(X)$) can be reduced to analyze the sub-lattice $\mathbb{I}[U]$ (or $I[\mathcal{J}]$);
        \item It is clear that $\outsupp(\widetilde{I}_{U})=\insupp(\widetilde{I}_{U})=U$, and in this way, for any ideal $I$ in $F^p_{red}(G(X))$, we have $\insupp(I)=\outsupp(I)$;
        \item $I_{U_I}$ (or $I(\mathcal{J}(I))$) is the largest dynamical (geometric) ideal contained in $I$ and $\widetilde{I}_{U_I}$ (or $\widetilde{I}(\mathcal{J}(I))$) is the smallest restrictive (ghostly) ideal that contains $I$. 
        \item Kang Li and Jiawen Zhang have studied (ii) and (iii) above in the context of ghostly ideals in reduced groupoid $C^*$-algebras.
    \end{enumerate}
\end{rem}

\begin{ex}
    The ideal $B^p_u(X,\mathcal{E}_{fin})$ of all compact operators in $B^p_u(X)$ and the ideal $I^p_G$ of all ghosts in $B^p_u(X)$ both correspond to $\mathcal{E}_{fin}$, which is the smallest ideal of $\mathcal{E}$:
    \[
    \mathcal{J}(B^p_u(X,\mathcal{E}_{fin}))=\mathcal{J}(I^p_G)=\mathcal{E}_{fin}.
    \]
    Moreover, $B^p_u(X,\mathcal{E}_{fin})=I(\mathcal{E}_{fin})$ is the geometric ideal associated to $\mathcal{E}_{fin}$, $I^p_G=\widetilde{I}(\mathcal{E}_{fin})$ is the ghostly ideal associated to $\mathcal{E}_{fin}$ and 
    \[
    U(\mathcal{E}_{fin})=X\subseteq G(X)^{(0)}.
    \]
\end{ex}

Combining with the limit operator theory, we have the following
\begin{cor}\leavevmode
    \begin{enumerate}
        \item For any invariant open subset $U\subseteq G(X)^{(0)}$, the ideals in $B^p_u(X)$ with inner support $\mathcal{J}(U)$ correspond to the ideals in $F^p_{red}(G(X))$ with inner support $U$ and thus correspond to the ideals in $B^p_u(X)$ with limit operators vanishing in the $G(X)^{(0)}\setminus U$-directions.
        \item $\widetilde{I}(\mathcal{J})=\{T\in B^p_u(X):\Phi_\omega(T)=0,\omega\in G(X)^{(0)}\setminus U(\mathcal{J})\}$;
        \item An operator $T\in B^p_u(X)$ is a ghost if and only if $\Phi_\omega(T)=0$ for any non-principal ultrafilter $\omega\in G(X)^{(0)}\setminus X$.
    \end{enumerate} 
\end{cor}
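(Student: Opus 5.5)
The plan is to reduce all three parts to Proposition \ref{prop:restrictive}, Theorem \ref{thm:isomorphic} and Lemma \ref{lem:limit}. Everything then comes down to one pointwise identity on $\beta(X\times X)$: an operator $T$ has vanishing limit operators at $\omega$ exactly when $\overline{T}$ vanishes on the part of $G(X)$ lying over the invariant set $X(\omega)$.

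Part (ii) is the heart of the matter, so I will prove it first. Put $U=U(\mathcal{J})$. By Lemma \ref{lem:open} we have $\mathcal{J}=\mathcal{J}(U)$. The proof of Proposition \ref{prop:restrictive} then gives
\[
\widetilde{I}(\mathcal{J})=\{T\in B^p_u(X):\overline{T}(\delta)=0\ \text{for all}\ \delta\in G(X)\setminus G(X)|_U\}.
\]
Since $U$ is invariant, $\delta\notin G(X)|_U$ is equivalent to $s(\delta)=\omega\notin U$. In that case $r(\delta)\in X(\omega)$ and $\delta=(\alpha,\omega)$ with $\alpha\in X(\omega)$; this uses Lemma \ref{lem:compatible} and the bijection of Remark \ref{rem:compatible}(ii).

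The condition therefore reads: $\overline{T}(\alpha,\omega)=0$ for every $\omega\notin U$ and every $\alpha\in X(\omega)$. Using invariance of $G(X)^{(0)}\setminus U$ again, this is the same as $\overline{T}(\alpha,\beta)=0$ for all $\alpha,\beta\in X(\omega)$ and all $\omega\notin U$.

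Now observe that $X\subseteq U$, since every ideal contains $\mathcal{E}_{min}$. Hence each such $\omega$ is non-principal, and Lemma \ref{lem:limit} identifies $\overline{T}(\alpha,\beta)$ with $\Phi_\omega(T)_{\alpha\beta}$. So the condition is exactly $\Phi_\omega(T)=0$ for all $\omega\in G(X)^{(0)}\setminus U(\mathcal{J})$, which proves (ii).

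One point needs care. Points $\delta\in\beta(X\times X)$ outside $G(X)$ are irrelevant, because $\overline{T}$ is supported in $G(X)$: the sets $\supp_\varepsilon(T)$ are entourages, and Lemma \ref{lem:supp} applies.

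Part (iii) is the special case $\mathcal{J}=\mathcal{E}_{min}$ (the $\mathcal{E}_{fin}$ of the example). Here $U(\mathcal{E}_{min})=X$ and $\widetilde{I}(\mathcal{E}_{min})=I^p_G$. The latter holds because the condition ``$\supp_\varepsilon(T)$ is finite for every $\varepsilon$'' is precisely the definition of a ghost.

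For part (i), Theorem \ref{thm:isomorphic} shows that $\Lambda$ maps $\mathbb{I}[U]$ bijectively onto $I[\mathcal{J}(U)]$. By Proposition \ref{prop:ghostly}, every ideal in $I[\mathcal{J}(U)]$ is contained in $\widetilde{I}(\mathcal{J}(U))$. Combined with (ii), this means all of its elements have limit operators vanishing at every $\omega\in G(X)^{(0)}\setminus U$.

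The main obstacle is the bookkeeping in (ii): checking that quantifying over $\omega\notin U$ together with $\alpha,\beta\in X(\omega)$ covers exactly $G(X)\setminus G(X)|_U$. This rests on the invariance of the complement of $U$ and on the injectivity of $(r,s)$ stated in \cite[Lemma 2.7]{STY}.
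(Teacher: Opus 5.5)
Your proofs of (ii) and (iii) are correct, and they follow what the paper means by ``combining with the limit operator theory''. The ingredients are the description of $\widetilde{I}(\mathcal{J}(U))$ extracted from the proof of Proposition \ref{prop:restrictive}, invariance of $U$ together with $X\subseteq U$, Lemma \ref{lem:limit}, and the identification $I^p_G=\widetilde{I}(\mathcal{E}_{min})$ with $U(\mathcal{E}_{min})=X$.

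The gap is in (i), where you prove only one containment. You show that every ideal with inner support $\mathcal{J}(U)$ consists of operators whose limit operators vanish at all $\omega\in G(X)^{(0)}\setminus U$. That alone is not a correspondence.

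Read literally (``vanishing at least in those directions''), your own (ii) shows that the class of such ideals is $\{I: I\subseteq\widetilde{I}(\mathcal{J}(U))\}=\{I:\insupp(I)\subseteq\mathcal{J}(U)\}$. This is strictly larger than $I[\mathcal{J}(U)]$ whenever $U\neq X$; for instance it contains $I(\mathcal{E}_{min})$. So (i) must mean that $V(I):=\{\omega\in G(X)^{(0)}:\Phi_\omega(T)=0 \text{ for all } T\in I\}$ equals $G(X)^{(0)}\setminus U$ exactly. For this you are missing the reverse inclusion: at every $\omega\in U(\insupp(I))$, some element of $I$ must have nonzero limit operator.

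The fix is short:
\begin{enumerate}
\item If $\omega\in U(\insupp(I))$, then $\omega\in\overline{Z}$ with $Z=r(\supp_\varepsilon(T))$ for some $T\in I$ and $\varepsilon>0$.
\item Proposition \ref{thm:keythm}(ii) gives $\chi_Z\in I$.
\item Lemma \ref{lem:limit} gives $\Phi_\omega(\chi_Z)_{\omega\omega}=\overline{\chi_Z}(\omega,\omega)=1$. For principal $\omega$, use $\Phi_\omega=\mathrm{id}$ and $I\neq 0$.
\end{enumerate}
This is the same computation as the inclusion $U(I)\subseteq\widetilde{U}(I)$ in the proof of Theorem \ref{thm:isomorphic}.

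Combined with your inclusion, this gives $V(I)=G(X)^{(0)}\setminus U(\insupp(I))$. Since $\mathcal{J}\mapsto U(\mathcal{J})$ is injective (Lemma \ref{lem:open}), it follows that $\insupp(I)=\mathcal{J}(U)$ if and only if $V(I)=G(X)^{(0)}\setminus U$. That is the correspondence asserted in (i).
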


\subsection{Ideals in $L^p$ crossed products}

As noted in \cite[Page 1066]{ChungLi18}, when $p\in(1,\infty)$ and the coarse space is a countable discrete group $\Gamma$ equipped with a proper left-invariant metric, the $L^p$ reduced crossed product $\ell^\infty(\Gamma)\rtimes_{red,p}\Gamma\cong C(\beta\Gamma)\rtimes_{red,p}\Gamma$ is isometrically isomorphic to $B^p_u(\Gamma)$.
Also, in this setting, the coarse groupoid of the metric space $\Gamma$ is just the transformation groupoid $\beta\Gamma\rtimes\Gamma$ \cite[Proposition 3.4]{STY}.
The invariant open subsets of the coarse groupoid are precisely the open subsets of $\beta\Gamma$ that are invariant under the $\Gamma$-action.
Therefore, the ideal structure of such crossed products can be understood in terms of the uniform Roe algebra or the groupoid operator algebra.
On the other hand, one may apply results in \cite{BK} relating ideals in $L^p$ crossed products to open subsets invariant under the group action.



\section{Property A}\label{section6}

In this section, we continue using our general setting that $X$ is a discrete countable set equipped with a uniformly locally finite coarse structure $\mathcal{E}$. Since we were only concerned with the ideal structure in the previous sections, it was not too crucial whether the coarse structure $\mathcal{E}$ is unital or not, because for a non-unital coarse structure, we can always add the diagonal $\Delta_X$ into it and obtain a unital coarse structure  called the unitization; this will not change the ideal structure. When we turn to the algebraic objects $B^p_u(X)$ and $F^p_{red}(G(X))$, it is clear that the unitization of the underlying spaces will lead to the unitization of these Banach algebras, and this will not change the ideal structure of these objects too. Property A introduced by Yu in \cite{Yu} is a coarse geometric property for a discrete metric space, and was generalized by Sako in \cite{Sako} to general uniformly locally finite unital coarse spaces. Thus in this section, we will assume that coarse spaces are unital.

We first start with the common cases $p\in(1,\infty)$, and prove that if the coarse space $(X,\mathcal{E})$ has property A, then the $\ell^p$ uniform Roe algebra $B^p_u(X)$ has a multiplier approximate identity with controlled propagation, all ideals of $B^p_u(X)$ are geometric ideals, and all ghosts in $B^p_u(X)$ are trivial ghosts. For the extreme cases $p\in\{0,1,\infty\}$, we find that the algebraic properties above are still valid without the assumption of Property A.

\subsection{Ghosts in $B^p_u(X)$}

For $p\in\{0\}\cup[1,\infty]$, any operator $T\in B^p_u(X)$ with matrix form $T=[T(x,y)]_{x,y\in X}$ can be considered a function in $\ell^\infty(X\times X)$. We denote by $C_0(X\times X)\subset\ell^\infty(X\times X)$ the functions vanishing at infinity. When $T\in C_0(X\times X)$, we say that the operator $T$ is a ghost element in $B^p_u(X)$. An ideal $I$ of $B^p_u(X)$ is called a ghost ideal if all the elements in $I$ are ghost elements.

\begin{defn} (cf. \cite{BV})\label{def:block}
    Let $\{X_\lambda\}_{\lambda\in\Lambda}$ be an increasing net of finite subsets of X such that $X=\bigcup_\lambda X_\lambda$. For $p\in\{0\}\cup[1,\infty]$ and $\lambda\in\Lambda$, let $M^p(X_\lambda)\subseteq B(\ell^p(X))$ be the algebra of $\mathbb{C}$-valued matrices $A\in B(\ell^p(X))$ such that $\chi_{\{x\}}A\chi_{\{y\}}\neq0$ implies $x,y\in X_\lambda$. Define 
    \[
    M^p_\infty(X)=\overline{\bigcup_\lambda M^p(X_\lambda)}^{\|\cdot\|_p}
    \]
\end{defn}

\begin{rem}\label{rem:trivial}
    Fix $p\in\{0\}\cup[1,\infty]$.
    \begin{enumerate}
        \item Recall that a $\mathcal{P}$-compact operator $K\in\mathcal{K}^p(X)$, by Definition \ref{def:subalgebras}(i), is an operator satisfying
        \[
        \lim\limits_{F\in\mathcal{F}}\|K-K\chi_F\|_p=0\ and\ \lim\limits_{F\in\mathcal{F}}\|K-\chi_F K\|_p=0,
        \]
        which is equivalent to $\lim\limits_{F\in\mathcal{F}}\|K-\chi_F K\chi_F\|_p=0$. Thus, $\mathcal{K}^p(X)$ is exactly $M^p_\infty(X)$. Moreover, by definition of the smallest coarse structure $\mathcal{E}_{min}$ in Example \ref{ex:min}(i), we have $\mathcal{K}^p(X)=M^p_\infty(X)=B^p_u(X,\mathcal{E}_{min})$.
        \item According to Lemma \ref{lem:compact} and Remark \ref{rem:exactly}(i-ii), we can see that $B^p_u(X,\mathcal{E}_{min})=M^p_\infty(X)=\mathcal{K}^p(X)$ is the smallest geometric ideal in $B^p_u(X)$ and
            \begin{enumerate}
            \item for $p\in\{0\}\cup(1,\infty)$, $B^p_u(X,\mathcal{E}_{min})=K(\ell^p(X))\subseteq B^p_u(X)$;
            \item for $p\in\{1,\infty\}$,  $B^p_u(X,\mathcal{E}_{min})=K(\ell^p(X))\cap B^p_u(X)\subsetneq K(\ell^p(X))$, 
            \end{enumerate}
        i.e., the compact operators in $B^p_u(X)$ are exactly all the $\mathcal{P}$-compact operators.
        \item Since 
        \[
        \|K-\chi_F K\chi_F\|_{\sup}\leq\|K-\chi_F K\chi_F\|_p,
        \]
        for any $\mathcal{P}$-compact operator $K$, we have $K\in C_0(X\times X)$ is a ghost element in $B^p_u(X)$ (which we call a trivial ghost) and the smallest geometric ideal $B^p_u(X,\mathcal{E}_{min})=K(\ell^p(X))\cap B^p_u(X)$ is also a (trivial) ghost ideal in $B^p_u(X)$.
    \end{enumerate}
\end{rem}

\begin{defn} (cf. \cite[Section 3]{CW05})
    For $p\in\{0\}\cup[1,\infty]$, $G\in B^p_u(X)$, and $E, F\in\mathcal{E}$, we define the local testing functions as follows:
    \begin{enumerate}
        \item \begin{align*}
                &g_{E\times F}: X\times X\to\mathbb{R}^+\\
                &g_{E\times F}(x,y)=\|G_{N_E(x)\times N_F(y)}\|_p;
                \end{align*}
        \item \begin{align*}
                &g_{E\times \Delta}(x): X\to\mathbb{R}^+\\
                &g_{E\times \Delta}(x)=\|G_{N_E(x)\times N_E(x)}\|_p.
                \end{align*}
    \end{enumerate}  
\end{defn}

\begin{lem} (cf. \cite[Theorem 3.1]{CW05})\label{lem:ghost}
    For $p\in\{0\}\cup[1,\infty]$ and any $G\in B^p_u(X)$, the following are equivalent:
    \begin{enumerate}
        \item $G$ is a ghost element;
        \item $g_{E\times F}\in C_0(X\times X)$ for some (or any) $E, F\in\mathcal{E}$;
        \item $g_{E\times\Delta}\in C_0(X)$ for any $E\in\mathcal{E}$;
        \item $\langle\ G\ \rangle\subseteq C_0(X\times X)$;
        \item $\varphi\circ G\in B^p_u(X,\mathcal{E}_{min})=K(\ell^p(X))\cap B^p_u(X)$ for any controlled propagation function $\varphi\in\ell^\infty(X\times X)$;
        \item The $E$-truncation $G_E\in B^p_u(X,\mathcal{E}_{min})=K(\ell^p(X))\cap B^p_u(X)$ for all $E\in\mathcal{E}$;
        \item $\mathbb{C}_u(\langle\ G\ \rangle)\subseteq B^p_u(X,\mathcal{E}_{min})=K(\ell^p(X))\cap B^p_u(X)$.
    \end{enumerate}
\end{lem}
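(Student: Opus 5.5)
I will prove the equivalences by running through the chain (i)$\Rightarrow$(vi)$\Rightarrow$(v)$\Rightarrow$(vii)$\Rightarrow$(iv)$\Rightarrow$(i), and treat (ii) and (iii) separately against (i). The tools are the partition of entourages into partial translations (Lemma \ref{lem:CW}), the equality $\|\cdot\|_{\sup}=\|\cdot\|_p$ on partial translations (Lemma \ref{prop:norm}), the truncation results (Propositions \ref{thm:keythm} and \ref{prop:truncation}), and the identification $B^p_u(X,\mathcal{E}_{min})=\mathcal{K}^p(X)$ from Remark \ref{rem:trivial}.

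\textbf{Step 1: (i)$\Rightarrow$(vi).} Write $E=E_1\sqcup\cdots\sqcup E_l$ with each $E_i$ a partial translation. For each $i$, $\|G_{E_i}-\chi_F G_{E_i}\chi_F\|_p$ equals the sup-norm of $G$ on $E_i\setminus(F\times F)$. Since $E_i$ is a partial translation, for any finite $K\subseteq X\times X$ one can choose a finite $F$ with $E_i\setminus (F\times F)$ disjoint from $K$: take $F\supseteq r(K)\cup s(K)$ together with their partners under $E_i$. Because $G$ vanishes at infinity, this sup-norm tends to $0$. Hence $G_E$ lies in $M^p_\infty(X)=B^p_u(X,\mathcal{E}_{min})$.

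\textbf{Step 2: (vi)$\Rightarrow$(v)$\Rightarrow$(vii).} For (vi)$\Rightarrow$(v), write $\varphi\circ G=\varphi\circ G_{\supp\varphi}$. Schur multipliers preserve $\mathcal{K}^p(X)$ by Lemma \ref{lem:Schur}, since they commute with cutting down by $\chi_F$. For (v)$\Rightarrow$(vii), Proposition \ref{prop:truncation}(i) applied to the ideal $\langle G\rangle$ gives $\mathbb{C}_u(\langle G\rangle)=\{T_E: T\in\langle G\rangle\}$. The key point is to reduce to $T$ that is a finite sum $\sum A_k G B_k$ with $A_k,B_k\in\mathbb{C}_u[X,\mathcal{E}]$ single partial translations; such sums are dense, and $T\mapsto T_E$ is continuous. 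For each term, $(A G B)_E$ is a finite sum of terms of the form $A\,(\varphi\circ G)\,B$ with $\varphi$ controlled. This holds because the entries of $AGB$ on $E$ only involve entries of $G$ on $\supp(A)^{-1}\circ E\circ \supp(B)^{-1}$, and one can split into partial translations. Each such term is compact by (v), and $\mathcal{K}^p$ is closed, so (vii) follows.

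\textbf{Step 3: (vii)$\Rightarrow$(iv)$\Rightarrow$(i).} Let $T\in\langle G\rangle$ and $\varepsilon>0$. Then $T_\varepsilon=T_{\supp_\varepsilon(T)}\in\mathbb{C}_u(\langle G\rangle)$ is $\mathcal{P}$-compact, hence lies in $C_0(X\times X)$ by Remark \ref{rem:trivial}(iii). Its entries are exactly those of $T$ of modulus at least $\varepsilon$, so $\{|T|\ge\varepsilon\}$ is finite, and therefore $T\in C_0(X\times X)$. Then (iv)$\Rightarrow$(i) is trivial since $G\in\langle G\rangle$.

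\textbf{Step 4: (ii) and (iii) against (i).} The sup-norm estimate gives $|G(x,y)|\le g_{E\times F}(x,y)$ when $\Delta\subseteq E,F$ (the coarse structure is unital here, or we include the point itself via $N_E$), and likewise $|G(x,x)|\le g_{E\times\Delta}(x)$. For the converse, $g_{E\times F}(x,y)\le N\cdot\sup\{|G(u,v)|:u\in N_E(x),v\in N_F(y)\}$ by Lemma \ref{lem:approximate1}, with $N$ depending on $n(E),n(F)$ through uniform local finiteness. As $(x,y)\to\infty$, the finite boxes $N_E(x)\times N_F(y)$ leave every finite set, since each point lies in boundedly many such boxes. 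So (i) gives $g_{E\times F}\in C_0$ for all $E,F$, and $g_{E\times F}\in C_0$ for some $E,F$ gives (i). For (iii)$\Rightarrow$(i), given $(x,y)$ in a controlled set $E'$, take $E=E'\cup E'^{-1}\cup\Delta$. Then $(x,y)\in N_E(x)\times N_E(x)$, so $|G(x,y)|\le g_{E\times\Delta}(x)$. Off every controlled set, however, (iii) only controls $G$ near the diagonal, so this case must be routed through the band structure. Using Lemma \ref{lem:supp}, $\supp_\varepsilon(G)$ is controlled, so $\supp_\varepsilon(G)\subseteq E$ for some $E$. Then $\{|G|\ge\varepsilon\}\subseteq\{(x,y)\in E: g_{E\times\Delta}(x)\ge\varepsilon\}$ is finite by uniform local finiteness. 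This gives (i).

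I expect the main obstacle to be Step 2: showing that $E$-truncations of elements of $\langle G\rangle$ are built from Schur products of $G$ itself, including the density and continuity bookkeeping.
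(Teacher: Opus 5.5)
Your proposal is correct, and it organizes the equivalences differently from the paper.

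\textbf{The paper's route.} The paper proves (i)$\Rightarrow$(ii)$\Rightarrow$(iii)$\Rightarrow$(i), then (i)$\Rightarrow$(iv), then (i)$\Rightarrow$(v)$\Rightarrow$(vi)$\Rightarrow$(i), and finally (vi)$\Leftrightarrow$(vii). The local testing functions do the real work there. The implication (i)$\Rightarrow$(iv) rests on the entrywise bound $|(AGB)(x,y)|\le\|A\|_p\|B\|_p\,g_{E\times F}(x,y)$ for controlled $A,B$. Then (vii) is obtained by applying (i)$\Rightarrow$(vi) to every $T\in\langle G\rangle$, each of which is a ghost by (iv).

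\textbf{Your route.} You close the main cycle (i)$\Rightarrow$(vi)$\Rightarrow$(v)$\Rightarrow$(vii)$\Rightarrow$(iv)$\Rightarrow$(i) without testing functions. You get (vii) from the identity $(AGB)_E=A(\chi_{E'}\circ G)B$ with $E'\subseteq\supp(A)^{-1}\circ E\circ\supp(B)^{-1}$. You get (iv) from the observation that $T_\varepsilon\in\mathbb{C}_u(\langle G\rangle)$ being compact forces $\supp_\varepsilon(T)$ to be finite.

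\textbf{Your Step 2 is easier than you feared.} When $A$ and $B$ are supported on partial translations, each entry of $AGB$ is a single product $A(x,u)G(u,v)B(v,y)$. So the identity is exact with one term, not a sum. The remaining ingredients are routine: density of such finite sums in $\langle G\rangle$ (the section assumes unital spaces), continuity of $\mathcal{M}_{\chi_E}$, and the fact that $B^p_u(X,\mathcal{E}_{min})$ is a closed ideal.

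\textbf{Step 4 has two small advantages over the paper.} First, your pointwise bound $|G(x,y)|\le g_{E\times F}(x,y)$ holds because $x\in N_E(x)$ and $y\in N_F(y)$. It handles the ``some $E,F$'' reading of (ii), whereas the paper's ``(ii)$\Rightarrow$(iii) is obvious'' only covers the ``any'' reading. Second, your (iii)$\Rightarrow$(i) argument via $\supp_\varepsilon(G)\in\mathcal{E}$ is a cleaner version of the paper's approximation by $R$.

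\textbf{One implicit step.} You never state (i)$\Rightarrow$(iii). It follows immediately from (i)$\Rightarrow$(ii) for all $E,F$, since $g_{E\times\Delta}(x)=g_{E\times E}(x,x)$.
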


\begin{proof}
    (i) $\Rightarrow$ (ii) For any $E, F\in\mathcal{E}$, since $\mathcal{E}$ is a uniformly locally finite coarse structure, there exist $n, m\in\mathbb{N}$ such that for any $x,y\in X$ we have $\#N_E(x)\leq n, \#N_F(y)\leq m$. As $G\in C_0(X\times X)$, for any $\varepsilon>0$, there exists a finite subset $Y\subseteq X$ such that 
    \[
    |G(x,y)|<\frac{\varepsilon}{mn}
    \]
    for all $(x,y)\in (X\times X)\setminus(Y\times Y)$.
    Now $Z=N_{E\cup E^{-1}\cup F\cup F^{-1}}(Y)$ is a finite subset of $X$, and for any $(x,y)\in (X\times X)\setminus (Z\times Z)$ we have
    \[
    N_E(x)\times N_F(y)\subseteq (X\times X)\setminus(Y\times Y).
    \]
    Hence, for all $(x,y)\in N_E(x)\times N_F(y)$,
    \[
    |G_{N_E(x)\times N_F(y)}(x,y)|<\frac{\varepsilon}{mn}.
    \]
    And for all $(x,y)\in (X\times X)\setminus(Z\times Z)$, we have
    \begin{align*}
        g_{E\times F}(x,y)&=\|G_{N_E(x)\times N_F(y)}\|_p\\
        &\leq mn\sup_{(x,y)\in N_E(x)\times N_F(y)}\{|G_{N_E(x)\times N_F(y)}(x,y)|\}\\
        &<mn\cdot \frac{\varepsilon}{mn}=\varepsilon.
    \end{align*}
    Hence,
    \[
    g_{E\times F}\in C_0(X\times X).
    \]
    (ii) $\Rightarrow$ (iii) is obvious.\\
    (iii) $\Rightarrow$ (i) For any $\varepsilon>0$, there exists $R\in\mathbb{C}_u[X,\mathcal{E}]$ such that
    \[
    \|R-G\|_{\sup}\leq\|R-G\|_p<\varepsilon.
    \]
    For $E=\supp(R)\cup\supp(R)^{-1}$, $g_{E\times\Delta}\in C_0(X)$ by assumption, and it follows that $G$ is a ghost.
    \\
    (i) $\Rightarrow$ (iv) Observe that $\{\chi_{r(E)}\}_{E\in\mathcal{E}}$ forms an approximate identity of $B^p_u(X)$, so any operator in $\langle\ G\ \rangle$ can be approximated by finite sums of operators of the form $AGB$ with $A, B\in B^p_u(X)$. By Proposition \ref{prop:norm}, it remains to prove $AGB\in C_0(X\times X)$ for any $A, B\in B^p_u(X)$. Without loss of generality, we assume that $A, B\in \mathbb{C}_u[X,\mathcal{E}]$, as any operator in $B^p_u(X)$ can be approximated in operator norm by controlled propagation operators. Let $H=AGB$, $E=\supp(A)\cup\supp(A)^{-1}, F=\supp(B)\cup\supp(B)^{-1}$. Then we have
    \begin{align*}
		|H(x,y)|&=\|E_{ii}AGBE_{jj}\|_p\\
		&=\|E_{ii}A\chi_{r(N_E(x))}G\chi_{r(N_F(y))}BE_{jj}\|_p\\
		&\leq\|A\|_p\|B\|_p\|G_{N_E(x)\times N_F(y)}\|_p\\
		&=\|A\|_p\|B\|_p g_{E\times F}(x,y)
	\end{align*}
    for any $(x,y)=(x_i,x_j)\in X\times X$. It follows from (ii) that $H\in C_0(X\times X)$.\\
    (iv) $\Rightarrow$ (i) is obvious.\\
    (i) $\Rightarrow$ (v) Denote $E=\supp(\varphi)\in\mathcal{E}$. By Lemma \ref{lem:CW}(ii), without loss of generality, we assume that $E$ is a partial translation. As $G\in C_0(X\times X)$, we have $G_E\in C_0(X\times X)$ and
    \[
    \lim\limits_{F\in\mathcal{F}}\|G_E-\chi_F G_E\chi_F\|_p=\lim\limits_{F\in\mathcal{F}}\|G_E-\chi_F G_E\chi_F\|_{\sup}=0.
    \]
    Thus, $G_E\in B^p_u(X,\mathcal{E}_{min})$, and there is a diagonal operator $D$ such that $\varphi\circ G=DG_E\in B^p_u(X,\mathcal{E}_{min})$ (cf. proof of Proposition \ref{thm:keythm}(iv)).\\
    (v) $\Rightarrow$ (vi) is obvious.\\
    (vi) $\Rightarrow$ (i) Let $E=\supp_\varepsilon(G)$. Then we have $G_\varepsilon=G_E\in B^p_u(X,\mathcal{E}_{min})$. Hence $G\in C_0(X\times X)$.\\
    (vi) $\Rightarrow$ (vii) By Proposition \ref{prop:truncation}(i), we have $\mathbb{C}_u(\langle\ G\ \rangle)=\{T_E: T\in\langle\ G\ \rangle,E\in\mathcal{E}\}$. It follows from (iv) and (vi) that $\mathbb{C}_u(\langle\ G\ \rangle)\subseteq B^p_u(X,\mathcal{E}_{min})$.\\
    (vii) $\Rightarrow$ (vi) is obvious as $G_E\in \mathbb{C}_u(\langle\ G\ \rangle)$ for any $E\in\mathcal{E}$.
\end{proof}

\begin{rem}\label{rem:ghost}
    For $p\in\{0\}\cup[1,\infty]$, we have:
    \begin{enumerate}
        \item For any ghost element $G\in B^p_u(X)$, it follows from Lemma \ref{lem:ghost}(iv) that $\langle\ G\ \rangle$ is a ghost ideal of $B^p_u(X)$. Denote by $I^p_G$ the collection of all ghosts in $B^p_u(X)$. Then $I^p_G$ is a closed ideal of $B^p_u(X)$.
        \item It follows from Lemma \ref{lem:ghost}(vi) that controlled propagation ghost elements must be trivial ghosts.
        \item It follows from Lemma \ref{lem:ghost}(iv, vii) that an ideal $I$ of $B^p_u(X)$ is a ghost ideal if and only if $\mathbb{C}_u(I)\subseteq B^p_u(X,\mathcal{E}_{min})\cap I$.
        \item If $G\in B^p_u(X)$ is a non-trivial ghost, it follows from Lemma \ref{lem:ghost}(vii) that $\mathbb{C}_u(\langle\ G\ \rangle)\subseteq B^p_u(X,\mathcal{E}_{min})\cap\langle\ G\ \rangle$ and then $\overline{\mathbb{C}_u(\langle\ G\ \rangle)}^{\|\cdot\|_p}\subseteq B^p_u(X,\mathcal{E}_{min})\cap\langle\ G\ \rangle\subsetneq\langle\ G\ \rangle$. In this way, $\mathbb{C}_u(\langle\ G\ \rangle)$ is not dense in $\langle\ G\ \rangle$, i.e., $\langle\ G\ \rangle$ is not a geometric ideal.
    \end{enumerate}    
\end{rem}

\subsection{Controlled $p$-partitions of unity and $p$-kernels}

\begin{defn}\cite[Definition 2.7]{Sako}\label{def:propertyA}
    A uniformly locally finite coarse space $(X,\mathcal{E})$ is said to have property A if for every positive number $\varepsilon$ and every controlled set $E\in\mathcal{E}$, there exist a controlled set $F\in\mathcal{E}$ and a subset $A\subset F\times\mathbb{N}$ such that
    \begin{enumerate}
        \item For $x\in X$, $A_x:=\{(y,n)\in X\times\mathbb{N}; (x,y,n)\in A\}$ is finite;
        \item For $x\in X$, $A_x$ is not empty;
        \item $\#(A_x\bigtriangleup A_y)<\varepsilon\#(A_x\cap A_y)$ for any $(x,y)\in E$. 
    \end{enumerate}
\end{defn}

Recall from Definition \ref{def:LV} that a (partial) covering $\boldsymbol{a}$ of $X$ is controlled if $\mathrm{diag}(\boldsymbol{a}):=\bigcup\{A\times A: A\in\boldsymbol{a}\}\in\mathcal{E}$ and from Lemma \ref{lem:Eix} that the canonical partial covering $E(\boldsymbol{i}_X)=\{E_x:x\in s(E)\subseteq X\}$ induced by an entourage $E\in\mathcal{E}$ is a controlled partial covering.

\begin{defn}(cf. \cite{HR})
    For all $p$ with $1\leq p<\infty$, a $p$-Higson-Roe function on $X$ is a map $\eta:X\to\ell^p(X)_{1}$ such that the collection $\{\supp(\eta_x)\}_{x\in X}$ of supports of each vector $\eta_x$ forms a partial covering of $X$ that is canonically induced by some controlled set, i.e., there exists an entourage $F$ in $\mathcal{E}$ such that
    \[
    F=\{(x,y)\in X\times X: \ y\in\supp(\eta_x)\}.
    \]
    Moreover, for $E\in\mathcal{E}$ and $\varepsilon>0$, we say $\eta:X\to\ell^p(X)_{1}$ has $(E,\varepsilon)$-variation if $\|\eta_x-\eta_y\|_p<\varepsilon$ whenever $(x,y)\in E$.
    Here, $\ell^p(X)_1$ denotes the set of unit vectors in $\ell^p(X)$.
\end{defn}

\begin{defn} (cf. \cite[Definition 1.2.3]{Willett}, \cite[Definition 6.1]{SW17})\label{def:partition}
    For all $p$ with $1\leq p<\infty$, a controlled $p$-partition of unity on $X$ is a collection $\{\phi_i:X\to [0,1]\}_{i\in I}$ of functions on $X$ satisfying:
        \begin{enumerate}
            \item There exists $N\in\mathbb{N}$ such that for each $x\in X$, at most $N$ of the numbers $\phi_i(x)$ are non-zero;
            \item There exists a controlled covering $\{U_i\}_{i\in I}$ of $X$ such that $\{\phi_i\}_{i\in I}$ is subordinated to $\{U_i\}_{i\in I}$, i.e.,  $\supp(\phi_i)\subseteq U_i$ for each $i\in I$;
            \item For each $x\in X$, $\sum_{i\in I}(\phi_i(x))^p=1$.
        \end{enumerate}
        Moreover, for $E\in\mathcal{E}$ and $\varepsilon>0$, we say $\{\phi_i\}_{i\in I}$ has $(E,\varepsilon)$-variation if 
            $\sum_{i\in I}|\phi_i(x)-\phi_i(y)|^p<\varepsilon$ whenever $(x,y)\in E$.
\end{defn}

It is clear that every $p$-Higson-Roe function $\eta:X\to\ell^p(X)_{1}$ induces a $p$-partition of unity $\{\phi_y:X\to [0,1]\}_{y\in X}$, where \[\phi_y(x):=|\eta_x(y)|\] for every $x, y\in X$. We will show in Lemma \ref{lem:propertyA} that it is a controlled $p$-partition of unity.

From a $p$-Higson-Roe function $\eta:X\to\ell^p(X)_{1}$, we can also define a controlled propagation $p$-kernel as follows.
Recall that given a set $X$ and $p,q\in[1,\infty)$, we have the Mazur map $M_{p,q}:\ell^p(X)\to\ell^q(X)$ given by the formula
\[
M_{p,q}(f)(x)=|f(x)|^{p/q}\sign(f(x)),
\]
where 
\[
    \sign(f(x))=\begin{cases}
        \frac{f(x)}{|f(x)|}\ &f(x)\neq0,\\
        0\ &f(x)=0.
    \end{cases}
\] 
It is clear that $\|M_{p,q}(f)\|_q=\|f\|_p$ for every $f\in\ell^p(X)$, and that $M_{p,q}$ and $M_{q,p}$ are inverses of each other. Moreover, as a map between the unit spheres, it is a uniformly continuous homeomorphism with a uniformly continuous inverse.

When $p=1, q=\infty$, define $M_{1,\infty}:\ell^1(X)\to\ell^\infty(X)$ by 
$M_{1,\infty}(f)(x):=\sign(f(x))$.
For $p\in[1,\infty)$ and $\frac{1}{p}+\frac{1}{q}=1$, define 
\[
[f,g]:=\langle\ f,M_{p,q}(g)\ \rangle= \sum_{x\in X}f(x)\overline{M_{p,q}(g)(x)}
\]
for $f,g\in\ell^p(X)$. It is clear that $[f,f]=\|f\|_p^p$.

\begin{defn} \cite[Definition 1]{Clark}
    A Banach space $\mathcal{B}$ is said to be uniformly convex if for each $\varepsilon\in(0,2]$, there exists $\delta=\delta(\varepsilon)>0$ such that the conditions
    \[\|x\|=\|y\|=1,\quad\|x-y\|\geq\varepsilon
    \]
    imply
    \[
    \|\frac{x+y}{2}\|\leq1-\delta(\varepsilon)
    \]
\end{defn}

Uniform convexity was introduced by Clarkson in \cite{Clark}, and for all $1<p<\infty$, the spaces $\ell^p(X)$ are uniformly convex, which can be proved using the following Clarkson's inequalities. One can also easily check that $\ell^1(X)$ and $\ell^\infty(X)$ are not uniformly convex.

\begin{lem} \cite[Theorem 2]{Clark}
    For $p\in[2,\infty)$ and $\frac{1}{p}+\frac{1}{q}=1$, the following inequalities hold for all $f,g\in\ell^p(X)$:
    \begin{enumerate}
        \item $2(\|f\|^p_p+\|g\|^p_p)\leq\|f+g\|^p_p+\|f-g\|^p_p\leq2^{p-1}(\|f\|^p_p+\|g\|^p_p)$;
        \item $2(\|f\|^p_p+\|g\|^p_p)^{q-1}\leq\|f+g\|^q_p+\|f-g\|^q_p$;
        \item $\|f+g\|^p_p+\|f-g\|^p_p\leq2(\|f\|^q_p+\|g\|^q_p)^{p-1}$.
    \end{enumerate}
    For $p\in(1,2]$, these inequalities hold in reverse.
\end{lem}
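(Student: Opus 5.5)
The plan is to reduce everything to elementary inequalities for complex numbers $a,b$ and then pass to $\ell^p(X)$ by summing over $x\in X$, using Minkowski-type inequalities where the exponents do not match.

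For (i) with $p\in[2,\infty)$, I start with the upper bound at the scalar level. Since $t\mapsto t^{p/2}$ is superadditive on $[0,\infty)$ and the parallelogram law holds in $\mathbb{C}$,
\[
|a+b|^p+|a-b|^p\le \bigl(|a+b|^2+|a-b|^2\bigr)^{p/2}=\bigl(2|a|^2+2|b|^2\bigr)^{p/2}.
\]
Convexity of $t^{p/2}$ gives $(|a|^2+|b|^2)^{p/2}\le 2^{p/2-1}(|a|^p+|b|^p)$, so the right-hand side is at most $2^{p-1}(|a|^p+|b|^p)$. For the lower bound, I apply this upper bound to the pair $a+b,\ a-b$, which gives
\[
2^p(|a|^p+|b|^p)\le 2^{p-1}\bigl(|a+b|^p+|a-b|^p\bigr),
\]
that is, $2(|a|^p+|b|^p)\le|a+b|^p+|a-b|^p$. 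Setting $a=f(x)$, $b=g(x)$ and summing over $x\in X$ gives (i), because all three quantities are sums of $p$-th powers. For $p\in(1,2]$ the same argument runs with the convexity inequalities reversed, since $t^{p/2}$ is then subadditive and concave.

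For (ii) and (iii), I first note that they are equivalent to each other for every $p$ by the substitution $u=f+g$, $v=f-g$. Indeed, (iii) applied to $u,v$ reads $2^p(\|f\|_p^p+\|g\|_p^p)\le 2(\|f+g\|_p^q+\|f-g\|_p^q)^{p-1}$, and raising both sides to the power $1/(p-1)=q-1$ gives (ii). So it suffices to prove one of them in each range.

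I would first treat $1<p\le2$, where the target is the reversed (ii): $\|f+g\|_p^q+\|f-g\|_p^q\le 2(\|f\|_p^p+\|g\|_p^p)^{q-1}$. Since $r=p/q\le1$, the reverse Minkowski inequality in $\ell^{r}$ for nonnegative functions gives
\[
\||f+g|^q\|_{r}+\||f-g|^q\|_{r}\le\||f+g|^q+|f-g|^q\|_{r}.
\]
Next I use the pointwise scalar Clarkson inequality
\[
|a+b|^q+|a-b|^q\le 2(|a|^p+|b|^p)^{q-1}\qquad(1<p\le2).
\]
Raising this to the power $p/q$, summing, and taking the $q/p$ root yields the claim, using $(q-1)p/q=1$.

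The scalar inequality is the main obstacle. I would normalize to $a=1$ and $b=z$ with $|z|\le1$, then reduce to real $z\in[0,1]$ by noting that for fixed $|z|$ the left side is maximized on the real axis. I would then prove the real case by expanding both sides in power series in $z$ and comparing coefficients, as in Clarkson's original argument.

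Finally, for $p\ge2$ I obtain (ii) and (iii) from the $p\le2$ case by duality $\ell^p(X)=\ell^q(X)^*$ (for $p<\infty$). I take norming functionals for $f+g$ and $f-g$ and apply the $q\le2$ inequality to them. Alternatively, I can reuse the substitution argument above, after checking carefully which direction of each inequality each step transfers.
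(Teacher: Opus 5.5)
Your reduction is sound, and it is in substance Clarkson's own proof; the paper does not reproduce that proof but only cites it. The pieces are: (i) from the scalar inequalities; (ii)$\Leftrightarrow$(iii) via $u=f+g$, $v=f-g$; the case $1<p\le 2$ from a pointwise inequality plus the reverse Minkowski inequality in $\ell^{p-1}$; and $p\ge 2$ by duality.

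\textbf{Gap: the real scalar inequality.} The one step you do not actually supply is $(1+t)^q+(1-t)^q\le 2(1+t^p)^{q-1}$ for $t\in[0,1]$, $1<p\le2$, and all the analytic content sits there. As you describe it, the argument would not go through. The two sides expand as $2\sum_k\binom{q}{2k}t^{2k}$ and $2\sum_k\binom{q-1}{k}t^{kp}$, which are series in different powers of $t$, so a coefficient comparison is not available. There are two ways to close this:
\begin{enumerate}
\item The classical series proof first substitutes $t=(1-s)/(1+s)$, which turns the claim into $(1+s^q)^{p-1}\le\frac12\bigl[(1+s)^p+(1-s)^p\bigr]$. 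It then shows that suitably grouped terms of the two binomial series compare correctly; here the $\binom{p}{2k}$ are nonnegative while the $\binom{p-1}{k}$ alternate in sign.
\item A shorter route: the inequality says $\bigl(\frac12(|a+b|^q+|a-b|^q)\bigr)^{1/q}\le(|a|^p+|b|^p)^{1/p}$, i.e.\ the Hausdorff--Young inequality for $\mathbb{Z}/2$. This follows from Riesz--Thorin between $p=1$, where it reads $\max(|a+b|,|a-b|)\le|a|+|b|$, and $p=2$, where it is the parallelogram law.
\end{enumerate}

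\textbf{The case $p\ge2$.} Your duality route works, and it is worth writing out. Take $u,v\in\ell^q(X)$ with $\langle f+g,u\rangle=\|f+g\|_p^p=\|u\|_q^q$ and $\langle f-g,v\rangle=\|f-g\|_p^p=\|v\|_q^q$. Then
$\|f+g\|_p^p+\|f-g\|_p^p=\langle f,u+v\rangle+\langle g,u-v\rangle\le(\|f\|_p^q+\|g\|_p^q)^{1/q}(\|u+v\|_q^p+\|u-v\|_q^p)^{1/p}$.
The reversed (ii) in $\ell^q(X)$, with exponent $q\in(1,2]$ and conjugate $p$, bounds the last factor by $2^{1/p}(\|f+g\|_p^p+\|f-g\|_p^p)^{1/q}$, and this gives (iii).

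Your ``substitution'' alternative, by contrast, cannot exchange the roles of $p$ and $q$: it only maps (ii) to (iii) for the same $p$. What does work is rerunning your Minkowski argument with the ordinary Minkowski inequality in $\ell^{p-1}$, since now $p-1\ge1$. The pointwise inequality this needs, $2(|a|^p+|b|^p)^{q-1}\le|a+b|^q+|a-b|^q$, is exactly the scalar lemma for the exponent $q\in(1,2]$ after the substitution $a\mapsto a+b$, $b\mapsto a-b$.
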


\begin{cor}\label{cor:modulus}
    When $f,g\in\ell^p(X)_1$, 
    \begin{enumerate}
        \item for $p\in[2,\infty)$, we have $\|f+g\|_p\leq(2^p-\|f-g\|^p_p)^{1/p}$;
        \item for $p\in(1,2]$, we have $\|f+g\|_p\leq(2^q-\|f-g\|^q_p)^{1/q}$, where $q\in[2,\infty)$.
    \end{enumerate}
    In particular, since 
    \[
    (1-t^\alpha)^{1/\alpha}\leq1-\frac{t^\alpha}{\alpha}
    \]
    for all $t\in[0,1]$ and $\alpha\in[2,\infty)$, let $t=\frac{\|f-g\|_p}{2}$ and $\alpha=\max\{p,q\}$, for any $p\in(1,\infty)$, $\frac{1}{p}+\frac{1}{q}=1$, then there exists a constant $C=\frac{1}{\alpha\cdot2^{\alpha-1}}$ such that 
    \[
    2-\|f+g\|_p\geq C\|f-g\|_p^\alpha.
    \]
\end{cor}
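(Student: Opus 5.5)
The plan is to derive both parts directly from Clarkson's inequalities for unit vectors $f,g\in\ell^p(X)_1$, and then to obtain the final estimate through the stated elementary inequality.

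\textbf{Case $p\in[2,\infty)$.} I apply the right-hand inequality in (i) of Clarkson's theorem:
\[
\|f+g\|_p^p+\|f-g\|_p^p\le 2^{p-1}(\|f\|_p^p+\|g\|_p^p)=2^p.
\]
Rearranging gives $\|f+g\|_p\le(2^p-\|f-g\|_p^p)^{1/p}$, which is part (i).

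\textbf{Case $p\in(1,2]$.} Clarkson's inequality (iii) holds in reverse here, so I use (ii) in its reversed form. For $p\le 2$, reversing (ii) gives
\[
\|f+g\|_p^q+\|f-g\|_p^q\le 2(\|f\|_p^p+\|g\|_p^p)^{q-1}=2\cdot 2^{q-1}=2^q.
\]
This yields $\|f+g\|_p\le (2^q-\|f-g\|_p^q)^{1/q}$ with $q\ge 2$, which is part (ii). As a sanity check, the reversed form of (iii) would instead give $\|f+g\|^p+\|f-g\|^p\ge 2\cdot 2^{p-1}$, which goes the wrong direction, so (ii) is the correct choice.

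\textbf{The final estimate.} In both cases I have $\|f+g\|_p\le(2^\beta-\|f-g\|_p^\beta)^{1/\beta}$ with $\beta\in\{p,q\}$, where $\beta=\max\{p,q\}=\alpha\ge2$. Indeed, in case (i) $p\ge2\ge q$, and in case (ii) $q\ge2\ge p$. Set $t=\|f-g\|_p/2\in[0,1]$. Then
\[
\|f+g\|_p\le 2(1-t^\alpha)^{1/\alpha}\le 2\Bigl(1-\frac{t^\alpha}{\alpha}\Bigr)=2-\frac{2}{\alpha 2^\alpha}\|f-g\|_p^\alpha,
\]
and $\frac{2}{\alpha 2^\alpha}=\frac{1}{\alpha 2^{\alpha-1}}=C$.

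\textbf{The elementary inequality.} To justify $(1-s)^{1/\alpha}\le 1-s/\alpha$ for $s=t^\alpha\in[0,1]$, note that $u\mapsto u^{1/\alpha}$ is concave. Its tangent line at $u=1$ therefore dominates it, giving $u^{1/\alpha}\le 1+(u-1)/\alpha$, and substituting $u=1-s$ gives the claim.

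The only delicate point in the whole argument is selecting the correct Clarkson inequality and its direction in the case $p<2$.
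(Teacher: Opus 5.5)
Your proposal is correct and follows the same route as the paper. The paper gives no separate proof beyond the derivation sketched in the statement itself: the right-hand inequality of Clarkson's (i) for $p\ge 2$, the reversed Clarkson (ii) for $p\in(1,2]$, and then the elementary bound $(1-t^\alpha)^{1/\alpha}\le 1-t^\alpha/\alpha$. Your concavity (tangent-line) argument is a clean justification of that last bound, which the paper simply asserts.
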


Moreover, we have the following two inequalities.

\begin{lem}(cf. \cite{BL}, \cite[Proposition 5.4.6]{NY})\label{lem:inequ}\leavevmode 
    \begin{enumerate}
        \item Let $1\leq p<q<\infty$. There exists a constant $C_1>0$, which depends only on $p$ and $q$, such that
        \[
        \frac{p}{q}\|f-g\|_p\leq\|M_{p,q}(f)-M_{p,q}(g)\|_q\leq C_1\|f-g\|_p^{p/q}
        \]
        for every $f,g\in\ell^p(X)_1$;
        \item For $p\in(1,\infty)$ and $\frac{1}{p}+\frac{1}{q}=1$, there exists a constant $C_2>0$, which depends only on $p$, and a constant $\alpha=\max\{p,q\}$, such that
        \[
        C_2\|f-g\|_p^\alpha\leq1-\Re[f,g]\leq\|f-g\|_p
        \]
        for every $f,g\in\ell^p(X)_1$. 
        Here, $\Re z$ denotes the real part of $z\in\mathbb{C}$.
    \end{enumerate}
\end{lem}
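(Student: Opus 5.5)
Part (ii) follows almost immediately from Hölder's inequality and Corollary \ref{cor:modulus}. Part (i) reduces to pointwise estimates for the scalar Mazur map, which are then summed. I treat (ii) first.

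For (ii), fix $f,g\in\ell^p(X)_1$ and recall that $M_{p,q}(g)$ is a unit vector in $\ell^q(X)$ with $[g,g]=\|g\|_p^p=1$. Since $[\cdot,g]$ is linear in the first variable, I write
\[
1-\Re[f,g]=\Re\bigl([g,g]-[f,g]\bigr)=\Re\langle g-f, M_{p,q}(g)\rangle .
\]
Hölder's inequality bounds the right-hand side by $\|g-f\|_p\|M_{p,q}(g)\|_q=\|f-g\|_p$, which is the upper bound.

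For the lower bound I use uniform convexity in the quantitative form of Corollary \ref{cor:modulus}. Applying linearity and Hölder again gives
\[
1+\Re[f,g]=\Re[f+g,g]\le \|f+g\|_p .
\]
Corollary \ref{cor:modulus} gives $\|f+g\|_p\le 2-C\|f-g\|_p^\alpha$ with $\alpha=\max\{p,q\}$ and $C=\frac{1}{\alpha 2^{\alpha-1}}$. Rearranging yields $1-\Re[f,g]\ge C\|f-g\|_p^\alpha$, so one may take $C_2=C$. This part has no real obstacle.

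For (i), I set $r=p/q\in(0,1)$ and work with the scalar map $\mu(z)=|z|^{r}\sign(z)$ on $\mathbb{C}$, so that $M_{p,q}(f)(x)=\mu(f(x))$. The first key step is the pointwise Hölder estimate $|\mu(a)-\mu(b)|\le C|a-b|^{r}$ for all $a,b\in\mathbb{C}$. I would prove it by scaling to $|a|=1\ge|b|$ and treating separately the cases $|a-b|$ small (where $\mu$ is smooth near $a$) and $|a-b|$ bounded below (where the estimate is trivial). Raising to the power $q$ and summing over $x$ then gives
\[
\|M_{p,q}f-M_{p,q}g\|_q^q\le C^q\sum_x|f(x)-g(x)|^{p}=C^q\|f-g\|_p^p,
\]
which is the right-hand inequality with $C_1=C$.

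For the left-hand inequality I would pass to the inverse map $M_{q,p}=\mu^{-1}$, namely $\nu(w)=|w|^{s}\sign(w)$ with $s=q/p>1$. The mean value estimate gives pointwise
\[
|\nu(u)-\nu(v)|\le s\bigl(|u|^{s-1}+|v|^{s-1}\bigr)|u-v| .
\]
I then apply Hölder's inequality with the conjugate exponents $q/p$ and $q/(q-p)$, using that $(s-1)p\cdot\frac{q}{q-p}=q$ and that $M_{p,q}f$ and $M_{p,q}g$ are unit vectors in $\ell^q(X)$. This yields $\|f-g\|_p\le K\|M_{p,q}f-M_{p,q}g\|_q$ for a constant $K$ depending only on $p$ and $q$.

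Matching the sharp constant $p/q$ claimed in the statement is the main obstacle. It is the only delicate point, since any such constant suffices for later use. To obtain $p/q$ exactly I would defer to the standard estimates cited (\cite{BL}, \cite[Proposition 5.4.6]{NY}).
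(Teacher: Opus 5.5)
\textbf{Part (ii).} This is the paper's argument. The paper also writes $1+\Re[f,g]=\Re\langle f+g,M_{p,q}(g)\rangle\le\|f+g\|_p$ and invokes Corollary~\ref{cor:modulus} for the lower bound. It gets the upper bound from H\"older applied to $|[g-f,g]|$.

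\textbf{Part (i).} Here your route genuinely differs, because the paper gives no argument. It only cites \cite[Proposition 5.4.6]{NY} and \cite{BL}.

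Your argument is more self-contained. The scalar H\"older estimate for $\mu$, followed by summation, proves the right-hand inequality completely. Your mean-value/H\"older argument also proves the left-hand inequality correctly, but with a weaker constant. As written it gives $\|f-g\|_p\le\frac{2q}{p}\|M_{p,q}(f)-M_{p,q}(g)\|_q$, because $\bigl\|\,|u|^{s-1}+|v|^{s-1}\bigr\|_{pq/(q-p)}\le 2$. That constant depends only on $p,q$, which is all Lemma~\ref{lem:propertyA} needs. Deferring the sharp constant to the references is no worse than what the paper does.

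You do not actually need to defer, though. The factor $2$ comes only from bounding the derivative of $\nu$ on the segment by its endpoint values. Keep the exact integral instead. Set $u=M_{p,q}(f)$, $v=M_{p,q}(g)$ and $z_t=(1-t)u+tv$. Since $|D\nu(z)|\le s|z|^{s-1}$, you get pointwise $|\nu(u_x)-\nu(v_x)|\le s\,|u_x-v_x|\int_0^1|z_t(x)|^{s-1}\,dt$. Then
\[
\|f-g\|_p\le s\int_0^1\bigl\|\,|z_t|^{s-1}|u-v|\,\bigr\|_p\,dt\le s\int_0^1\|z_t\|_q^{s-1}\,dt\;\|u-v\|_q\le\frac{q}{p}\,\|u-v\|_q .
\]
The three steps are justified as follows:
\begin{itemize}
\item the first is Minkowski's integral inequality;
\item the second is H\"older with $\frac1p=\frac{q-p}{pq}+\frac1q$, together with $\bigl\|\,|z|^{s-1}\bigr\|_{pq/(q-p)}=\|z\|_q^{s-1}$;
\item the third uses $\|z_t\|_q\le 1$, by convexity of the unit ball.
\end{itemize}
This gives exactly $\frac{p}{q}\|f-g\|_p\le\|M_{p,q}(f)-M_{p,q}(g)\|_q$, so your approach proves all of (i) without relying on the citations.
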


\begin{proof}
    (i) comes from \cite[Proposition 5.4.6]{NY}, which refers to \cite{BL} for the proof. 
    (ii) For $p\in(1,\infty)$, $\frac{1}{p}+\frac{1}{q}=1$, and $f,g\in\ell^p(X)_1$, it is clear that  
    \begin{align*}
        1+\Re[f,g]&=1+\Re\langle\ f,M_{p,q}(g)\ \rangle=\Re\langle\ f+g,M_{p,q}(g)\ \rangle\\&\leq|\langle\ f+g,M_{p,q}(g)\ \rangle|\leq\|f+g\|_p\cdot\|g\|_p^{p/q}=\|f+g\|_p,
    \end{align*}
    where the second inequality follows from H\"{o}lder's inequality. Hence,
    \[
    1-\Re[f,g]\geq2-\|f+g\|_p.
    \]
    It follows from Corollary \ref{cor:modulus} that there exists a constant $C_2>0$, which depends only on $p$, and $\alpha=\max\{p,q\}$, such that
        \[
        1-\Re[f,g]\geq C_2\|f-g\|_p^\alpha
        \]
    For the right-hand side, we have 
    \begin{align*}
    1-\Re[f,g]&\leq|1-[f,g]|=|[f-g,g]|=|\langle \ f-g,M_{p,q}(g)\ \rangle| \\ 
    &\leq\|f-g\|_p\cdot\|g\|_p^{p/q}=\|f-g\|_p
    \end{align*}
\end{proof}

\begin{rem}\leavevmode
    \begin{enumerate}
        \item The inequality in Lemma \ref{lem:inequ}(i) also covers the case $p>q$ by representing $f,g\in\ell^p(X)$ as images of elements of $\ell^q(X)$ under $M_{q,p}$. 
        \item When $p=1$, the right side of the inequality in Lemma \ref{lem:inequ}(ii) still holds, but the left side may not.
        For example, let $f:=(1,0,\cdots)$ and $g:=(\frac{1}{2},\frac{1}{2},0,\cdots)$ be two elements in $\ell^1(X)_1$. Then $\|f-g\|_1=1$ but $1-\Re[f,g]=0$.
    \end{enumerate}
\end{rem}

\begin{defn}
    For $p\in[1,\infty)$ and a $p$-Higson-Roe function $\eta:X\to\ell^p(X)_{1}$, define a map $\varphi:X\times X\to\mathbb{R}$ by
    \[
    \varphi(x,y):=[|\eta_y|,|\eta_x|]=\sum_{z\in X}|\eta_x(z)|^{p/q}\cdot|\eta_y(z)|=\sum_{z\in X}\phi_z(x)^{p/q}\phi_z(y),
    \]
    where $\frac{1}{p}+\frac{1}{q}=1$ and ``$\frac{1}{\infty}$'' represents 0. 
    We say $\varphi:X\times X\to\mathbb{R}$ is a controlled propagation $p$-kernel induced by the $p$-Higson-Roe function $\eta:X\to\ell^p(X)_{1}$. 

    For $p\in\{0,\infty\}$ and a 1-Higson-Roe function $\eta:X\to\ell^1(X)_{1}$, define $\varphi:X\times X\to\mathbb{R}$ by
    \begin{align*}
    \varphi(x,y) &:=[|\eta_x|,|\eta_y|]=\sum_{z\in X}|\eta_y(z)|^{1/\infty}\cdot|\eta_x(z)|=\sum_{z\in X}\phi_z(y)^{1/\infty}\phi_z(x) \\
    &= \sum_{z\in \supp(\eta_y)}\phi_z(x).
    \end{align*}
    We say $\varphi:X\times X\to\mathbb{R}$ is a controlled propagation $p$-kernel induced by the $1$-Higson-Roe function $\eta:X\to\ell^1(X)_{1}$.

    For $E\in\mathcal{E}$ and $\varepsilon>0$, we say $\varphi$ has $(E,\varepsilon)$-variation if $1-\varphi(x,y)<\varepsilon$ whenever $(x,y)\in E$.
\end{defn}

It is clear that for all $p\in\{0\}\cup[1,\infty]$, we have
    \begin{enumerate}
        \item $\varphi(x,y)\leq1$ and $\varphi(x,x)=1$ for each $x,y\in X$;
        \item $\supp(\varphi):=\{(x,y)\in X\times X:\varphi(x,y)\neq0\}= F\circ F^{-1}$ (or $F^{-1}\circ F$), where $F:=\{(x,y)\in X\times X: x\in X, y\in\supp(\eta_x)\}\in\mathcal{E}$.
    \end{enumerate}
Thus, $\varphi\in\mathbb{C}_u[X,\mathcal{E}]$ and from Lemma \ref{lem:Schur} we have a bounded controlled propagation Schur multiplier:
    \[
        \mathcal{M}_\varphi: B^p_u(X)\to B^p_u(X),\ T\mapsto \varphi\circ T.
    \]
Moreover, we will show that these controlled propagation Schur multipliers are contractive. First, we need the following lemma:

\begin{lem}(\cite[Theorem 5.11]{Pisier})
    For $p\in[1,\infty)$ and $\frac{1}{p}+\frac{1}{q}=1$, let $\varphi\in\ell^\infty(X\times X)$ and let $C\geq0$ be a constant. Then $\mathcal{M}_\varphi$ is a bounded Schur multiplier on $B(\ell^p(X))$ with norm $\leq C$ if and only if there exist a measure space $(\Omega,\mu)$ and elements $(\eta_y)_{y\in X}$ in $L^p(\mu)$ and $(\eta'_x)_{x\in X}$ in $L^q(\mu)$ such that
    \[
    \varphi(x,y)=\langle\ \eta_y,\eta_x'\ \rangle
    \]
    for any $x,y\in X$ and $\sup_{x\in X}\|\eta_x\|_p\cdot \sup_{y\in X}\|\eta'_y\|_q\leq C$. 
\end{lem}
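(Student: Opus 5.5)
The plan is to prove the two implications separately. Sufficiency is a direct averaging argument. Necessity is a Hahn--Banach / factorization argument carried out first for finite $X$ and then passed to the limit. Throughout I will use the fact, recorded in Section~\ref{subsection1}, that an operator on $\ell^p(X)$ is determined by its matrix and that $\|T\|_p=\sup_{F\in\mathcal{F}}\|\chi_F T\chi_F\|_p$. This reduces everything to finitely supported matrices and finitely supported vectors, since $\chi_F(\varphi\circ T)\chi_F=\varphi\circ(\chi_F T\chi_F)$.

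\emph{Sufficiency.} Suppose $\varphi(x,y)=\langle \eta_y,\eta'_x\rangle=\int_\Omega \eta_y\,\overline{\eta'_x}\,d\mu$, with conjugation placed according to the pairing convention. Fix a finitely supported matrix $T$, and finitely supported $\xi\in\ell^p(X)$, $\zeta\in\ell^q(X)$. For $\omega\in\Omega$ put $\xi_\omega(y)=\xi(y)\eta_y(\omega)$ and $\zeta_\omega(x)=\zeta(x)\eta'_x(\omega)$. Exchanging the finite sum with the integral gives
\[
\langle(\varphi\circ T)\xi,\zeta\rangle=\int_\Omega\langle T\xi_\omega,\zeta_\omega\rangle\,d\mu(\omega).
\]
Hence $|\langle(\varphi\circ T)\xi,\zeta\rangle|\le\|T\|_p\int_\Omega\|\xi_\omega\|_p\|\zeta_\omega\|_q\,d\mu$. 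Applying H\"older in $\omega$ and then Fubini, the integral is bounded by
\[
\Big(\sum_y|\xi(y)|^p\|\eta_y\|_p^p\Big)^{1/p}\Big(\sum_x|\zeta(x)|^q\|\eta'_x\|_q^q\Big)^{1/q}\le C\|\xi\|_p\|\zeta\|_q.
\]
Taking suprema over $\xi$, $\zeta$ and then over $F$ gives $\|\varphi\circ T\|_p\le C\|T\|_p$. For $p=1$, $q=\infty$ the same computation works, using $\sup_\omega$ in place of the $L^q$ norm.

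\emph{Necessity, finite case.} Let $X$ be finite, say $|X|=n$, and assume $\|\mathcal{M}_\varphi\|\le C$ on $B(\ell^p_n)$. I will test $\mathcal{M}_\varphi$ against matrices of the form $T=\sum_k a_k\otimes b_k$, paired with tensors in $\ell^p\otimes\ell^q$. This turns the hypothesis into a family of inequalities
\[
\Big|\sum_{x,y}\varphi(x,y)\,u(x)v(y)\,T(x,y)\Big|\le C\|T\|_p\cdot(\text{norms of the test vectors}).
\]
Convexity of these inequalities in the $p$-th and $q$-th powers of the weights lets me apply the Ky Fan / Hahn--Banach minimax lemma, as in Pietsch--Maurey factorization. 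The outcome should be probability weights $\lambda$ on the rows and $\mu$ on the columns. Dividing $\varphi$ by $\lambda(x)^{1/q}\mu(y)^{1/p}$ should then give a matrix that is contractive as a map between suitable $\ell^p$-type spaces. Writing that contraction through a concrete $L^p$ space produces the required $\eta_y\in L^p$, $\eta'_x\in L^q$ with $\sup\|\eta_y\|_p\cdot\sup\|\eta'_x\|_q\le C$.

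This is the step I expect to be the main obstacle: choosing the right convex set of test functionals so that the minimax produces a \emph{simultaneous} factorization. I would follow Pisier's argument for Theorem~5.11 rather than reinvent it, and attempt the Maurey-type route first.

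\emph{Passage to infinite $X$.} For each finite $F\subseteq X$, the restriction $\varphi|_{F\times F}$ still has multiplier norm at most $C$, so the finite case gives $(\Omega_F,\mu_F)$, $\eta^F$, $\eta'^F$. Normalizing so that $\sup\|\eta^F_y\|_p\le C^{1/2}$ and $\sup\|\eta'^F_x\|_q\le C^{1/2}$, I take an ultrafilter on $\mathcal{F}$ refining the order filter and form the ultraproducts of the $L^p(\mu_F)$ and $L^q(\mu_F)$. For $1<p<\infty$, an ultraproduct of $L^p$ spaces is again an $L^p(\mu)$ space by the Dacunha-Castelle--Krivine theorem, and the duality pairing passes to the limit, so $\varphi(x,y)=\langle\eta_y,\eta'_x\rangle$ holds for all $x,y$ with the same bounds.

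For $p=1$ the $L^\infty$ side needs care. There I would instead realize the factorization directly: take $\Omega$ to be $\beta$ of a suitable discrete set, or observe that the finite-case data can be chosen in $\ell^1$ and $\ell^\infty$ on a fixed countable index set, and extract a weak$^*$ limit.
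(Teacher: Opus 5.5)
The paper gives no proof of this lemma; it quotes it from Pisier, and only the ``if'' direction is ever used (for $p$-kernels $\varphi(x,y)=\langle|\eta_y|,M_{p,q}(|\eta_x|)\rangle$ built from unit vectors). Your sufficiency argument covers exactly that direction and is correct. For $p=1$ the converse is trivial: $\|\mathcal{M}_\varphi\|=\|\varphi\|_{\sup}$ on $B(\ell^1(X))$, so take counting measure on $X$, $\eta_y=\delta_y$ and $\eta'_x=\varphi(x,\cdot)$.

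The genuine gap is the finite case of necessity for $1<p<\infty$. This is the entire content of the ``only if'' direction, and the mechanism you sketch cannot work.

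\begin{itemize}
\item You run the minimax on the hypothesis itself. But $|\langle(\varphi\circ T)\xi,\zeta\rangle|\le C\|T\|_p\|\xi\|_p\|\zeta\|_q$ involves only sum-type norms, so there is no supremum for a Pietsch-type domination to turn into a weighted average.
\item The outcome you predict is also false. Take $\varphi\equiv1$ on $F\times F$ with $|F|=n$, so $\|\mathcal{M}_\varphi\|=1$. For any probability weights, the rank-one matrix $\lambda(x)^{-1/q}\mu(y)^{-1/p}$ has norm $\|\lambda^{-1/q}\|_p\,\|\mu^{-1/p}\|_q\ge n^2$ on $\ell^p(F)$. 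Its largest entry is at least $n$, so it is not contractive $\ell^1\to\ell^\infty$ either.
\end{itemize}

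The supremum lives in the conclusion, $\sup_y\|\eta_y\|_p\cdot\sup_x\|\eta'_x\|_q$. So the weights must be produced on the dual side, roughly as follows.

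\begin{enumerate}
\item Let $\Phi\subseteq\mathbb{C}^{F\times F}$ be the set of $\psi(x,y)=\langle\eta_y,\eta'_x\rangle$ with all $\|\eta_y\|_p\le1$ and $\|\eta'_x\|_q\le1$.
\item $\Phi$ is balanced and convex. For convexity, rescale two factorizations by $t^{1/p},(1-t)^{1/p}$ on the $L^p$ side and by $t^{1/q},(1-t)^{1/q}$ on the $L^q$ side, and place them on $\Omega_1\sqcup\Omega_2$.
\item $\Phi$ is compact, since by Carath\'eodory $\Omega$ can be taken finite of bounded size.
\item If $\varphi\notin C\Phi$, Hahn--Banach gives a matrix $b$ with $\Re\sum\varphi b>C$ and $|\sum\psi b|\le1$ for all $\psi\in\Phi$.
\item With $\Omega$ finite, the last condition is a bilinear inequality in the families $(\eta_y),(\eta'_x)$, bounded by $\sup_y\|\eta_y\|_p\sup_x\|\eta'_x\|_q$. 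Here Ky Fan does apply, because the relevant family of functions is convex by the same direct-sum trick.
\item Ky Fan yields probability weights $\tau$ on columns and $\sigma$ on rows. Taking $\Omega$ a single point then gives $|\sum_{x,y}b(x,y)\gamma(x)\alpha(y)|\le(\sum_y\tau(y)|\alpha(y)|^p)^{1/p}(\sum_x\sigma(x)|\gamma(x)|^q)^{1/q}$.
\item Hence $b(x,y)=\sigma(x)^{1/q}c(x,y)\tau(y)^{1/p}$ with $\|c\|_{B(\ell^p(F))}\le1$. Where a weight vanishes, $b$ vanishes too, and one puts $c=0$ there.
\item Then $\sum\varphi b=\langle(\varphi\circ c)\tau^{1/p},\sigma^{1/q}\rangle$ has modulus at most $\|\mathcal{M}_\varphi\|\le C$, a contradiction.
\end{enumerate}

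Once this finite case is in place, your ultraproduct passage to infinite $X$ is fine. Ultraproducts of $L^p$ spaces are $L^p$ spaces, and by uniform convexity the ultraproduct of the $L^q(\mu_F)$ is the dual of the ultraproduct of the $L^p(\mu_F)$. So the pairing passes to the limit.
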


\begin{prop}\label{lem:approximate3}
    For $p\in\{0\}\cup[1,\infty]$, the $p$-kernels as bounded controlled propagation Schur multipliers on $B^p_u(X)$ are contractive.
\end{prop}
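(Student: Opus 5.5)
The plan is to split into the cases $p\in[1,\infty)$, which go through Pisier's factorization criterion, and $p\in\{0,\infty\}$, which go through a direct row-sum estimate. In every case we first bound $\mathcal{M}_\varphi$ on $\mathbb{C}_u[X,\mathcal{E}]$. Since $\mathcal{M}_\varphi$ is already known to be bounded (Lemma \ref{lem:Schur}) and $\mathbb{C}_u[X,\mathcal{E}]$ is dense in $B^p_u(X)$, contractivity on the dense subalgebra then passes to all of $B^p_u(X)$.

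For $p\in(1,\infty)$ with $\frac1p+\frac1q=1$, we take $(\Omega,\mu)=X$ with counting measure. We set $\xi_y:=|\eta_y|\in\ell^p(X)$ and $\xi'_x:=M_{p,q}(|\eta_x|)=|\eta_x|^{p/q}\in\ell^q(X)$. By definition of the $p$-kernel,
\[
\varphi(x,y)=\sum_{z}|\eta_y(z)|\,|\eta_x(z)|^{p/q}=\langle\ \xi_y,\xi'_x\ \rangle .
\]
Moreover $\|\xi_y\|_p=1$ and $\|\xi'_x\|_q=\|\eta_x\|_p^{p/q}=1$. Pisier's theorem with $C=1$ then gives that $\mathcal{M}_\varphi$ has norm $\le 1$ on $B(\ell^p(X))$, hence on $B^p_u(X)$.

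For $p=1$ the kernel is $\varphi(x,y)=\sum_{z\in\supp(\eta_x)}|\eta_y(z)|$, with $q=\infty$. We take $\xi_y=|\eta_y|\in\ell^1$ and $\xi'_x=\chi_{\supp(\eta_x)}\in\ell^\infty$, both of norm at most $1$, and apply the same theorem (which covers $p=1$). As a sanity check one can verify directly that for $T\in\mathbb{C}_u[X,\mathcal{E}]$ the $\ell^1$ norm is the sup of column sums, and $|\varphi|\le 1$ entrywise gives $\|\varphi\circ T\|_1\le\|T\|_1$. This observation already suffices for $p=1$ and makes the exact form of the kernel irrelevant there.

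For $p\in\{0,\infty\}$ the same direct observation is the whole argument. For a controlled-propagation matrix, the operator norm on $\ell^\infty(X)$, and on $c_0(X)$, equals $\sup_x\sum_y|T(x,y)|$ (using Lemma \ref{lem:infty} for $p=\infty$ to justify the matrix representation). Since $0\le\varphi(x,y)\le 1$, we get
\[
\sum_y|\varphi(x,y)T(x,y)|\le\sum_y|T(x,y)| ,
\]
so $\mathcal{M}_\varphi$ is contractive. The only point needing care is the row-sum norm formula for $p=\infty$, which follows from Lemma \ref{lem:infty} together with the fact that each row is finitely supported, and the corresponding formula for $c_0(X)$.

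The main obstacle is matching the definition of the $p$-kernel to the bilinear form in Pisier's lemma, in particular the order of the indices and which of the two functions plays the $L^p$ role and which the $L^q$ role. The extreme cases are handled by a row- or column-sum argument in which only $|\varphi|\le1$ matters.
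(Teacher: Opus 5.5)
Your proposal is correct and follows the paper's route. The paper invokes Pisier's factorization criterion for $p\in[1,\infty)$, and your explicit choice $\xi_y=|\eta_y|\in\ell^p$, $\xi'_x=M_{p,q}(|\eta_x|)\in\ell^q$ (with $\chi_{\supp(\eta_x)}$ in place of $\xi'_x$ when $p=1$) is the factorization it leaves implicit. For $p=1$ and $p\in\{0,\infty\}$ the paper uses the same direct column/row-sum estimate as in Lemma~\ref{lem:Schur2}, which needs only $0\le\varphi\le 1$, exactly as you do.
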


\begin{proof}
    For $p\in[1,\infty)$, it follows immediately from the lemma above. In fact, the $p=1$ case can be obtained by a straightforward computation (cf. proof of Lemma \ref{lem:Schur2}), and the $p\in\{0,\infty\}$ case is similar.
\end{proof}

\begin{lem} (cf. \cite[Lemma 3.5]{HR}, \cite[Remark 6.2]{SW17}, \cite[Theorem 3.1]{Sako}, \cite[Theorem 1.2.4]{Willett})\label{lem:propertyA}
    For a uniformly locally finite coarse space $(X,\mathcal{E)}$, the following conditions are equivalent: 
    \begin{enumerate}
        \item $(X,\mathcal{E})$ has property A;
        \item For every $\varepsilon>0$ and every $E\in\mathcal{E}$, there exists a map $\xi: X\to\ell^1(X)_{1,+}$ such that:
        \begin{enumerate}
            \item $\{(x,y)\in X\times X: x\in X, y\in\supp(\xi_x)\}$ is an entourage in $\mathcal{E}$;
            \item $\|\xi_x-\xi_y\|_1\leq\varepsilon$ for each $(x,y)\in E$.
        \end{enumerate}
        \item For all (or some) $p$ with $1\leq p<\infty$, every $\varepsilon>0$ and every $E\in\mathcal{E}$, there exists a $p$-Higson-Roe function with $(E,\varepsilon)$-variation;
        \item For all (or some) $p$ with $1\leq p<\infty$, every $\varepsilon>0$ and every $E\in\mathcal{E}$, there exists a controlled $p$-partition of unity with $(E,\varepsilon)$-variation;
        \item For all (or some) $p$ with $1<p<\infty$, every $\varepsilon>0$ and every $E\in\mathcal{E}$, there exists a controlled propagation $p$-kernel with $(E,\varepsilon)$-variation;
        \item For every $\varepsilon>0$ and every $E\in\mathcal{E}$, there exists a controlled propagation positive definite kernel $\varphi: X\times X\to\mathbb{C}$ with $(E,\varepsilon)$-variation.
    \end{enumerate}
\end{lem}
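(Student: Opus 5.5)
I will prove the equivalences through a cycle centred on condition (ii), the $\ell^1$ characterization, and use the Mazur maps to move between different values of $p$.

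\textbf{Step 1: (i)$\Leftrightarrow$(ii).} This is essentially Sako's theorem \cite[Theorem 3.1]{Sako}, and I would only recall the constructions.

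Given $F$ and $A\subset F\times\mathbb{N}$ as in Definition \ref{def:propertyA}, set
\[
\xi_x(y):=\frac{\#\{n:(y,n)\in A_x\}}{\#A_x}.
\]
Then $\supp(\xi_x)\subseteq F_x$-type sets, so the support relation is contained in $F$ or $F^{-1}$ and is controlled. The standard estimate
\[
\|\xi_x-\xi_y\|_1\le \frac{2\#(A_x\triangle A_y)}{\#A_x}
\]
then gives (ii).

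For the converse, given $\xi$, approximate each $\xi_x$ by a rational-valued function with common denominator $N$. Let $A_x$ be the set of pairs $(y,n)$ with $n\le N\xi_x(y)$, rounded. Uniform local finiteness bounds $\#\supp\xi_x$ uniformly, so the rounding error is uniformly small, and the symmetric difference estimate follows.

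\textbf{Step 2: (ii)$\Leftrightarrow$(iii)$\Leftrightarrow$(iv).}

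Starting from $\xi$ as in (ii), set $\eta_x:=M_{1,p}(\xi_x)$. Then $\eta_x\in\ell^p(X)_1$ and its support equals that of $\xi_x$. By Lemma \ref{lem:inequ}(i) and the remark following it,
\[
\|\eta_x-\eta_y\|_p\le C\|\xi_x-\xi_y\|_1^{1/p},
\]
so $\eta$ is a $p$-Higson--Roe function with $(E,\varepsilon')$-variation. Two small points need care here:
\begin{itemize}
\item Enlarge the support entourage so that it exactly equals $\{(x,y):y\in\supp\eta_x\}$. This is automatic, since that relation is a subset of an entourage and hence an entourage.
\item Conversely, $M_{p,1}$ takes a $p$-Higson--Roe function back to (ii), using the lower bound in Lemma \ref{lem:inequ}(i).
\end{itemize}
Since every $p$ passes through $\ell^1$, this also gives the ``all versus some $p$'' claim.

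For (iii)$\Rightarrow$(iv), take $\phi_y(x):=|\eta_x(y)|$. Then:
\begin{itemize}
\item $\sum_y\phi_y(x)^p=1$.
\item At most $n(F)$ of the values $\phi_y(x)$ are nonzero.
\item The functions are subordinate to the cover $U_y=F^{-1}[\{y\}]$, whose blocky diagonal lies in $F^{-1}\circ F$ and is therefore controlled.
\item The variation satisfies $\sum_y|\phi_y(x)-\phi_y(x')|^p\le\|\eta_x-\eta_{x'}\|_p^p$.
\end{itemize}
For the reverse direction, set $\eta_x(i):=\phi_i(x)$ and reindex $I$ into $X$. To do this, choose a point $x_i\in U_i$ for each $i$ and sum over fibres of $i\mapsto x_i$, which have uniformly bounded size. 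Summing over a fibre in the $\ell^p$ sense only changes the variation by a bounded factor.

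\textbf{Step 3: (iii)$\Leftrightarrow$(v) for $1<p<\infty$.}

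If $\eta$ has small variation, then so does $|\eta|$. The induced kernel satisfies
\[
1-\varphi(x,y)=1-\Re\,[|\eta_y|,|\eta_x|]\le\|\eta_x-\eta_y\|_p
\]
by the right-hand inequality of Lemma \ref{lem:inequ}(ii).

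Conversely, if $1-\varphi(x,y)<\varepsilon$ on $E$, the left-hand inequality gives
\[
\||\eta_x|-|\eta_y|\|_p\le(\varepsilon/C_2)^{1/\alpha},
\]
so $|\eta|$ is a $p$-Higson--Roe function with small variation. This direction is exactly where uniform convexity, and hence $p>1$, is used.

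\textbf{Step 4: (vi).}

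For (iii)$\Rightarrow$(vi), take $p=2$ and set $\varphi(x,y)=\langle\eta_y,\eta_x\rangle$. This kernel is positive definite, controlled since its support lies in $F\circ F^{-1}$, and satisfies
\[
1-\Re\varphi\le\tfrac12\|\eta_x-\eta_y\|_2^2.
\]
The reverse direction (vi)$\Rightarrow$(i) is the step I expect to be the main obstacle. A positive definite kernel gives a Hilbert-space map $x\mapsto\zeta_x$ with $\varphi(x,y)=\langle\zeta_y,\zeta_x\rangle$, but the $\zeta_x$ are not finitely supported vectors in $\ell^2(X)$ with controlled support. Here I would follow Willett's proof \cite[Theorem 1.2.4]{Willett}: take $\zeta_x:=\varphi^{1/2}$-columns, i.e. use the controlled operator square root approximation. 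Concretely, let $\varphi$ act as a positive element of $C^*_u(X)$ supported in a controlled set, approximate its square root by a polynomial with controlled propagation, and truncate. Uniform local finiteness lets the truncation error be controlled uniformly. I would adapt his metric argument verbatim, replacing balls with $N_F(x)$, to obtain a 2-Higson--Roe function with small variation, which gives (iii).
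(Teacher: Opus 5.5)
Your proposal is correct, and for most of the cycle it follows the paper. The shared steps are the counting vectors $\zeta_x(y)=\#A_x(y)$ for (i)$\Rightarrow$(ii), with Sako's rounding argument for the way back; Mazur maps to move between values of $p$; Lemma~\ref{lem:inequ}(ii) applied to $|\eta|$ for (iii)$\Leftrightarrow$(v); and the $p=2$ kernel $\langle\eta_y,\eta_x\rangle$ for (iii)$\Rightarrow$(vi).

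The genuine difference is (vi)$\Rightarrow$(iii). The paper gives no argument there and just invokes Sako's $p=2$ theorem, whereas you supply an operator-theoretic proof. Made precise, it runs as follows. The kernel $\varphi$ is a positive element of $C^*_u(X)$. A real polynomial $P$ with $P(t)\approx\sqrt t$ on $[0,\|\varphi\|]$ gives $P(\varphi)\in\mathbb{C}_u[X,\mathcal{E}]$, so no truncation is actually needed. Since $\langle P(\varphi)\delta_y,P(\varphi)\delta_x\rangle=P(\varphi)^2(x,y)$ is uniformly close to $\varphi(x,y)$, the normalized columns $P(\varphi)\delta_x$ form a $2$-Higson--Roe function with small variation. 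This makes the lemma self-contained.

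However, this argument uses that $\varphi$ is normalized on the diagonal, for instance $|1-\varphi|<\varepsilon$ on some $E\supseteq\Delta_X$. Under the paper's literal one-sided condition $1-\varphi<\varepsilon$, the kernel $c\chi_{\Delta_X}+\chi_{E\cup E^{-1}}$ with $c$ large is positive definite and controlled, so it satisfies (vi) on every space. You should state this normalization explicitly.

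A smaller difference is in (iii)$\Leftrightarrow$(iv). You work at each $p$ directly via $\eta_x(z)=\bigl(\sum_{x_i=z}\phi_i(x)^p\bigr)^{1/p}$ and the reverse triangle inequality on fibres. The paper instead proves $p$-independence of (iv) through $\phi_i\mapsto\phi_i^{p/q}$ and reindexes only at $p=1$. Both work.

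For (iii)$\Rightarrow$(ii), your estimate $\|M_{p,1}(|\eta_x|)-M_{p,1}(|\eta_y|)\|_1\le p\,\|\eta_x-\eta_y\|_p$, from the lower Mazur bound, is the right one. The paper's bound $C_1\|\cdot\|_p^p$ fails for small distances, though its conclusion survives. Remember to pass to $|\eta_x|$ so the vectors are positive.

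Finally, with the paper's conventions $\supp\phi_y=F[\{y\}]$ rather than $F^{-1}[\{y\}]$, and its blocky diagonal is $F\circ F^{-1}$ by Lemma~\ref{lem:Eix}.
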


\begin{proof}
    (i) $\Rightarrow$ (ii) Suppose $(X,\mathcal{E})$ has property A. For a given $\varepsilon>0$ and $E\in\mathcal{E}$, there exist a controlled set $F\in\mathcal{E}$ and a subset $A\subseteq F\times\mathbb{N}$ which satisfy the condition of Definition \ref{def:propertyA}. Define a vector $\zeta_x\in\ell^1(X)_{+}$ by $\zeta_x(y)=\#(A_x(y))$ and a unit vector $\xi_x:=\zeta_x/\|\zeta_x\|_1$ in $\ell^1(X)_{1,+}$, where 
    \[
    A_x(y)=\{n\in\mathbb{N}: (y,n)\in A_x\}=\{n\in\mathbb{N}:(x,y,n)\in A\}.
    \]
    Here, $\ell^1(X)_+$ denotes the set of vectors $\zeta=(\zeta(y))_{y\in X}\in\ell^1(X)$ such that $\zeta(y)\geq 0$ for all $y\in X$.
    Then the set $\{(x,y)\in X\times X: y\in\supp(\xi_x)\}\subseteq F\in\mathcal{E}$ is controlled and for any $(x,y)\in E$, we have
    \begin{align*}
        \|\xi_x-\xi_y\|_1&=\|\frac{\zeta_x}{\|\zeta_x\|_1}-\frac{\zeta_y}{\|\zeta_y\|_1}\|_1=\frac{\|\|\zeta_y\|_1\zeta_x-\|\zeta_x\|_1\zeta_y\|_1}{\|\zeta_x\|_1\|\zeta_y\|_1}\\
        &\leq\frac{|\|\zeta_y\|_1-\|\zeta_x\|_1|\cdot\|\zeta_x\|_1+\|\zeta_x\|_1\cdot\|\zeta_x-\zeta_y\|_1}{\|\zeta_x\|_1\cdot\|\zeta_y\|_1}\\
        &\leq2\frac{\|\zeta_x-\zeta_y\|_1}{\|\zeta_y\|_1}\\
        &\leq2\frac{\#(A_x\bigtriangleup A_y)}{\#(A_y)}\\
        &<2\varepsilon.
    \end{align*}
    
    (ii) $\Rightarrow$ (iii) is obvious. From Lemma \ref{lem:inequ}(i), we observe that if a $p$-Higson-Roe function $\eta:X\to\ell^p(X)_{1}$ satisfies the conditions in (iii) for some $p\in[1,\infty)$, let 
    \[
    \eta':=M_{p,q}\circ\eta:X\to\ell^q(X)_{1},\ \eta'_x=M_{p,q}(\eta_x),
    \]
    then the $q$-Higson-Roe function $\eta':X\to\ell^q(X)_{1}$ satisfies the conditions in (iii) for all $q\in[1,\infty)$. 

    (iii) $\Rightarrow$ (ii) We prove that if there is some $p\in[1,\infty)$ and a $p$-Higson-Roe function $\eta:X\to\ell^p(X)_{1}$ satisfying the conditions in (iii), then (ii) is true. For each $x\in X$, define a positive unit element $\xi_x$ of $\ell^1(X)_{1,+}$ by $\xi_x(y):=M_{p,1}(|\eta_x|)(y)=|\eta_x(y)|^p$. Then for $(x,y)\in E$, we have
    \begin{align*}
        \|\xi_x-\xi_y\|_1&=\|M_{p,1}(|\eta_x|)-M_{p,1}(|\eta_x|)\|_1\leq C_1\||\eta_x|-|\eta_y|\|_p^p\\
        &\leq C_1\|\eta_x-\eta_y\|_p^p<C_1\cdot\varepsilon^p,
    \end{align*}
    where $C_1>0$ is a constant that depends only on $p$. This completes the proof.
    
    (ii) $\Rightarrow$ (i) is obvious from the proof in \cite[Theorem 3.1]{Sako}.
    
    (iv) $\Leftrightarrow$ (iii) First, we claim that if a controlled $p$-partition of unity $\{\phi_i:X\to[0,1]\}_{i\in I}$ satisfies the conditions in (iv) for some $p\in[1,\infty)$, let 
    \[
    \phi_i':=\phi_i^{p/q}:X\to[0,1],\ \phi_i'(x)=\phi_i(x)^{p/q},
    \]
    then the controlled $q$-partition of unity $\{\phi':X\to[0,1]\}_{i\in I}$ satisfies the conditions in (iv) for all $q\in[1,\infty)$. Exchanging the indices, we can define functions $\phi_x,\phi_x':I\to[0,1]$ by $\phi_x(i):=\phi_i(x)$ and $\phi_x'(i):=\phi_i'(x)$. It is clear that $\phi_x$ and $\phi_x'$ have finite support and $\phi_x\in\ell^p(I)_{1,+},\ \phi_x'=M_{p,q}(\phi_x)\in\ell^q(I)_{1,+}$. It follows from Lemma \ref{lem:inequ}(i) that
    \[
    \|\phi_x'-\phi_y'\|_q=\|M_{p,q}(\phi_x)-M_{p,q}(\phi_y)\|_q\leq C_1\|\phi_x-\phi_y\|_p^{p/q},
    \]
    where $C_1>0$ is a constant that depends only on $p$ and $q$. Thus, we have
    \[
    \sum_{i\in I}|\phi_i'(x)-\phi_i'(y)|^q=\|\phi_x'-\phi_y'\|_q^q\leq C_1^q\cdot\|\phi_x-\phi_y\|_p^p=C_1^q\cdot\sum_{i\in I}|\phi_i(x)-\phi_i(y)|^p,
    \]
    which implies the claim above.
    
    Then it remains to show that there is a correspondence between 1-Higson-Roe functions and controlled 1-partitions of unity. On the one hand, let $\varepsilon>0$, $E\in\mathcal{E}$, and $\eta: X\to\ell^{1}(X)_1$ be a map as in (iii). Without loss of generality, we assume that each $\eta_x$ is positive-valued. For every $y\in X$, define $\phi_y:X\to[0,1]$ by $\phi_y(x)=\eta_x(y)$. In this way, we have
    \[
    F:=\{(x,y):x\in\supp(\phi_y)\}=\{(x,y):y\in\supp(\eta_x)\}\in\mathcal{E}.
    \]
    Then $F(\boldsymbol{i_X})=\{\supp(\phi_y)\}_{y\in X}$ is a covering of $X$, and $\{\phi_y\}_{y\in X}$ is subordinated to the covering $F(\boldsymbol{i_X})$. Since $F\in\mathcal{E}$, it follows from Lemma \ref{lem:Eix} that $F(\boldsymbol{i_X})$ is a controlled covering of $X$.
    Moreover, since $X$ is a uniformly locally finite coarse space, there exists $N\in\mathbb{N}$ such that for each $x\in X$, at most $N$ of the numbers $\phi_y(x)$ are non-zero. It is clear that $\sum_{y\in X}\phi_y(x)=\|\eta_x\|_1=1$ for each $x\in X$ and 
    \[
    \sum_{z\in X}|\phi_z(x)-\phi_z(y)|=\|\eta_x-\eta_y\|_1<\varepsilon
    \]
    whenever $(x,y)\in E$.

    On the other hand, let $\varepsilon>0$, $E\in\mathcal{E}$, and $\{\phi_i\}_{i\in I}$ be a controlled 1-partition of unity on $X$ in (iv) which is subordinated to the controlled covering $\{U_i\}_{i\in I}$. Set $F:=\bigcup_i U_i\times U_i\in\mathcal{E}$. Pick a point $x_i$ in $U_i$ for each $i\in I$, and define
    \[
    \eta_x:=\sum_i\phi_i(x)\delta_{x_i}\in\ell^1(X)_{1,+}
    \]
    for each $x\in X$, where $\delta_{x_i}$ is the characteristic function of $x_i\in X$. If $\phi_i(x)\neq0$, then $x\in\supp(\phi_i)\subseteq U_i$, $(x,x_i)\in U_i\times U_i$, and
    \[
    \{(x,y):x\in X, y\in\supp(\eta_x)\}\subseteq F\in\mathcal{E}.
    \]
    Moreover, we have
    \[
    \|\eta_x-\eta_y\|_1=\sum_{z\in X}\left|\sum_{i,x_i=z}(\phi_i(x)-\phi_i(y))\right|\leq\sum_{z\in X}\left(\sum_{i,x_i=z}|\phi_i(x)-\phi_i(y)|\right)<\varepsilon
    \]
    whenever $(x,y)\in E$.
    
    (v) $\Leftrightarrow$ (iii) Without loss of generality, we assume that each $\eta_x$ is positive-valued, then we have
    \[
    1-\varphi(x,y)=1-[\eta_y,\eta_x]=1-\Re[\eta_y,\eta_x].
    \]
    From Lemma \ref{lem:inequ}(ii), we have \[ C_2\| \eta_y-\eta_x \|_p^\alpha \leq 1-\varphi(x,y) \leq \|\eta_y-\eta_x\|_p \] for some constant $C_2>0$ depending only on $p$, and $\alpha=\max\{p,q\}$ with $\frac{1}{p}+\frac{1}{q}=1$. It follows that (v) and (iii) are equivalent.
    
    (v) $\Rightarrow$ (vi) is obvious since a controlled propagation 2-kernel is a controlled propagation positive definite kernel.

    (vi) $\Rightarrow$ (iii) This follows from the $p=2$ case proved in \cite[Theorem 3.1]{Sako}.
    \end{proof}

The following is a consequence of the observation that Lemma \ref{lem:inequ}(ii) gives us (iii) $\Rightarrow$ (v) in Lemma \ref{lem:propertyA}.

\begin{cor}\label{cor:propertyA}
    For $p\in\{0,1,\infty\}$, if $(X,\mathcal{E})$ has property A, since we still have the right-hand inequality in Lemma \ref{lem:inequ}(ii), thus for every $\varepsilon>0$ and every $E\in\mathcal{E}$, there exists a controlled propagation p-kernel with $(E,\varepsilon)$-variation. The converse may not be true because of the failure of the left-hand inequality in Lemma \ref{lem:inequ}(ii).
\end{cor}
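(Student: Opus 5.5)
The statement to prove is Corollary \ref{cor:propertyA}: for $p\in\{0,1,\infty\}$, property A of $(X,\mathcal{E})$ yields, for every $\varepsilon>0$ and $E\in\mathcal{E}$, a controlled propagation $p$-kernel with $(E,\varepsilon)$-variation. The plan is to pass through a $1$-Higson-Roe function. Fix $\varepsilon>0$ and $E\in\mathcal{E}$. By Lemma \ref{lem:propertyA} (i)$\Rightarrow$(ii), property A gives a map $\xi:X\to\ell^1(X)_{1,+}$ with controlled support set $F=\{(x,y):y\in\supp(\xi_x)\}\in\mathcal{E}$ and $\|\xi_x-\xi_y\|_1\le\varepsilon$ for $(x,y)\in E\cup E^{-1}$. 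Then $\eta:=\xi$ is a $1$-Higson-Roe function with nonnegative entries. Applying (ii) with $\varepsilon/2$ makes the variation strict.

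For $p=1$ we have $q=\infty$ and $p/q=0$, so by the definition the induced kernel is
\[
\varphi(x,y)=[\eta_y,\eta_x]=\sum_{z}\eta_y(z)\,\sign(\eta_x(z)).
\]
For $p\in\{0,\infty\}$ the kernel is $\varphi(x,y)=[\eta_x,\eta_y]$, which is the same expression with $x$ and $y$ swapped. In both cases the support is $F\circ F^{-1}$ or $F^{-1}\circ F$, hence $\varphi\in\mathbb{C}_u[X,\mathcal{E}]$, so it only remains to check the variation.

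The key step is the right-hand inequality of Lemma \ref{lem:inequ}(ii) at $p=1$, which the remark after that lemma says still holds. For $f,g\in\ell^1(X)_1$ we argue as in that proof:
\[
1-\Re[f,g]\le|[g,g]-[f,g]|=|\langle g-f,M_{1,\infty}(g)\rangle|\le\|f-g\|_1\,\|M_{1,\infty}(g)\|_\infty\le\|f-g\|_1 .
\]
Here $[g,g]=\|g\|_1=1$ and $|\sign|\le1$. Since the $\eta_x$ are nonnegative, $[\eta_y,\eta_x]$ is real. Taking $f=\eta_y,\ g=\eta_x$ (or the swap for $p\in\{0,\infty\}$) gives
\[
1-\varphi(x,y)\le\|\eta_x-\eta_y\|_1<\varepsilon \quad\text{for }(x,y)\in E,
\]
which is exactly $(E,\varepsilon)$-variation. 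Checking the variation on $E\cup E^{-1}$ covers the swapped argument order.

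For the remark about the converse, the plan is only to point to where it breaks. Recovering $\|\eta_x-\eta_y\|_1$ small from $1-\varphi$ small would need the left-hand inequality of Lemma \ref{lem:inequ}(ii), and the remark after that lemma gives $f=(1,0,\dots)$, $g=(\tfrac12,\tfrac12,0,\dots)$ with $1-\Re[f,g]=0$ but $\|f-g\|_1=1$. So no implication back to property A is claimed. I expect no real obstacle; the only delicate point is keeping the argument order of $[\cdot,\cdot]$ consistent between the $p=1$ and $p\in\{0,\infty\}$ definitions, which taking $E$ symmetric removes.
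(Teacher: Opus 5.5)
Your proposal is correct and follows the paper's argument: property A gives a nonnegative $1$-Higson--Roe function with $(E,\varepsilon)$-variation via Lemma \ref{lem:propertyA}, and the right-hand inequality $1-\Re[f,g]\le\|f-g\|_1$ of Lemma \ref{lem:inequ}(ii), which survives at $p=1$, transfers that variation to the induced $p$-kernel for $p\in\{0,1,\infty\}$; you correctly treat the converse only as a possible failure, not a claimed implication. One simplification: $1-[\eta_x,\eta_y]\le\|\eta_x-\eta_y\|_1$ holds in either argument order and the right-hand side is symmetric, so replacing $E$ by $E\cup E^{-1}$ is unnecessary.
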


\begin{rem}\leavevmode
    \begin{enumerate}
        \item Condition (ii) in Lemma \ref{lem:propertyA} is well-known as the Higson-Roe condition \cite[Lemma 3.5]{HR} when $X$ is a uniformly discrete metric space with bounded geometry, and shows that property A is a weak form of amenability. The idea of (i) $\Leftrightarrow$ (ii) comes from \cite[Theorem 3.1]{Sako}, which generalizes property A to general uniformly locally finite unital coarse spaces;
        \item For a uniformly discrete metric space with bounded geometry, a controlled 1-partition of unity on $X$ is called a metric 1-partition of unity on $X$ (\cite[Theorem 1.2.4]{Willett}) and the idea comes from the proof from \cite[Lemma 4.3]{HR}, \cite[Remark 11.36]{Roe03}, \cite[Theorem 4.3.6]{NY} that finite asymptotic dimension implies property A. A metric $p$-partition was introduced in \cite[Definition 6.1]{SW17}. 
    \end{enumerate}
\end{rem}

For all $p\in\{0\}\cup[1,\infty]$, we find that property A of the underlying space $X$ will lead to some good algebraic properties of the $\ell^p$ uniform Roe algebra $B^p_u(X)$.
The ideas of the following results come from \cite[Lemma 11.17]{Roe03}, \cite[Lemma 4.3]{CW05}, \cite[Corollary 6.5]{SW17} and \cite[Theorem 5.11]{Pisier}.

\begin{lem}\label{lem:approximate2}
    Given $p\in\{0\}\cup[1,\infty]$, for any $\varphi\in\ell^\infty(X\times X)$ and $A\in\mathbb{C}_u[X,\mathcal{E}]$ with $N:=2n(\supp(A))-1\in\mathbb{N}$, the Schur product $\varphi\circ A$ belongs to $\mathbb{C}_u[X,\mathcal{E}]$, and we have
    \[
    \|\varphi\circ A\|_p\leq\sup_{(x,y)\in \supp(A)}|\varphi(x,y)|\cdot N\|A\|_p.
    \]
\end{lem}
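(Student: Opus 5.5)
The natural route is to reduce to partial translations via Lemma \ref{lem:CW}(ii), as in the proof of Lemma \ref{lem:approximate1}. First, $\supp(\varphi\circ A)\subseteq\supp(A)\in\mathcal{E}$. The function $\varphi\circ A$ is bounded, since $|\varphi(x,y)A(x,y)|\leq\|\varphi\|_\infty\|A\|_{\sup}$. Hence $\varphi\circ A$ is a controlled propagation matrix, and by the discussion following Example \ref{ex:partial} it belongs to $\mathbb{C}_u[X,\mathcal{E}]$ for every $p\in\{0\}\cup[1,\infty]$. Nothing about $\varphi$ beyond boundedness on $\supp(A)$ is needed here. In fact only the values of $\varphi$ on $\supp(A)$ matter, so we may replace $\varphi$ by $\varphi\chi_{\supp(A)}$.

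For the norm estimate, partition $\supp(A)=E_1\sqcup\cdots\sqcup E_N$ into partial translations, where $N=2n(\supp(A))-1$, using Lemma \ref{lem:CW}(ii). Write $\varphi\circ A=\sum_{i=1}^N(\varphi\circ A)_{E_i}$. Each summand has support in the partial translation $E_i$, so Lemma \ref{prop:norm} (equality case) gives
\[
\|(\varphi\circ A)_{E_i}\|_p=\|(\varphi\circ A)_{E_i}\|_{\sup}\leq\sup_{(x,y)\in\supp(A)}|\varphi(x,y)|\cdot\|A\|_{\sup}.
\]
The inequality $\|A\|_{\sup}\leq\|A\|_p$ from Lemma \ref{prop:norm} then bounds each term by $\sup_{\supp(A)}|\varphi|\cdot\|A\|_p$. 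Applying the triangle inequality over the $N$ pieces gives the stated bound.

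For $p=\infty$ one must check that the equality $\|T\|_\infty=\|T\|_{\sup}$ for partially translational supports still holds. The remark after Example \ref{ex:partial} says this is easily verified, and Lemma \ref{lem:infty} guarantees that these operators are determined by their matrices. I will cite that and not expect any real obstacle. The only mildly delicate point is this bookkeeping for $p\in\{0,\infty\}$, where the matrix calculus must be justified. Since $\varphi\circ A$ has controlled support, it lies in $\mathcal{S}^p(X)$, so Lemma \ref{lem:infty} applies and the same estimate goes through.
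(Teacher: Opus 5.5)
Your proof is correct and follows the paper's argument. The paper cites Lemma \ref{lem:approximate1} (exactly your decomposition into $N$ partial translations combined with the equality case of Lemma \ref{prop:norm}) and then bounds $\sup|A(x,y)|$ by $\|A\|_p$ via Lemma \ref{prop:norm}; decomposing $\supp(A)$ rather than $\supp(\varphi\circ A)$, as you do, makes the count $N=2n(\supp(A))-1$ immediate.
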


\begin{proof}
    It is clear that $\supp(\varphi\circ A)\subseteq\supp(A)\in\mathcal{E}$ and $\varphi\circ A\in\mathbb{C}_u[X,\mathcal{E}]$. It follows from Lemma \ref{lem:approximate1} and Lemma \ref{prop:norm} that 
    \begin{align*}
    \|\varphi\circ A\|_p&\leq N\sup_{(x,y)\in E}|\varphi(x,y)\cdot A(x,y)|\\
    &\leq\sup_{(x,y)\in E}|\varphi(x,y)|\cdot N\sup_{(x,y)\in E}|A(x,y)|\\
    &\leq\sup_{(x,y)\in E}|\varphi(x,y)|\cdot N\|A\|_p.
    \end{align*}
\end{proof}

\begin{prop}\label{lem:approximate4}
    For $p\in\{0\}\cup[1,\infty]$, if $(X,\mathcal{E})$ has property A, then any operator in $B^p_u(X)$ can be approximated in operator norm by controlled propagation Schur products, i.e., for any $T\in B^p_u(X)$ and $\varepsilon>0$, there exists $\varphi\in\mathbb{C}_u[X,\mathcal{E}]$ such that
    \[
    \|T-\varphi\circ T\|_p<\varepsilon.
    \]
    Moreover, we can ask for the well-defined linear operator $\mathcal{M}_\varphi: T\mapsto\varphi\circ T$ to be contractive.
\end{prop}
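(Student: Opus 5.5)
We combine an $\varepsilon/3$ argument with the kernels supplied by property A. Fix $T\in B^p_u(X)$ and $\varepsilon>0$. First choose $A\in\mathbb{C}_u[X,\mathcal{E}]$ with $\|T-A\|_p<\varepsilon/3$, and set $E:=\supp(A)\in\mathcal{E}$ and $N:=2n(E)-1$. The idea is to find a contractive controlled propagation Schur multiplier $\mathcal{M}_\varphi$ with $\|A-\varphi\circ A\|_p<\varepsilon/3$. Then
\[
\|T-\varphi\circ T\|_p\leq\|T-A\|_p+\|A-\varphi\circ A\|_p+\|\mathcal{M}_\varphi(A-T)\|_p<\varepsilon,
\]
where the last term is bounded by $\|A-T\|_p<\varepsilon/3$ precisely because $\mathcal{M}_\varphi$ is contractive.

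To construct $\varphi$, use property A via Lemma \ref{lem:propertyA}(iii). For $p\in[1,\infty)$, it yields a $p$-Higson-Roe function $\eta$ with $(E,\delta)$-variation, for a $\delta>0$ to be chosen. For $p\in\{0,\infty\}$, take instead a $1$-Higson-Roe function with $(E\cup E^{-1},\delta)$-variation. Let $\varphi$ be the induced controlled propagation $p$-kernel. By the right-hand inequality of Lemma \ref{lem:inequ}(ii), which holds for every $p\in[1,\infty)$ and, as noted in Corollary \ref{cor:propertyA}, also in the extreme cases, we get
\[
0\leq 1-\varphi(x,y)\leq\|\eta_x-\eta_y\|<\delta\quad\text{for }(x,y)\in E.
\]
For $p\in\{0,\infty\}$ one should check that the pairing in the definition of $\varphi$ ($\sum_{z\in\supp(\eta_y)}\phi_z(x)$) gives $1-\varphi(x,y)\leq\|\eta_x-\eta_y\|_1$. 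This is direct: $1-\sum_{z\in\supp\eta_y}\eta_x(z)=\sum_{z\notin\supp\eta_y}\eta_x(z)\leq\|\eta_x-\eta_y\|_1$. Moreover, $\varphi\in\mathbb{C}_u[X,\mathcal{E}]$, since its support is $F\circ F^{-1}$ or $F^{-1}\circ F$, and $\mathcal{M}_\varphi$ is contractive by Proposition \ref{lem:approximate3}.

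Now apply Lemma \ref{lem:approximate2} to the function $1-\varphi$ and the operator $A$:
\[
\|A-\varphi\circ A\|_p=\|(1-\varphi)\circ A\|_p\leq\sup_{(x,y)\in E}|1-\varphi(x,y)|\cdot N\|A\|_p\leq\delta N\|A\|_p.
\]
Choosing $\delta<\varepsilon/(3N(\|A\|_p+1))$ at the outset finishes the argument. Note that $\delta$ depends only on $A$, which was fixed before $\varphi$ was chosen, so there is no circularity.

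The main obstacle is the contractivity of $\mathcal{M}_\varphi$ on all of $B^p_u(X)$, uniformly in $\delta$. This is essential, since otherwise the third term in the $\varepsilon/3$ estimate cannot be controlled; Lemma \ref{lem:Schur} only gives boundedness with an unspecified constant. For $p\in(1,\infty)$, contractivity follows from Pisier's factorization, with $\varphi(x,y)=\langle|\eta_y|,M_{p,q}(|\eta_x|)\rangle$ and both factors of norm $1$. For $p=1$ and for $p\in\{0,\infty\}$, I would verify it directly. Writing $\varphi(x,y)=\sum_z a_z(x)b_z(y)$ with $\sup_x\sum_z|a_z(x)|$ and $\sup_z\|b_z\|_\infty$ at most $1$ (or the transposed roles), one has $\varphi\circ T=\sum_z D_{a_z}TD_{b_z}$. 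One then estimates column sums for $\ell^1$ and row sums for $\ell^\infty$ and $\ell^0$ entrywise, using $\|T\|_1=\sup_y\sum_x|T(x,y)|$ and the analogous row formula, which is valid for $T\in\mathcal{S}^\infty(X)$ by Lemma \ref{lem:infty}.
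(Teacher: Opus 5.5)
Your proposal is correct and is essentially the paper's proof. It uses the same $\varepsilon/3$ argument with a controlled approximant $A$, a $p$-kernel with $(\supp(A),\delta)$-variation supplied by Lemma~\ref{lem:propertyA} and Corollary~\ref{cor:propertyA}, the estimate of Lemma~\ref{lem:approximate2} applied to $1-\varphi$, and contractivity of $\mathcal{M}_\varphi$ from Proposition~\ref{lem:approximate3}. For $p\in\{0,1,\infty\}$ that contractivity is immediate from Lemma~\ref{lem:Schur2} since $0\le\varphi\le 1$, so your factorization $\varphi\circ T=\sum_z D_{a_z}TD_{b_z}$ is unnecessary. Your choice $\delta<\varepsilon/(3N(\|A\|_p+1))$ and your direct check that $1-\varphi(x,y)\le\|\eta_x-\eta_y\|_1$ for $p\in\{0,\infty\}$ are small improvements: the paper divides by $\|A\|_p$, which may be zero, and only asserts the latter inequality.
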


\begin{proof}
    For any given $T\in B^p_u(X)$ and $\varepsilon>0$, there exists $A\in\mathbb{C}_u[X,\mathcal{E}]$ with support denoted by $E:=\supp(A)\in\mathcal{E}$ such that
    \[
    \|T-A\|_p<\varepsilon/3.
    \]
    Let $N:=2n(\supp(A))-1$ and $\varepsilon'=\frac{\varepsilon}{3N\|A\|_p}>0$. Since $(X,\mathcal{E})$ has property A, for $E$ and $\varepsilon'$, it follows from Lemma \ref{lem:propertyA} and Corollary \ref{cor:propertyA} that there exists a controlled propagation $p$-kernel $\varphi$ with $(E,\varepsilon')$-variation. It follows from Lemma \ref{lem:approximate2} that 
    \[
    \|A-\varphi\circ A\|_p=\|(1-\varphi)\circ A\|_p\leq\sup_{(x,y)\in E}|\varphi(x,y)-1|\cdot N\|A\|_p,
    \]
    and from Lemma \ref{lem:approximate3} that $\|\mathcal{M}_\varphi\|\leq1$. In this way, we have
    \begin{align*}        
    \|T-\varphi\circ T\|_p&=\|(T-A)+(A-\varphi\circ A)+(\varphi\circ(T-A))\|_p\\
    &\leq\|T-A\|_p+\|A-\varphi\circ A\|_p+\|\mathcal{M}_\varphi\|\cdot\|T-A\|_p\\&\leq 2\|T-A\|_p+\sup_{(x,y)\in E}|\varphi(x,y)-1|\cdot N\|A\|_p\\&<\frac{2\varepsilon}{3}+\varepsilon'\cdot N\|A\|_p=\varepsilon.
    \end{align*}
\end{proof}

\begin{defn} (cf. \cite[Corollary 11.18]{Roe03})
    A net $\{\varphi_i\}_{i\in I}\subseteq\ell^\infty(X\times X)$ is a multiplier approximate identity for $B^p_u(X)$ if $M_{\varphi_i}:T\mapsto\varphi_i\circ T$ are all bounded Schur multipliers with $\sup_{i\in I}\|M_{\varphi_i}\|< \infty$ and $\varphi_i\circ T\to T$ for every $T\in B^p_u(X)$.
\end{defn}

The definition here is slightly different from \cite[Corollary 11.18]{Roe03}, where it was shown that if $X$ is a uniformly discrete bounded geometry coarse space that is coarsely embeddable in Hilbert space, then the uniform Roe $C^*$-algebra $C^*_u(X)$ has a multiplier approximate identity $\{\varphi_n\}_{n\in\mathbb{N}}$. Moreover, one may ask the corresponding multipliers $M_{\varphi_n}$ to be contractive and define a sequence of unital completely positive maps $C^*_u(X)\to C^*_u(X).$

\begin{thm}\label{thm:multappr}
    For $p\in\{0\}\cup[1,\infty]$, if $(X,\mathcal{E})$ has property A, then $B^p_u(X)$ has a multiplier approximate identity with controlled propagation. 
\end{thm}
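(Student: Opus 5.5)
The plan is to build the multiplier approximate identity as a net indexed by pairs $(E,\varepsilon)$, with $E\in\mathcal{E}$ and $\varepsilon>0$. The order is $(E,\varepsilon)\leq(E',\varepsilon')$ iff $E\subseteq E'$ and $\varepsilon'\leq\varepsilon$. This is a directed set, since $\mathcal{E}$ is closed under finite unions.

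For each index $i=(E,\varepsilon)$, property A provides a controlled propagation $p$-kernel $\varphi_i$ with $(E,\varepsilon)$-variation. For $p\in(1,\infty)$ this comes from Lemma \ref{lem:propertyA}(v); for $p\in\{0,1,\infty\}$ it comes from Corollary \ref{cor:propertyA}. The kernel arises from a Higson--Roe function, so $\supp(\varphi_i)=F\circ F^{-1}\in\mathcal{E}$, giving $\varphi_i\in\mathbb{C}_u[X,\mathcal{E}]$. By Proposition \ref{lem:approximate3} the Schur multipliers are contractive, $\|\mathcal{M}_{\varphi_i}\|\leq 1$. This gives the uniform bound $\sup_i\|\mathcal{M}_{\varphi_i}\|\leq1$, and the controlled propagation holds by construction.

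It remains to show $\varphi_i\circ T\to T$ for every $T\in B^p_u(X)$. I would run the three-term estimate from the proof of Proposition \ref{lem:approximate4}, but uniformly along the net.

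Given $T$ and $\delta>0$, choose $A\in\mathbb{C}_u[X,\mathcal{E}]$ with $\|T-A\|_p<\delta/3$. Set $E_0=\supp(A)$ and $N=2n(E_0)-1$, and let $\varepsilon_0=\delta/(3N\|A\|_p)$; if $A=0$ the claim is trivial.

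Now take any $i=(E,\varepsilon)\geq(E_0,\varepsilon_0)$. Since $E_0\subseteq E$ and $\varepsilon\leq\varepsilon_0$, we have $\sup_{E_0}|1-\varphi_i|<\varepsilon_0$. Lemma \ref{lem:approximate2} then gives $\|A-\varphi_i\circ A\|_p\leq \varepsilon_0 N\|A\|_p=\delta/3$. Combining this with contractivity,
\[
\|T-\varphi_i\circ T\|_p\leq 2\|T-A\|_p+\|A-\varphi_i\circ A\|_p<\delta .
\]
So the net converges. If a sequence is preferred, one can instead pick $\varphi_i$ at a single index for each $T$, but the net form is what the definition asks for.

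The main subtlety is already handled by earlier results. One point is the contractivity of the $p$-kernel multipliers for the extreme values $p\in\{0,1,\infty\}$, where Pisier's factorization is not directly stated; here I would rely on Proposition \ref{lem:approximate3}. The other is the existence of kernels with small variation for those $p$, which Corollary \ref{cor:propertyA} supplies. Once both are in hand, the argument is the uniform version of Proposition \ref{lem:approximate4} given above.
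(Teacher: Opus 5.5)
Your proof is correct and follows the paper's argument. It uses the same net $\{\varphi^{(E,\varepsilon)}\}$ of controlled propagation $p$-kernels, obtained from Lemma \ref{lem:propertyA}(v) and Corollary \ref{cor:propertyA}, together with contractivity from Proposition \ref{lem:approximate3} and the three-term estimate of Proposition \ref{lem:approximate4}; like the paper, you apply that estimate to every index beyond $(\supp(A),\varepsilon_0)$, using that $(E',\varepsilon')$-variation implies $(E,\varepsilon)$-variation. Your version only makes the threshold index and the uniform bound explicit, which the paper leaves implicit.
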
  

\begin{proof}
    From property A, for each $E\in\mathcal{E}$ and $\varepsilon>0$, we have a controlled propagation $p$-kernel $\varphi^{(E,\varepsilon)}$ with $(E,\varepsilon)$-variation. We claim that the net $\{\varphi^{(E,\varepsilon)}\}_{E\in\mathcal{E}, \varepsilon>0}$ forms a multiplier approximate identity for $B^p_u(X)$. 
    
    For any given $T\in B^p_u(X)$ and $\delta>0$, it follows from Proposition \ref{lem:approximate4} that there exists a controlled propagation $p$-kernel $\varphi\in\{\varphi^{(E,\varepsilon)}\}_{E\in\mathcal{E}, \varepsilon>0}$ with $(E,\varepsilon)$-variation such that $\|T-\varphi\circ T\|_p\leq\delta$. For every given controlled set $E'\supseteq E$ and $0<\varepsilon'\leq\varepsilon$, the controlled propagation $p$-kernel $\varphi'\in\{\varphi^{(E,\varepsilon)}\}_{E\in\mathcal{E}, \varepsilon>0}$ with $(E',\varepsilon')$-variation also has $(E,\varepsilon)$-variation and then we can see from the proof of Proposition \ref{lem:approximate4} that
        $\|T-\varphi'\circ T\|_p\leq\varepsilon$.
        Thus,
        \[
        \lim_{E\in\mathcal{E},\varepsilon>0}\|T-\varphi^{(E,\varepsilon)}\circ T\|_p=0.
        \]   
\end{proof}

\begin{rem}
    Since $\mathcal{E}$ is closed under finite unions, $(\mathcal{E},\supseteq)$ forms a directed set. The net $\{\varphi^{(E,\varepsilon)}\}_{E\in\mathcal{E}, \varepsilon>0}$ is indexed by the directed set $(\mathcal{E\times\mathbb{R}}^+,\leq)$ with partial order $\leq$ given by:
    \[
    (E',\varepsilon')\leq(E,\varepsilon)\Leftrightarrow E'\supseteq E\ \text{and}\ \varepsilon'\leq \varepsilon ,
    \]
    where $E,E'\in\mathcal{E}$ and $\varepsilon,\varepsilon'>0$.
\end{rem}

\begin{thm}\label{thm:geomideal}
    For $p\in\{0\}\cup[1,\infty]$, if $B^p_u(X)$ has a multiplier approximate identity with controlled propagation, then all ideals of $B^p_u(X)$ are geometric ideals.
\end{thm}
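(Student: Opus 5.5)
Let $I$ be a closed two-sided ideal of $B^p_u(X)$ and $\{\varphi_i\}_{i\in I}$ a multiplier approximate identity with controlled propagation. Controlled propagation means each $\varphi_i\in\mathbb{C}_u[X,\mathcal{E}]$, so $\supp(\varphi_i)\in\mathcal{E}$. The plan is to show directly that $\mathbb{C}_u(I)=I\cap\mathbb{C}_u[X,\mathcal{E}]$ is dense in $I$, which is the definition of a geometric ideal. Equivalently, via Proposition \ref{prop:lattice3}, this says $I=I(\mathcal{J}(I))$.

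Fix $T\in I$ and $\varepsilon>0$. By the defining property of a multiplier approximate identity, there is an index $i$ with $\|T-\varphi_i\circ T\|_p<\varepsilon$. It remains to see that $\varphi_i\circ T\in\mathbb{C}_u(I)$.

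First, $\varphi_i\circ T$ has controlled propagation, since $\supp(\varphi_i\circ T)\subseteq\supp(\varphi_i)\in\mathcal{E}$ and its entries are bounded by $\|\varphi_i\|_{\sup}\|T\|_{\sup}$. So it is a controlled propagation matrix, hence lies in $\mathbb{C}_u[X,\mathcal{E}]$ as observed after Example \ref{ex:partial}.

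Second, $\varphi_i\circ T\in I$. This follows from Proposition \ref{thm:keythm}(iv), which gives $\varphi\circ T\in\mathbb{C}_u(\langle\ T\ \rangle)\subseteq\mathbb{C}_u(I)$ for any $\varphi\in\mathbb{C}_u[X,\mathcal{E}]$. Alternatively, Proposition \ref{prop:truncation}(iii) says that $I$ is invariant under the Schur multiplier $\mathcal{M}_{\varphi_i}$.

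Combining these, every element of $I$ is a norm limit of elements of $\mathbb{C}_u(I)$, so $I$ is geometric.

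There is essentially no obstacle here; the substantive work was done in Proposition \ref{thm:keythm}(iv), which shows that controlled Schur multipliers preserve ideals. The only subtlety is that $\varphi_i$ must itself be a bounded controlled-propagation function, so that Proposition \ref{thm:keythm}(iv) applies. The uniform bound $\sup_i\|\mathcal{M}_{\varphi_i}\|$ is not even needed for this implication.
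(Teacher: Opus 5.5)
Your proof is correct and follows essentially the same route as the paper: approximate $T$ in the ideal by $\varphi\circ T$ and observe that $\varphi\circ T\in\mathbb{C}_u(I)$. The paper cites this last fact via Proposition \ref{prop:truncation}(i), which itself rests on Proposition \ref{thm:keythm}(iv), the result you invoke directly. One small point: you use $I$ both for the ideal and for the index set of the net, so rename one of them.
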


\begin{proof}
    For any ideal $I$ of $B^p_u(X)$, by definition, any operator $T\in I$ can be approximated by controlled propagation Schur products in operator norm, i.e., for any $\varepsilon>0$ there exists $\varphi\in\mathbb{C}_u[X,\mathcal{E}]$ such that $\|T-\varphi\circ T\|_p<\varepsilon$. By Proposition \ref{prop:truncation}(i), we have $\mathbb{C}_u(I)=\{\varphi\circ T: T\in I, \varphi\in\mathbb{C}_u[X,\mathcal{E}]\}$. In this way, $\mathbb{C}_u(I)$ is dense in $I$, so $I$ is a geometric ideal.
\end{proof}

\begin{cor}\label{cor:geomideal}
    For $p\in\{0\}\cup[1,\infty]$, if $(X,\mathcal{E})$ has property A, then all ideals of $B^p_u(X)$ are geometric ideals.
\end{cor}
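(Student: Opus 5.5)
This is a direct combination of Theorem \ref{thm:multappr} and Theorem \ref{thm:geomideal}, so the plan is to chain them. Fix $p\in\{0\}\cup[1,\infty]$ and assume $(X,\mathcal{E})$ has property A.

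First, Theorem \ref{thm:multappr} gives a multiplier approximate identity $\{\varphi^{(E,\varepsilon)}\}$ for $B^p_u(X)$ with controlled propagation. Its members are controlled propagation $p$-kernels with $(E,\varepsilon)$-variation. For $p\in(1,\infty)$ these come from Lemma \ref{lem:propertyA}(v), and for $p\in\{0,1,\infty\}$ from Corollary \ref{cor:propertyA}. They are uniformly contractive by Proposition \ref{lem:approximate3}.

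Second, I apply Theorem \ref{thm:geomideal} to an arbitrary closed two-sided ideal $I$. For $T\in I$ and $\delta>0$, Proposition \ref{lem:approximate4} supplies some $\varphi\in\mathbb{C}_u[X,\mathcal{E}]$ with $\|T-\varphi\circ T\|_p<\delta$. By Proposition \ref{prop:truncation}(i), equivalently Proposition \ref{thm:keythm}(iv), $\varphi\circ T$ lies in $\mathbb{C}_u(\langle T\rangle)\subseteq\mathbb{C}_u(I)$. Hence $\mathbb{C}_u(I)$ is dense in $I$, so $I$ is geometric.

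The only point needing care is that the Schur product $\varphi\circ T$ actually lands in the ideal. This is exactly the controlled truncation machinery of Lemma \ref{lem:keylem}. It rests on Lemma \ref{lem:ideal}, the inverse argument on $\supp(R)$-separated pieces, which works for every $p$. There is no new obstacle here; the statement is a corollary of the two theorems.
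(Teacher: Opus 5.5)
Your proposal is correct and follows exactly the paper's route. The corollary is obtained by chaining Theorem \ref{thm:multappr} (property A yields contractive controlled-propagation $p$-kernels forming a multiplier approximate identity) with Theorem \ref{thm:geomideal}, whose proof is your observation that $\varphi\circ T\in\mathbb{C}_u(I)$ by Proposition \ref{thm:keythm}(iv) and that these Schur products approximate $T$; as a side remark, for $p\in\{0,1,\infty\}$ property A is not needed at all, since Theorem \ref{thm:extreme} uses the multipliers $\chi_E$ directly.
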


\begin{prop}\label{prop:trivial}
    For $p\in\{0\}\cup[1,\infty]$, let $(X,\mathcal{E})$ be a uniformly locally finite coarse space. If all ideals of $B^p_u(X)$ are geometric ideals, then all ghosts in $B^p_u(X)$ are trivial ghosts, i.e., the ideal $I^p_G$ of all ghosts coincides with the smallest geometric ideal $B^p_u(X,\mathcal{E}_{min})=K(\ell^p(X))\cap B^p_u(X)$.
\end{prop}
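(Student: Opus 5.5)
Take an arbitrary ghost $G\in I^p_G$; the plan is to show $G\in B^p_u(X,\mathcal{E}_{min})$. This gives $I^p_G\subseteq B^p_u(X,\mathcal{E}_{min})$. The reverse inclusion is already Remark \ref{rem:trivial}(iii): every $\mathcal{P}$-compact operator is a ghost. By Remark \ref{rem:ghost}(i), $I^p_G$ is a closed two-sided ideal of $B^p_u(X)$, so the hypothesis says it is geometric. Hence $\mathbb{C}_u(I^p_G)$ is dense in $I^p_G$. We may also argue with the ideal $\langle\ G\ \rangle$ directly, which is geometric by hypothesis.

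The key step is to identify the controlled-propagation part of this ideal. Lemma \ref{lem:ghost}(vii), applied to $G$, gives $\mathbb{C}_u(\langle\ G\ \rangle)\subseteq B^p_u(X,\mathcal{E}_{min})$. Alternatively, apply Remark \ref{rem:ghost}(iii) to the ghost ideal $I^p_G$ to get $\mathbb{C}_u(I^p_G)\subseteq B^p_u(X,\mathcal{E}_{min})$. More directly, any $R\in\mathbb{C}_u(I^p_G)$ is a controlled propagation ghost, so Remark \ref{rem:ghost}(ii) makes it a trivial ghost, i.e.\ $R\in B^p_u(X,\mathcal{E}_{min})$.

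Combining the two steps, $I^p_G=\overline{\mathbb{C}_u(I^p_G)}^{\|\cdot\|_p}\subseteq B^p_u(X,\mathcal{E}_{min})$, because $B^p_u(X,\mathcal{E}_{min})=\mathcal{K}^p(X)$ is norm closed. Equivalently, Remark \ref{rem:ghost}(iv) says a non-trivial ghost $G$ would generate the non-geometric ideal $\langle\ G\ \rangle$, contradicting the hypothesis. The final identification $B^p_u(X,\mathcal{E}_{min})=K(\ell^p(X))\cap B^p_u(X)$ is Remark \ref{rem:trivial}(ii).

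There is no real obstacle. The only point to check is that $I^p_G$ really is a closed ideal, so that the hypothesis applies to it. This follows from Lemma \ref{lem:ghost}(iv): for each ghost $G$ we have $\langle\ G\ \rangle\subseteq C_0(X\times X)$, and $C_0(X\times X)$ is closed under the sup-norm, which is dominated by $\|\cdot\|_p$ (Lemma \ref{prop:norm}).
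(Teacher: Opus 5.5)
Your proposal is correct and is essentially the paper's argument. The paper's proof just invokes Remark \ref{rem:ghost}(iv): a non-trivial ghost $G$ would generate the non-geometric ideal $\langle\ G\ \rangle$, since $\mathbb{C}_u(\langle\ G\ \rangle)\subseteq B^p_u(X,\mathcal{E}_{min})$, and this is exactly your alternative formulation; applying the hypothesis to the ghost ideal $I^p_G$ instead, as in your main line, is only a cosmetic variation resting on the same fact that controlled-propagation ghosts are trivial.
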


\begin{proof}
    It is obvious from Remark \ref{rem:ghost}(iv).
\end{proof}



\begin{rem}
    Combining the two results yields that property A of the coarse space implies all ghosts in the $\ell^p$ uniform Roe algebra are trivial.
    This is consistent with \cite[Proposition 2.4]{BV} when $p\in[1,\infty)$ and $(X,d)$ is a discrete metric space with bounded geometry and equipped with the bounded coarse structure $(X,\mathcal{E}_d)$.
\end{rem}

A natural question is whether for a uniformly locally finite coarse space $(X,\mathcal{E})$ without property A, there exist non-trivial ghosts in $B^p_u(X)$ for some $p\in\{0\}\cup[1,\infty]$? If it is true, then the following are equivalent:
\begin{enumerate}
    \item $(X,\mathcal{E})$ has property A;
    \item For some $p\in\{0\}\cup[1,\infty]$, $B^p_u(X)$ has a multiplier approximate identity with controlled propagation;
    \item For some $p\in\{0\}\cup[1,\infty]$, all ideals $I$ of $B^p_u(X)$ are geometric ideals;
    \item For some $p\in\{0\}\cup[1,\infty]$, all ghosts in $B^p_u(X)$ are trivial ghosts (i.e., $I^p_G=B^p_u(X,\mathcal{E}_{min})$).
\end{enumerate}

By Theorem \ref{thm:multappr}, Theorem \ref{thm:geomideal}, and Proposition \ref{prop:trivial}, we have (i) $\Rightarrow$ (ii) $\Rightarrow$ (iii) $\Rightarrow$ (iv).

It is known that (iv) $\Rightarrow$ (i) holds for a special case: when $p=2$, and $(X,d)$ is a discrete metric space with bounded geometry, equipped with the bounded coarse structure $(X,\mathcal{E}_d)$ \cite{RW}. In fact, this is not true for $p\in\{0,1,\infty\}$, which we will show in the next section.
However, it remains open for $p\in(1,\infty)\setminus\{2\}$, and a general uniformly locally finite coarse space $X$.

\subsection{Counterexamples for $p\in\{0,1,\infty\}$}

When we turn to the extreme cases $p\in\{0,1,\infty\}$, surprisingly, we find that the algebraic properties of $B^p_u(X)$ above already hold without the assumption of property A for $(X,\mathcal{E})$. This is due to the simple computation of operator norms in these cases.

\begin{lem}\label{lem:Schur2}
    For $p\in\{0,1,\infty\}$, any controlled kernel $\varphi\in\mathbb{C}_u[X,\mathcal{E}]$ induces a bounded controlled propagation Schur multiplier $\mathcal{M}_\varphi: T\mapsto \varphi\circ T$ on $B^p_u(X)$ with norm $\|\mathcal{M}_\varphi\|_p=\|\varphi\|_{\sup}$.
\end{lem}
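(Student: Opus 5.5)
The plan is to use the explicit row/column-sum formulas for operator norms at the extreme exponents. For $p=1$, a matrix $T$ acting on $\ell^1(X)$ has norm equal to its supremal column $\ell^1$-sum:
\[
\|T\|_1=\sup_{y\in X}\sum_{x\in X}|T(x,y)|.
\]
For $p\in\{0,\infty\}$, the norm is the supremal row sum:
\[
\|T\|_p=\sup_{x\in X}\sum_{y\in X}|T(x,y)|.
\]
Each formula holds for every bounded operator represented by its matrix: for $p=0$ and $p=1$ because the basis $(\delta_x)$ is a Schauder basis, and for $p=\infty$ for operators in $\mathcal{S}^\infty(X)\supseteq B^p_u(X)$ by Lemma \ref{lem:infty}. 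These are the standard computations of extreme-point norms, and I would verify them first. The $p=1$ case is the action on the unit vectors $\delta_y$. For $p=\infty$, I would test against the vectors $\xi(y)=\overline{\sign(T(x,y))}\chi_F(y)$ over finite $F$, and use $\|T\|=\sup_F\|\chi_FT\chi_F\|$.

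With these formulas the upper bound is immediate. Since $|\varphi(x,y)T(x,y)|\le\|\varphi\|_{\sup}|T(x,y)|$ entrywise, every column sum (respectively row sum) of $\varphi\circ T$ is at most $\|\varphi\|_{\sup}$ times that of $T$. Hence $\|\varphi\circ T\|_p\le\|\varphi\|_{\sup}\|T\|_p$ for every $T\in B^p_u(X)$. By Proposition \ref{thm:keythm}(iv), $\varphi\circ T$ lies in $\mathbb{C}_u(\langle T\rangle)\subseteq B^p_u(X)$, so $\mathcal{M}_\varphi$ maps $B^p_u(X)$ into itself and $\|\mathcal{M}_\varphi\|_p\le\|\varphi\|_{\sup}$.

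For the reverse inequality, I would test $\mathcal{M}_\varphi$ on matrix units. Since $\mathcal{E}$ is coarse connected, $\chi_{\{(x,y)\}}\in\mathbb{C}_u[X,\mathcal{E}]\subseteq B^p_u(X)$, and this operator has norm $1$ by Lemma \ref{prop:norm}. Its image under $\mathcal{M}_\varphi$ is $\varphi(x,y)\chi_{\{(x,y)\}}$, whose norm is $|\varphi(x,y)|$. Taking the supremum over $(x,y)$ gives $\|\mathcal{M}_\varphi\|_p\ge\|\varphi\|_{\sup}$, so equality holds.

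The only step that needs any care is justifying the norm formula at $p=\infty$, where the matrix does not a priori determine the operator. Lemma \ref{lem:infty} resolves this for elements of $\mathcal{S}^\infty(X)$: there the matrix does determine the operator, and the series $\sum_yT(x,y)\xi(y)$ converges absolutely in the bounded case. I would therefore cite that lemma together with the finite-dimensional row-sum computation.
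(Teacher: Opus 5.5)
Your proposal is correct and is essentially the paper's proof. The paper also uses the column-sum formula for $p=1$ and the row-sum formula for $p\in\{0,\infty\}$ to get $\|\varphi\circ T\|_p\le\|\varphi\|_{\sup}\|T\|_p$, and its one-line appeal to coarse connectedness for equality is exactly your test on matrix units. Your justification of the $p=\infty$ norm formula via Lemma~\ref{lem:infty} fills in a detail the paper leaves implicit.
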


\begin{proof}
    For $p=1$, it is clear that any $T\in B^1_u(X)$ has operator norm \[\|T\|_1=\sup_{y\in X}\|T\delta_y\|_1=\sup_{y\in X}\sum_{x\in X}|T(x,y)|.\] Thus, for any $\varphi\in\mathbb{C}_u[X,\mathcal{E}]$, it follows from Lemma \ref{lem:Schur} that $\varphi\circ T$ is a bounded operator in $B^1_u(X)$ and we have 
    \begin{align*}
    \|\varphi\circ T\|_1&=\sup_{y\in X}\sum_{x\in X}|\varphi(x,y)T(x,y)|\leq\sup_{y\in X}\sum_{x\in X}\|\varphi\|_{\sup}\cdot|T(x,y)|\\
    &\leq\|\varphi\|_{\sup}\cdot\sup_{y\in X}\sum_{x\in X}|T(x,y)|=\|\varphi\|_{\sup}\cdot\|T\|_1,
    \end{align*}
    which implies that $\|\mathcal{M}_\varphi\|_p\leq\|\varphi\|_{\sup}$. Equality holds since the coarse structure is coarse connected. For $p\in\{0,\infty\}$, it is similar as \[\|T\|_\infty=\sup_{x\in X}\sum_{y\in X}|T(x,y)|\] for any $T\in B^\infty_u(X)$.
\end{proof}

\begin{prop}\label{lem:approximate8}
    For $p\in\{0,1,\infty\}$, any operator in $B^p_u(X)$ can be approximated by controlled propagation Schur products in operator norm, i.e., for any $T\in B^p_u(X)$ and $\varepsilon>0$, there exists $\varphi\in\mathbb{C}_u[X,\mathcal{E}]$ such that
    \[
    \|T-\varphi\circ T\|_p<\varepsilon.
    \]
    Moreover, we can ask the well-defined linear operator $\mathcal{M}_\varphi: T\mapsto \varphi\circ T$ to have norm at most one.
\end{prop}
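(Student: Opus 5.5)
The plan is to run the three-$\varepsilon$ argument of Proposition \ref{lem:approximate4}, but with the property A kernel replaced by a cutoff function, since Lemma \ref{lem:Schur2} removes any need for smooth variation when $p\in\{0,1,\infty\}$. Fix $T\in B^p_u(X)$ and $\varepsilon>0$. Choose $A\in\mathbb{C}_u[X,\mathcal{E}]$ with $\|T-A\|_p<\varepsilon/2$, and set $E:=\supp(A)\in\mathcal{E}$. For the multiplier I take $\varphi:=\chi_E$. This lies in $\mathbb{C}_u[X,\mathcal{E}]$: it is bounded, and its support is $E$, which is an entourage. By Lemma \ref{lem:Schur2}, $\|\mathcal{M}_\varphi\|_p=\|\chi_E\|_{\sup}\le 1$, which already gives the ``moreover'' clause.

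The key steps, in order, are as follows.
\begin{enumerate}
\item Because $\supp(A)=E$, we have the exact identity $\varphi\circ A=A$, so the middle error term $\|A-\varphi\circ A\|_p$ from the proof of Proposition \ref{lem:approximate4} vanishes.
\item Split the error as
\[
\|T-\varphi\circ T\|_p\le \|T-A\|_p+\|A-\varphi\circ A\|_p+\|\mathcal{M}_\varphi\|\,\|A-T\|_p .
\]
\item Substituting the bounds from the previous steps gives $\|T-\varphi\circ T\|_p\le 2\|T-A\|_p<\varepsilon$.
\end{enumerate}
If a strictly positive kernel resembling the earlier ones is preferred, one can instead take $\varphi=\chi_{E\cup\Delta_X}$; this is still controlled because $\mathcal{E}$ is unital.

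The only real content is the contractivity of the truncation Schur multiplier $\mathcal{M}_{\chi_E}$, and I expect this to be the main obstacle. For $p\in(1,\infty)$ such truncations need not be uniformly bounded, which is precisely why property A was needed there. For $p\in\{0,1,\infty\}$ the bound comes from Lemma \ref{lem:Schur2}, where the operator norm is the supremum over columns (for $p=1$) or rows (for $p\in\{0,\infty\}$) of $\ell^1$-sums of moduli of matrix entries. Multiplying entries by numbers of modulus at most $1$ cannot increase these sums. One point to check is that for $p=\infty$ the row-sum formula applies to every $T\in B^\infty_u(X)$, not only to controlled operators. This holds because $B^\infty_u(X)\subseteq\mathcal{S}^\infty(X)$, and Lemma \ref{lem:infty} shows that such operators are determined by their matrices with $\|T\|=\sup_F\|\chi_FT\chi_F\|$; for finite matrices the row-sum formula is standard. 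With this verified, $\mathcal{M}_\varphi$ is contractive on all of $B^p_u(X)$, and the estimate above completes the argument.
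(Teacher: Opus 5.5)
Your proposal is correct and follows essentially the same route as the paper. You approximate $T$ within $\varepsilon/2$ by some $A\in\mathbb{C}_u[X,\mathcal{E}]$, take $\varphi=\chi_{\supp(A)}$, invoke Lemma \ref{lem:Schur2} for $\|\mathcal{M}_{\chi_E}\|\le 1$, and conclude with the triangle inequality. Your extra check that the row-sum norm formula holds on all of $B^\infty_u(X)$ (via Lemma \ref{lem:infty}) supplies a detail the paper leaves implicit.
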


\begin{proof}
    For any given $T\in B^p_u(X)$ and $\varepsilon>0$, there exist $A\in\mathbb{C}_u[X,\mathcal{E}]$ with support denoted by $E:=\supp(A)\in\mathcal{E}$ such that
    \[
    \|T-A\|_p<\varepsilon/2.
    \]
    For the given $E\in\mathcal{E}$, we have $\chi_E\in\mathbb{C}_u[X,\mathcal{E}]$ and the Schur multiplier $\mathcal{M}_{\chi_E}: B^p_u(X)\to B^p_u(X),\ T\mapsto\chi_E\circ T$.
    It follows from Lemma \ref{lem:Schur2} that $\|\mathcal{M}_{\chi_E}\|\leq1$. Thus, we have
    \begin{align*}        
    \|T-\chi_E\circ T\|_p&=\|(T-A)+(A-\chi_E\circ A)+(\chi_E\circ(T-A))\|_p\\
    &\leq\|T-A\|_p+\|\mathcal{M}_{\chi_E}\|\cdot\|T-A\|_p<\varepsilon.
    \end{align*}
\end{proof}

\begin{thm} \label{thm:extreme}
    For $p\in\{0,1,\infty\}$, $B^p_u(X)$ has a multiplier approximate identity with controlled propagation, all ideals of $B^p_u(X)$ are geometric ideals, and all ghosts in $B^p_u(X)$ belong to $B^p_u(X,\mathcal{E}_{min})=K(\ell^p(X))\cap B^p_u(X)$.
\end{thm}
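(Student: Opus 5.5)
The plan is to run the same chain of implications as in the property A case, (i) $\Rightarrow$ (ii) $\Rightarrow$ (iii) $\Rightarrow$ (iv). The only change is that Proposition \ref{lem:approximate8}, whose proof uses no geometric hypothesis at all, replaces Proposition \ref{lem:approximate4}.

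\textbf{Step 1: the approximate identity.} I will index a net by $(\mathcal{E},\subseteq)$ and take $\varphi_E:=\chi_E$ for $E\in\mathcal{E}$. Each $\chi_E$ lies in $\mathbb{C}_u[X,\mathcal{E}]$, so it has controlled propagation. By Lemma \ref{lem:Schur2}, $\|\mathcal{M}_{\chi_E}\|_p=\|\chi_E\|_{\sup}\le 1$, which gives the uniform bound. For convergence, fix $T\in B^p_u(X)$ and $\varepsilon>0$, and choose $A\in\mathbb{C}_u[X,\mathcal{E}]$ with $\|T-A\|_p<\varepsilon/2$ and $E_0:=\supp(A)$. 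For every $E\supseteq E_0$ we have $\chi_E\circ A=A$, so the estimate in the proof of Proposition \ref{lem:approximate8} gives
\[
\|T-\chi_E\circ T\|_p\le \|T-A\|_p+\|\mathcal{M}_{\chi_E}\|\,\|T-A\|_p<\varepsilon .
\]
Hence $\chi_E\circ T\to T$ along the net, and $\{\chi_E\}_{E\in\mathcal{E}}$ is a multiplier approximate identity with controlled propagation.

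\textbf{Step 2: all ideals are geometric.} This is exactly Theorem \ref{thm:geomideal}, applied to the net from Step 1. Its proof only needs that each $T\in I$ is a norm limit of elements $\varphi\circ T$ with $\varphi\in\mathbb{C}_u[X,\mathcal{E}]$, together with Proposition \ref{prop:truncation}(i), which places these elements in $\mathbb{C}_u(I)$.

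\textbf{Step 3: ghosts are trivial.} Apply Proposition \ref{prop:trivial}. If $G$ were a non-trivial ghost, Remark \ref{rem:ghost}(iv) shows that $\langle G\rangle$ is not geometric, which contradicts Step 2. So every ghost lies in $B^p_u(X,\mathcal{E}_{min})=K(\ell^p(X))\cap B^p_u(X)$.

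The only point that needs real care is Lemma \ref{lem:Schur2}, that is, the contractivity of $\mathcal{M}_{\chi_E}$ for $p\in\{0,1,\infty\}$. This is where these values of $p$ differ from $p\in(1,\infty)$. For $p=1$ the operator norm is the supremum of the $\ell^1$-norms of the columns, and for $p\in\{0,\infty\}$ it is the supremum of the $\ell^1$-norms of the rows. Multiplying entrywise by a $0/1$ function can only shrink these sums, so the bound $\|\mathcal{M}_{\chi_E}\|\le1$ is immediate.

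For $p=\infty$ I also need that these row-sum norm formulas are valid for elements of $B^\infty_u(X)$, and that Schur products stay inside $B^\infty_u(X)$. Both follow from Lemma \ref{lem:infty}, since $B^\infty_u(X)\subseteq\mathcal{S}^\infty(X)$ means operators there are determined by their matrices, together with Lemma \ref{lem:Schur}. Once this is in place, the rest is formal.
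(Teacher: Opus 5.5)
Your proposal is correct and follows the paper's proof essentially verbatim. The paper likewise reduces the theorem, via Theorem \ref{thm:geomideal} and Proposition \ref{prop:trivial}, to the existence of a multiplier approximate identity, and takes the net $\{\chi_E\}_{E\in\mathcal{E}}$, using Lemma \ref{lem:Schur2} and the estimate from Proposition \ref{lem:approximate8} (with $\chi_E\circ A=A$ for $E\supseteq\supp(A)$) to get convergence along the net; your version only spells out the uniform bound $\|\mathcal{M}_{\chi_E}\|\le 1$ and the matrix-norm formulas for $p\in\{0,\infty\}$ that the paper leaves implicit.
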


\begin{proof}
    From Theorem \ref{thm:geomideal} and Proposition \ref{prop:trivial}, we only need to prove $B^p_u(X)$ has a multiplier approximate identity with controlled propagation. 
    
    For $p\in\{0,1,\infty\}$, we show that the net $\{\chi_E\}_{E\in\mathcal{E}}$ forms a multiplier approximate identity of $B^p_u(X)$ with controlled propagation. For any $\varepsilon>0$, it follows from the proof of Proposition \ref{lem:approximate8} that there exists some $E\in\mathcal{E}$ such that $\|T-\chi_E\circ T\|_p\leq\varepsilon$ and moreover, $\|T-\chi_E'\circ T\|_p\leq\varepsilon$ for any entourage $E'\supseteq E$. 
\end{proof}

\section{Geometric ideals via Morita equivalence}\label{section7}

In \cite{Chung25}, given a metric space $X$ with bounded geometry, a Morita equivalence was established between the $\ell^p$ uniform Roe algebra $B^p_u(X)$ and the $\ell^p$ uniform algebra $UB^p(X)$ for $p\in[1,\infty)$.
In this section, we shall show that the isomorphism of ideal lattices induced by this Morita equivalence preserves geometric ideals.
Combining this with results in the previous sections, we obtain a characterization of geometric ideals in the $\ell^p$ uniform algebra.

We begin by recalling the definition of Morita equivalence for (nondegenerate) Banach algebras from \cite{Par09}.

\begin{defn}
Let $B$ be a Banach algebra. A \textit{Banach $B$-pair} is a pair $(\E,\F)$ such that
\begin{enumerate}
\item $\E$ is a left Banach $B$-module,
\item $\F$ is a right Banach $B$-module,
\item there is a $\mathbb{C}$-bilinear map $\langle -, - \rangle_B \colon \E \times \F \to B$ such that 
\begin{itemize}
\item $\langle b\cdot x,y\rangle_B=b\langle x,y \rangle_B$,
\item $\langle x,y\cdot b \rangle_B=\langle x,y \rangle_B b$, and
\item $\|\langle x,y \rangle_B\|\leq\|x\| \|y\|$
\end{itemize}
for all $x \in \E$, $y \in \F$, and $b\in B$.
\end{enumerate}
We say that $(\E,\F)$ is nondegenerate if both $\E$ and $\F$ are nondegenerate. We say that $(\E,\F)$ is full if the linear span of $\langle \E, \F \rangle$ is dense in $B$.
\end{defn}

\begin{defn}
Let $(\E,\F)$ be a Banach $B$-pair. A linear operator on $(\E,\F)$ is a pair $T=(T^l,T^r)$ such that 
\begin{enumerate}
\item $T^l \colon \E \to \E$ is a homomorphism of left Banach $B$-modules,
\item $T^r \colon \F \to \F$ is a homomorphism of right Banach $B$-modules,
\item $T^l$ and $T^r$ are formal adjoints to each other, that is 
\[
\langle T^l(x),y \rangle_B =\langle x,T^r(y) \rangle_B
\]
 for all $x \in \E$ and $y \in \F$.
\end{enumerate}
\end{defn}
The space of all linear operators on $(\E,\F)$ is denoted by $\mathcal{L}_B(\E,\F)$ and is a Banach algebra under the norm 
\[
\|T\|=\|(T^l,T^r)\| \coloneqq \max(\|T^l\|,\|T^r\|)
\] 
and the composition
\[
TS \coloneqq (S^lT^l, T^rS^r).
\]

\begin{defn}
Let $A$ and $B$ be Banach algebras. We say $((\E,\F), \pi_A)$ is a\textit{ Banach $(A,B)$-correspondence} (also called a \textit{Banach $A$-$B$-pair} in \cite{Par09}) when $(\E,\F)$ is a Banach $B$-pair and $\pi_A \colon A\to\mathcal{L}_B(\E,\F)$ is a contractive homomorphism. 
\end{defn}

\begin{defn}
Let $A$ and $B$ be Banach algebras. A Morita equivalence between $A$ and $B$ is a pair $({}_B\E_A,{}_A\F_B)$ of bimodules equipped with a bilinear pairing $\langle\cdot,\cdot\rangle_B \colon \E\times \F\to B$ and a bilinear pairing ${}_A\langle\cdot,\cdot\rangle \colon \F\times \E \to A$ such that
\begin{enumerate}
\item $(\E,\F)$ with $\langle \cdot, \cdot \rangle_B$ is a Banach $(A,B)$-correspondence that is also full and nondegenerate as a Banach $B$-pair,
\item $(\F,\E)$ with ${}_A\langle\cdot , \cdot \rangle$ is a Banach $(B,A)$-correspondence that is also full and nondegenerate as a Banach $A$-pair,
\item $\langle x_1,y \rangle_B \cdot x_2=x_1 \cdot {}_A\langle y,x_2 \rangle$ for all $x_1, x_2 \in \E$ and $y \in \F$,
\item $y_1\cdot \langle x,y_2 \rangle_B={}_A\langle y_1,x \rangle \cdot y_2$ for all $x \in \E$ and $y_1, y_2 \in \F$.
\end{enumerate}
$A$ and $B$ are said to be \textit{Morita equivalent} if there is a Morita equivalence between $A$ and $B$.
\end{defn}

\begin{rem}
It is straightforward to verify that this definition of Morita equivalence applies only to nondegenerate Banach algebras, i.e., Banach algebras $B$ for which the span of $B^2$ is dense in $B$.
\end{rem}

Next, we observe that a Morita equivalence between Banach algebras with bounded approximate identities induces an isomorphism of ideal lattices, similar to the Rieffel correspondence for Morita equivalent $C^*$-algebras.

If $(\E,\F)$ is a Banach $(A,B)$-correspondence, and $\F'$ is a closed $A$-$B$-submodule of $\F$, then \[I_{\F'}:=\overline{\mathrm{span}}\langle \E,\F' \rangle_B\] is a closed two-sided ideal in $B$. 
On the other hand, if $I$ is a closed two-sided ideal in $B$, then \[\F_I:=\{ y\in\F:\langle x,y \rangle_B\in I\;\text{for all}\; x\in\E \}\] is a closed $A$-$B$-submodule of $\F$.

If $\F''$ is another closed $A$-$B$-submodule of $\F$, and $\F''\subseteq \F'$, then $I_{\F''}\subseteq I_{\F'}$.
Also, if $I'$ is another closed ideal in $B$, and $I'\subseteq I$, then $\F_{I'}\subseteq\F_I$.

One may consider composing these two operations:
\[ I\mapsto \F_I\mapsto I_{\F_I}. \]
Observe that $I_{\F_I}=\overline{\mathrm{span}}\langle \E,\F_I \rangle_B\subseteq I$.

\begin{prop} \label{prop:Rcor1}
Let $A$ and $B$ be Banach algebras, and assume that $B$ has a bounded approximate identity.
Let $(\E,\F)$ be a Banach $(A,B)$-correspondence that is full as a Banach $B$-pair, and let $I$ be a closed ideal in $B$.
Then $I_{\F_I}=I$.
\end{prop}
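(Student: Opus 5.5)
The inclusion $I_{\F_I}\subseteq I$ was already observed just before the statement, so only $I\subseteq I_{\F_I}$ needs proof. The plan is to show that every element of the form $\langle x,y\rangle_B\, b$ with $b\in I$ lies in $I_{\F_I}$. We then use fullness together with the bounded approximate identity of $B$ to approximate an arbitrary $b\in I$ by such elements.

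First, for $x\in\E$, $y\in\F$ and $b\in I$, we check that $y\cdot b\in\F_I$. For any $x'\in\E$, the pair axiom gives $\langle x',y\cdot b\rangle_B=\langle x',y\rangle_B\, b$. This lies in $I$ because $I$ is a right ideal in $B$. Hence $\langle x,y\rangle_B\, b=\langle x,y\cdot b\rangle_B\in\langle\E,\F_I\rangle_B$. Taking linear spans, $\mathrm{span}\langle\E,\F\rangle_B\cdot I\subseteq I_{\F_I}$.

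Next, let $(e_\lambda)$ be a bounded approximate identity of $B$ with $\sup_\lambda\|e_\lambda\|\le K$, and fix $b\in I$ and $\varepsilon>0$.
\begin{enumerate}
\item Choose $\lambda$ with $\|b-e_\lambda b\|<\varepsilon/2$. Only a left approximate identity property is needed here.
\item By fullness, choose $c\in\mathrm{span}\langle\E,\F\rangle_B$ with $\|e_\lambda-c\|<\varepsilon/(2\|b\|+1)$.
\item Then $\|b-cb\|\le\|b-e_\lambda b\|+\|e_\lambda-c\|\,\|b\|<\varepsilon$.
\end{enumerate}
Since $cb\in I_{\F_I}$ by the first step and $I_{\F_I}$ is closed, it follows that $b\in I_{\F_I}$.

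The only point needing care is the approximation step: we need a left approximate identity to approximate $b$ by $e_\lambda b$. This is standard for a bounded (two-sided) approximate identity. Boundedness of $(e_\lambda)$ is in fact not essential once $\lambda$ is fixed, but we use the hypothesis as stated. We do not use the $A$-action at all, except to note that $\F_I$ is an $A$-$B$-submodule, which is irrelevant for this inclusion.
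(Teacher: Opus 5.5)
Your proposal is correct and is essentially the paper's proof: the paper likewise notes $\F I\subseteq\F_I$. It then uses fullness together with the approximate identity to get $I\subseteq\overline{\mathrm{span}}\langle\E,\F\rangle_B I=\overline{\mathrm{span}}\langle\E,\F I\rangle_B\subseteq I_{\F_I}$, and you have written out the $\varepsilon$-estimate behind the first inclusion (your remark that only a left approximate identity, not necessarily bounded, is needed is accurate).
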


\begin{proof}
Observe that $FI\subseteq F_I$.
Thus $I\subseteq\overline{\mathrm{span}}\langle \E,\F \rangle_BI=\overline{\mathrm{span}}\langle \E,\F I \rangle_B\subseteq\overline{\mathrm{span}}\langle \E,\F_I \rangle_B=I_{\F_I}$.
\end{proof}

One may also consider composing the two operations in the other order:
\[ \F'\mapsto I_{\F'}\mapsto \F_{I_{\F'}}. \]
Observe that $\F_{I_{\F'}}=\{ y\in\F:\langle x,y \rangle_B\in I_{\F'}\;\text{for all}\;x\in\E \}\supseteq \F'$.

\begin{lem}
Let $A$ and $B$ be Banach algebras, and assume that $A$ has a bounded approximate identity.
Let $(\E,\F)$ be a Morita equivalence between $A$ and $B$.
Then $\F_I=\overline{\mathrm{span}}\F I$ for every closed ideal $I$ in $B$.
\end{lem}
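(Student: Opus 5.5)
Since $\F I$ is contained in $\F_I$ (because $\langle x, y b\rangle_B=\langle x,y\rangle_B b\in I$ for $b\in I$) and $\F_I$ is a closed subspace, we already have $\overline{\mathrm{span}}\,\F I\subseteq\F_I$. The whole content is the reverse inclusion: every $y\in\F_I$ is a norm limit of finite sums of elements of the form $y'b$ with $y'\in\F$ and $b\in I$.

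The plan is to use the bounded approximate identity of $A$ together with nondegeneracy and fullness of $(\F,\E)$ as a Banach $A$-pair.

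\emph{Step 1: approximate $y$ from the left.} Let $(e_\lambda)$ be a bounded approximate identity for $A$. Since $\F$ is a nondegenerate left $A$-module, $e_\lambda\cdot y\to y$ for every $y\in\F$. The usual argument applies: this holds on $A\cdot\F$, which is dense, and the bound $\sup\|e_\lambda\|<\infty$ passes it to the closure.

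\emph{Step 2: replace $e_\lambda$ by a finite sum of inner products.} Fullness means the span of ${}_A\langle\F,\E\rangle$ is dense in $A$. So for each $\lambda$ and each $\varepsilon>0$ we choose finitely many $y_i\in\F$ and $x_i\in\E$ with
\[
\Bigl\|e_\lambda-\sum_i {}_A\langle y_i,x_i\rangle\Bigr\|<\frac{\varepsilon}{\|y\|+1}.
\]
Since the left action of $A$ on $\F$ is contractive, it follows that $\bigl\|e_\lambda\cdot y-\sum_i {}_A\langle y_i,x_i\rangle\cdot y\bigr\|<\varepsilon$.

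\emph{Step 3: move $y$ into the $B$-pairing.} By the compatibility axiom (iv) of a Morita equivalence,
\[
{}_A\langle y_i,x_i\rangle\cdot y=y_i\cdot\langle x_i,y\rangle_B .
\]
For $y\in\F_I$ we have $\langle x_i,y\rangle_B\in I$, so each term $y_i\cdot\langle x_i,y\rangle_B$ lies in $\F I$. Hence $e_\lambda\cdot y$ is within $\varepsilon$ of $\mathrm{span}\,\F I$. Letting $\varepsilon\to0$ and then $\lambda\to\infty$ gives $y\in\overline{\mathrm{span}}\,\F I$.

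The only delicate point is Step 1. We must check that nondegeneracy of $\F$ as a left $A$-module, which is built into the Morita-equivalence definition, together with boundedness of the approximate identity, gives $e_\lambda\cdot y\to y$ on all of $\F$ and not merely on a dense subset. This is the standard Cohen-type density argument: approximate $y$ by an element of $\mathrm{span}\,A\cdot\F$ and use $\sup\|e_\lambda\|<\infty$ to control the error uniformly in $\lambda$. Once this is in place, the remaining steps are direct applications of the axioms.
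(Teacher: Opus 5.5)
Your proof is correct and follows the paper's argument. You approximate $y\in\F_I$ by $ay$ for some $a\in A$, use fullness to replace $a$ by a finite sum $\sum_k {}_A\langle y_k,x_k\rangle$, and then use axiom (iv) to rewrite ${}_A\langle y_k,x_k\rangle y$ as $y_k\langle x_k,y\rangle_B\in\F I$. The only difference is that the paper obtains $a$ with $\|ay-y\|$ small from the Cohen--Hewitt factorization $y=ay'$, whereas you get it directly from $e_\lambda y\to y$ by the elementary density argument, which is a lighter tool for the same step.
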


\begin{proof}
It is clear that $\overline{\mathrm{span}}\F I\subseteq \F_I$. We shall prove the reverse inclusion.

Consider $y\in\F_I$, i.e., $\langle x,y \rangle_B\in I$ for all $x\in\E$. Suppose that $A$ has an approximate identity bounded by $M$.
Since $\F$ is a nondegenerate left $A$-module, by the Cohen-Hewitt factorization theorem (cf. \cite[Theorem 5.2.2]{Pal}), for any $\varepsilon>0$, there exist $a\in A$ and $y'\in\F$ such that $y=ay'$, $\Vert a\Vert\leq M$, and $\Vert y-y'\Vert<\varepsilon/(2M)$.
Then $\Vert ay-y\Vert=\Vert ay-ay'\Vert<\varepsilon/2$.

Since $(\F,\E)$ is full, there exist $\{ x_1,\ldots,x_n \}\subset\E$ and $\{ y_1,\ldots,y_n \}\subset\F$ such that $\Vert a-\sum_{k=1}^n{}_A\langle y_k,x_k \rangle \Vert<\varepsilon/(2\Vert y\Vert)$. Then
\[ \left\Vert \sum_{k=1}^n y_k\langle x_k,y \rangle_B-y \right\Vert=\left\Vert \sum_{k=1}^n {}_A\langle y_k,x_k \rangle y-y \right\Vert<\varepsilon. \]
Since $\langle x_k,y \rangle_B\in I$ for each $k$, we have $y\in\overline{\mathrm{span}}\F I$.
\end{proof}

\begin{prop} \label{prop:Rcor2}
Let $A$ and $B$ be Banach algebras, and assume that $A$ has a bounded approximate identity.
Let $(\E,\F)$ be a Morita equivalence between $A$ and $B$, and let $\F'$ be a closed $A$-$B$-submodule of $\F$.
Then $\F_{I_{\F'}}=\F'$.
\end{prop}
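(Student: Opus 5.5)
Since $\F'\subseteq\F_{I_{\F'}}$ was already observed just before the statement, only the reverse inclusion $\F_{I_{\F'}}\subseteq\F'$ needs proof. I will argue as in the preceding lemma, using the left $A$-action on $\F$ and the pairing ${}_A\langle\cdot,\cdot\rangle$ to reconstruct a vector of $\F$ from its $B$-valued pairings.

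Fix $y\in\F_{I_{\F'}}$ and $\varepsilon>0$. First, because $A$ has an approximate identity bounded by $M$ and $\F$ is a nondegenerate left $A$-module, the Cohen--Hewitt factorization step from the proof of the previous lemma gives $a\in A$ with $\|ay-y\|<\varepsilon/2$. Next, fullness of $(\F,\E)$ as a Banach $A$-pair gives $x_1,\ldots,x_n\in\E$ and $y_1,\ldots,y_n\in\F$ with $\|a-\sum_k{}_A\langle y_k,x_k\rangle\|<\varepsilon/(2\|y\|)$. Then, by condition (iv) of a Morita equivalence,
\[
\Big\|\sum_{k=1}^n y_k\langle x_k,y\rangle_B-y\Big\|<\varepsilon .
\]
Since $y\in\F_{I_{\F'}}$, each $\langle x_k,y\rangle_B$ lies in $I_{\F'}=\overline{\mathrm{span}}\langle\E,\F'\rangle_B$. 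So it remains to show that $y_k\, b\in\F'$ whenever $y_k\in\F$ and $b\in I_{\F'}$. Once that is done, $y$ is a norm limit of elements of $\F'$, and since $\F'$ is closed we get $y\in\F'$.

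The key step, and the only real obstacle, is the claim $\F\cdot I_{\F'}\subseteq\F'$. By bilinearity and continuity of the right action (and closedness of $\F'$), it suffices to treat $b=\langle x,z\rangle_B$ with $x\in\E$ and $z\in\F'$. Applying (iv) once more,
\[
y_k\langle x,z\rangle_B={}_A\langle y_k,x\rangle\cdot z ,
\]
which lies in $A\cdot\F'\subseteq\F'$ because $\F'$ is a left $A$-submodule. Here I must check that the left $A$-action on $\F$ implicit in (iv) is the same as the module structure with respect to which $\F'$ is an $A$-$B$-submodule, and that the right $B$-action is continuous, which holds since $\F$ is a Banach $B$-module. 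With this claim in hand, the argument above finishes the proof.
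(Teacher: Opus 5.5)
Your proof is correct and is essentially the paper's argument. The paper obtains the reverse inclusion by citing the preceding lemma, $\F_{I_{\F'}}=\overline{\mathrm{span}}\,\F I_{\F'}$, and then using condition (iv) to get $y\langle x,y'\rangle_B={}_A\langle y,x\rangle y'\in\F'$; you re-run that lemma's Cohen--Hewitt/fullness argument inline before making the same key observation $\F\cdot I_{\F'}\subseteq\F'$.
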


\begin{proof}
We have observed earlier that $\F_{I_{\F'}}\supseteq\F'$.
On the other hand, for any $x\in\E$, $y\in\F$, and $y'\in\F'$, we have $y\langle x,y' \rangle_B={}_A\langle y,x \rangle y'\in\F'$, so $\F_{I_{\F'}}=\overline{\mathrm{span}}\F I_{\F'}\subseteq\F'$.
\end{proof}

\begin{thm} \label{thm:Rcor}
Let $A$ and $B$ be Banach algebras with bounded approximate identities, and let $(\E,\F)$ be a Morita equivalence between $A$ and $B$.
Then there are lattice isomorphisms between the following:
\begin{enumerate}
    \item the lattice of closed ideals in $B$,
    \item the lattice of closed ideals in $A$,
    \item the lattice of closed $A$-$B$-submodules of $\F$,
    \item the lattice of closed $B$-$A$-submodules of $\E$.
\end{enumerate}
\end{thm}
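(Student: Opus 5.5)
The plan is to take the lattice of closed $A$-$B$-submodules of $\F$ as the hub: build order-preserving mutually inverse bijections from it to each of the other three lattices. Lemma \ref{lem:lattice} then turns every such bijection into a lattice isomorphism, and composing them gives the remaining isomorphisms.

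\textbf{Step 1 (ideals of $B$ versus submodules of $\F$).} Use the maps $\F'\mapsto I_{\F'}=\overline{\mathrm{span}}\langle\E,\F'\rangle_B$ and $I\mapsto \F_I$. Both were already observed to be order-preserving. Proposition \ref{prop:Rcor1} gives $I_{\F_I}=I$; it needs a bounded approximate identity in $B$ and fullness of $(\E,\F)$ as a $B$-pair, both of which hold. Proposition \ref{prop:Rcor2} gives $\F_{I_{\F'}}=\F'$, using the bounded approximate identity of $A$ and the Morita equivalence. So these are mutually inverse order-preserving bijections.

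\textbf{Step 2 (ideals of $A$ versus submodules of $\E$).} The definition of Morita equivalence is symmetric under swapping $(A,\E,\langle\cdot,\cdot\rangle_B)$ with $(B,\F,{}_A\langle\cdot,\cdot\rangle)$. Conditions (i)/(ii) exchange, and (iii)/(iv) become each other once sides are flipped. Hence $(\F,\E)$ is a Morita equivalence between $B$ and $A$, and both algebras have bounded approximate identities. Applying Step 1 to this swapped equivalence, I would check that the roles of left and right modules are handled consistently: $\E$ is a $B$-$A$-bimodule, and ideals are paired through ${}_A\langle\F,\E'\rangle$. This yields a bijection between closed ideals of $A$ and closed $B$-$A$-submodules of $\E$.

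\textbf{Step 3 (linking $\F$-submodules to ideals of $A$).} I want a direct lattice isomorphism between submodules of $\F$ and ideals of $A$. Step 1 applied to the swapped equivalence, with $\F$ now regarded as the ``left'' module (a left $A$-module appearing in the $A$-pair $(\F,\E)$), should give a bijection between closed ideals of $A$ and closed $A$-$B$-submodules of $\F$. Concretely, the maps are $J\mapsto \{y:{}_A\langle y,x\rangle\in J \ \forall x\}=\overline{\mathrm{span}}\,J\F$ and $\F'\mapsto\overline{\mathrm{span}}\,{}_A\langle \F',\E\rangle$.

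The main obstacle is the left/right bookkeeping in this mirrored use of Propositions \ref{prop:Rcor1} and \ref{prop:Rcor2}. Those statements are phrased for right-module pairings, and here I need their left-handed analogues, which involves verifying that the Cohen--Hewitt factorization is applied on the correct side. I will redo the two short proofs with sides reversed rather than citing them blindly; the computations are identical, with (iv) replaced by (iii).

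With Steps 1–3 in hand, the lattice of closed $A$-$B$-submodules of $\F$ is isomorphic to (i) and (ii), and Step 2 gives (ii)$\cong$(iv). Every map involved is order-preserving with order-preserving inverse, so Lemma \ref{lem:lattice} finishes the proof.
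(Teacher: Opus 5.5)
Your proposal is correct and is essentially the paper's argument. Steps 1 and 2 are exactly the paper's links (i)$\leftrightarrow$(iii) and (ii)$\leftrightarrow$(iv). Your Step 3 link (ii)$\leftrightarrow$(iii) is the mirror image of the paper's third link (i)$\leftrightarrow$(iv), which uses ${}_{\E'}I=\overline{\mathrm{span}}\langle \E',\F\rangle_B$ and ${}_I\E=\{x\in\E:\langle x,y\rangle_B\in I \text{ for all } y\in\F\}$; either choice connects all four lattices.

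One small correction for when you redo the proofs in Step 3: the identity you need is still axiom (iv), read as $y\langle x_k,y_k\rangle_B={}_A\langle y,x_k\rangle y_k$, not (iii). It is combined with Cohen--Hewitt factorization for $\F$ as a nondegenerate right $B$-module (using the bounded approximate identity of $B$) and fullness of $(\E,\F)$. It is the paper's link (i)$\leftrightarrow$(iv) that relies on (iii).
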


\begin{proof}
Propositions \ref{prop:Rcor1} and \ref{prop:Rcor2} give a lattice isomorphism between (i) and (iii).
Likewise, interchanging the roles of $A$ and $B$, as well as the roles of $\E$ and $\F$, we get a lattice isomorphism between (ii) and (iv).

If $\E'$ is a closed $B$-$A$-submodule of $\E$, then ${}_{\E'}I:=\overline{\mathrm{span}}\langle \E',\F \rangle_B$ is a closed two-sided ideal in $B$. 
On the other hand, if $I$ is a closed two-sided ideal in $B$, then ${}_I\E:=\{ x\in\E:\langle x,y \rangle_B\in I\;\text{for all}\;y\in\F \}$ is a closed $B$-$A$-submodule of $\E$. 
Moreover, ${}_{{}_I\E}I=I$ and ${}_{{}_{\E'}I}\E=\E'$ by arguments completely analogous to the proofs of Propositions \ref{prop:Rcor1} and \ref{prop:Rcor2}.
This gives a lattice isomorphism between (i) and (iv).
\end{proof}

Given a closed ideal $I$ in $B$, the corresponding closed ideal in $A$ is \[ \mathrm{Ind}^A_BI:=\overline{\mathrm{span}}{}_A\langle \F_I,\E \rangle=\overline{\mathrm{span}}{}_A\langle \F I,\E \rangle=\overline{\mathrm{span}}{}_A\langle \F,I\E \rangle=\overline{\mathrm{span}}{}_A\langle \F,{}_I\E \rangle. \]

Given a closed ideal $J$ in $A$, the corresponding closed ideal in $B$ is \[ \mathrm{Ind}^B_AJ:=\overline{\mathrm{span}}\langle \E_J,\F \rangle_B=\overline{\mathrm{span}}\langle \E J,\F \rangle_B=\overline{\mathrm{span}}\langle \E,J\F \rangle_B=\overline{\mathrm{span}}\langle \E,{}_J\F \rangle_B. \]

Observe that $\mathrm{Ind}^B_A\mathrm{Ind}^A_BI=I$ and $\mathrm{Ind}^A_B\mathrm{Ind}^B_AJ=J$.

If $A$ and $B$ are Banach algebras with bounded approximate identities, $(\E,\F)$ is a Morita equivalence between $A$ and $B$, and $I$ is a closed ideal in $B$ that is itself nondegenerate, then one can show that $(_I\E,\F_I)$ is a Morita equivalence between $\mathrm{Ind}^A_BI$ and $I$.
In fact, one can remove the nondegeneracy assumption on $I$ by using a more general definition of Morita equivalence for possibly degenerate Banach algebras introduced in \cite{Par15}.
One can also show (without assuming nondegeneracy of $I$) that $(\E/_I\E,\F/\F_I)$ is a Morita equivalence between $A/\mathrm{Ind}^A_BI$ and $B/I$.

We shall apply Theorem \ref{thm:Rcor} to $\ell^p$ uniform Roe algebras based on a result from \cite{Chung25}.
First, we need to recall the definition of the $\ell^p$ uniform algebra $UB^p(X)$, and check that it has a bounded approximate identity.

\begin{defn}
Given a metric space $X$ with bounded geometry, let $U\mathbb{C}^p[X]$ be the algebra of all finite propagation bounded operators $T=[T_{xy}]$ on $\ell^p(X,\ell^p)$ for which there exists $N\in\mathbb{N}$ such that $T_{xy}$ is an operator on $\ell^p$ of rank at most $N$ for all $x,y\in X$.
The $\ell^p$ uniform algebra of $X$, denoted by $UB^p(X)$, is the operator norm closure of $U\mathbb{C}^p[X]$ in $B(\ell^p(X,\ell^p))$.
\end{defn}

\begin{lem} (\cite[Theorem 5.1.2]{Pal}) \label{lem:bai}
    Let $A$ be a normed algebra. If for every $a\in A$ and $\varepsilon>0$, there are elements $e_1,e_2\in A$ satisfying \[\max(\Vert e_1a-a\Vert,\Vert ae_2-a\Vert)<\varepsilon,\] then $A$ has a bounded approximate identity.
\end{lem}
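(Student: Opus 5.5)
The plan follows the standard argument behind \cite[Theorem 5.1.2]{Pal}. I read the hypothesis as Palmer states it, with a uniform bound: there is a constant $K$ such that the elements $e_1,e_2$ can always be chosen with $\Vert e_1\Vert,\Vert e_2\Vert\le K$. Without such a bound the conclusion ``bounded'' cannot be extracted, and in the application to $UB^p(X)$ the elements will be of the form $\chi_{F}\otimes$(finite rank projections), so the bound is available there. I work in the unitization $A^+$, which lets me write expressions like $(1-e)a$.

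\textbf{Step 1: a one-sided approximate identity for finite sets.} I want a \emph{left} approximate identity for finite sets, bounded in terms of $K$ alone. Given $a_1,\dots,a_n$ and $\varepsilon>0$, the naive route is iterative: pick $e^{(1)}$ for $a_1$, then $e^{(2)}$ for the single element $(1-e^{(1)})a_1+\dots$, and set $1-e=\prod_k(1-e^{(k)})$. This controls all $\Vert a_i-ea_i\Vert$, but only gives $\Vert e\Vert\le(1+K)^n-1$. To keep the bound independent of $n$ I would use Dixon/Palmer's averaging device. Apply the hypothesis to the single element $b=(\sum_i\Vert a_i\Vert^2)^{-1/2}$-normalized combination in $A^n$, viewed as a left $A$-module with the sup norm. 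Equivalently, run the proof of Palmer's theorem, which shows that ``approximate left units for single elements with bound $K$'' upgrade to a left approximate identity indexed by (finite set, $\varepsilon$) with bound $K$. This is the step I expect to be the main obstacle, and I would follow Palmer's argument there rather than reinvent it. The result is a net $(e_\lambda)$ with $\Vert e_\lambda\Vert\le K$ and $e_\lambda a\to a$ for all $a$. Symmetrically, the same argument gives a right bounded approximate identity $(f_\mu)$.

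\textbf{Step 2: combining the two sides.} Set $u_{\lambda,\mu}=e_\lambda+f_\mu-f_\mu e_\lambda\in A$. Then $1-u_{\lambda,\mu}=(1-f_\mu)(1-e_\lambda)$, which gives two estimates:
\[
\Vert a-u_{\lambda,\mu}a\Vert\le(1+K)\Vert a-e_\lambda a\Vert,\qquad \Vert a-au_{\lambda,\mu}\Vert\le(1+K)\Vert a-af_\mu\Vert .
\]
The second uses the identity $a(1-u)=a(1-f_\mu)(1-e_\lambda)$. Both right-hand sides tend to $0$ along the product directed set, and $\Vert u_{\lambda,\mu}\Vert\le 2K+K^2$. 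So $(u_{\lambda,\mu})$ is a two-sided bounded approximate identity, as required.
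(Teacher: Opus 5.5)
You are right that the hypothesis needs a uniform bound $\Vert e_1\Vert,\Vert e_2\Vert\le K$. As printed, the lemma is false. For $\ell^1(\mathbb{N})$ with pointwise multiplication, the functions $\chi_{\{1,\dots,N\}}$ are two-sided approximate units for every single element, yet any net with $e_\lambda\delta_n\to\delta_n$ for all $n$ has $\liminf_\lambda\Vert e_\lambda\Vert_1=\infty$. Your Step 2 is also correct as written.

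The paper gives no argument beyond the citation, so all the content of the lemma lies in your Step 1, and that is where the proposal has a genuine gap. The device you sketch does not work. The hypothesis only supplies approximate units for single elements of $A$. Asking for an approximate unit of a tuple $(a_1,\dots,a_n)\in A^n$ under the diagonal action is exactly the finite-set statement you are trying to prove, so the step is circular. Deferring to ``Palmer's argument'' at this point leaves the one nontrivial step unproved.

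The missing idea is short, and your naive route already supplies half of it: the norm growth is harmless if the product is used only as an intermediate element. Given $a_1,\dots,a_n$ and $\varepsilon>0$, choose $e^{(k)}$ with $\Vert e^{(k)}\Vert\le K$ as an approximate unit, with small enough tolerance, for the single element $(1-e^{(k-1)})\cdots(1-e^{(1)})a_k\in A$. This gives $u=1-(1-e^{(n)})\cdots(1-e^{(1)})\in A$ with $\max_i\Vert a_i-ua_i\Vert<\varepsilon/(2(1+K))$, although $\Vert u\Vert$ may be as large as $(1+K)^n-1$. Now apply the hypothesis once more, to the single element $u$. This gives $e$ with $\Vert e\Vert\le K$ and $\Vert u-eu\Vert<\varepsilon/(2\max_i\Vert a_i\Vert+1)$. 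Since
\[
a_i-ea_i=(1-e)(a_i-ua_i)+(u-eu)a_i,
\]
you get $\Vert a_i-ea_i\Vert\le(1+K)\Vert a_i-ua_i\Vert+\Vert u-eu\Vert\,\Vert a_i\Vert<\varepsilon$ for every $i$. Indexing by pairs (finite set, $\varepsilon$) then gives a left approximate identity bounded by $K$. The right-handed version is symmetric, and your Step 2 finishes the proof.

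Separately, your remark that the bound is available in the $UB^p(X)$ application is asserted rather than checked. The paper's $P_x$ and $Q_y$ are unspecified projections onto subspaces of dimension (resp.\ codimension) up to $k_rN$. Nothing in that argument bounds their norms uniformly in $S$ and $\varepsilon$, so that claim would need its own proof.
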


\begin{prop}
    $UB^p(X)$ has a bounded approximate identity for $1\leq p<\infty$.
\end{prop}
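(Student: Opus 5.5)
The plan is to use Lemma \ref{lem:bai}, so that it suffices to produce approximate left and right units for each single element. Because $U\mathbb{C}^p[X]$ is dense in $UB^p(X)$ and the approximate units we build will be uniformly bounded, we may first reduce to $a=T\in U\mathbb{C}^p[X]$. Indeed, for a general $a$, pick $T$ with $\|a-T\|$ small; then $\|ea-a\|\leq \|e\|\|a-T\|+\|eT-T\|+\|T-a\|$ is small provided $\|e\|\leq 1$.

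Now fix $T=[T_{xy}]$ with propagation at most $R$, each entry $T_{xy}$ of rank at most $N$, and $M:=\|T\|$. Let $\{e_k\}$ be the standard basis of $\ell^p$ and $P_n$ the coordinate projection onto $\mathrm{span}\{e_1,\dots,e_n\}$, so $\|P_n\|=1$. Consider the diagonal operators $D_n=\mathrm{diag}(P_n)_{x\in X}$. Each $D_n$ has zero propagation, rank $n$ entries, and norm $1$, so $D_n\in U\mathbb{C}^p[X]$. For $p\in[1,\infty)$ we have $P_nS\to S$ in norm for every compact, in particular finite rank, operator $S$ on $\ell^p$, and $SP_n\to S$ as well for $p\in(1,\infty)$. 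The main obstacle is uniformity in $x,y$: $T$ has infinitely many entries, and $P_n$ need not converge uniformly on them.

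To handle uniformity, I would not use a fixed basis but instead build projections adapted to $T$. For $p\in(1,\infty)$ the plan is this. Each row $x$ has at most $n(R)$ nonzero entries, so the operator $T_{x\cdot}:=(T_{xy})_y$ has range $V_x$ of dimension at most $n(R)N=:K$. By Auerbach's lemma or the Kadec--Snobar theorem there is a projection $Q_x$ onto $V_x$ with $\|Q_x\|\leq \sqrt{K}$, and then $Q_xT_{xy}=T_{xy}$ exactly. Set $e_1=\mathrm{diag}(Q_x)$. This is an element of $U\mathbb{C}^p[X]$ with norm at most $\sqrt K$ (the $\ell^p$-direct sum of operators has norm equal to the sup of their norms) and with $e_1T=T$ exactly. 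Symmetrically, for the right unit choose, for each $y$, a complement to the common kernel: let $W_y$ be a subspace of dimension at most $K$ such that $\ell^p=W_y\oplus\bigcap_x\ker T_{xy}$, and let $Q'_y$ be a projection onto $W_y$ along that kernel with norm bounded by a constant depending only on $K$. This works via duality, using a bounded projection in $(\ell^p)^*$ onto the span of the row functionals, which is valid for all $p\in[1,\infty)$. Then $T\,\mathrm{diag}(Q'_y)=T$. The same construction gives $e_1$ for $p=1$ too, since Kadec--Snobar holds in any Banach space.

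The remaining worry is that these bounds depend on $K$, hence on $T$, while Lemma \ref{lem:bai} in the form quoted does not demand bounds at all. Since the reduction in the first paragraph needs a bound, I would instead apply Lemma \ref{lem:bai} directly to the normed algebra $U\mathbb{C}^p[X]$, where we even get exact units $e_1T=T=Te_2$. That yields a bounded approximate identity for the dense subalgebra $U\mathbb{C}^p[X]$, and it extends to the completion $UB^p(X)$ by the standard $\varepsilon/3$ argument using the uniform bound.
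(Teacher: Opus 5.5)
Your construction of the units is the paper's: projections onto the row ranges $V_x=\sum_y\mathrm{im}\,T_{xy}$ on the left, and projections along $M_y=\bigcap_x\ker T_{xy}$ on the right. You also add the Kadec--Snobar/Auerbach norm estimates that the paper leaves implicit.

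\textbf{The gap is in your last paragraph.} Lemma~\ref{lem:bai}, as transcribed, omits the hypothesis that $\|e_1\|,\|e_2\|\le C$ for a constant $C$ independent of $a$ and $\varepsilon$. Without that hypothesis the conclusion is false. Take $c_{00}(\mathbb{N})$ with pointwise product and the $\ell^1$-norm: every $a$ has the exact unit $\chi_{\supp(a)}$. Yet a bounded approximate identity $(e_\lambda)$ would force $e_\lambda(n)\to1$ for every $n$, hence $\|e_\lambda\|_1\to\infty$.

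Your units have norms of order $\sqrt{K}$ and $C(K)$ with $K=n(R)N$, and $K$ is unbounded as $T$ ranges over $U\mathbb{C}^p[X]$. So applying the lemma to $U\mathbb{C}^p[X]$ gives nothing. The ``$\varepsilon/3$ argument using the uniform bound'' presupposes exactly the bound you do not have. The paper's written proof has the same hole: choosing $T$ with $\|T-S\|<\varepsilon/(2\|P\|)$ is circular, since $P$ is built from $T$.

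Better projections onto $V_x$ itself cannot repair this either. For $p\neq2$, $\ell^p$ is not Hilbertian, so (Lindenstrauss--Tzafriri) the projection constants of $K$-dimensional subspaces of $\ell^p$ are unbounded in $K$.

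\textbf{The missing idea is to give up exactness.} Allow units with larger range, of norm at most $1$, whose rank depends only on $K$ and $\varepsilon$.

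For a $K$-dimensional $V\subseteq\ell^p$:
\begin{enumerate}
\item Take an Auerbach basis $\phi_1,\dots,\phi_K$ of $V$ and put $w=\sum_i|\phi_i|$. Each $\phi_i/w$ then takes values in the closed unit disc.
\item Partition $\supp(w)$ into $M\le(cK^2/\varepsilon)^{2K}$ sets $A_j$, on each of which every $\phi_i/w$ varies by at most $\varepsilon/K^2$.
\item Define
\[ Ef=\sum_j\frac{\sum_{n\in A_j}f(n)w(n)^{p-1}}{\sum_{n\in A_j}w(n)^p}\,w\chi_{A_j}. \]
\end{enumerate}
This $E$ is a contraction of rank at most $M$. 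Indeed, via the isometry $f\mapsto f/w$ from $\ell^p(\supp(w))$ onto $L^p$ of the measure $w(n)^p$, it is conjugate to a conditional expectation. Moreover $\|Ev-v\|_p\le\varepsilon\|v\|_p$ for $v\in V$.

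Now take $E_x$ built from $V_x$. Then $e=\mathrm{diag}(E_x)\in U\mathbb{C}^p[X]$ satisfies $\|e\|\le1$, and $\|eT-T\|\le\varepsilon\|T\|$ because $(T\xi)_x\in V_x$.

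Right units come from the same construction on $\ell^q$, applied to the (at most $K$-dimensional) spans of the ranges of the $T_{xy}^*$, followed by taking adjoints. For $p=1$, use instead the weak$^*$-continuous operator $g\mapsto\sum_j g(n_j)\chi_{A_j}$ on $\ell^\infty$, which is the adjoint of a contraction on $\ell^1$. With left and right units bounded by $1$, the correct, bounded form of the approximate-unit criterion then yields a bounded approximate identity.
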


\begin{proof}
    Given $T=[T_{xy}]\in U\mathbb{C}^p[X]$ of propagation $r$, there exists $N\in\mathbb{N}$ such that $T_{xy}$ is of rank at most $N$ for all $x,y\in X$.
    Also, for each $x\in X$, at most $k_r$ of the matrix entries $T_{xy}$ are nonzero.
    The dimension of the sum of ranges $\sum_{y\in X}\mathrm{im}\;T_{xy}$ is at most $k_rN$ for each $x\in X$.
    Let $P_x$ be the projection of $\ell^p$ onto $\sum_{y\in X}\mathrm{im}\;T_{xy}$. 
    Then $P=\bigoplus_{x\in X}P_x$ belongs to $U\mathbb{C}^p[X]$, and $PT=T$.

    If $S\in UB^p(X)$ and $\varepsilon>0$, there exists $T$ as above such that $\Vert T-S\Vert<\varepsilon/(2\Vert P\Vert)$.
    Then $\Vert PS-S\Vert<\varepsilon$.

    Next, we shall find an idempotent $Q\in U\mathbb{C}^p[X]$ such that $TQ=T$. Note that this is equivalent to $\ker Q\subseteq\ker T$.
    
    Each $\ker T_{xy}$ is closed and has finite codimension equal to $\mathrm{rank}\;T_{xy}$.
    For each $y\in X$, define $M_y=\bigcap_{x\in X}\ker T_{xy}$. Then $M_y$ is closed and $\mathrm{codim}\;M_y\leq \sum_{x\in X}\mathrm{codim}\ker T_{xy}=\sum_{x\in X}\mathrm{rank}\;T_{xy}\leq k_rN$.

    Choose a finite-dimensional subspace $K_y$ such that $\ell^p=M_y\oplus K_y$. 
    Let $Q_y$ be the projection of $\ell^p$ onto $K_y$. Then $\ker Q_y=M_y\subseteq \ker T_{xy}$ for all $x\in X$, so $T_{xy}Q_y=T_{xy}$.
    Now $Q=\bigoplus_{y\in Y}Q_y$ belongs to $U\mathbb{C}^p[X]$, and $TQ=T$.

    If $S\in UB^p(X)$ and $\varepsilon>0$, there exists $T$ as above such that $\Vert T-S\Vert<\varepsilon/(2\Vert Q\Vert)$. Then $\Vert SQ-S\Vert<\varepsilon$.

    By Lemma \ref{lem:bai}, $UB^p(X)$ has a bounded approximate identity.
\end{proof}

\begin{prop} (\cite[Theorem 3.1]{Chung25})
The Banach algebras $UB^p(X)$ and $B^p_u(X)$ are Morita equivalent for $1\leq p<\infty$.
\end{prop}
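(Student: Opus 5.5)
The statement is cited from \cite[Theorem 3.1]{Chung25}. The plan is to reconstruct the Morita equivalence explicitly and then check the axioms of the definition above one by one. Fix a rank-one idempotent $e$ on $\ell^p$, say $e=\delta_1\otimes\delta_1^*$ projecting onto the first basis vector, and put $\mathbf{e}=\bigoplus_{x\in X} e\in U\mathbb{C}^p[X]$. With $\ell^p(X,\ell^p)\cong\ell^p(X\times\mathbb{N})$, we have $\mathbf{e}\,UB^p(X)\,\mathbf{e}\cong B^p_u(X)$. Indeed, a finite propagation operator whose entries are scalar multiples of $e$ is exactly an element of $\mathbb{C}_u[X]$ acting on $\ell^p(X)\otimes\delta_1$, and this corner embedding is isometric.

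Take $A=UB^p(X)$ and $B=B^p_u(X)$. Define $\E=\mathbf{e}\,UB^p(X)$, with $B$ acting on the left and $A$ on the right, and $\F=UB^p(X)\,\mathbf{e}$, with $A$ acting on the left and $B$ on the right. The pairings are the obvious multiplications:
\[
\langle x,y\rangle_B=xy\in \mathbf{e}A\mathbf{e}=B,\qquad {}_A\langle y,x\rangle=yx\in A.
\]
All module actions, the contractivity $\|\langle x,y\rangle\|\le\|x\|\|y\|$, and the associativity conditions (iii) and (iv) of the definition follow from associativity in $B(\ell^p(X,\ell^p))$. The map $\pi_A$ is left multiplication on $\F$ paired with right multiplication on $\E$; these are formal adjoints and are contractive. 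The same holds for $\pi_B$.

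What remains is nondegeneracy and fullness. For $B$: $\langle\E,\F\rangle_B\supseteq \mathbf{e}U\mathbb{C}^p[X]\mathbf{e}$, which is dense in $B$. For nondegeneracy over $B$ we use that $B$ is unital (we may take $X$ unital) and $\mathbf{e}$ acts as its unit. For $A$, nondegeneracy of $\E$ and $\F$ as $A$-modules comes from the bounded approximate identity of $A$ just established, whose elements $P$ and $Q$ lie in $U\mathbb{C}^p[X]$.

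The main obstacle is fullness over $A$: we must show that $\mathrm{span}\,U\mathbb{C}^p[X]\mathbf{e}\,U\mathbb{C}^p[X]$ is dense in $UB^p(X)$. Take $T\in U\mathbb{C}^p[X]$ with propagation $r$ and entry ranks at most $N$. First, use the uniform local finiteness and Lemma \ref{lem:CW}(ii) to split $T$ into finitely many pieces, each supported on a partial translation. For a single such piece, each nonzero entry $T_{xy}$ has rank at most $N$, so write it as a sum of $N$ rank-one operators $u_{xy}^{(k)}\otimes v_{xy}^{(k)}$. We must do this with uniform norm control, e.g. via Auerbach bases of the ranges, at a cost of a constant depending on $N$. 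Next, set $S_k$ to be the operator whose $(x,y)$ entry is $u^{(k)}_{xy}\otimes\delta_1^*$, and $R_k$ the diagonal operator with $(y,y)$ entry $\delta_1\otimes v^{(k)}_{xy}$; here the partial translation determines $x$ from $y$. Then $T=\sum_k S_k\mathbf{e}R_k$. Both $S_k$ and $R_k$ lie in $U\mathbb{C}^p[X]$ and are bounded, because their entries are uniformly bounded and their supports are partial translations. This settles density on $U\mathbb{C}^p[X]$ and hence in $A$. The delicate point is keeping the factorization uniformly bounded in $x$ and $y$, which I expect Auerbach's lemma to handle.
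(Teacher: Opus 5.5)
Your proposal is correct and is essentially the construction the paper recalls from \cite{Chung25}: under the isometric identification $\ell^p(X)\cong\ell^p(X)\otimes\delta_1\subseteq\ell^p(X,\ell^p)$, your modules $\mathbf{e}\,UB^p(X)$ and $UB^p(X)\,\mathbf{e}$ are precisely the completions $\E$ and $\F$ of finite propagation matrices with uniformly bounded $\ell^q$-valued (resp.\ $\ell^p$-valued) entries, both pairings are composition of operators, and your rank-one factorization of the entries along partial translations is exactly what fullness of the $UB^p(X)$-valued pairing requires. Two minor simplifications are available: nondegeneracy over $UB^p(X)$ needs no approximate identity, since $\mathbf{e}\in\E\cap\F$ gives $\E=\mathbf{e}\cdot UB^p(X)$ and $\F=UB^p(X)\cdot\mathbf{e}$ directly; and Auerbach's lemma with Hahn--Banach extensions gives $T_{xy}=\sum_i w_i\otimes(w_i^*\circ T_{xy})$ with $\|w_i\|=1$ and $\|w_i^*\circ T_{xy}\|\le\|T_{xy}\|$, so no $N$-dependent constant arises.
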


We recall some details in the construction of the Morita equivalence.
Let $1/p+1/q=1$.
Let $F$ be the set of all finite propagation $X\times X$ matrices with uniformly bounded entries in $\ell^p$, to be regarded as operators in $B(\ell^p(X),\ell^p(X,\ell^p))$ via the formula $(T\eta)_x=\sum_{y\in X}T_{xy}\eta_y$ for $T\in F$, $\eta\in\ell^p(X)$, and $x\in X$.
Similarly, let $E$ be the set of all finite propagation $X\times X$ matrices with uniformly bounded entries in $\ell^q$, to be regarded as operators in $B(\ell^p(X,\ell^p),\ell^p(X))$ via the formula $(S\xi)_x=\sum_{y\in X}\langle S_{xy},\xi_y \rangle$ for $S\in E$, $\xi\in\ell^p(X,\ell^p)$, and $x\in X$. 

The right action of $B^p_u(X)$ on $F$ is simply matrix multiplication.
For $S\in E$ and $T\in F$, let $T\cdot S$ be the matrix with entries $(T\cdot S)_{xy}=\sum_z T_{xz}\otimes S_{zy}\in\ell^p\otimes\ell^q$.
This gives a bilinear map ${}_{UB^p(X)}\langle\cdot,\cdot\rangle:F\times E\rightarrow UB^p(X)$.

Let $\E$ and $\F$ be the completions of $E$ and $F$ in the respective operator norms.
Then $(\E,\F)$ is a Morita equivalence between $UB^p(X)$ and $B^p_u(X)$.

Observing that $\E$ and $\F$ are completions of certain sets of finite propagation operators, we shall show that the isomorphism of lattice ideals induced by $(\E,\F)$ preserves geometric ideals.

A closed ideal $J$ in $UB^p(X)$ is called a geometric ideal if $J\cap U\mathbb{C}^p[X]$ is dense in $J$.

\begin{thm}
The Morita equivalence $(\E,\F)$ between $UB^p(X)$ and $B^p_u(X)$ induces a lattice isomorphism between the geometric ideals of $UB^p(X)$ and the geometric ideals of $B^p_u(X)$ for $1\leq p<\infty$.
\end{thm}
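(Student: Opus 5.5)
The plan is to use the explicit description of the Rieffel-type correspondence from Theorem \ref{thm:Rcor}. A closed ideal $I$ in $B^p_u(X)$ goes to $\mathrm{Ind}^{UB^p}_{B^p_u}I=\overline{\mathrm{span}}\,{}_{UB^p(X)}\langle \F I,\E\rangle$, and a closed ideal $J$ in $UB^p(X)$ goes to $\mathrm{Ind}^{B^p_u}_{UB^p}J=\overline{\mathrm{span}}\langle \E,J\F\rangle_{B^p_u(X)}$. These maps are mutually inverse, order-preserving lattice isomorphisms. So it suffices to show two things: $\mathrm{Ind}$ sends geometric ideals of $B^p_u(X)$ to geometric ideals of $UB^p(X)$, and conversely. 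The restriction to geometric ideals is then a bijection between the two sublattices. It is order-preserving both ways, hence a lattice isomorphism by Lemma \ref{lem:lattice}.

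\emph{Forward direction.} Let $I$ be geometric. By Proposition \ref{prop:lattice3}, $I=I(\mathcal{J})=\overline{\mathbb{C}_u[X,\mathcal{J}]}$ for $\mathcal{J}=\mathcal{J}(I)$. Since $F$ and $E$ are dense in $\F$ and $\E$, the pairing is bounded, and $\mathbb{C}_u[X,\mathcal{J}]$ is dense in $I$, the ideal $\mathrm{Ind}\,I$ is the closed span of elements ${}_{UB^p}\langle T R,S\rangle$ with $T\in F$, $R\in\mathbb{C}_u[X,\mathcal{J}]$ and $S\in E$. Such an element has matrix entries $\sum_{z,w}T_{xz}R_{zw}\otimes S_{wy}$. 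Its support lies in $\supp(T)\circ\supp(R)\circ\supp(S)$, which is an entourage in $\mathcal{J}$ because $\mathcal{J}$ is an ideal of $\mathcal{E}$. Each entry is a sum of at most a uniformly bounded number of rank-one tensors, by uniform local finiteness. Hence the element lies in $U\mathbb{C}^p[X]\cap\mathrm{Ind}\,I$, and these elements are dense in $\mathrm{Ind}\,I$. Therefore $\mathrm{Ind}\,I$ is geometric.

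\emph{Reverse direction.} Let $J$ be geometric in $UB^p(X)$. Then $J\cap U\mathbb{C}^p[X]$ is dense in $J$, so $\mathrm{Ind}\,J$ is the closed span of $\langle S,KT\rangle_{B^p_u}$ with $S\in E$, $K\in J\cap U\mathbb{C}^p[X]$ and $T\in F$. The pairing $\langle S,KT\rangle$ is a finite-propagation matrix with entries $\sum\langle S_{xz},K_{zw}T_{wy}\rangle$. These entries are uniformly bounded, so the pairing lies in $\mathbb{C}_u[X,\mathcal{E}]\cap\mathrm{Ind}\,J$. Thus $\mathbb{C}_u(\mathrm{Ind}\,J)$ is dense in $\mathrm{Ind}\,J$, and $\mathrm{Ind}\,J$ is geometric.

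\emph{Expected obstacle.} The main difficulty is justifying that the induced ideals are spanned by pairings of finite-propagation generators. This needs the density arguments together with the identifications $\F_I=\overline{\mathrm{span}}\,\F I$ and ${}_J\F=\overline{\mathrm{span}}\,J\F$ from the lemma preceding Proposition \ref{prop:Rcor2}, which uses the bounded approximate identities. One must also check carefully that the $UB^p$-valued pairing of finite-propagation elements has uniformly bounded rank. I would handle the rank bound with Lemma \ref{lem:CW}(ii): decompose the supports into partial translations, so that each entry is a sum of boundedly many elementary tensors.
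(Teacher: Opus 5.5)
Your proposal is correct and follows the paper's route. You approximate elements of $I$, $\E$ and $\F$ by finite-propagation ones, and then read off from the explicit formula $\mathrm{Ind}\,I=\overline{\mathrm{span}}\,{}_{UB^p(X)}\langle \F I,\E\rangle$ that the induced ideal is the closed span of elements of $U\mathbb{C}^p[X]$ that lie in it. The paper's proof only checks this forward direction, so your reverse argument supplies the surjectivity onto the geometric ideals of $UB^p(X)$ that the stated lattice isomorphism needs but the paper leaves implicit: it shows that $\overline{\mathrm{span}}\langle \E,J\F\rangle_{B^p_u(X)}$ is geometric whenever $J$ is, using the finite-propagation matrices $SKT$.
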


\begin{proof}
Let $I$ be a geometric ideal in $B^p_u(X)$.
Then the corresponding ideal in $UB^p(X)$ is $\mathrm{Ind}\;I:=\overline{\mathrm{span}}{}_{UB^p(X)}\langle \F I,\E \rangle$.

Since operators in $I$ (resp. $\E,\F$) can be approximated by finite propagation operators in $I$ (resp. $\E,\F$), it follows from the definitions of the $B^p_u(X)$-action on $\F$ and the $UB^p(X)$-valued pairing that $\mathrm{Ind}\;I$ is geometric.
\end{proof}



\bibliographystyle{plain}
\bibliography{ref}

\end{document}